\documentclass[a4paper, 11 pt, twoside, notitlepage]{amsart}
\usepackage[left = 3.5cm, right = 3.5cm, headsep = 6mm,
footskip = 10mm, top = 35mm, bottom = 35mm, footnotesep=5mm, headheight =
2cm]{geometry}

\usepackage{multicol}

\usepackage{makecell}
\usepackage{mathtools,leftindex,tensor}
\usepackage[version=4]{mhchem}
\usepackage{longtable}
\usepackage{booktabs}
\usepackage{tabularx}

\RequirePackage{amsmath}
\RequirePackage{amssymb, amsthm, amsfonts}
\RequirePackage[T1]{fontenc}
\usepackage{comment}

\RequirePackage[british]{babel}

\RequirePackage{booktabs}
\RequirePackage{multirow}

\RequirePackage{enumitem}

\RequirePackage[usenames, dvipsnames, pdftex]{xcolor}
\RequirePackage{tikz, tikz-cd, tcolorbox}

\usetikzlibrary{shapes,backgrounds}
\usetikzlibrary{arrows, positioning, intersections}
\usetikzlibrary {through, chains}

\tikzset{every picture/.style={>=Straight Barb}} 
\tikzset{commutative diagrams/arrow style=tikz}

\newtheoremstyle{Lehn-it}
{}
{}
{\itshape}
{}
{\bfseries}
{$\;$\textmd{---}}
{ }
{}

\newtheoremstyle{Lehn-up}
{}
{}
{\upshape}
{}
{\bfseries}
{$\;$\textmd{---}}
{ }
{}

\newtheoremstyle{up-list}
{}
{}
{\upshape}
{}
{\bfseries}
{ }
{ }
{}

\newtheoremstyle{Lehn-Bemerkung}
{}
{}
{}
{}
{\itshape}
{$\;$\textmd{---}}
{ }
{}

\newtheoremstyle{citing}
{}
{}
{\itshape}
{}
{\bfseries}
{$\;$\textmd{---}}
{.5em}
{\thmnote{#3}}

\usepackage{chngcntr}
\usepackage{apptools}

\AtAppendix{\numberwithin{equation}{chapter}}
\AtAppendix{\numberwithin{equation}{section}}
\AtAppendix{\numberwithin{equation}{subsection}}
\AtAppendix{\numberwithin{equation}{subsubsection}}

\numberwithin{equation}{section}

\newenvironment{customthm}[1]
{\renewcommand\theinnercustomthm{#1}\innercustomthm}
{\endinnercustomthm}

\newenvironment{customprop}[1]
{\renewcommand\theinnercustomprop{#1}\innercustomprop}
{\endinnercustomprop}

\newenvironment{customcor}[1]
{\renewcommand\theinnercustomcor{#1}\innercustomcor}
{\endinnercustomcor}

{\theoremstyle{Lehn-it}
	
	\newtheorem{thm}[equation]{Theorem}
	\newtheorem{lem}[equation]{Lemma}
	
	\newtheorem{prop}[equation]{Proposition}
	\newtheorem{cor}[equation]{Corollary}
	
	\newtheorem{innercustomthm}{Theorem}
	\newtheorem{innercustomprop}{Proposition}
	\newtheorem{innercustomcor}{Corollary}
}
{\theoremstyle{Lehn-up}
	\newtheorem{defin}[equation]{Definition}
	\newtheorem{exam}[equation]{Example}
	
}
{\theoremstyle{Lehn-Bemerkung}
	\newtheorem{rem}[equation]{Remark}
	
}

{\theoremstyle{up-list}
	
}

{\theoremstyle{citing}
	}

\RequirePackage{tikz-cd}
\RequirePackage{amsmath, amssymb, amsthm, amsfonts}

\makeatletter

\def\chaptermark#1{}

\def\chapter{%
	\if@openright\cleardoublepage\else\clearpage\fi
	\thispagestyle{plain}\global\@topnum\z@
	\@afterindenttrue \secdef\@chapter\@schapter}

\def\@chapter[#1]#2{\refstepcounter{chapter}%
	\ifnum\c@secnumdepth<\z@ \let\@secnumber\@empty
	\else \let\@secnumber\thechapter \fi
	\typeout{\chaptername\space\@secnumber}%
	\def\@toclevel{0}%
	\ifx\chaptername\appendixname \@tocwriteb\tocappendix{chapter}{#2}%
	\else \@tocwriteb\tocchapter{chapter}{#2}\fi
	\chaptermark{#1}%
	\addtocontents{lof}{\protect\addvspace{10\p@}}%
	\addtocontents{lot}{\protect\addvspace{10\p@}}%
	\@makechapterhead{#2}\@afterheading}
\def\@schapter#1{\typeout{#1}%
	\let\@secnumber\@empty
	\def\@toclevel{0}%
	\ifx\chaptername\appendixname \@tocwriteb\tocappendix{chapter}{#1}%
	\else \@tocwriteb\tocchapter{chapter}{#1}\fi
	\chaptermark{#1}%
	\addtocontents{lof}{\protect\addvspace{10\p@}}%
	\addtocontents{lot}{\protect\addvspace{10\p@}}%
	\@makeschapterhead{#1}\@afterheading}
\newcommand\chaptername{Chapter}

\def\@makechapterhead#1{\global\topskip 7.5pc\relax
	\begingroup
	\fontsize{\@xivpt}{18}\bfseries\centering
	\ifnum\c@secnumdepth>\m@ne
	\leavevmode \hskip-\leftskip
	\rlap{\vbox to\z@{\vss
			\centerline{\normalsize\mdseries
				\uppercase\@xp{\chaptername}\enspace\thechapter}
			\vskip 3pc}}\hskip\leftskip\fi
	#1\par \endgroup
	\skip@34\p@ \advance\skip@-\normalbaselineskip
	\vskip\skip@ }
\def\@makeschapterhead#1{\global\topskip 7.5pc\relax
	\begingroup
	\fontsize{\@xivpt}{18}\bfseries\centering
	#1\par \endgroup
	\skip@34\p@ \advance\skip@-\normalbaselineskip
	\vskip\skip@ }
\def\appendix{\par
	\c@chapter\z@ \c@section\z@
	\let\chaptername\appendixname
	\def\thechapter{\@Alph\c@chapter}}

\newcounter{chapter}

\newif\if@openright

\makeatother

\newcommand{\del}{\partial}
\newcommand{\dbar}{\overline\del}

\DeclareMathOperator{\id}{\mathsf{id}}
\DeclareMathOperator{\Hom}{\mathsf{Hom}}
\DeclareMathOperator{\Mor}{\mathsf{Mor}}
\DeclareMathOperator{\End}{\mathsf{End}}

\newcommand{\Wedge}{\textstyle\bigwedge}

\DeclareMathOperator{\Aut}{\mathsf{Aut}}
\DeclareMathOperator{\rk}{\mathsf{rk}}
\DeclareMathOperator{\im}{\mathsf{im}}

\DeclareMathOperator{\coker}{\mathsf{coker}}

\DeclareMathOperator{\GL}{\mathsf{GL}}

\DeclareMathOperator{\Kur}{Kur}
\DeclareMathOperator{\obs}{obs}
\DeclareMathOperator{\Diff}{Diff}
\DeclareMathOperator{\Ann}{Ann}
\DeclareMathOperator{\Gr}{Gr}

\DeclareMathOperator{\para}{par}

\renewcommand{\epsilon}{\varepsilon}

\DeclareFontFamily{OT1}{rsfs}{}
\DeclareFontShape{OT1}{rsfs}{n}{it}{<-> rsfs10}{}
\DeclareMathAlphabet{\curly}{OT1}{rsfs}{n}{it}

\newcommand{\kc}{{\mathcal C}}
\newcommand{\kd}{{\mathcal D}}

\newcommand{\kf}{{\mathcal F}}
\newcommand{\kg}{{\mathcal G}}
\newcommand{\kh}{{\mathcal H}}
\newcommand{\ki}{{\mathcal I}}

\newcommand{\kk}{{\mathcal K}}
\newcommand{\kl}{{\mathcal L}}
\newcommand{\km}{{\mathcal M}}

\newcommand{\kp}{{\mathcal P}}

\newcommand{\ks}{{\mathcal S}}
\newcommand{\kt}{{\mathcal T}}
\newcommand{\ku}{{\mathcal U}}
\newcommand{\kv}{{\mathcal V}}

\newcommand{\kx}{{\mathcal X}}
\newcommand{\ky}{{\mathcal Y}}
\newcommand{\kz}{{\mathcal Z}}

\newcommand{\IC}{{\mathbb C}}

\newcommand{\IN}{{\mathbb N}}

\newcommand{\IP}{{\mathbb P}}
\newcommand{\IQ}{{\mathbb Q}}
\newcommand{\IR}{{\mathbb R}}

\newcommand{\IZ}{{\mathbb Z}}
\newcommand{\gotha}{{\mathfrak a}}
\newcommand{\gothb}{{\mathfrak b}}

\newcommand{\frg}{{\mathfrak g}}
\newcommand{\frh}{{\mathfrak h}}

\newcommand{\gotht}{{\mathfrak t}}

\DeclareMathOperator{\ad}{\mathsf{ad}}

\usepackage[unicode,bookmarks, pdftex]{hyperref}
\hypersetup{colorlinks=true, citecolor=NavyBlue ,linkcolor=NavyBlue,urlcolor=Orange, pdfpagemode=UseNone, breaklinks=true}

\usepackage{enumitem}
\newlist{prooflist}{description}{1}
\setlist[prooflist]{font=\normalfont \itshape, labelindent = \parindent, leftmargin = 0pt}

\usepackage{comment}

\makeatletter
\g@addto@macro\@floatboxreset\centering
\makeatother

\usepackage[backend=biber, style=alphabetic, url= false, doi= false, maxbibnames= 99, isbn= false]{biblatex}
\usepackage{csquotes}

\title{Teichmüller and moduli spaces for complex nilmanifolds}
\author{Konstantin Wehler}

\address{Konstantin Wehler\\FB 12/Mathematik und Informatik\\
	Philipps-Universit\"at Marburg\\
	Hans-Meerwein-Str. 6\\
	35032 Marburg\\
	Germany}
\email{konstantin.wehler@uni-marburg.de}

\begin{document}
	
	\begin{abstract}
		We develop several tools to study the local and global structure of the Teichmüller and moduli spaces for complex nilmanifolds. As a starting point, we prove a structure theorem for holomorphic maps between complex nilmanifolds, which allows for an explicit description of the Teichmüller and moduli spaces. We then introduce two period maps, and give general criteria for the existence of a complex structure on these spaces.
		
		To illustrate the theory, we prove a global Torelli theorem for various classes of nilmanifolds, including principal torus bundles over tori and almost abelian nilmanifolds. In particular, we show that there are nilmanifolds of arbitrary dimension and nilpotency index for which the Teichmüller space admits the structure of a complex manifold.
		
	\end{abstract}
	\subjclass[2020]{32Q57; 22E25, 14D22, 32G05}
	\keywords{complex nilmanifold, moduli space, nilpotent Lie algebra, period map, Teichmüller space}

	\maketitle

	\tableofcontents

	\addtocontents{toc}{\protect\setcounter{tocdepth}{-1}}
	\section*{Introduction}
	\addtocontents{toc}{\protect\setcounter{tocdepth}{1}}
	\addcontentsline{toc}{chapter}{Introduction}

	Given a class $\kc$ of complex manifolds with a fixed underlying smooth manifold, we can ask the following general question: is there a complex space whose points uniquely correspond to the biholomorphism classes of complex manifolds in $\kc$? Such a parameter space is called a moduli space and there are several classes of complex manifolds for which the existence of a moduli space has already been established, especially in algebraic geometry. Among the simplest examples are the classes of elliptic curves and Kodaira surfaces \cite{Borcea84}, which belong to a much larger class of complex manifolds, known as complex nilmanifolds.
	
	A complex nilmanifold is a compact quotient of a simply connected nilpotent Lie group $G$ by a lattice $\Gamma \subset G$, endowed with a left-invariant complex structure.
	
	Unfortunately, it was already shown by Kodaira and Spencer in \cite{Kodaira58} that a moduli space no longer exists for complex tori of higher dimension, while the existence of a moduli space for the higher-dimensional analogues of Kodaira surfaces, known as Kodaira manifolds, was ruled out in \cite{grantcharov2004deformations}.
	This suggests that one should instead consider another variant of a moduli space, known as the Teichmüller space.

	 So far, complex tori and Kodaira manifolds are the only complex nilmanifolds for which the existence of a moduli space has been studied. We will establish general criteria for the existence of a complex structure on the Teichmüller space, and show that this space admits a complex structure for large classes of complex nilmanifolds.

	\subsection*{Teichmüller and moduli spaces}
	
	Let $M = \Gamma \backslash G$ be an even-dimensional nilmanifold and let $\frg$ be the Lie algebra of $G$. By left-translations, any almost complex structure $J\colon\frg\to \frg$ extends to an almost complex structure on $G$. This almost complex structure is invariant under the action of the lattice $\Gamma$ and thus descends to an almost complex structure on $M$, also denoted by $J$. If $J$ is integrable, then the compact complex manifold $(M,J)$ is called a complex nilmanifold.
	
	The lattice $\Gamma$ and the Lie algebra $\frg$ determine the Lie group $G$ up to canonical isomorphism, so we will often omit the Lie group $G$ from the notation and just say $M$ is a nilmanifold of type $(\frg, \Gamma)$. 
	The main objects of our study are the following spaces: we define 
	\[
	\km(\frg, \Gamma) = \left\{ \textup{complex nilmanifolds $(M,J)$}\right\}/ \cong
	\] 
	as the moduli space of biholomorphism classes of complex nilmanifolds of type $(\frg, \Gamma)$.
	The Teichmüller space
	\[
	\kt(\frg) =  \left\{ \textup{complex nilmanifolds $(M,J)$}\right\}/ \sim
	\]
	is defined by the equivalence relation $\sim$ identifying two complex nilmanifolds if they are biholomorphic via a diffeomorphism of $M$ isotopic to the identity. Following the results of Section \ref{sect: affine maps}, we will see that the structure of the Teichmüller space only depends on the underlying Lie algebra, so we omit the lattice from the notation. Our main goal is to identify pairs $(\frg, \Gamma)$ for which $\km(\frg, \Gamma)$ or $\kt(\frg)$ can be endowed, in a natural way, with the structure of a complex space.
	
	In the same way as above, one can also define the moduli and Teichmüller spaces of all possible complex structures on $M$. However, it was shown in \cite{Cat04DefLargeI} that even for the real six-dimensional torus these spaces have countably many connected components of unbounded dimension, so we will restrict our considerations to left-invariant complex structures. It turns out that this is not a serious limitation since we locally stay in the realm of complex nilmanifolds, that is, every sufficiently small deformation of a complex nilmanifold is again a complex nilmanifold \cite{rollenske09b, Has26}. Under additional assumptions the space $\kt(\frg)$ even makes up a union of connected components in the Teichmüller space of all complex structures on $M$ \cite{rollenske09}.

	\subsection*{Holomorphic maps between complex nilmanifolds} 
	
	Since the spaces $\km(\frg, \Gamma)$ and $\kt(\frg)$ are defined by the existence of certain holomorphic maps between complex nilmanifolds, our first goal will be to provide a classification of such maps. It is a classical result that a holomorphic map $f \colon \Gamma\backslash \IC^n \to \Lambda\backslash\IC^m$ between complex tori lifts to an affine map $F \colon \IC^n \to \IC^m$. Concretely, there is a linear map $\Phi \colon \IC^n \to \IC^m$  with $\Phi(\Gamma) \subset \Lambda$ and an element $\lambda \in \Lambda$ such that $F = \Phi + \lambda$. The following result generalises this fact to holomorphic maps between arbitrary complex nilmanifolds.

	\begin{customthm}{A}[Theorem \ref{thm: affine maps}]\label{Theorem A}
		Let $f \colon X \to Y$ be a holomorphic map between complex nilmanifolds $X = (\Gamma \backslash G, J)$ and $Y = (\Lambda\backslash H, I)$. Then there is a holomorphic lift $F \colon (G,J) \to (H,I)$ of $f$ such that $F= R_h \circ \Phi$, where $R_h$ is a right-translation on $H$ and $\Phi\colon G \to H$ is a group homomorphism with $\Phi(\Gamma) \subset \Lambda$.
	\end{customthm}

	As a consequence of Theorem \ref{Theorem A}, we have a much simpler description of the Teichmüller and moduli spaces. Let us denote by 
	\[
	\kc(\frg) = \{ J \colon \frg \to \frg \, | \,  J^2= - \id, \,\textup{$J$ integrable}\}
	\]
	the space of complex structures on $\frg$; for any lattice $\Gamma$ the space $\kc(\frg)$ contains all left-invariant complex structures on a nilmanifold of type $(\frg, \Gamma)$. The space $\kc(\frg)$ can be embedded as a locally closed subspace of a complex Grassmannian and, contrary to the Teichmüller and moduli spaces, it always comes equipped with the structure of a (possibly singular) complex space. The group $G$ acts on $\kc(\frg)$ by right-translations and since every automorphism of the lattice $\Gamma \subset G$ extends uniquely to an automorphism of the Lie group $G$ \cite{Vinberg00}, we have an action of  $\Aut(\Gamma)$ on $\kc(\frg)$, which descends to an action on the quotient $\kc(\frg)/G$.
	\begin{customcor}{B}[Corollary \ref{cor: simple description}]\label{Corollary B}
		For any lattice $\Gamma\subset G$ we have $\kt(\frg) = \kc(\frg)/G$ and $\km(\frg, \Gamma) = \kt(\frg)/\Aut(\Gamma)$.
	\end{customcor}
	
	The space $\kc(\frg)$ also admits an action by the automorphism group $\Aut(\frg)$, and the quotient $\kc(\frg)/\Aut(\frg)$ has already been studied for several classes of Lie algebras \cite{ceballos2016invariant, Latorre23, andrada2025almost}. While understanding the space $\kc(\frg)/\Aut(\frg)$ is of interest in its own right, it does not parametrise complex structures on any nilmanifold since the elements of $\Aut(\frg)$ usually do not descend to the quotient $\Gamma\backslash G$. As already illustrated by the moduli space of Kodaira surfaces \cite{Borcea84}, the situation becomes much more complicated as soon as the lattice starts to play a role.
	
	Unfortunately, the simpler description of the Teichmüller and moduli spaces given in Corollary \ref{Corollary B} certainly does not imply that they have any nice geometric structure. In Section \ref{chap: Teichmüller and moduli spaces} we will give a precise meaning to the question whether or not $\kt(\frg)$ or $\km(\frg, \Gamma)$ admits a natural complex structure. In this case, we will just say that $\kt(\frg)$, respectively $\km(\frg, \Gamma)$, exists as a complex space or a complex manifold.
	
	We will see that one of the main obstructions for these spaces to admit a complex structure is the existence of too many automorphisms. More precisely, to any complex nilmanifold parametrised by $\kc(\frg)$ one can attach the dimension of its automorphism group, that is, the group of its biholomorphisms. We will show that if this dimension is not constant on the connected components of $\kc(\frg)$, then the Teichmüller space cannot exist as a complex space. For example, this occurs if the Lie algebra $\frg$ is non-abelian and admits a complex parallelisable structure. In fact, both $\kt(\frg)$ and $\km(\frg, \Gamma)$ are far from Hausdorff in this case (see Proposition \ref{prop: Tg non hausdorff complex par}).

	\subsection*{Period maps}
	One of the most important geometric features of complex nilmanifolds is that their canonical bundle is always trivial \cite{Barberis09, salamon01}. While complex tori are the only nilmanifolds that admit a Kähler metric, the Kähler case might give some insight into what one could hope for when considering deformations of complex nilmanifolds.
	
	Two central results in the deformation theory of Calabi--Yau manifolds, i.e., Kähler manifolds with trivial canonical bundle, are the Tian--Todorov theorem and the local Torelli theorem. The former states that for a Calabi--Yau manifold $Y$, the Kuranishi space $\Kur(Y)$ is always smooth. Moreover, since every small deformation of a Calabi--Yau manifold is again a Calabi--Yau manifold one can define a local period map on $\Kur(Y)$ by mapping a point $t \in \Kur(Y)$ to the de Rham class of a trivialising section of the canonical bundle of $Y_t$. The local Torelli theorem states that this map is always a holomorphic embedding.
	
	Unfortunately, both of these statements are no longer true for complex nilmanifolds. As a consequence of a result by Rollenske \cite{rollenske09b}, it was recently shown that the Kuranishi family of a complex nilmanifold only contains complex nilmanifolds \cite{Has26}. However, the Kuranishi space of a complex nilmanifold is not necessarily smooth. For instance, it is almost always singular for complex parallelisable nilmanifolds \cite{Rol11Kuranishi, paulsen2024verbal}. Secondly, and perhaps more surprisingly, we will see that in a holomorphic family of complex nilmanifolds there can be infinitesimal directions which no longer have trivial canonical bundle. Due to this phenomenon a holomorphic local period map can only be defined on a suitable reduction of the Kuranishi space of a complex nilmanifold which, unlike in the Kähler case, is not necessarily injective. Nevertheless, it turns out that such a local period map is an important tool in the study of the Teichmüller space. For example, based on a result of Catanese \cite{catanese2011superficial} we obtain the following useful criterion.

	\begin{customprop}{C}[Proposition \ref{prop: catanese criterion}, Lemma \ref{lem: local period map}]
		Let $X$ be an $n$-dimensional complex nilmanifold with underlying Lie algebra $\frg$. If every left-invariant $d$-exact $(n-1,1)$-form is $\dbar$-exact, then the natural map
		\[
		\Kur(X) \to \kt(\frg)
		\] 
		is a local homeomorphism.
	\end{customprop}
	
	Furthermore, since we consider complex nilmanifolds with a fixed underlying Lie algebra $\frg$, we can extend the local period map to a map on the space $\kc(\frg)$. Concretely, if $M$ is a nilmanifold of dimension $2n$ with underlying Lie algebra $\frg$, then, up to scaling, the canonical bundle of a complex nilmanifold $(M,J)$ can be trivialised by a unique closed left-invariant $(n,0)$-form. So we obtain a map $\kc(\frg) \to \IP H^n(\frg, \IC)$ by sending a complex structure $J \in \kc(\frg)$ to the class of an associated $(n,0)$-form on $(M,J)$. It turns out that this map is holomorphic on a reduction of the complex space $\kc(\frg)$, and it descends to a map
	\[
	\kp \colon \kt(\frg) \to \IP H^n(\frg, \IC).
	\]
	We call $\kp$ the global period map. The image of this map is contained in the so-called period domain $\kd$, which is an open subset of a projective variety in $\IP H^n(\frg, \IC)$. One of our main goals is to understand when $\kp$ is injective, as this has strong implications for the structure of the Teichmüller space.
	
	\subsection*{Applications}
	It is a well-known fact that every real nilmanifold $M$ has the structure of an iterated real principal torus bundle, that is, there exists a tower of maps
	\[
	\begin{tikzcd}
		M= M_1 \rar{\pi_1} & M_2 \rar{\pi_2} & {\cdots} \rar{\pi_{\nu-1}} & M_{\nu},
	\end{tikzcd} 
	\]
	where every $M_i$ is a real nilmanifold, the maps $\pi_i\colon M_i \to M_{i+1}$ are real principal torus bundles, and $M_\nu$ is itself a real torus. Conversely, every smooth manifold admitting such a bundle structure is a nilmanifold. Unfortunately, this correspondence fails in the complex setting. While it is true that every iterated holomorphic principal bundle of complex tori is a complex nilmanifold, the converse no longer holds. In order to obtain a global description of the Teichmüller space we often rely on the existence of what is known as a (stable) principal torus bundle series, a notion that was first introduced by Rollenske in \cite{rollenske09}.
	
	Roughly speaking, a principal torus bundle series for a left-invariant complex structure $J$ on $M = \Gamma \backslash G$ is a filtration on the Lie algebra $\frg$ of $G$ compatible with both the complex structure $J$ and the lattice $\Gamma$ (see Definition \ref{def: stbs}). Geometrically, such a filtration induces a tower of principal bundles as above such that the complex structure $J$ induces a complex structure on every nilmanifold $M_i$ and the maps $\pi_i$ are holomorphic with respect to these complex structures.
	
	In \cite{rollenske09} Rollenske proved the existence of a principal torus bundle series for several classes of complex nilmanifolds. Furthermore, he showed that such a torus bundle series is often stable in the sense that it is compatible with all complex structures on a given Lie algebra $\frg$ and also with every lattice. As a first application we study the Teichmüller spaces for principal torus bundles over a torus.
	
	\begin{customthm}{D}[Theorem \ref{thm: fibre map not injective}]\label{Theorem C}
		Let $\frg$ be a $2$-step nilpotent Lie algebra of dimension $2n$ with a stable principal torus bundle series of length $2$. If $\dim \Aut(X) =1$ for every complex nilmanifold $X$ of type $(\frg, \Gamma)$, then the global period map $\kp\colon \kt(\frg) \to \kd$ is injective.
		
		Moreover, the space $\kc(\frg)$ is a complex manifold of dimension $\frac{1}{2}n(n+1)$ and the Teichmüller space $\kt(\frg)$ exists as a complex manifold of dimension $\frac{1}{2}n(n-1) +1$.
	\end{customthm}
	
	We will see several examples of nilmanifolds which satisfy the assumptions of this theorem, among them is the class of Kodaira manifolds.
	Unfortunately, if one loosens any of the assumptions in Theorem \ref{Theorem C}, then it only holds for special classes of Lie algebras. One of these classes are Lie algebras whose commutator $\kc^1\frg$ is at most two-dimensional, in particular, they are at most 3-step nilpotent. In \cite{rollenske09} Rollenske provides a classification of nilpotent Lie algebras with at most two-dimensional commutator that admit a stable torus bundle series. Based on his classification we obtain a fairly complete picture for this class.
	
	\begin{customthm}{E}[Theorem \ref{thm: Teichmüller 2-dim commutator}]\label{Theorem D}
		Let $\frg$ be a nilpotent Lie algebra admitting complex structures. If  $\dim \kc^1\frg \leq 2$, then we have the following cases.
		\begin{enumerate}
			\item If $\dim \kc^1\frg=1$, then $\kt(\frg)$ exists as a complex manifold and the global period map $\kp$ is an embedding.
			\item If $\frg$ is $2$-step nilpotent and the centre $\kz\frg$ is two-dimensional, then $\kt(\frg)$ does not exist as a complex space in general, but $\kp$ is generically injective on the connected components of $\kt(\frg)$ containing complex parallelisable structures.
			\item If $\frg$ is $2$-step nilpotent and $\dim \kz\frg$ is odd, then $\kt(\frg)$ does not exist as a complex space in general.
			\item If $\frg$ is $3$-step nilpotent, then $\kt(\frg)$ exists as a complex manifold and the global period map $\kp$ is an embedding.
		\end{enumerate}
	\end{customthm}
	
	The existence of a stable principal torus bundle series $\ks$ on a Lie algebra $\frg$ also allows us to define another period map $\kp^\ks \colon \kt(\frg) \to \ku$, where $\ku$ is an open subset of a product of Grassmannians. This map roughly measures to what extent the complex tori making up the fibres and base of an iterated torus bundle determine the isomorphism class of a complex nilmanifold. Even though there is generally no relation between the two period maps $\kp$ and $\kp^\ks$ we will see that it is much rarer for $\kp^\ks$ to be injective. In fact, we will often have a precise characterisation of Lie algebras for which $\kp^\ks$ is injective.
	
	Another class for which the existence of a stable principal torus bundle series $\ks$ has recently been proved is the class of so-called almost abelian Lie algebras \cite{andrada2025almost}.
	An almost abelian Lie algebra is a non-abelian Lie algebra admitting a codimension one abelian ideal, and there exists a complete classification of nilpotent almost abelian Lie algebras admitting a complex structure~\cite{ABDGH24}.
	Even though neither the nilpotency index nor the dimension of the commutator is bounded in this class, it turns out that almost abelian Lie algebras are very well suited for the study of our moduli problems. 
	
	\begin{customthm}{F}[Theorem \ref{thm: global period map almost abelian}, Proposition \ref{prop: fibre map almost abelian}]
		Let $\frg$ be a nilpotent almost abelian Lie algebra admitting complex structures. 
		\begin{enumerate}
			\item The Teichmüller space $\kt(\frg)$ exists as a complex manifold and the period map $\kp \colon \kt(\frg) \to \kd$ is an embedding.
			\item The period map $\kp^\ks \colon \kt(\frg) \to \ku$ is injective if and only if $\frg$ is the real Lie algebra underlying a Kodaira surface.
		\end{enumerate}
	
	\end{customthm}

	Finally, we apply the previous results to study the Teichmüller and moduli spaces for nilmanifolds of dimension six. It was shown by Salamon in \cite{salamon01} that there are 18 isomorphism classes of six-dimensional nilpotent Lie algebras admitting a complex structure. While a complete description of all Teichmüller and moduli spaces in this class requires a more detailed analysis, we already cover some parts as an immediate consequence of our previous results. For instance, we obtain
	
	\begin{customthm}{G}[Theorem \ref{thm: dimension six}]
		Let $\frg$ be a nilpotent Lie algebra of dimension six admitting complex structures. If $\dim \kc^1\frg \leq 2$, then $\kp \colon \kt(\frg) \to \kd \subset \IP H^3(\frg, \IC)$ is injective and $\kt(\frg)$ exists as a complex manifold, unless $\frg$ is the real Lie algebra underlying the Iwasawa manifold in which case $\kt(\frg)$ is non-Hausdorff.
	\end{customthm}
	
	A different approach to understanding the Teichmüller and moduli spaces, which avoids the problem of dealing with non-Hausdorff spaces, was taken by Meersseman in \cite{Meersseman19}. Instead of considering the Teichmüller and moduli spaces as topological spaces, Meersseman constructed them as stacks for an arbitrary smooth compact manifold. Moreover, he showed that under a mild condition on the dimension of the automorphism group, these stacks are analytic \cite{Meersseman19}, which is far from true on the level of topological spaces. Although the Teichmüller and moduli stacks are in general difficult to describe, our results suggest that an explicit description of some connected components should be possible for nilmanifolds admitting left-invariant complex structures.

	\subsection*{Acknowledgements}
	This work is part of my PhD thesis \cite{Wehler26}. I would like to thank my advisers Sönke Rollenske and Nicolina Istrati for their constant support, and many valuable comments and discussions. I would also like to thank Stefan Nemirovski for a helpful conversation regarding Proposition \ref{prop: polynomial growth}. This work was supported by the DFG through the grant RO 3734/4-1. 
	
	\chapter*{Part I: General theory}
	
	\section{Preliminaries}\label{chap: geometry of complex nilmanifolds}
	
	This section collects some definitions and basic results about complex nilmanifolds. 
	
	\subsection{Complex structures on nilpotent Lie algebras}\label{sect: complex structures on nilpotent Lie algebras}
	
	Let $\frg$ be a real nilpotent Lie algebra of dimension $2n$. An almost complex structure on $\frg$ is an endomorphism $J \colon\frg \to \frg$ such that $J^2 = -\id$, which is said to be integrable if its Nijenhuis tensor vanishes, that is, we have
	\begin{equation*}\label{eq: nijenhuis}
		[x,y]-[Jx,Jy]+J[Jx,y]+J[x,Jy] =0
	\end{equation*}
	 for all $x,y \in \frg$. In this case, the pair $(\mathfrak{g}, J)$ is called a nilpotent Lie algebra with complex structure.

	 Alternatively, we have a decomposition $\frg_\IC = \frg^{1,0}\oplus\frg^{0,1}$ of the complexification of $\frg$ into the $\pm i$-eigenspaces of the complex linear extension of the almost complex structure $J$, and $J$ is integrable if and only if $[\frg^{1,0}, \frg^{1,0}] \subset \frg^{1,0}$. Conversely, every decomposition $\frg_\IC=V \oplus \overline{V}$ defines a unique almost complex structure on $\frg$ whose $i$-eigenspace is $V$.

	It is often useful to describe complex structures on $\frg$ in terms of differential forms. We define $d\colon \frg^* \to \Wedge^2\frg^*$ by
	\begin{equation}\label{eq: differential}
		d\alpha(x,y) = -\alpha([x,y])
	\end{equation}
	for $\alpha \in \frg^*$ and $x,y\in \frg$. The map $d$ induces a map on the exterior algebra $\Wedge^\bullet \frg^*$ and makes $(\Wedge^\bullet \frg^*, d)$ into a differential graded algebra. The Jacobi identity on $\frg$ corresponds to $d^2 =0$.
	If $e^1, \dots, e^{2n}$ is a basis of $\frg^*$, then the Lie algebra $\frg$ can be described by the real structure equations
	\[
	de^k = \sum_{i<j} c_{ij}^k e^i \wedge e^j,
	\]
	where the constants $c_{ij}^k\in \mathbb{R}$ are called the structure constants with respect to the chosen basis of $\mathfrak{g}^*$.
	
	An almost complex structure $J$ also induces a decomposition $\mathfrak{g}_\mathbb{C}^* = \mathfrak{g}^{*1,0} \oplus \mathfrak{g}^{*0,1}$, and thus for $1 \leqslant k \leqslant 2n$ the exterior algebra of $\mathfrak{g}_\IC^*$ decomposes as
	\begin{equation*}
		\Wedge^k\mathfrak{g}_\mathbb{C}^* = \bigoplus_{p+q = k} \Wedge^{p,q}\mathfrak{g}^*,
	\end{equation*}
	where $\Wedge^{p,q}\mathfrak{g}^* = \Wedge^p\mathfrak{g}^{*1,0} \otimes \Wedge^q\mathfrak{g}^{*0,1}$ and $\overline{\Wedge^{p,q}\mathfrak{g}^*} = \Wedge^{q,p}\mathfrak{g}^*$. With respect to this bi-grading we obtain maps  $\partial \colon \Wedge^{p,q}\frg^* \to \Wedge^{p+1,q}\frg^*$ and $\dbar \colon \Wedge^{p,q}\frg^* \to \Wedge^{p,q+1}\frg^*$, defined by composing the differential $d$ with the respective projections. The almost complex structure $J$ is integrable in the above sense if and only if the differential $d$ on $\Wedge^\bullet \frg_\IC^*$ decomposes as $d = \partial + \dbar$. If $\omega^1, \dots, \omega^n$ is a complex basis of $\frg^{*1,0}$, then the integrability of $J$ implies that the differentials $d\omega^k$ are of the form
	\[
	d\omega^k = \sum_{i<j} A_{ij}^k \omega^{i} \wedge \omega^j +\sum_{i,j} B_{ij}^k \omega^{i} \wedge \overline \omega^j,
	\]
	where $A_{ij}^k, B_{ij}^k\in \IC$. We will often abbreviate $\omega^i \wedge \omega^j = \omega^{ij}$ and $\omega^i \wedge \overline\omega^j = \omega^{i\bar j}$. We refer to these equations as the complex structure equations for the pair $(\frg, J)$ with respect to the chosen basis.
	
	\subsection{Complex nilmanifolds}\label{sect: complex nilmanifolds}

	Let $\frg$ be a nilpotent Lie algebra and let $G$ be an associated simply connected nilpotent Lie group. A lattice $\Gamma \subset G$ is a discrete co-compact subgroup of $G$. By the results of \cite{malcev51} a lattice exists if and only if there is a rational subalgebra $\frg_\IQ$ of $\frg$ such that $\frg_\IQ \otimes \IR = \frg$. Such a subalgebra is called a rational structure for $\frg$ and its existence is equivalent to the existence of a basis of $\frg^*$ with rational structure constants.

	If $G$ does admit a lattice $\Gamma$, then $\Gamma$ acts on $G$ by multiplication from the left and the compact manifold $M = \Gamma \backslash G$ is called a \textit{nilmanifold}. Every complex structure $J$ on $\mathfrak{g}$ induces a complex structure on $G$, making $(G,J)$ into a complex manifold on which all left-translations are holomorphic. Since the lattice $\Gamma$ acts by left-multiplication, this left-invariant complex structure on $G$ descends to a complex structure on $M$. A complex structure on $M$ is said to be left-invariant if it is induced by a complex structure on $\frg$ in this way.
	
	\begin{defin}
		A \textit{complex nilmanifold} is a complex manifold $(M,J)$, where $M$ is a nilmanifold and $J$ is a left-invariant complex structure.
	\end{defin}

	\begin{rem}\label{rem: Kähler are tori}
		It was shown by Hasegawa in \cite{hasegawa1989minimal} that a nilmanifold is formal if and only if it is diffeomorphic to a torus. In particular, a complex nilmanifold $X$ does not carry a Kähler metric or satisfy the $\partial\dbar$-lemma unless $X$ is a complex torus.
	\end{rem}

	\subsection{Principal torus bundle series}\label{sect: torus bundles}
	In the following, we will recall some aspects of the real and complex geometry of nilmanifolds. Let $\frg$ be a real nilpotent Lie algebra. Following the notation of \cite{rollenske09}, the ascending central series on $\frg$ is the filtration defined by
	\[
	\kz^{i+1}\frg = \{ x \in \frg \,|\, [\frg,x] \subset \kz^i\frg\},
	\]
	where $\kz^0\frg =0$. The descending central series is the filtration defined by 
	\[
	\kc^{i+1}\frg = [\kc^i\frg, \frg]
	\]
	with $\kc^0\frg =\frg$, and the Lie algebra $\frg$ is said to be \textit{$\nu$-step nilpotent} if $\nu \in \IN$ is the smallest integer such that $\kz^\nu\frg = \frg$, or equivalently, $\kc^{\nu}\frg  =0$.

	We also recall that a subalgebra $\frh\subset \frg$ is called rational with respect to a rational structure $\frg_\IQ$ of $\frg$ if the intersection $\frh \cap \frg_\IQ$ is a rational structure for $\frh$. The subalgebras appearing in the ascending and descending central series of $\frg$ are rational with respect to every rational structure on $\frg$ \cite[Corollary~5.2.2, Theorem~5.2.3]{Cor-Green}. Therefore, both of these filtrations are compatible with every lattice $\Gamma \subset G$ in an associated simply connected Lie group, in the sense that they
	 induce a (possibly different) real iterated torus bundle structure on the nilmanifold $\Gamma \backslash G$ (see \cite[Section~1]{rollenske09} for details). 
	
	Unfortunately, this no longer works in the complex setting since both the ascending and descending central series are not necessarily preserved by a given complex structure on $\frg$. So in order to obtain a holomorphic torus bundle structure on a complex nilmanifold we need the following notion introduced in \cite{rollenske09}.

	\begin{defin}\label{def: stbs} 
		Let $(\frg,J)$ be a nilpotent Lie algebra with complex structure and a fixed rational structure. A $J$-invariant filtration
		\[0=\ks^0\frg \subset \ks^1\frg \subset \dots \subset \ks^\ell\frg=\frg\]
		of rational subalgebras is called a \textit{principal torus bundle series} of length $\ell$ if $\ks^i\frg$ is an ideal in $\ks^{i+1}\frg$ and $\ks^i\frg /\ks^{i-1}\frg$ is contained in the centre of $\frg/\ks^{i-1}\frg$ for all $i=1, \dots, \ell$.

		If the filtration $(\ks^i\frg)_{i=0, \dots, \ell}$ is invariant under every complex structure and the subspaces $\ks^i\frg$ are rational with respect to every rational structure on $\frg$, then it is called a stable principal torus bundle series (SPTBS).
	\end{defin}
	
	\begin{rem}
		The length $\ell$ of a principal torus bundle series may not coincide with the nilpotency index of $\frg$ \cite[Example~1.14]{rollenske09}. The same example also shows that not every nilpotent Lie algebra admits a SPTBS. But, as shown in \cite[Section~3]{rollenske09}, a SPTBS commonly exists on Lie algebras with small commutator, and for most Lie algebras up to real dimension six. Moreover, it was recently shown in \cite{andrada2025almost} that a SPTBS always exists for almost abelian Lie algebras, which we will discuss in Section \ref{sect: almost abelian nilmanifolds}.
	\end{rem}

	 A SPTBS of length $\ell$ on a nilpotent Lie algebra $\frg$ with associated Lie group $G$ induces for every complex structure $J$ on $\frg$ and for every lattice $\Gamma \subset G$ a tower of holomorphic principal torus bundles
	 \begin{equation}\label{eq: tower nilmanifold}
	 	\begin{tikzcd}
	 		X= X_1 \rar{\pi_1} & X_2 \rar{\pi_2} & {\cdots} \rar{\pi_{\ell-1}} & X_{\ell}
	 	\end{tikzcd} 
	 \end{equation}
	 on the complex nilmanifold $X = (\Gamma\backslash G, J)$. Concretely, for a given (stable) principal torus bundle series $(\ks^i\frg)_{i=1,\dots, \ell}$, the real Lie algebra underlying the complex nilmanifold $X_i$ is $\frg/\ks^i \frg$ and the fibres of the map $\pi_i \colon X_i \to X_{i+1}$ are complex tori $T_i$ with underlying Lie algebra $\ks^i\frg /\ks^{i-1}\frg$. The complex structures on these manifolds are given by the complex structures on $\frg/\ks^i\frg$ and $\ks^i\frg/\ks^{i-1}\frg$, respectively, induced by the left-invariant complex structure on $X$.

	\subsection{Abelian and complex parallelisable structures}
	The existence of a principal torus bundle series for a Lie algebra with complex structure $(\frg, J)$ implies that $J$ is a nilpotent complex structure in the sense of \cite{Cordero00NilpotentStructures}, but the converse does not hold in general \cite[Example~1.14]{rollenske09}. However, there are two classes of nilpotent complex structures where one always has a (not necessarily stable) principal torus bundle series. If the subalgebra $\frg^{1,0}$ is abelian, or equivalently, if $d(\Wedge^{1,0}\frg^*) \subset \Wedge^{1,1}\frg^*$, then $J$ is called an \textit{abelian} complex structure.
 
 	If $[\frg^{1,0}, \frg^{0,1}]=0$, or equivalently, if $d(\Wedge^{1,0}\frg^*) \subset \Wedge^{2,0}\frg^*$, then the pair $(\frg, J)$ is a complex Lie algebra and $J$ is called a \textit{complex parallelisable} structure. In this case, the Lie group $(G,J)$ is a complex Lie group and every complex nilmanifold $(\Gamma \backslash G,J)$ is complex parallelisable.

	The ascending central series is a principal torus bundle series for both abelian and complex parallelisable structures. While the descending central series is not necessarily preserved by an abelian complex structure, it also provides another principal torus bundle series for all complex parallelisable structures.
	
		\begin{exam}\label{exam: Iwasawa fibres}
		Consider the three-dimensional complex Heisenberg group
		\[
		H = \left\{
		\begin{pmatrix}
			1 & z_1 & z_3\\
			0 & 1 & z_2\\
			0 & 0 & 1
		\end{pmatrix}
		\middle | z_i \in \IC
		\right\}
		\]
		 with its lattice $\Lambda = \GL(3, \IZ[i]) \cap H$.  The \textit{Iwasawa manifold} is the three-dimensional complex parallelisable nilmanifold $\ki = \Lambda\backslash H$. The map $H \to \IC^2$ given by the projection on $(z_1, z_2)$ induces a principal bundle $\ki \to T_2$ over a complex 2-torus $T_2$, where the fibre $C$ is an elliptic curve with complex multiplication and $T_2$ is isogenous to the product $C\times C$  \cite[Example~9.1.3]{Winkelmann98}. It was shown in \cite[Proposition~3.8]{rollenske09} that the descending central series on the real Lie algebra underyling $\ki$ defines a SPTBS.
		
		It turns out that many of the phenomena related to our moduli problems can be observed on the real nilmanifold underlying $\ki$. So the Iwasawa manifold and its real Lie algebra will be a recurring example throughout the text.
	\end{exam} 
	
	\begin{rem}\label{rem: Winkelmann tori}
		In \cite[Chapter~9]{Winkelmann98} Winkelmann studied the complex tori $T_i$ appearing as the fibres of the maps $\pi_i\colon X_i \to X_{i+1}$ of an iterated bundle as in \eqref{eq: tower nilmanifold} for a given complex parallelisable nilmanifold $X$. If the bundle structure on $X$ is given by the descending central series, then the tori $T_1, \dots, T_{\nu-1}$ are isogenous to a product of simple tori with complex multiplication \cite[Theorem~9.1.1]{Winkelmann98}. Under an additional assumption on the Lie algebra this also holds for the base torus $T_\nu$ \cite[Proposition~9.1.2]{Winkelmann98}, as in Example \ref{exam: Iwasawa fibres}. However, there is generally no relation between the fibres and base of an iterated principal torus bundle given by a nilmanifold with an abelian complex structure.
	\end{rem}

	\begin{exam}\label{exam: Kodaira surface}
		A (primary)  \emph{Kodaira surface} is a non-trivial principal bundle of elliptic curves over an elliptic curve. Apart from complex tori, these are the only two-dimensional complex nilmanifolds, and all complex structures on a Kodaira surface are abelian. It was shown in \cite{Borcea84} that for every choice of elliptic curves $E_1$ and $E_2$ there is a Kodaira surface with fibre $E_1$ and base $E_2$.
	\end{exam}

	\subsection{Cohomology}\label{subsect: Lie algebra cohomology}
	Following \cite{rollenske09b}, we will briefly describe the cohomology of a nilmanifold in terms of left-invariant forms.
	
	Let $\frh$ be a Lie algebra of dimension $m$ and let $V$ be an $\frh$-module. Then, as explained in \cite[Section~7]{Weibel}, the cohomology $H^\bullet(\frh, V)$ of $\frh$ with values in $V$ can be computed via the Chevalley--Eilenberg complex
	\[
	\begin{tikzcd}
		0  \rar & V \rar{d} & \frh^* \otimes V \rar{d} &   {\cdots}  \rar{d} & \Wedge^{m-1}\frh^* \otimes V \rar{d} & \Wedge^{m}\frh^* \otimes V.
	\end{tikzcd}
	\]
	 For instance, if $\frh = \frg$ is a real nilpotent Lie algebra and we consider $V =\IR$ as the trivial $\frg$-module, then the differential in the associated Chevalley--Eilenberg complex $(\Wedge^\bullet \frg^*,d)$ is the differential defined in \eqref{eq: differential}.
	  Moreover, the space $\Wedge^\bullet \frg^*$ can be considered as the space of left-invariant forms on any nilmanifold $M$ with underlying Lie algebra $\frg$, and $d$ coincides with the exterior derivative on $M$ restricted to $\Wedge^\bullet \frg^*$. This yields an inclusion of $H^\bullet(\frg, \IR)$ into the de Rham cohomology of $M$, which was shown to always be an isomorphism by Nomizu \cite{nomizu54}.
	 
	There is an analogous description of the Dolbeault cohomology of a complex nilmanifold. Let $(\frg, J)$ be a nilpotent Lie algebra with complex structure and let $\frg^{0,1} \subset \frg_\IC$ be the associated $(-i)$-eigenspace. We will mostly be interested in the cohomology groups $H^q(\frg^{0,1}, V)$ for  $V= \Wedge^p\frg^{*1,0}$ or $V = \frg^{1,0}$. The $\frg^{0,1}$-module structure on $\Wedge^p\frg^{*1,0}$ is induced by the action 
	\begin{equation*}
		{\frg^{0,1}}\times {\frg^{*1,0}} \to {\frg^{*1,0}},\qquad  (\overline x, \omega) \mapsto-\overline x\lrcorner \dbar\omega.
	\end{equation*}
	The associated Chevalley--Eilenberg complex is the complex $(\Wedge^{p,\bullet}\frg^*, \dbar)$, and the groups 
	\[
	H^{p,q}(\frg,J) = H^q(\frg^{0,1}, \Wedge^p\frg^{*1,0})
	\]
	are called the Dolbeault cohomology groups of $(\frg,J)$.
	The $\frg^{0,1}$-module structure on $\frg^{1,0}$ is defined by 
	\[
	{\frg^{0,1}}\times {\frg^{1,0}} \to {\frg^{1,0}},\qquad  (\overline x, y) \mapsto [\overline x, y]^{1,0}
	\]
	and the differential in the Chevalley--Eilenberg complex computing the cohomology groups $H^q(\frg^{0,1}, \frg^{1,0})$ is also denoted by $\dbar$.

	As above we can identify $\Wedge^{p,q}\frg^*$ with the space of left-invariant $(p,q)$-forms on the complex nilmanifold $X = (M,J)$. Therefore, we obtain a map
	\[
	H^{p,q}(\frg, J) \to H_{\dbar}^{p,q}(X)
	\]
	to the Dolbeault cohomology of $X$, which was recently shown to always be an isomorphism \cite[Theorem~1.1]{Has26}. Similarly, one has an isomorphism between $H^q(\frg^{0,1}, \frg^{1,0})$ and $H^q(X, T_X) \cong H^q(X, \Theta_X)$, where $T_X$ is the holomorphic tangent bundle of $X$ and $\Theta_X$ is the sheaf of holomorphic vector fields~\cite[Corollary~2.5]{rollenske09b}.
	
	\subsection{Deformation theory}\label{subsubsect: Kuranishi theory}
	
	In this section, we review the deformation theory of complex nilmanifolds. If $X$ is a complex nilmanifold with underlying Lie algebra $\frg$, then the Kodaira--Spencer DGLA controlling the deformation theory of $X$ \cite[Section~8.3]{Manetti22}, contains the finite-dimensional DGLA 
	\[(\Wedge^\bullet \frg^{*0,1} \otimes \frg^{1,0},\, \dbar,\, [\cdot, \cdot]).\]
	Here, $(\Wedge^\bullet \frg^{*0,1} \otimes \frg^{1,0}, \dbar)$ is the Chevalley--Eilenberg complex for the $\frg^{0,1}$-module $\frg^{1,0}$ discussed in the previous section, and for $\overline\alpha \otimes x$, $\overline\beta \otimes y \in \frg^{*0,1} \otimes \frg^{1,0}$ the bracket
	\[
	[\overline\alpha \otimes x, \overline\beta \otimes y] = \overline \alpha \wedge L_x\overline \beta \otimes y  +  \overline \beta \wedge L_y\overline \alpha \otimes x + \overline\alpha \wedge \overline\beta \otimes [x,y],
	\]
	where $L$ is the Lie-derivative, is the restriction of the Schouten bracket on $X$ to $\frg^{*0,1} \otimes \frg^{1,0}$.
	
	 It was shown in \cite[Theorem~2.6]{rollenske09b} that the inclusion of $(\Wedge^\bullet \frg^{*0,1} \otimes \frg^{1,0},\, \dbar, \,[\cdot, \cdot])$ into the Kodaira--Spencer DGLA is a quasi-isomorphism if the Dolbeault cohomology of $X$ is computed by left-invariant forms. Together with the main result of \cite{Has26} mentioned in the previous section, this implies that sufficiently small deformations of any complex nilmanifold carry a left-invariant complex structure.
	
	The small deformations of a compact complex manifold are encoded by a complex space known as the Kuranishi space \cite{kuranishi62}. We will briefly recall Kuranishi's construction for a complex nilmanifold $X$ with underlying Lie algebra $\frg$ (cf. \cite{Rol11Kuranishi, paulsen2024verbal}). Since we already established that every sufficiently small deformation of the left-invariant complex structure on $X$ is again left-invariant, every such deformation can be described by a form $\phi \in \frg^{*0,1} \otimes \frg^{1,0}$ satisfying the Maurer--Cartan equation
	\begin{equation}\label{eq: MC}
		\dbar \phi +\tfrac12[\phi, \phi]=0.
	\end{equation}
	The first-order solutions of the Maurer--Cartan equation correspond to infinitesimal deformations of $X$, which are parametrised by $H^1(X,\Theta_X) \cong H^1(\frg^{0,1}, \frg^{1,0})$.  The complex manifold $X$ is said to have \textit{unobstructed} deformations if every first-order solution can be integrated to an actual solution of the Maurer--Cartan equation. More precisely, this means that there exist representatives $\eta_1, \dots, \eta_m$ of a basis of $H^1(X, \Theta_X)$ such that for every $\phi_1(t) = \sum_{i=1}^m t_i \eta_i$ with $t=(t_1,\dots,t_m) \in \IC^m$ there is a formal solution
	\begin{equation}\label{eq: series}
		\phi(t) = \phi_1(t) + \sum_{k \geq 2} \phi_k(t)
	\end{equation}
	to the system of equations
	\begin{equation}\label{eq:Phi_k}
		\dbar\phi_k(t) = -\frac{1}{2}\sum_{0<i<k}[\phi_i(t), \phi_{k-i}(t)], \qquad k\geq 1.
	\end{equation}
	By the results of \cite{kuranishi62} the existence of a formal solution to \eqref{eq:Phi_k} also ensures the existence of a convergent solution. 
	
	In order to define the Kuranishi space of $X$ we have to remove the ambiguity in the choice of a possible solution to \eqref{eq:Phi_k}. This is achieved by choosing a left-invariant hermitian metric on $X$. Let $\kh^q(X,\Theta_X)$ denote the space of harmonic $(0,q)$-forms with values in $\Theta_X$ defined for the chosen metric, and let $\kg$ denote the Green operator. If $\theta_1, \dots, \theta_m$ is a basis of $\kh^1(X, \Theta_X)$, then starting from $\phi_1(t)= \sum_{i=1}^m t_i\theta_i$ we can recursively define
	\begin{equation}\label{eq: Green}
		\phi_k(t) = -\dbar^*\kg\left(\frac{1}{2}\sum_{0<i<k}[\phi_i(t), \phi_{k-i}(t)]\right).
	\end{equation}
	The resulting formal power series $ \phi(t) = \sum_{k\geq 1} \phi_k(t)$ now satisfies the Maurer--Cartan equation \eqref{eq: MC} if and only if $H[\phi(t), \phi(t)] =0$, where $H$ is the projection to $\kh^2(X, \Theta_X)$. As in \cite{Rol11Kuranishi} we define the so-called obstruction map
	\[
	\obs \colon \kh^1(X, \Theta_X) \to \kh^2(X, \Theta_X)
	\]
	 by $\obs(\theta) = H[\phi(t), \phi(t)]$, where $\phi(t)$ is the power series defined by \eqref{eq: Green} with $\phi_1(t) = \theta$. In this notation, Kuranishi's theorem \cite{kuranishi62} states that for sufficiently small $\epsilon>0$, there is a semi-universal family of deformations of $X$ over the Kuranishi space 
		\[\Kur(X)=\{\theta\in \kh^1(X,\Theta_X)\mid \|\theta\|<\epsilon, \, \obs(\theta)=0\},\]
	known as the Kuranishi family of $X$.
	
	\begin{rem}
		Although this construction depends on the chosen hermitian metric, different choices result in isomorphic families. 
		Moreover, the complex space $\Kur(X)$ is smooth, in which case $\Kur(X)$ is just an open ball around $0 \in  \kh^1(X, \Theta_X)$, if and only if $X$ has unobstructed deformations. 
		
		In general, it is a difficult question whether or not a given complex manifold has unobstructed deformations, even for complex nilmanifolds. The only class of complex nilmanifolds where a precise characterisation is known is the class of complex parallelisable nilmanifolds \cite{paulsen2024verbal}. For this this class explicit solutions to the system of equations \eqref{eq:Phi_k} were computed for several examples in \cite{Rol11Kuranishi} and \cite{popovici2022deformations}.
		
	\end{rem}

	\section{Holomorphic maps between complex nilmanifolds}\label{sect: affine maps}
	
	The goal of this section is to prove the following structure theorem for holomorphic maps between complex nilmanifolds.
	
	\begin{thm}\label{thm: affine maps}
		Let $f \colon X \to Y$ be a holomorphic map between complex nilmanifolds $X = (\Gamma \backslash G, J)$ and $Y = (\Lambda\backslash H, I)$. Then there is a holomorphic lift $F \colon (G,J) \to (H,I)$ of $f$ such that $F= R_h \circ \Phi$, where $R_h$ is a right-translation on $H$ and $\Phi\colon G \to H$ is a group homomorphism with $\Phi(\Gamma) \subset \Lambda$.
	\end{thm}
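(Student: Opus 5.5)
Lift $f$ to the universal covers. Since $G$ and $H$ are simply connected, $f$ lifts to a holomorphic map $F\colon (G,J)\to (H,I)$. This lift satisfies $F(\gamma g)=\rho(\gamma)F(g)$ for all $\gamma\in\Gamma$ and $g\in G$, where $\rho=f_*\colon\Gamma\to\Lambda$ is the induced homomorphism on fundamental groups (after fixing basepoints). By Malcev rigidity, or equivalently the extension property of \cite{Vinberg00} applied to homomorphisms of lattices, $\rho$ extends uniquely to a Lie group homomorphism $\Phi\colon G\to H$ with $\Phi(\Gamma)\subset\Lambda$.

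Consider the map $\psi(g)=\Phi(g)^{-1}F(g)$ from $G$ to $H$. Using $\Phi(\gamma)=\rho(\gamma)$, we get $\psi(\gamma g)=\psi(g)$, so $\psi$ descends to a smooth map $\Gamma\backslash G\to H$. The key claim is that $\psi$ is constant, say $\psi\equiv h$. Then $F(g)=\Phi(g)h=R_h\circ\Phi(g)$. Holomorphicity of $\Phi$ follows afterwards: $\Phi=R_{h^{-1}}\circ F$, and right translations on $H$ are holomorphic for $I$ only in the complex parallelisable case. So instead I would argue that $d\Phi$ at the identity is $d(R_{h^{-1}})\circ dF$, and that the left-invariance of both structures forces $\Phi_*\colon\frg\to\frh$ to satisfy $\Phi_*J=\Ad_{h}I\,\Phi_*$-type relations. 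This needs care. In fact the statement only requires $F$ to be holomorphic, which it is by construction, so holomorphicity of $\Phi$ itself is not needed.

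The main obstacle is showing $\psi$ is constant. It is not a holomorphic map in general, since $\Phi$ need not be holomorphic a priori, so compactness plus a Liouville argument does not apply directly. My plan is to induct along the central series of $H$, using that $\Lambda\cap\kz\frh$-type subgroups give a principal torus bundle $Y\to Y'$ with real torus fibres. At the abelian level the argument goes as follows. Compose with the projection $H\to H/[H,H]\cong\frh/\kc^1\frh$. The composite of $F$ with this projection is $\Gamma$-equivariant into a vector space, and its components are pluriharmonic in a suitable sense: they pull back left-invariant closed 1-forms on $Y$ under a holomorphic map, giving closed forms on $X$ whose $(1,0)$ parts are holomorphic. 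Holomorphic 1-forms on a complex nilmanifold are left-invariant (by the Dolbeault isomorphism \cite[Theorem~1.1]{Has26}), and closed 1-forms are cohomologous to left-invariant ones by Nomizu. This shows the linear part of $F$ equals that of the equivariant homomorphism up to a bounded function that is pluriharmonic on a compact manifold, hence constant. I would then pass to central fibres inductively. Assuming $\psi$ is constant modulo $\kc^{i}$, the next component of $\psi$ takes values in the central abelian group $\exp(\kc^i\frh/\kc^{i+1}\frh)$ and is a $\Gamma$-invariant function on compact $X$. Its differential is expressed via pulled-back left-invariant forms of the form $\theta-(\text{exact correction})$. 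I expect the hardest part to be establishing that this component satisfies a maximum principle, i.e. that it is pluriharmonic. I would first try writing $F^*$ of a left-invariant $(1,0)$-form on $H$ as a holomorphic-type form on $X$ plus $d$ of the lower-order components, and using polynomial growth of $F$ coming from equivariance to conclude that these components are pluriharmonic.
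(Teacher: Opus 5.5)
Your reduction is sound. The lift $F$, the equivariance $F(\gamma g)=\rho(\gamma)F(g)$, the Malcev extension $\Phi$ of $\rho$, and the fact that $\psi=\Phi^{-1}F$ descends to $X$ are all correct, and $\psi$ being constant is equivalent to the theorem. But that constancy is the entire content of the statement, and your plan for it has a genuine gap.

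Note first that $\psi$ is automatically bounded, since it factors through the compact manifold $X$. So polynomial growth obtained from equivariance tells you nothing new about $\psi$; you need an actual rigidity principle.

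The one you propose already fails at the abelian step. It is not true that a real closed left-invariant $1$-form $\theta$ on $Y$ has holomorphic $(1,0)$-part. On a Kodaira surface with $d\omega^1=0$ and $d\omega^2=\omega^{1\bar 1}$, the form $\omega^2+\overline\omega^2$ is closed, yet $\dbar\omega^2=\omega^{1\bar 1}\neq 0$; indeed $b_1=3$ while $h^{1,0}=1$. Let $\ell_\theta\colon H\to\IR$ be the homomorphism integrating $\theta$ and set $u=\ell_\theta\circ\psi$. Then one computes $\partial\dbar u=-F^*(\dbar\theta^{1,0})+\dbar\bigl((\Phi^*\theta)^{1,0}\bigr)$. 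Whenever $\dbar\theta^{1,0}$ involves non-holomorphic left-invariant $(1,0)$-forms of $Y$, the first term is $F^*$ of a left-invariant form that is not known to be left-invariant on $X$. This already happens for the real closed form with $(1,0)$-part proportional to $\omega^n$ in the $3$-step case of Lemma \ref{lem: structure equations abelian}. So neither pluriharmonicity nor a maximum principle is available from what you have.

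Your argument only covers the directions given by closed holomorphic $1$-forms on $Y$. There $F^*\theta^{1,0}$ is holomorphic, hence left-invariant, hence $du=0$.

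The inductive step inherits this problem and adds another. The subalgebras $\kc^i\frh$ need not be $I$-invariant, so the quotients of $H$ you induct along carry no complex structure and the projections are not holomorphic. Your final step, deducing pluriharmonicity from polynomial growth, has no mechanism behind it.

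The missing ingredient is a system of global holomorphic coordinates on $(G,J)$ in which the group law is polynomial. The paper uses that $P\colon (G,J)\to\IC^n$ is a biholomorphism that is polynomial, with polynomial inverse, in exponential coordinates. It proceeds as follows:
\begin{enumerate}
\item Normalise $f([e_G])=[e_H]$ by a right translation, replacing $I$ by the conjugated left-invariant structure.
\item The transported lift $\widetilde F\colon\IC^n\to\IC^m$ then satisfies $\widetilde F(\gamma\cdot z)=\widetilde F(\gamma)*\widetilde F(z)$.
\item A compact fundamental domain, together with polynomiality of the group laws and of the extension $\widetilde\Phi$, gives $\Vert\widetilde F(z)\Vert\le C(1+\Vert z\Vert)^k$. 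By Liouville, $\widetilde F$ is a holomorphic polynomial.
\item Then $\widetilde F-\widetilde\Phi$ is a real polynomial vanishing on $\Gamma$. It is identically zero because $\log\Gamma$ contains an additive lattice.
\end{enumerate}
Your instinct that polynomial growth is the key is right. But growth only forces rigidity once it is measured in holomorphic coordinates compatible with the group structure, and that is exactly what the biholomorphism $G\cong\IC^n$ supplies and your approach lacks.
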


	This theorem is well-known for holomorphic maps between complex tori (see for instance \cite[Theorem~1.5]{Debarre05Tori}). In \cite{Schumacher79Iwasawa} this was generalised to a class of complex parallelisable nilmanifolds and later proved
	by Winkelmann for holomorphic maps between arbitrary complex parallelisable nilmanifolds \cite[Theorem~5.2.5]{Winkelmann98}.

	\subsection{The universal cover of a complex nilmanifold}\label{subsect: universal cover}
	
	In order to prove Theorem~\ref{thm: affine maps} we first recall a recent result stating that the universal cover of an $n$-dimensional complex nilmanifold is always biholomorphic to $\IC^n$ \cite{Hasegawa26, Wehler26}.
	
	Let $J$ be a left-invariant complex structure on a simply connected nilpotent Lie group $G$, and let $G_\IC$ be its universal complexification. Since $G_\IC$ is also simply connected and nilpotent, the exponential map $\exp \colon \frg_\IC \to G_\IC$ is a biholomorphism, and thus $\exp(\frg^{1,0})$ and $\exp(\frg^{0,1})$ are closed subgroups of $G_\IC$. We can define a biholomorphism from $\IC^n$ to $G_\IC / \exp(\frg^{0,1})$ by
	\begin{equation}\label{eq: quotient is C^n}
		\begin{tikzcd}
			\IC^n \cong \frg^{1,0} \arrow[r, "\exp"] &\exp(\frg^{1,0}) \arrow[hookrightarrow]{r} &G_\IC \arrow[r, "\pi"] &G_\IC / \exp(\frg^{0,1})
		\end{tikzcd}
	\end{equation}
	and we denote its inverse by $\varphi\colon G_\IC/\exp(\frg^{0,1}) \to \IC^n$ (cf. \cite[Remark~2.2]{Hasegawa26}). Then, restricting $\varphi\circ\pi \colon G_\IC \to \IC^n$ to $G \subset G_\IC$ we obtain a holomorphic map
	\[P \colon (G,J) \to \IC^n.\]
	 This map was constructed by Snow in the more general setting of solvable Lie groups, where it was also shown that $P$ is a local biholomorphism \cite[Theorem~1, p.~194]{Snow86}.
	
	However, much more can be said in the nilpotent case. Indeed, it was shown in \cite{Wehler26} that the map $P \colon (G,J) \to \IC^n$ is a biholomorphism if $G$ is nilpotent. This result was also obtained independently by Hasegawa, Sillari, and Tomassini, and is contained in the joint article \cite{Hasegawa26}.

	\subsection{Proof of Theorem \ref{thm: affine maps}}
	
	We will first outline the strategy of the proof. Let $X = (\Gamma \backslash G, J)$ and $Y = (\Lambda\backslash H, I)$ be complex nilmanifolds of dimension $n$ and $m$, respectively. Let $f\colon X \to Y$ be a holomorphic map. The proof of Theorem \ref{thm: affine maps} is roughly divided into three steps.

	\textit{Step 1}: first, we will show that the proof of Theorem \ref{thm: affine maps} can be reduced to the case where $f$ maps the class of the neutral element $[e_G] \in X$ to the class $[e_H] \in Y$. Under the assumption $f([e_G]) = [e_H]$, the map $f$ has a unique holomorphic lift $F \colon (G,J) \to (H,I)$ such that $F(e_G) = e_H$ and in particular $F(\Gamma) \subset \Lambda$. Thus, to prove the theorem it remains to show that $F$ is a group homomorphism.

	\textit{Step 2}: next, we will pass from $(G,J)$ to $\IC^n$ and from $(H,I)$ to $\IC^m$ using the biholomorphism defined in the previous section. So the map $f\colon X\to Y$ lifts to a holomorphic map $\widetilde{F} \colon \IC^n \to \IC^m$ and by the first step we may assume $\widetilde F(0) =0$. We will prove that the lift $ \widetilde F\colon \IC^n \to \IC^m$ is a polynomial map.

	\textit{Step 3}: lastly, we will use the fact that $\widetilde{F}$ is a polynomial map to deduce that the lift $F \colon G \to H$ is a group homomorphism.

	\subsubsection{Step 1}
	
	Assume that Theorem \ref{thm: affine maps} holds for every holomorphic map sending the class of $e_G\in G$ in $X$ to the class of $e_H \in H$. Suppose $f\colon X \to Y$ maps the class $[e_G]$ to $[h]$ for some $h\in H$. Then the map $f' = R_{h^{-1}}\circ f$ defines a holomorphic map $X \to Y'= (\Lambda \backslash H, I')$, where $I'$ denotes the left-invariant complex structure $d{R_{h^{-1}}} \circ I \circ d R_h$ and we have $f'([e_G]) =[e_H]$. By our assumption the map $f'$ now lifts to a holomorphic group homomorphism $\Phi \colon (G,J) \to (H,{I'})$ and thus the map $R_h \circ \Phi\colon (G,J) \to (H,I)$ is a holomorphic lift of $f$. Hence, we have proved Theorem~\ref{thm: affine maps} under the assumption that it holds for maps sending $[e_G]$ to $[e_H]$. From now on we will assume that $f([e_G]) = [e_H]$.

	\subsubsection{Step 2}
	
	In this step, we will prove that the map $\widetilde{F}\colon \IC^n \to \IC^m$ is a polynomial map. Concretely, we will show that there exists a constant $C>0$ and $k \in \IN$ such that
	\[
	\Vert \widetilde{F}(z) \Vert \leq C(1+ \Vert z \Vert)^k
	\]
	for all $z \in \IC^n$. It then follows from a generalised Liouville theorem that $\widetilde F$ is a polynomial map of degree at most $k$ \cite[Section~6.3, p.~41]{Vladimirov66}.
	
	To this purpose, we endow $\IC^n$ with a multiplication ${\boldsymbol{\cdot}}$ such that the biholomorphism $P\colon (G,J) \to (\IC^n, {\boldsymbol{\cdot}})$ defined in Section \ref{subsect: universal cover} is a group homomorphism, that is, we set
	\begin{equation}\label{eq: multiplication on C^n}
		w {\boldsymbol{\cdot}} z = P (P^{-1}(w)P^{-1}(z))
	\end{equation}
	for $w,z \in \IC^n$. Similarly, we will denote by $*$ the multiplication making the biholomorphism $(H,I) \to (\IC^m, *)$ into a group homomorphism. Throughout this section, we will consider $\Gamma$ and $\Lambda$ as lattices in $(\IC^n, \boldsymbol{\cdot})$ and $(\IC^m, *)$, respectively.

	\begin{lem}\label{lem: F is equivariant}
		For every $\gamma \in \Gamma$ and $z \in \IC^n$ we have
		\[
		\widetilde F(\gamma {\boldsymbol{\cdot}} z) = \widetilde F(\gamma) * \widetilde F(z).
		\]
	\end{lem}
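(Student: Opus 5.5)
The plan is the standard covering-space argument, carried out in the coordinates of Step~2. By Step~1 we may assume $f([e_G]) = [e_H]$, and we let $F\colon (G,J)\to(H,I)$ be the unique lift of $f$ with $F(e_G)=e_H$. Set $\widetilde F = Q\circ F\circ P^{-1}$, where $Q\colon (H,I)\to(\IC^m,*)$ is the analogous biholomorphism for $H$. By construction of the multiplications ${\boldsymbol{\cdot}}$ and $*$, the maps $P$ and $Q$ are group isomorphisms, so it suffices to prove
\[
F(\gamma z) = F(\gamma)\,F(z) \qquad \text{for all } \gamma\in\Gamma,\ z\in G,
\]
and then transport this identity through $P$ and $Q$.

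Fix $\gamma\in\Gamma$ and consider the two maps $z\mapsto F(\gamma z)$ and $z\mapsto F(\gamma)F(z)$ from $G$ to $H$. Both are continuous, and both are lifts of $f$ through the covering $H\to \Lambda\backslash H$:
\begin{itemize}
\item The first map satisfies $[F(\gamma z)] = f([\gamma z]) = f([z])$, since $[\gamma z]=[z]$ in $\Gamma\backslash G$.
\item For the second map, first observe that $[F(\gamma)] = f([\gamma]) = f([e_G]) = [e_H]$, so $F(\gamma)\in\Lambda$. Left multiplication by an element of $\Lambda$ does not change the class in $\Lambda\backslash H$, hence $[F(\gamma)F(z)] = [F(z)] = f([z])$.
\end{itemize}
Precisely, both maps are lifts of the composition $G\to\Gamma\backslash G\xrightarrow{f}\Lambda\backslash H$.

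At $z=e_G$ the two maps agree: both equal $F(\gamma)$. Since $G$ is connected, the uniqueness of lifts for the covering $H\to\Lambda\backslash H$ forces them to agree everywhere, which gives $F(\gamma z)=F(\gamma)F(z)$. Applying $Q$ and using that $P$ and $Q$ are homomorphisms yields $\widetilde F(\gamma{\boldsymbol{\cdot}} z)=\widetilde F(\gamma)*\widetilde F(z)$. Here $\gamma$ on the left is identified with $P(\gamma)$, as the paper does when regarding $\Gamma$ as a lattice in $(\IC^n,{\boldsymbol{\cdot}})$.

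The only point needing care is the claim $F(\gamma)\in\Lambda$. It depends on normalising the lift so that $F(e_G)=e_H$, and on the convention that both lattices act by left multiplication, so that $[\lambda h]=[h]$. Beyond that the argument is routine.
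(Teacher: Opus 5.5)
Your proof is correct and is essentially the paper's argument. The paper forms the continuous map $h_\gamma(z)=\widetilde F(\gamma{\boldsymbol{\cdot}} z)*\widetilde F(z)^{-1}$, notes that it takes values in the discrete set $\Lambda$, and concludes that it is constant, equal to $h_\gamma(0)=\widetilde F(\gamma)$. This is exactly the uniqueness-of-lifts argument you invoke, except that the paper works directly in $\IC^n,\IC^m$ rather than on $G,H$; and your separate check that $F(\gamma)\in\Lambda$ comes for free there from $h_\gamma$ being $\Lambda$-valued.
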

	\begin{proof}
		Let us fix an element $\gamma \in \Gamma$. Since $\widetilde F$ is a lift of $f$ we have a commutative diagram
		\[
		\begin{tikzcd}
			\IC^n \ar[r,"\widetilde F"]\ar[d, swap,"\pi"]&\IC^m\ar[d, "\pi'"]\\
			X\ar[r,"f"]& Y
		\end{tikzcd}
		\]
		and thus for every $z \in \IC^{n}$ there exists a $\lambda(z) \in \Lambda$ such that $\widetilde F(\gamma {\boldsymbol{\cdot}} z)= \lambda(z) * \widetilde F(z)$.  Consider the continuous map $ h_\gamma \colon \IC^n \to \IC^m$ defined by $h_\gamma(z) =  \widetilde F(\gamma {\boldsymbol{\cdot}} z) * (\widetilde F(z))^{-1}$. Then we have
		\[
		\pi'\circ h_\gamma(z) = \pi'(\lambda(z) * \widetilde F(z) * (\widetilde F(z))^{-1}) =  \pi'( \widetilde F(z) * (\widetilde F(z))^{-1}) = [0].
		\]
		Hence, the map $h_\gamma$ only takes values in the lattice $\Lambda$ and is therefore constant. Since we assumed $\widetilde F(0) =0$ this implies that $h_\gamma(z) = \widetilde F(\gamma)$ for every $z \in \IC^n$ which is equivalent to $\widetilde F(\gamma {\boldsymbol{\cdot}} z) = \widetilde F(\gamma) * \widetilde F(z)$.
	\end{proof}

	By \cite[Corollary~2.8, p.~45]{Vinberg00}, the homomorphism $\widetilde F_{|\Gamma} \colon \Gamma \to \Lambda$ extends uniquely to a homomorphism $\widetilde \Phi \colon (\IC^n, \boldsymbol{\cdot}) \to (\IC^m, *)$. The following lemma follows from the description of the biholomorphisms $P\colon (G,J) \to \IC^n$ and $P'\colon (H,I) \to \IC^m$ given in~\cite{Hasegawa26}.
	
	\begin{lem}\label{lem: maps are polynomial}
		The multiplication maps $\boldsymbol{\cdot} \colon \IC^n \times \IC^n \to \IC^n$, $* \colon \IC^m \times \IC^m \to \IC^m$, and the map $\widetilde \Phi$ are polynomial maps in the variables $z_i, \overline z_i$. 
	\end{lem}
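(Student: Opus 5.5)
We plan to make the biholomorphism $P\colon (G,J)\to\IC^n$ explicit and check that in these coordinates everything is built from the Baker--Campbell--Hausdorff (BCH) formula and linear projections. Recall that $P = \varphi\circ\pi$ restricted to $G$. Identify $G_\IC$ with $\frg_\IC$ via $\exp$, so the group law becomes the BCH product $a\star b$. Since $\frg_\IC$ is nilpotent, $\star$ is a polynomial map $\frg_\IC\times\frg_\IC\to\frg_\IC$ (a finite sum of iterated brackets). Similarly identify $G$ with $\frg$, so $G$ sits inside $G_\IC$ as the real points $\frg\subset\frg_\IC$.

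\textbf{Step 1: $P$ is a polynomial in real coordinates.} For $x\in\frg\subset\frg_\IC$ we solve $x = u\star v$ with $u\in\frg^{1,0}$ and $v\in\frg^{0,1}$; by construction $P(x)=u$. We will show that $u$ depends polynomially on $x$ by solving degree by degree along the descending central series of $\frg_\IC$. Write $x = x^{1,0}+x^{0,1}$. Then $u\star v = u+v+\tfrac12[u,v]+\dots$, so to first order $u\equiv x^{1,0}$ and $v\equiv x^{0,1}$ modulo $\kc^1\frg_\IC$. Substituting back, the correction terms lie in $\kc^1\frg_\IC$; splitting them into $(1,0)$ and $(0,1)$ parts determines $u,v$ modulo $\kc^2\frg_\IC$. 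Iterating $\nu$ times gives polynomial expressions for $u$ and $v$ in $x$. The corrections are polynomial because they are brackets of previously determined polynomial quantities followed by linear projections. Equivalently, the map $\frg^{1,0}\times\frg^{0,1}\to\frg_\IC$, $(u,v)\mapsto u\star v$, is a polynomial bijection whose inverse is triangular with respect to the filtration, hence polynomial. Expressed in a real basis of $\frg$, the coordinates of $x$ are polynomials in $z,\overline z$ once we write $x=P^{-1}(z)$. For that we also need $P^{-1}$ to be polynomial. It is: given $u$, the unique $x\in\frg$ with $x\in u\star\frg^{0,1}$ is obtained by the same triangular recursion, now imposing the reality condition $x=\overline x$ degree by degree. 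Existence and uniqueness of $x$ are guaranteed by the cited result that $P$ is a biholomorphism, and the recursion shows its components are polynomials in $u,\overline u$.

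\textbf{Step 2: the multiplications.} By definition $w\boldsymbol{\cdot} z = P(P^{-1}(w)\star P^{-1}(z))$. This is a composition of polynomial maps: $P^{-1}$ is polynomial in $w,\overline w$ (resp.\ $z,\overline z$), BCH is polynomial, and $P$ is polynomial in real coordinates. Hence $\boldsymbol{\cdot}$ is polynomial in $z_i,\overline z_i$. The same argument applies to $*$ on $\IC^m$.

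\textbf{Step 3: the map $\widetilde\Phi$.} Under the identifications $G\cong\frg$ and $H\cong\frh$ via $\exp$, a continuous homomorphism of simply connected nilpotent Lie groups is $\exp\circ\, d\widetilde\Phi\circ\log$, i.e.\ a real-linear map $\frg\to\frh$. Thus $\widetilde\Phi = P'\circ L\circ P^{-1}$ with $L$ real-linear, and this composite is polynomial in $z,\overline z$.

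\textbf{Main obstacle.} The delicate point is Step 1 for $P^{-1}$: showing that the real solution $x$ is a polynomial function of $u$ and $\overline u$. The recursion has to be arranged carefully, handling the reality constraint and the splitting into $(1,0)$ and $(0,1)$ parts simultaneously at each filtration level. We would do this by working with $\kc^i\frg_\IC=(\kc^i\frg)_\IC$, which is conjugation-invariant, and by using the explicit description of $P$ from \cite{Hasegawa26}.
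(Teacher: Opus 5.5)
Your Steps 2 and 3 are exactly the paper's argument. The paper takes from \cite{Hasegawa26} that $P$ and $P'$ are polynomial in exponential coordinates with polynomial inverses. It then concludes via the Baker--Campbell--Hausdorff formula and $\widetilde\Phi=(P'\circ\exp)\circ d\Phi_{e_G}\circ(P\circ\exp)^{-1}$. The problem is your Step 1, where you try to derive that input yourself. As written it does not work, so you should either cite \cite{Hasegawa26} there, as the paper does, or replace the argument.

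\textbf{The filtration argument for $P$.} The triangular recursion along $\kc^i\frg_\IC$ fails because the descending central series is in general not $J$-invariant. It is therefore not compatible with the splitting $\frg_\IC=\frg^{1,0}\oplus\frg^{0,1}$. From $u+v\equiv x$ modulo $\kc^1\frg_\IC$ you only learn that $u-x^{1,0}$ lies in the $(1,0)$-projection of $\kc^1\frg_\IC$, and then $[u-x^{1,0},v]$ need not lie in $\kc^2\frg_\IC$.

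Concretely, take the Kodaira--Thurston algebra with $d\omega^1=0$, $d\omega^2=\omega^{1\bar 1}$. Then $[z_1,\overline z_1]=\overline z_2-z_2$ and $\kc^1\frg_\IC=\IC(z_2-\overline z_2)$. For $x=x_1z_1+x_2z_2+x_3\overline z_1+x_4\overline z_2$ the solution is $u=x_1z_1+(x_2+\tfrac12x_1x_3)z_2$. So $u\not\equiv x^{1,0}$ modulo $\kc^1\frg_\IC$.

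Switching to a $J$-invariant central series does not repair this, since such a series exists only for nilpotent complex structures. For example, the complex structures on $\frh_{19}^-$ and $\frh_{26}^+$ are not nilpotent.

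\textbf{The recursion for $P^{-1}$.} Here the gap is more serious. Imposing reality degree by degree only yields a formal power series in $u,\overline u$. Existence, uniqueness and analyticity of the inverse do not force this series to terminate: $t\mapsto t+t^3$ is a bijective polynomial local diffeomorphism of $\IR$ whose inverse is not polynomial. Polynomiality of $P^{-1}$ is precisely the nontrivial input the paper imports.

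\textbf{A self-contained route.} One correct argument is algebraic, using the fact that an injective polynomial self-map of $\IC^N$ is a polynomial automorphism.
\begin{enumerate}
\item For $P$: apply this to $(u,v)\mapsto u\star v$ on $\frg^{1,0}\times\frg^{0,1}$. The map is injective since both factors are subalgebras and $\exp(\frg^{1,0})\cap\exp(\frg^{0,1})=\{e\}$.
\item For $P^{-1}$: apply it to the map $\frg_\IC\to\frg^{1,0}\times\frg^{0,1}$ recording the cosets of $\exp(x)$ modulo $\exp(\frg^{0,1})$ and modulo $\exp(\frg^{1,0})$. This map is injective for the same reason. It intertwines conjugation with $(a,b)\mapsto(\overline b,\overline a)$, and on $\frg$ it restricts to $x\mapsto(P(x),\overline{P(x)})$. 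Hence $P^{-1}(u)$ is its polynomial inverse evaluated at $(u,\overline u)$, and the value is real by the intertwining property.
\end{enumerate}
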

	\begin{proof}
		It was shown in \cite{Hasegawa26} that the biholomorphisms $P$ and $P'$ are polynomial in exponential coordinates with polynomial inverse. Therefore, the corresponding multiplication maps defined by \eqref{eq: multiplication on C^n} are also polynomial. The map $\widetilde{\Phi}$ fits in the commutative diagram
		\[
		\begin{tikzcd}
			\frg \arrow[r, "d\Phi_{e_{G}}"] \arrow[d, "\exp", swap] & \frh \arrow[d,"\exp"] \\
			(G,J) \arrow[r, "\Phi"] \arrow[d, "P", swap] & (H,I) \arrow[d, "P'"] \\
			(\IC^n, \boldsymbol{\cdot}) \arrow[r, "\widetilde \Phi"]           & (\IC^m, *),        
		\end{tikzcd}
		\]
		where $\Phi$ is the unique extension of $\Phi_{|\Gamma}$. Again, since both $P$ and $P'$ are polynomial in exponential coordinates with polynomial inverse, the composition
		\[
		\widetilde \Phi = (P' \circ \exp) \circ d\Phi_{e_{G}} \circ (P \circ \exp)^{-1}
		\]
		is also a polynomial map.
	\end{proof}
	
	\begin{prop}\label{prop: polynomial growth}
		There exists a constant $C>0$ and $k \in \IN$ such that
		\[
		\Vert \widetilde{F}(z) \Vert \leq C(1+ \Vert z \Vert)^k
		\]
		for all $z \in \IC^n$. Hence, $\widetilde F$ is a polynomial map.
	\end{prop}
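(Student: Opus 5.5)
We prove the growth bound by comparing $\widetilde F$ with the polynomial homomorphism $\widetilde\Phi$ of Lemma~\ref{lem: maps are polynomial}. The idea is to write every point of $\IC^n$ as a lattice element times a point of a compact fundamental domain. Since $\Gamma$ is cocompact in $(\IC^n,\boldsymbol{\cdot})$, fix a compact set $K\subset\IC^n$ with $\Gamma\boldsymbol{\cdot} K=\IC^n$. For each $z\in\IC^n$ we then write $z=\gamma\boldsymbol{\cdot} k$ with $\gamma\in\Gamma$ and $k\in K$.

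By Lemma~\ref{lem: F is equivariant} we get
\[
\widetilde F(z)=\widetilde F(\gamma)*\widetilde F(k)=\widetilde\Phi(\gamma)*\widetilde F(k),
\]
because $\widetilde\Phi$ extends $\widetilde F_{|\Gamma}$. The factor $\widetilde F(k)$ ranges over the compact set $\widetilde F(K)$, hence is bounded. The multiplication $*$ is polynomial by Lemma~\ref{lem: maps are polynomial}, so
\[
\Vert a*b\Vert\le C_1(1+\Vert a\Vert)^{k_1}
\]
for all $a\in\IC^m$ and $b\in\widetilde F(K)$. Likewise $\widetilde\Phi$ is polynomial, which gives $\Vert\widetilde\Phi(\gamma)\Vert\le C_2(1+\Vert\gamma\Vert)^{k_2}$. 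It therefore suffices to bound $\Vert\gamma\Vert$ polynomially in $\Vert z\Vert$.

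For this, write $\gamma=z\boldsymbol{\cdot} k^{-1}$. Inversion in a simply connected nilpotent group is polynomial in exponential coordinates: in exponential coordinates it is $x\mapsto -x$, and $P\circ\exp$ is polynomial with polynomial inverse. So inversion is polynomial on $(\IC^n,\boldsymbol{\cdot})$ as well, and $k^{-1}$ ranges over a compact set. Since $\boldsymbol{\cdot}$ is polynomial, we obtain $\Vert\gamma\Vert\le C_3(1+\Vert z\Vert)^{k_3}$. Combining the three estimates yields
\[
\Vert\widetilde F(z)\Vert\le C(1+\Vert z\Vert)^{k}
\quad\text{with } k=k_1k_2k_3 .
\]
The generalised Liouville theorem cited in Step~2 then shows that the entire map $\widetilde F$ is a polynomial of degree at most $k$.

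The main obstacle is the bookkeeping around Lemma~\ref{lem: maps are polynomial}. The maps there are polynomial in $z_i,\overline z_i$, not holomorphic, so we use only their growth bounds and never holomorphicity. These bounds must hold uniformly when one argument ranges over a compact set; this is immediate for polynomials in real coordinates, since each monomial is bounded by a constant times a power of $(1+\Vert a\Vert)$. We also need the polynomiality of inversion, which we take from \cite{Hasegawa26} in the same way as in the proof of Lemma~\ref{lem: maps are polynomial}.
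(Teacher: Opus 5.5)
Your proposal is correct and follows the paper's proof essentially step by step. Both arguments write $z=\gamma\boldsymbol{\cdot} z_0$ with $z_0$ in a compact set and use equivariance together with $\widetilde F_{|\Gamma}=\widetilde\Phi_{|\Gamma}$. Both then combine polynomial bounds for $*$ and $\widetilde\Phi$ with a bound on $\gamma=R_{z_0^{-1}}(z)$ that is uniform in $z_0$, and conclude with the generalised Liouville theorem. The one addition is that you say explicitly why $z_0^{-1}$ stays in a compact set, namely continuity (indeed polynomiality) of inversion; the paper uses this implicitly.
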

	\begin{proof}
		Since the lattice $\Gamma \subset (\IC^n, \boldsymbol{\cdot})$ is co-compact, there exists a compact domain $D \subset \IC^n$ such that $\Gamma \boldsymbol{\cdot} D = \IC^n$. Hence, for every $z \in \IC^n$, there is a $\gamma \in \Gamma$ and $z_0 \in D$ such that $z = \gamma \boldsymbol{\cdot} z_0$, and by Lemma \ref{lem: F is equivariant} we have
		\[
		\widetilde{F}(z) = \widetilde{F}(\gamma \boldsymbol{\cdot} z_0) = \widetilde F(\gamma) * \widetilde F(z_0).
		\]
		By Lemma \ref{lem: maps are polynomial}, the map $* \colon \IC^m \times \IC^m \to \IC^m$ is polynomial, and thus we can estimate 
		\begin{equation}\label{eq: first estimate}
			\Vert \widetilde{F}(z) \Vert = \Vert \widetilde F(\gamma) * \widetilde F(z_0) \Vert \leq c_1(1+ \Vert \widetilde{F}(\gamma)\Vert)^{d_1}(1+ \Vert \widetilde{F}(z_0)\Vert)^{d_1}
		\end{equation}
		for some $c_1 >0$ and $d_1 \in \IN$. Since $D$ is compact, the term $(1+ \Vert \widetilde{F}(z_0)\Vert)^{d_1}$ is bounded by some constant $c_2>0$ for every $z_0 \in D$. Moreover, we have $\widetilde F(\gamma) = \widetilde \Phi(\gamma)$ and $\widetilde \Phi$ is also a polynomial map, so we have
		\[
		\Vert \widetilde F(\gamma) \Vert =  \Vert \widetilde \Phi(\gamma) \Vert \leq c_3(1+ \Vert \gamma \Vert)^{d_3}
		\]
		for some $c_3 >0$ and $d_3 \in \IN$. Therefore, we can further estimate \eqref{eq: first estimate} by
		\begin{align}\label{eq: second estimate}
			\begin{split}
				c_1(1+ \Vert \widetilde{F}(\gamma)\Vert)^{d_1}(1+ \Vert \widetilde{F}(z_0)\Vert)^{d_1} &\leq c_1c_2(1+ \Vert \widetilde{\Phi}(\gamma)\Vert)^{d_1}\\
				&\leq c_1c_2(1+c_3)^{d_1}(1+ \Vert \gamma \Vert)^{d_1d_3}.
			\end{split}
		\end{align}
		So far, none of the constants depend on $z$, $z_0$ or $\gamma$. It remains to show that we can estimate $\Vert \gamma \Vert$ by the norm of $ z =  \gamma \boldsymbol{\cdot} z_0$. Again by Lemma \ref{lem: maps are polynomial}, the right-translation $R_{z_0^{-1}} \colon (\IC^n, \boldsymbol{\cdot}) \to (\IC^n, \boldsymbol{\cdot})$ is a polynomial map, so we have 
		\[
		\Vert \gamma \Vert = \Vert R_{z_0^{-1}}(z) \Vert \leq c(z_0)(1+ \Vert z \Vert)^{d(z_0)},
		\]
		where $c(z_0) >0$ and $d(z_0) \in \IN$ depend on $z_0$. However, the degree of the polynomials $R_{z_0^{-1}}$ is bounded by some $d_4 \geq d(z_0)$ and since $D$ is compact we can also find a constant $c_4>0$ with
		\begin{equation}\label{eq: third estimate}
			\Vert \gamma \Vert = \Vert R_{z_0^{-1}}(z) \Vert \leq c(z_0)(1+ \Vert z \Vert)^{d(z_0)} \leq c_4(1+ \Vert z \Vert)^{d_4}
		\end{equation}
		for all $\gamma \in \Gamma$ and $z_0 \in D$. Substituting \eqref{eq: third estimate} into \eqref{eq: second estimate} we obtain
		\[
		c_1(1+ \Vert \widetilde{F}(\gamma)\Vert)^{d_1}(1+ \Vert \widetilde{F}(z_0)\Vert)^{d_1} \leq c_1c_2(1+c_3)^{d_1}(1+c_4)^{d_1d_3}(1+ \Vert z\Vert)^{d_1d_3d_4}.
		\]
		Hence, for $C =c_1c_2(1+c_3)^{d_1}(1+c_4)^{d_1d_3}$ and $k =d_1d_3d_4$, the inequality \eqref{eq: first estimate} yields
		\[
		\Vert \widetilde{F}(z) \Vert \leq C(1+ \Vert z \Vert)^k,
		\]
		which concludes the proof.
	\end{proof}
	
	\subsubsection{Step 3}
	
	Finally, we will show that the map $\widetilde F$ coincides with the homomorphism $\widetilde \Phi \colon (\IC^n, \boldsymbol{\cdot}) \to (\IC^m, *)$. In this case, $F \colon G \to H$ is a composition of group homomorphisms and thus also a homomorphism. Recall that the maps $\widetilde{F}$ and $\widetilde \Phi$ already coincide on the lattice $\Gamma \subset \IC^n \cong \IR^{2n}$.
	
	\begin{lem}\label{lem: polynomials vanish on lattice}
		If a polynomial function $p \colon \IR^{2n} \to \IR$ vanishes on $\Gamma \subset \IR^{2n}$, then $p =0$.
	\end{lem}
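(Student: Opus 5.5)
The plan is to reduce the statement to the elementary fact that a real polynomial on $\IR^{N}$ vanishing on $\IZ^{N}$ is identically zero. The lattice $\Gamma$ is not a linear lattice in the coordinates of $\IC^n \cong \IR^{2n}$. It is, however, the image of $\IZ^{2n}$ under a polynomial parametrisation of the group, namely Malcev coordinates, and this is what makes the reduction possible.

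\textbf{Step 1: Malcev coordinates.} Since $\Gamma$ is a lattice in the simply connected nilpotent group $G$, there is a strong Malcev basis $X_1, \dots, X_{2n}$ of $\frg$ strongly based on $\Gamma$ \cite[Theorem~5.1.6]{Cor-Green}. Concretely, setting $\gamma_i = \exp(X_i) \in \Gamma$, every element of $\Gamma$ can be written uniquely as $\gamma_1^{k_1}\cdots\gamma_{2n}^{k_{2n}}$ with $k \in \IZ^{2n}$. Moreover, the map
\[
\mu \colon \IR^{2n} \to G, \qquad t \mapsto \exp(t_1X_1)\cdots\exp(t_{2n}X_{2n})
\]
is a diffeomorphism. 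By the Baker--Campbell--Hausdorff formula, which is a finite sum because $\frg$ is nilpotent, the composition $\exp^{-1}\circ\mu \colon \IR^{2n} \to \frg$ is a polynomial map. By construction, $\mu(\IZ^{2n}) = \Gamma$.

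\textbf{Step 2: pulling back the polynomial.} Let $p$ be a polynomial on $\IR^{2n} \cong \IC^n$ vanishing on $\Gamma$, where $\Gamma$ is viewed inside $\IC^n$ via $P$. As recalled in the proof of Lemma \ref{lem: maps are polynomial}, the map $P\circ\exp \colon \frg \to \IC^n$ is polynomial (with polynomial inverse) \cite{Hasegawa26}. Hence
\[
q = p \circ P \circ \exp \circ (\exp^{-1}\circ\mu) \colon \IR^{2n} \to \IR
\]
is a polynomial in $t_1,\dots,t_{2n}$, and $q(k) = p(P(\gamma_1^{k_1}\cdots\gamma_{2n}^{k_{2n}})) = 0$ for all $k \in \IZ^{2n}$.

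\textbf{Step 3: vanishing on $\IZ^{2n}$ forces $q=0$.} I argue by induction on the number of variables. Write $q = \sum_j q_j(t_1,\dots,t_{2n-1})\, t_{2n}^j$. For fixed $(k_1,\dots,k_{2n-1}) \in \IZ^{2n-1}$, the one-variable polynomial $t_{2n} \mapsto q(k_1,\dots,k_{2n-1},t_{2n})$ has infinitely many zeros and is therefore identically zero. Thus each coefficient $q_j$ vanishes on $\IZ^{2n-1}$, and by induction $q_j = 0$ for all $j$. Hence $q \equiv 0$ on $\IR^{2n}$.

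Since $\mu$, $\exp$ and $P$ are all bijections, the map $P\circ\mu$ is surjective onto $\IC^n$. Therefore $p$ vanishes identically on $\IC^n$, which proves the statement.

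The only nontrivial input, and the step I expect to be the main obstacle, is Step 1: the existence of a Malcev basis adapted to $\Gamma$ together with the polynomiality of the coordinates of the second kind. Both are standard facts from \cite{Cor-Green}, so this amounts to citing them correctly rather than proving anything new. Everything else is formal once one knows that $P\circ\exp$ is polynomial.
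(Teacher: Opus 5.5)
Your proof is correct, but it takes a different route from the paper.

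The paper works in exponential coordinates of the first kind. It pulls $p$ back to $\widetilde p = p\circ P\circ\exp$ on $\frg$ and invokes \cite[Theorem~2.13, p.~48]{Vinberg00} to obtain an additive lattice $\Gamma'\subset\log(\Gamma)$. It then observes that $\widetilde p$ vanishes on every line through the origin containing a nonzero point of $\Gamma'$, since such a line meets $\Gamma'$ in all integer multiples of that point. The union of these lines is dense in $\frg$, so continuity gives $\widetilde p=0$.

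You instead use coordinates of the second kind with respect to a strong Malcev basis strongly based on $\Gamma$. This gives a polynomial bijection $\IR^{2n}\to G$ carrying $\IZ^{2n}$ exactly onto $\Gamma$. The problem then reduces directly to a polynomial vanishing on $\IZ^{2n}$, which your induction handles with no density or continuity argument.

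The trade-off is in the structural input from nilpotent Lie theory:
\begin{itemize}
\item The paper needs the existence of a full additive lattice inside $\log(\Gamma)$, and only uses that $p$ vanishes on the corresponding subset of $\Gamma$.
\item You need a Malcev basis strongly based on $\Gamma$ \cite{Cor-Green} and the polynomiality of $\exp^{-1}\circ\mu$ via the finite Baker--Campbell--Hausdorff series, and you use vanishing on all of $\Gamma$.
\end{itemize}
Both arguments depend in the same way on $P\circ\exp$ being a polynomial bijection \cite{Hasegawa26}. The final steps are also interchangeable: writing $\Gamma' = A\IZ^{2n}$ for a linear isomorphism $A$ of $\frg$, the paper's density argument could be replaced by your induction applied to $\widetilde p\circ A$.
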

	\begin{proof}
		Let $\frg$ be the Lie algebra of $(\IC^n, \boldsymbol{\cdot}) \cong G$ and let $\widetilde p \colon \frg \to \IR$ denote the composition
		\[
		\begin{tikzcd}
			\frg \arrow[r, "\exp"] &(G,J) \arrow[r, "P" ] &\IC^n \cong \IR^{2n} \arrow[r, "p"] & \IR.
		\end{tikzcd}
		\]
		The map $\widetilde p$ is again a polynomial map, which vanishes on the subset $\log(\Gamma)\subset \frg$. By \cite[Theorem~2.13, p.~48]{Vinberg00} the set $\log(\Gamma)$ contains an additive lattice $\Gamma' \subset \frg$. We claim that $\widetilde p =0$ because it vanishes on $\Gamma'$. Note that if $\widetilde p$ vanishes on infinitely many points on a given line $L \subset \frg$ through the origin, then $\widetilde p$ vanishes on $L$. Now, consider the set of lines $\kl = \{ L \subset \frg ß \, | \, \textup{ $L$ intersects $\Gamma'$ in infinitely many points} \}$. The map $\widetilde p$ vanishes on infinitely many points of every line in $\kl$ and thus $\widetilde p$  vanishes on $\bigcup_{L \in \kl} L$. Since $\Gamma' \subset \frg$ is an additive lattice, the union $\bigcup_{L \in \kl} L$ is a dense subset of $\frg$ and thus $\widetilde p =0$. As $P \circ \exp$ is a bijection this implies that $p =0$.
	\end{proof}
	
	By the results of the previous section, both $\widetilde F$ and $\widetilde \Phi$ can be considered as real polynomial maps on $\IR^{2n}$, so $\widetilde F-\widetilde \Phi$ is a polynomial vanishing on $\Gamma$, and Lemma \ref{lem: polynomials vanish on lattice} implies $\widetilde F = \widetilde \Phi$. This concludes the proof of Theorem \ref{thm: affine maps}.\qed
	
	\section{The space of left-invariant complex structures}\label{sect: Cg}

	Let  $\frg$ be a nilpotent Lie algebra of dimension $2n$ admitting complex structures. The goal of this section is to describe some properties of the space
	\[
	\kc(\frg) = \{ J \colon \frg \to \frg \, | \,  J^2= - \id, \,\textup{$J$ integrable}\}
	\]
	of complex structures on $\frg$. Recall that every almost complex structure $J$ on $\frg$ is uniquely determined by the $i$-eigenspace $\frg^{1,0}$ of $J_\IC \colon \frg_\IC \to \frg_\IC$. Hence, we can consider the set
	\[
	\ku_n = \{ V \in \Gr(n, \frg_\IC) \, | \, V \cap \overline V = 0 \}  \subset \Gr(n, \frg_\IC)
	\]
	as the space of almost complex structures on $\frg$. Note that $\ku_n$ defines an open subset of the Grassmannian $\Gr(n, \frg_\IC)$ and thus we can consider $\ku_n$ as a complex manifold of dimension $n^2$. Since a complex structure $J$ on $\frg$ is integrable if and only if the space $\frg^{1,0}$ is closed under the Lie bracket we will identify
	\[
	\kc(\frg) = \{ V \in \Gr(n, \frg_\IC) \, | \, V \cap \overline V =0, \; [V, V] \subset V\}.
	\]
	In this way, the space $\kc(\frg)$ can be viewed as an open subset of the variety
	\[
	\kv(\frg) = \{V \in \Gr(n, \frg_\IC) \, | \, [V,V] \subset V\}\subset \Gr(n,\frg_\IC),
	\]
	and as such has the structure of a complex space.
	
	To study some basic properties of this complex structure on $\kc(\frg)$, we start with a description of the tangent space at a point $J \in \kc(\frg)$: recall that the tangent space of the Grassmannian $\Gr(n, \frg_\IC)$ at a point $V$ can be identified with the space $\Hom(V, \frg_\IC/V)\cong V^* \otimes \frg_\IC/ V$ (see for instance \cite[Lemma~10.7]{Voisin_2002}). In particular, if $ V= \frg^{0,1}$ is the $(-i)$-eigenspace in $\frg_\IC$ associated to the complex structure $J$, then $\frg_\IC /\frg^{0,1} \cong \frg^{1,0}$ and thus we can describe the Zariski tangent space $T_J\kc(\frg)$ as a subspace of $\frg^{*0,1} \otimes \frg^{1,0}$ via the identification
	\[
	T_{\frg^{0,1}}\Gr(n,\frg_\IC) \cong \Hom(\frg^{0,1}, \frg^{1,0}) \cong \frg^{*0,1} \otimes \frg^{1,0}.
	\]
	Consider the map $\dbar \colon \frg^{*0,1} \otimes \frg^{1,0} \to \Wedge^2\frg^{*0,1} \otimes \frg^{1,0}$ associated to $J$ (see Section \ref{subsect: Lie algebra cohomology}). In \cite[Proposition~4.2]{salamon01} Salamon showed that if $J$ is a smooth point in $\kc(\frg)$, then the tangent space at $J$ is contained in the kernel of $\dbar$. More generally, one has
	
	\begin{lem}\label{lem: tangent space Cg}
		The Zariski tangent space of $\mathcal{C}(\mathfrak{g})$ at the point $J$ is $T_J\kc(\frg) =\ker \dbar$.
	\end{lem}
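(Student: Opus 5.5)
We will compute the Zariski tangent space directly in a standard affine chart of the Grassmannian around $V_0 = \frg^{1,0}$, which is the point of $\Gr(n,\frg_\IC)$ corresponding to $J$. The paper identifies the tangent space with $\frg^{*0,1}\otimes\frg^{1,0}$ via the chart around $\frg^{0,1}$. Since complex conjugation exchanges the two descriptions, we work with the equivalent chart at $\frg^{1,0}$ and translate at the end. Concretely, every $n$-dimensional subspace near $\frg^{1,0}$ is the graph
\[
V_\phi = \{x + \phi(x) \mid x \in \frg^{1,0}\}
\]
of a unique linear map $\phi \in \Hom(\frg^{1,0},\frg^{0,1})$. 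This gives an open chart $\Hom(\frg^{1,0},\frg^{0,1}) \to \Gr(n,\frg_\IC)$ sending $0$ to $\frg^{1,0}$. Under the identification $\frg^{0,1}\cong \frg_\IC/\frg^{1,0}$ it induces the standard identification of the tangent space of the Grassmannian. In this chart, $\kc(\frg)$ is the open subset (given by $V\cap\overline V=0$) of the zero locus of the polynomial map
\[
N(\phi)(x,y) = \bigl[x+\phi x,\, y+\phi y\bigr]^{0,1} - \phi\bigl([x+\phi x, y+\phi y]^{1,0}\bigr), \qquad x,y\in\frg^{1,0}.
\]
Indeed, $V_\phi$ is closed under the bracket if and only if the component of $[x+\phi x, y+\phi y]$ transverse to $V_\phi$ vanishes. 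Writing an element $u\in\frg_\IC$ as $u^{1,0} + \phi(u^{1,0}) + (u^{0,1}-\phi(u^{1,0}))$, this transverse component is exactly $N(\phi)(x,y)\in\frg^{0,1}$. The expression $N(\phi)$ is bilinear and antisymmetric in $(x,y)$, so it defines a polynomial map $\Hom(\frg^{1,0},\frg^{0,1})\to \Wedge^2\frg^{*1,0}\otimes\frg^{0,1}$.

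The key step is the linearisation. Because the Zariski tangent space is computed from the ideal, I first need to know that the components of $N$ generate the ideal of $\kv(\frg)$ in this chart, and not just cut it out set-theoretically. This holds because $\kv(\frg)$ is defined scheme-theoretically by precisely these equations: the condition $[V,V]\subset V$, expressed in Plücker-type or chart coordinates, is exactly $N(\phi)=0$. I will take this as the definition of the complex space structure. The Zariski tangent space at $0$ is then the kernel of the differential $dN_0$. Using $[x,y]^{0,1}=0$ by integrability of $J$, the linear part is
\[
dN_0(\phi)(x,y) = [x,\phi y]^{0,1} + [\phi x, y]^{0,1} - \phi([x,y]).
\]
This is precisely the Chevalley--Eilenberg differential of the $\frg^{1,0}$-module $\frg^{0,1}$, with action $(x,\bar y)\mapsto[x,\bar y]^{0,1}$, applied to $\phi\in\frg^{*1,0}\otimes\frg^{0,1}$. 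Up to the sign convention for $d$ (and $-[\phi y,x]^{0,1}=[x,\phi y]^{0,1}$), it is the formula
\[
d\phi(x,y) = x\cdot\phi(y) - y\cdot\phi(x) - \phi([x,y]).
\]
Applying complex conjugation turns this into the map $\dbar\colon \frg^{*0,1}\otimes\frg^{1,0}\to\Wedge^2\frg^{*0,1}\otimes\frg^{1,0}$ from Section \ref{subsect: Lie algebra cohomology}, with module structure $(\bar x,y)\mapsto[\bar x,y]^{1,0}$. It therefore remains to check that the two identifications of the tangent space, via $\frg^{1,0}$ and via $\frg^{0,1}$, correspond under conjugation. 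Conjugation $V\mapsto\overline V$ is an antiholomorphic involution of $\Gr(n,\frg_\IC)$ that preserves $\kv(\frg)$ (since the bracket is real) and fixes the locus $\kc(\frg)$ as a set. Complex-linear kernels are carried to complex-linear kernels, so $\ker dN_0$ corresponds to $\ker\dbar$.

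I expect the main obstacle to be bookkeeping rather than substance. One issue is matching the chart and sign conventions with the paper's identification $T_{\frg^{0,1}}\Gr\cong\frg^{*0,1}\otimes\frg^{1,0}$. Alternatively, and more cleanly, one can work directly in the chart of graphs over $\frg^{0,1}$, applying the bracket condition to $\overline V$, which is equivalent since the bracket is real. The other issue is making sure the scheme structure is the one given by $N$. Once both points are settled, the equality $T_J\kc(\frg)=\ker\dbar$ is just the statement that the linear part of $N$ at $0$ is $\dbar$. This also recovers the first-order part of the Maurer--Cartan equation \eqref{eq: MC}, whose linear term is $\dbar\phi$; the quadratic term $\tfrac12[\phi,\phi]$ matches the quadratic part of $N$.
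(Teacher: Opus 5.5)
Your proof is correct and is essentially the paper's argument. The paper computes the Zariski tangent space via $\IC[\varepsilon]/\varepsilon^2$-points, deforming $\frg^{0,1}$ to $\{z+\varepsilon\eta(z)\}$ and noting that integrability reduces to the linear part $\dbar\eta=0$ of the Maurer--Cartan equation; this is exactly your linearisation $dN_0$ of the chart equations. Your extra bookkeeping (the conjugation between the charts at $\frg^{1,0}$ and $\frg^{0,1}$, and taking the complex-space structure cut out by $[V,V]\subset V$) makes explicit what the paper leaves implicit, and it agrees with the paper's conventions.
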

	\begin{proof}
		Let $\eta \in \mathfrak{g}^{*0,1}\otimes \mathfrak{g}^{1,0}$ be a tangent vector of $\Gr(n,\frg_\IC)$ at the point $\frg^{0,1}$. To compute the tangent space $T_J\mathcal{C}(\mathfrak{g})$ we work over the the dual numbers $\mathbb{C}[\varepsilon]/\varepsilon^2$. Consider an infinitesimal deformation $\mathfrak{g}_{\varepsilon}^{0,1}=\{z+\varepsilon\eta(z)\,|\, z\in \mathfrak{g}^{0,1}\}$ of $\mathfrak{g}^{0,1}$. Then the integrability condition $[\mathfrak{g}_{\varepsilon}^{0,1},\mathfrak{g}_{\varepsilon}^{0,1}]\subset \mathfrak{g}_{\varepsilon}^{0,1}$ is equivalent to the Maurer--Cartan equation
		\[
		\varepsilon\dbar\eta + \frac{\varepsilon^2}{2}[\eta,\eta] =0,
		\]
		which over $\mathbb{C}[\varepsilon]/\varepsilon^2$ is equivalent to $\dbar \eta =0$.
	\end{proof}

	Next, we want to deduce a criterion for a point in $\kc(\frg)$ to be reduced. To this purpose, it is often useful to consider $\kc(\frg)$ as a subspace of $\Gr(n, \frg_\IC^*)$ via the natural map $\Gr(n, \frg_\IC) \to \Gr(n, \frg_\IC^*)$ and describe it in terms of left-invariant differential forms.
	Recall that the Grassmannian $\Gr(n, \frg^*_\IC)$ can be embedded into $\IP\left(\Wedge^n\frg^*_\IC\right)$ via the Plücker embedding 
	\[
	\iota \colon \Gr(n,\frg_\IC^*) \hookrightarrow \IP\left(\Wedge^n\frg^*_\IC\right),
	\]
	which is given by mapping a vector space $V = \langle \omega^1, \dots, \omega^n \rangle$ to the class of the $n$-form $\omega^1 \wedge \dots \wedge \omega^n$ in $ \IP\left(\Wedge^n\frg_\IC^*\right)$. We will first describe the image of the restriction of $\iota$ to $\kc(\frg)$.

	\begin{lem}\label{lem: plücker image}
		A class $[\alpha] \in \IP\left(\Wedge^n\frg^*_\IC\right)$ is in the image of $\iota_{|\kc(\frg)}$ if and only if $\alpha$ is decomposable, $d$-closed, and $\alpha \wedge \overline \alpha \neq 0$.
	\end{lem}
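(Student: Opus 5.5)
Fix the convention first: $\kc(\frg)$ sits in $\Gr(n,\frg_\IC^*)$ by sending $J$ to the annihilator of $\frg^{0,1}$, which is $\frg^{*1,0}$. So $\iota(J)=[\omega^1\wedge\dots\wedge\omega^n]$ for a basis $\omega^1,\dots,\omega^n$ of $\frg^{*1,0}$, i.e.\ the class of a generator of $\Wedge^{n,0}\frg^*$. I will prove the two implications separately. Throughout I use the standard fact that a nonzero decomposable $n$-form $\alpha$ determines the $n$-dimensional subspace $V_\alpha=\{\theta\in\frg_\IC^*\mid \theta\wedge\alpha=0\}$, and that $\alpha$ is a generator of $\Wedge^n V_\alpha$.

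\emph{Forward direction.} Let $\alpha=\omega^1\wedge\dots\wedge\omega^n$ with $\omega^i$ a basis of $\frg^{*1,0}$.
\begin{enumerate}
\item Decomposability holds by construction.
\item Since $\frg_\IC^*=\frg^{*1,0}\oplus\frg^{*0,1}$, the form $\alpha\wedge\overline\alpha$ is a nonzero multiple of a volume form, so $\alpha\wedge\overline\alpha\neq0$.
\item For closedness, integrability gives $d\omega^i\in\Wedge^{2,0}\oplus\Wedge^{1,1}$. Hence $d\alpha$ is a sum of terms $\pm\,\omega^1\wedge\dots\wedge d\omega^i\wedge\dots\wedge\omega^n$, and the only part that can survive wedging with the other $n-1$ forms of type $(1,0)$ is the $(0,1)$-component in the $\omega^i$-slot of $d\omega^i$. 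In total $d\alpha=\pm\sum_{i,j}B^i_{ij}\,\overline\omega^j\wedge\alpha$.
\item The coefficient $\sum_i B^i_{ij}$ is (up to sign) the trace of $\mathrm{ad}_{\bar x_j}$ restricted to $\frg^{1,0}$, acting through the $\frg^{0,1}$-module structure $\bar x\mapsto[\bar x,\cdot]^{1,0}$. This is the step needing care, and it is the main obstacle. I would handle it with nilpotency: $\mathrm{ad}_{\bar x}$ is nilpotent on $\frg_\IC$ (Engel). To pass this to the induced operator on the quotient $\frg^{1,0}\cong\frg_\IC/\frg^{0,1}$, I note that $\frg^{0,1}$ is a subalgebra invariant under $\mathrm{ad}_{\bar x}$. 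So the induced map on the quotient is nilpotent and has trace zero. This gives $d\alpha=0$, recovering the known fact that the canonical bundle is holomorphically trivialised by a closed $(n,0)$-form.
\end{enumerate}

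\emph{Converse.} Let $\alpha$ be decomposable, closed, with $\alpha\wedge\overline\alpha\neq0$, and set $V=V_\alpha$.
\begin{enumerate}
\item $\alpha\wedge\overline\alpha\neq0$ means $V\cap\overline V=0$. Writing $\alpha=\theta^1\wedge\dots\wedge\theta^n$, a common vector would kill the product. Hence $\frg_\IC^*=V\oplus\overline V$, and this defines an almost complex structure $J$ with $\frg^{*1,0}=V$.
\item For integrability, take $\theta\in V$ and show $d\theta$ has no $(0,2)$-component. From $\theta\wedge\alpha=0$ we get $d\theta\wedge\alpha=\pm\theta\wedge d\alpha=0$.
\item Decompose $d\theta$ by type. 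Wedging with $\alpha$ (type $(n,0)$) kills exactly the components containing a $(1,0)$ factor, so $(d\theta)^{0,2}\wedge\alpha=0$. Since $(d\theta)^{0,2}\wedge\alpha$ lies in $\Wedge^{n,2}$ and wedging with $\alpha$ is injective on $\Wedge^{0,\bullet}$, this forces $(d\theta)^{0,2}=0$.
\item Thus $dV\subset\Wedge^{2,0}\oplus\Wedge^{1,1}$, which is integrability. Equivalently, $[\frg^{1,0},\frg^{1,0}]\subset\frg^{1,0}$ by the duality formula $d\theta(x,y)=-\theta([x,y])$.
\end{enumerate}
So $J\in\kc(\frg)$ and $\iota(J)=[\alpha]$.
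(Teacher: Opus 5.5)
Your proof is correct. For the converse you use essentially the paper's idea. The paper isolates $\pi^{0,2}(d\alpha^i)$ by contracting $d\alpha$ with $z_1\wedge\dots\wedge\widehat{z_i}\wedge\dots\wedge z_n$. You instead apply the Leibniz rule to $\theta\wedge\alpha=0$ and use that $\beta\mapsto\beta\wedge\alpha$ is an isomorphism $\Wedge^{0,2}\frg^*\to\Wedge^{n,2}\frg^*$. These are dual formulations of the same observation, and yours needs less sign and component bookkeeping.

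The forward direction, however, takes a genuinely different route. The paper gets closedness in one line from Salamon's structure theorem, which gives a basis of $\frg^{*1,0}$ with $d\omega^{k+1}\in I(\omega^1,\dots,\omega^k)$. You compute $d\omega$ directly, with $\omega=\omega^1\wedge\dots\wedge\omega^n$ and $z_1,\dots,z_n$ the dual basis. The key steps all check out:
\begin{itemize}
\item The Leibniz signs are the same for every $i$, so $d\omega=\pm\,\tau\wedge\omega$ for the $(0,1)$-form $\tau$ with $\tau(\bar z_j)=\sum_i B^i_{ij}$.
\item From $d\alpha(x,y)=-\alpha([x,y])$ one gets $\tau(\bar z_j)=\sum_i\omega^i([\bar z_j,z_i])$, which is exactly the trace of $y\mapsto[\bar z_j,y]^{1,0}$ on $\frg^{1,0}$.
\item This trace vanishes. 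The map $\ad_{\bar z_j}$ is nilpotent, and Engel is not needed since $\ad_{\bar z_j}^{\nu}$ maps $\frg_\IC$ into $\kc^\nu\frg_\IC=0$. It also preserves the subalgebra $\frg^{0,1}$, so it induces a nilpotent operator on $\frg_\IC/\frg^{0,1}\cong\frg^{1,0}$.
\end{itemize}

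Your argument is elementary and self-contained. It also shows more: for any Lie algebra with complex structure, the generator of $\Wedge^{n,0}\frg^*$ is closed exactly when the $\frg^{0,1}$-module $\frg^{1,0}$ is traceless. Nilpotency is used only to guarantee this tracelessness. The paper's citation is shorter, but it relies on a structural result about nilpotent complex structures that carries much more information than this lemma requires.
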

	\begin{proof}
		Let $J$ be a complex structure on $\frg$. According to \cite[Theorem~1.3]{salamon01} there exists a basis $\omega^1, \dots, \omega^n$ of $\frg^{*1,0}$ such that $d\omega^{k+1} \in I(\omega^1, \dots , \omega^k)$, where $I(\omega^1, \dots , \omega^k)$ is the ideal in $\Wedge^2\frg_\IC^*$ generated by $\omega^1, \dots, \omega^k$. This implies that the form $\omega = \omega^1 \wedge \dots \wedge \omega^n$ is closed and since $ \frg^{*1,0} \cap \frg^{*0,1} =0$, the $2n$-form $\omega \wedge \overline{\omega}$ is non-zero.
		
		Conversely, we have to show that every closed decomposable form $\alpha = \alpha^1 \wedge \dots \wedge \alpha^n$ satisfying $\alpha \wedge \overline{\alpha} \neq 0$ determines a complex structure on $\frg$. Setting $V = \langle \alpha^1, \dots, \alpha^n \rangle$, the condition $\alpha \wedge \overline{\alpha} \neq 0$ implies that $V \oplus \overline V = \frg^*_\IC$ and thus $\alpha$ uniquely determines an almost complex structure on $\frg$. It remains to show that the almost complex structure given by $V$ is integrable, which is equivalent to the map
		\[
		\begin{tikzcd}
			V \arrow[r, "d"] &\Wedge^2\frg_\IC^* \arrow[r, "\pi^{0,2}"] &\Wedge^{2} \overline V
		\end{tikzcd}
		\] 
		being the zero-map. Let $z_1, \dots, z_n$ be the dual basis of $\alpha^1, \dots, \alpha^n$ and consider for $i \in \{1,\dots,n\}$ the map $c_i \colon \Wedge^{n+1} \frg_\IC^* \to \Wedge^2\frg^*_\IC$ which sends a form $\beta \in \Wedge^{n+1}\frg_\IC^*$ to $c_i(\beta) = (z_1 \wedge \dots \wedge \widehat{z_i} \wedge \dots z_n) \lrcorner \beta$. Then for every $\alpha^i$ we have
		\[
		\pi^{0,2} \circ d(\alpha^i) =  \pi^{0,2}\circ c_i (d\alpha) = 0
		\]
		since we assumed $\alpha$ to be closed. Hence, the space $V$ defines a complex structure on the Lie algebra $\frg$.
	\end{proof}
	
	\begin{rem}
		Considering $\kc(\frg)$ as a subspace of $\IP\left(\Wedge^n\frg^*_\IC\right)= \IP^{\binom{2n}{n}-1}$ via the Plücker embedding, the previous lemma shows that $\kc(\frg) = \ku_n \cap \IP \ker d$. This description is often useful to determine the connected components of $\kc(\frg)$. By the Fulton–Hansen connectedness theorem the variety $\kv(\frg) = \Gr(n, \frg_\IC^*) \cap \IP \ker d$ is connected if $n^2 + \dim \ker d > \binom{2n}{n}$ \cite[Corollary~1]{Fulton79}. Moreover, it follows from a Lefschetz-type theorem due to Sommese that the homotopy groups $\pi_j(\kv(\frg))$ coincide with  $\pi_j(\Gr(n, \frg_\IC^*))$ for $j\leq n^2 + \dim \ker d -\binom{2n}{n}-1$ \cite[Corollary~3.5]{Sommese82}. A comparison with \cite[Table~A.1]{salamon01} shows that $\kv(\frg)$ is connected and simply connected for nilpotent Lie algebras up to dimension six. 
	\end{rem}

	As explained above, up to scaling, the Plücker embedding associates to every complex structure $J$ on $\frg$ a form $\omega \in \Wedge^n\frg_\IC^*$. For every element $\eta \in \ker \dbar = T_J \kc(\frg)$ the form $\eta\lrcorner\omega$ is a $\dbar$-closed $(n-1,1)$-form. 
	
	\begin{prop}\label{prop: reduced points in Cg}
		If $J$ is a reduced point of $\kc(\frg)$, then $d (\eta \lrcorner \omega) =0$ for every $\eta \in \ker \dbar$, or equivalently, every $\dbar$-closed form in $\Wedge^{n-1,1}\frg^*$ is $d$-closed.
	\end{prop}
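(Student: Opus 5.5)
The plan is to use the Plücker description $\kc(\frg) = \ku_n \cap \IP\ker d$ from Lemma \ref{lem: plücker image}, together with the tangent space computation $T_J\kc(\frg) = \ker\dbar$ of Lemma \ref{lem: tangent space Cg}. Since $J$ is a reduced point, the Zariski tangent space at $J$ equals the tangent cone of the reduced space. Moreover, every $\eta\in\ker\dbar$ is a tangent direction of the scheme defined \emph{only} by the equations $d\alpha = 0$, intersected with the Grassmannian. More concretely, consider the holomorphic map
\[
\Psi\colon \ku_n \to \Wedge^{n+1}\frg_\IC^*\otimes \mathcal{O}(1), \qquad [\alpha]\mapsto d\alpha .
\]
It is well defined as a section of a twisted trivial bundle over the open set $\ku_n$ of the Grassmannian, because $\alpha$ is determined up to scaling. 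By Lemma \ref{lem: plücker image}, the zero locus of $\Psi$ is set-theoretically $\kc(\frg)$. The complex structure on $\kc(\frg)$, defined via $[V,V]\subset V$, is cut out by the equations $\pi^{0,2}\circ d|_V=0$. The proof of Lemma \ref{lem: plücker image} shows that these equations and the components of $d\alpha$ generate the same ideal locally: one direction is the contraction argument there, and the other uses $d\alpha=\sum\pm \alpha^1\wedge\dots\wedge d\alpha^i\wedge\dots\wedge\alpha^n$. So $\Psi$ vanishes on the complex space $\kc(\frg)$.

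The key step is then the following. If $J$ is reduced, the germ $(\kc(\frg),J)$ is a reduced space, and $\Psi$ vanishes on it scheme-theoretically. Hence for every $\eta\in T_J\kc(\frg)=\ker\dbar$ the differential satisfies $d\Psi_J(\eta)=0$. Here I use that a function vanishing on a complex space has differential vanishing on its Zariski tangent space; this already holds without reducedness, provided $\Psi$ lies in the ideal. Reducedness enters by guaranteeing that $\Psi$, which a priori vanishes only pointwise on the reduced locus, lies in the defining ideal of the complex structure on $\kc(\frg)$: for a reduced germ, a function vanishing on the underlying set belongs to the ideal. This lets me avoid any fine comparison of ideals, and I would phrase the argument that way. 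Next I compute $d\Psi_J(\eta)$. A curve $V_t$ in the Grassmannian with tangent $\eta$ at $\frg^{0,1}$ corresponds dually to $\omega_t = \omega + t\,\eta\lrcorner\omega + O(t^2)$, up to sign conventions and the identification $\frg^{*0,1}\otimes\frg^{1,0}$ acting by contraction on $(n,0)$-forms. Therefore $d\Psi_J(\eta) = d(\eta\lrcorner\omega)$, which gives the first claim.

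For the equivalence, I use that $\eta\mapsto\eta\lrcorner\omega$ is an isomorphism $\frg^{*0,1}\otimes\frg^{1,0}\to\Wedge^{n-1,1}\frg^*$, since $\omega$ spans $\Wedge^{n,0}\frg^*$. It remains to check that this isomorphism intertwines $\dbar$, meaning $\dbar(\eta\lrcorner\omega)=(\dbar\eta)\lrcorner\omega$ up to sign. This follows from $\omega$ being $d$-closed, hence $\dbar$-closed as a holomorphic section of the trivial canonical bundle, so it is a standard identity. Consequently $\ker\dbar$ on $\frg^{*0,1}\otimes\frg^{1,0}$ corresponds exactly to the $\dbar$-closed $(n-1,1)$-forms, which proves the reformulation.

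I expect the main obstacle to be the justification that $\Psi$ lies in the ideal of $\kc(\frg)$ at $J$, i.e. the precise use of reducedness, together with getting the first-order variation of the Plücker form right. If the ideal-theoretic route is delicate, I would instead work directly with the reduced germ. Its tangent space is $\ker\dbar$, and reducedness means every such $\eta$ is the velocity of actual analytic arcs only in the tangent-cone sense. So I would argue with the ideal formulation rather than with curves.
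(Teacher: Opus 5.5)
Your final argument is correct and is essentially the paper's proof. Reducedness of the germ at $J$ (via the Rückert Nullstellensatz) upgrades the set-theoretic containment of $\kc(\frg)$ in $\IP\ker d$ to a containment of complex spaces. Hence the Plücker differential sends $T_J\kc(\frg)=\ker\dbar$ into the tangent space of $\IP\ker d$, and differentiating $\omega+t\,\eta\lrcorner\omega+O(t^2)$ gives $d(\eta\lrcorner\omega)=0$. The paper phrases this as $\iota|_U$ factoring through $\IP\ker d$, which is the same as your statement that the components of $\Psi$ lie in the defining ideal. Your proof of the equivalence via $\dbar(\eta\lrcorner\omega)=(\dbar\eta)\lrcorner\omega$ is also fine.

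You should, however, delete the claim in your first paragraph that the integrability equations and the components of $d\alpha$ generate the same ideal. It is false, and only one inclusion holds. The contraction argument shows that the integrability equations lie in the ideal generated by the components of $d\alpha$. So $\ku_n\cap\IP\ker d$ is a closed complex subspace of $\kc(\frg)$ with the same underlying set.

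The Leibniz expansion does not give the converse. Modulo the integrability equations, $d\alpha$ reduces to its $(n,1)$-part $\sum_i\pm\alpha^1\wedge\dots\wedge(d\alpha^i)^{1,1}\wedge\dots\wedge\alpha^n$. This is a trace-type term whose vanishing on $\kc(\frg)$ is only a pointwise consequence of Salamon's theorem that the $(n,0)$-form of an integrable structure is closed.

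If the ideals did coincide, your own remark that $d\Psi_J$ kills the Zariski tangent space without any reducedness would prove the proposition at every point of $\kc(\frg)$. That contradicts Example \ref{exam: non-reduced point}, where $\eta=\overline\omega^3\otimes z_2\in\ker\dbar$ but $d(\eta\lrcorner\omega^{123})=\omega^{123\bar 1}\neq 0$. So reducedness is not a convenience that lets you avoid a comparison of ideals. It is exactly what compensates for the failure of that comparison, and the proof should rest on it alone.
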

	\begin{proof}
		The differential of the Plücker embedding 
		\[
		d\iota_J\colon \frg^{*0,1} \otimes \frg^{1,0} \to \Hom\left(\langle \omega \rangle , \Wedge^n\frg_\IC^* /\langle\omega \rangle\right) = T_{\langle \omega \rangle}  \IP\left(\Wedge^n\frg_\IC^*\right)
		\]
		at the point corresponding to $J$ sends $\eta \in \frg^{*0,1} \otimes \frg^{1,0}$ to the homomorphism given by $\left(\omega \mapsto [\eta \lrcorner \omega]\right)$. Suppose $\kc(\frg)$ is reduced at $J$. Then $\kc(\frg)$ is reduced in an open neighbourhood $U$ of $J$ on which the map $\iota_{|U} \colon U \to \IP \ker d$ is an embedding of complex spaces. 
		By Lemma \ref{lem: plücker image} the image of the restriction of the Plücker embedding to $\kc(\frg)$ is contained in $\IP\ker d\subset \IP\left(\Wedge^n\frg^*_\IC\right)$. Hence, the image of $(d\iota_{|U})_J$ at the point $J$ is contained in $\Hom\left(\langle \omega \rangle , \ker d /\langle\omega \rangle\right)$.  Thus, if $\eta \in \ker \dbar = T_J\kc(\frg)$, then  $d(\eta \lrcorner \omega) =0$.
	\end{proof}

	\begin{rem}\label{rem: complex parallelisable reduced}
		The necessary condition given in Proposition \ref{prop: reduced points in Cg} is not sufficient for $J$ to be a reduced point. For instance, if $J$ is a complex parallelisable structure, that is, the pair $(\frg, J)$ is a complex Lie algebra, then the condition is always satisfied since in this case $\ker \dbar = \ker d \otimes \frg^{1,0}$. But the Kuranishi space of a complex parallelisable nilmanifold is often not reduced (see \cite[Table~1]{Rol11Kuranishi}).
	\end{rem}

	\begin{exam}\label{exam: non-reduced point}
		Consider the real Lie algebra $\frh$ underlying the Iwasawa manifold (see Example \ref{exam: Iwasawa fibres}). By \cite[Lemma~2.11]{ceballos2016invariant} there exists a complex structure $J$ on $\frh$ with complex structure equations 
		\[d\omega^1 = d\omega^2 =0, \; d\omega^3 = \omega^{1\bar 1} + \omega^{1\bar 2}.\]
		If $z_1, z_2, z_3 \in \frh^{1,0}$ is the dual basis of $\omega^1, \omega^2, \omega^3$, then
		\[
		\dbar(\overline \omega^3 \otimes z_2) = \dbar\overline \omega^3 \otimes z_2 - \overline \omega^3 \otimes \dbar z_2 = 0.
		\]
		Hence, $\eta = \overline \omega^3 \otimes z_2$ defines a tangent vector of $\kc(\frh)$ at $J$ but $d(\eta \lrcorner \omega^{123}) = d\omega^{13\bar 3} =\omega^{123\bar 1}$ is non-zero. Therefore, the complex space $\kc(\frh)$ is not reduced and in particular not smooth at $J$.
	\end{exam}

	Alternatively, we can describe the behaviour of a point $J \in \kc(\frg)$ in terms of the Kuranishi space of a nilmanifold with complex structure $J$.
	Let $X = (\Gamma \backslash G, J)$ be a complex nilmanifold with underlying Lie algebra $\frg$. As explained in Section \ref{subsubsect: Kuranishi theory}, after fixing a left-invariant hermitian metric $h$ on $X$ we obtain a distinguished representative of every class in $\Kur(X)$ and we can consider $\Kur(X)$ as a subspace around $0$ in the space $\kh^1(\frg^{0,1}, \frg^{1,0}) = \kh^1(X, \Theta_X)$. In particular, we have an embedding $\Kur(X) \hookrightarrow \kc(\frg)$ depending on the choice of the hermitian metric. Let $H^0(\frg^{0,1}, \frg^{1,0})^{\perp} \subset \frg^{1,0}$ be the orthogonal complement to the space of left-invariant holomorphic vector fields on $X$ with respect to $h$. Then, for every $z\in H^0(\frg^{0,1}, \frg^{1,0})^{\perp}$ and $I\in \Kur(X)$ we obtain a new left-invariant structure 
	\[
	R_{\exp(z+\bar{z})_*} I = dR_{\exp(z+\bar{z})}^{-1} \circ I \circ dR_{\exp(z+\overline{z})}.
	\]
	Hence, we have a map
	\[
	\psi_h \colon \Kur(X) \times H^0(\frg^{0,1}, \frg^{1,0})^{\perp} \to \kc(\frg), \qquad (I,z) \mapsto R_{\exp(z+\overline{z})_*} I
	\]
	depending on the choice of $h$.

	\begin{prop}\label{lem: Kur inside Cg}
		If every complex nilmanifold of type $(\frg, \Gamma)$ is unobstructed, then every connected component of $\kc(\frg)$ is a complex manifold.
	\end{prop}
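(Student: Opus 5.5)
The plan is to show that the map $\psi_h$ defined just before the statement is a local isomorphism of complex spaces onto a neighbourhood of $J$ in $\kc(\frg)$. Once this is done, unobstructedness makes $\Kur(X)$ an open ball in $\kh^1(X,\Theta_X)$, so $\Kur(X)\times H^0(\frg^{0,1},\frg^{1,0})^\perp$ is a complex manifold. Every point of $\kc(\frg)$ arises from some complex nilmanifold of type $(\frg,\Gamma)$, since $\kc(\frg)$ parametrises all left-invariant structures for a fixed lattice $\Gamma$. Hence $\kc(\frg)$ would be smooth at every point, and each connected component is a complex manifold, of constant dimension by connectedness.

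\textbf{Step 1: infinitesimal statement.} I will compute the differential of $\psi_h$ at $(J,0)$. First I check that $\psi_h$ is holomorphic. On the $\Kur(X)$ factor this holds because $\Kur(X)\hookrightarrow\kc(\frg)$ is the holomorphic family $\phi(t)$ given by \eqref{eq: Green}. On the second factor I need that the right-translation action of $G$ on $\kc(\frg)$, precomposed with $z\mapsto \exp(z+\bar z)$, is holomorphic in $z\in\frg^{1,0}$. This I expect to hold because the action factors through $\mathrm{Ad}$ of $G_\IC$, on which $z\mapsto \exp(z)$ is holomorphic. Concretely, the derivative in direction $z$ of $\mathrm{Ad}_{\exp(-(z+\bar z))}$ acting on $\frg^{0,1}$, projected to $\frg^{1,0}$, is $\bar w\mapsto [\bar w, z]^{1,0}=-\dbar z(\bar w)$ up to sign. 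The $\bar z$ part only moves $\frg^{0,1}$ within itself to first order, since $[\frg^{0,1},\frg^{0,1}]\subset\frg^{0,1}$.

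So $d\psi_h$ sends $(\theta,z)$ to $\theta\pm\dbar z$. By Lemma \ref{lem: tangent space Cg} the target is $T_J\kc(\frg)=\ker\dbar$. Hodge theory for the finite-dimensional complex $(\Wedge^\bullet\frg^{*0,1}\otimes\frg^{1,0},\dbar)$ with the metric $h$ gives
\[
\ker\dbar=\kh^1\oplus\dbar(\frg^{1,0}),
\]
and $\dbar$ restricted to $H^0(\frg^{0,1},\frg^{1,0})^\perp=(\ker\dbar|_{\frg^{1,0}})^\perp$ is injective. Hence $d\psi_h$ is an isomorphism onto the Zariski tangent space of $\kc(\frg)$ at $J$.

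\textbf{Step 2: from tangent spaces to smoothness.} The source of $\psi_h$ is smooth of dimension $\dim\ker\dbar=\dim T_J\kc(\frg)$. A holomorphic map from a manifold into a complex space whose differential is an isomorphism onto the Zariski tangent space is a closed embedding near the point. Then $\kc(\frg)$ contains a smooth germ of dimension equal to its embedding dimension at $J$, which forces the germ of $\kc(\frg)$ at $J$ to equal that smooth germ. Indeed, a complex space germ of embedding dimension $N$ that contains a smooth $N$-dimensional germ must be that germ, since the ideal is contained in $\mathfrak m^2$ and vanishes on a smooth germ of full dimension, so it is zero. Thus $J$ is a smooth point. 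Note that this avoids needing surjectivity of $\psi_h$ from deformation theory.

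\textbf{Main obstacle.} The delicate point is the holomorphicity of $z\mapsto R_{\exp(z+\bar z)_*}I$ together with the sign and derivative bookkeeping showing that the image is exactly $\dbar(H^0{}^\perp)$. If direct holomorphicity in $z$ fails, I would instead use that the $G$-orbit is the image of a holomorphic action of the complex group generated via $G_\IC$ acting on $\Gr(n,\frg_\IC)$. In that case I would replace $\exp(z+\bar z)$ by a holomorphic parametrisation of a transversal slice with the same tangent space $\dbar(\frg^{1,0})$, and then run Step 2 unchanged.
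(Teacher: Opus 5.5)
Your proposal is correct and follows the paper's proof. The paper also shows that $d\psi_h$ at $(0,0)$ is $(\eta,z)\mapsto\eta+\dbar z$, an isomorphism from $\kh^1(X,\Theta_X)\times H^0(\frg^{0,1},\frg^{1,0})^\perp$ onto $\ker\dbar=T_J\kc(\frg)$; it computes this via the Plücker embedding and $L_{z+\bar z}\omega=\dbar z\lrcorner\omega$ rather than via $\mathrm{Ad}$ on $\frg^{0,1}$, and it leaves implicit the embedding-dimension argument you give in Step 2.

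Your worry about holomorphicity is justified. As literally defined, $\psi_h$ is not holomorphic in $z$ at points $I\neq J$ of $\Kur(X)$: for a Kodaira surface with $d\omega^2=\omega^{1\bar 1}$, translating $\langle\bar z_1+tz_1,\bar z_2\rangle$ by $\exp(cz_1+\bar c\bar z_1)$ produces a term proportional to $t\bar c$. So use your fallback $(I,z)\mapsto\mathrm{Ad}_{\exp z}(\frg_I^{0,1})$, which comes from the holomorphic $G_\IC$-action on $\Gr(n,\frg_\IC)$, has the same differential up to sign, and lets Step 2 run unchanged.
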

	\begin{proof}
		We will show that the differential of $\psi_h$ at the point $(0,0) \in \Kur(X) \times H^0(\frg^{0,1}, \frg^{1,0})^\perp$ is an isomorphism for every unobstructed complex nilmanifold $X$. To this purpose we consider $\Kur(X)\subset \kc(\frg)$ as a subset of $ \IP\left(\Wedge^n\frg_\IC^*\right)$ via the Plücker embedding. Let $\omega \in \Wedge \frg_\IC^*$ be a left-invariant $(n,0)$-form on $X$ and let $\{X_t\}$ be a one-parameter family of deformations of $X$ given by $\phi(t) = \sum t^i \phi_i \in \frg^{0,1*} \otimes \frg^{1,0}$. Then the form
		\[
		\phi(t)(\omega) = \omega + t (\phi_1 \lrcorner \omega) + t^2( \phi_1 \lrcorner( \phi_1 \lrcorner \omega) + \phi_2 \lrcorner \omega) +  \textup{ higher order terms}
		\]
		is a left-invariant $(n,0)$-form on $X_t$ representing the class in $ \IP\left(\Wedge^n\frg_\IC^*\right)$ corresponding to $X_t$. Hence, for $z \in H^0(\frg^{0,1}, \frg^{1,0})^\perp$ the map $\gamma(t) = (\phi(t)(\omega), tz)$ defines a path in $\Kur(X) \times H^0(\frg^{0,1}, \frg^{1,0})^\perp$ with $\frac{d}{dt}_{|t=0}\gamma(t) = (\phi_1, z) \in \kh^1(X, \Theta_X)\times H^0(\frg^{0,1}, \frg^{1,0})^{\perp}$. On the image of $\gamma$ the map $\psi_h$ is given by mapping $\gamma(t)$ to the class of the $n$-form
		\begin{align*}
			\psi_h(\gamma(t)) = R_{\exp(t(z+ \overline z))}^*(\phi(t)(\omega)) = R_{\exp(t(z+ \overline z))}^*\omega +  t(\phi_1 \lrcorner \omega) + \textup{ higher order terms},
		\end{align*}
		which corresponds to a left-invariant complex structure in $\kc(\frg)$. To compute the derivative $\frac{d}{dt}_{|t=0}\psi_h(\gamma(t))$ we first observe 
		\[
		\frac{d}{dt}_{|t=0} R_{\exp(t(z+ \overline z))}^*\omega = L_{z+\overline z}\omega = d((z+\overline z) \lrcorner \omega) = d (z \lrcorner \omega) =  \dbar (z \lrcorner \omega) =\dbar z \lrcorner \omega.
		\]
		The second to last equality holds because $\partial\left(\Wedge^{n-1,0}\frg^*\right) =0$ and for the last equality we use the general formula $\dbar(z \lrcorner \omega) = z \lrcorner (\dbar \omega) + (\dbar z) \lrcorner \omega$ \cite[Lemma~2.4 (1)]{Xia22} and the fact that $\omega$ is closed. Thus, we have
		\begin{align*}
			\frac{d}{dt}_{|t=0}\psi_h(\gamma(t)) &= \frac{d}{dt}_{|t=0}(R_{\exp(t(z+ \overline z))}^*(\phi(t)(\omega))\\
			&=\frac{d}{dt}_{|t=0} R_{\exp(t(z+ \overline z))}^*\omega +  \phi_1 \lrcorner \omega\\
			&= (\phi_1 +\dbar z)\lrcorner \omega.
		\end{align*}
		Therefore, the differential 
		\begin{equation*}
			{d\psi_h}_{(0,0)} \colon \kh^1(X, \Theta_X)\times H^0(\frg^{0,1}, \frg^{1,0})^{\perp} \to \ker \dbar
		\end{equation*}
		of $\psi_h$ at $(0,0) \in \Kur(X) \times H^0(\frg^{0,1},\frg^{1,0})^\perp$ is given by $(\eta, z) \mapsto \eta +\dbar z$, which is an isomorphism. 
	\end{proof}

	As a consequence of \cite[Theorem~1.3]{salamon01}, it was shown in \cite[Theorem~2.7]{Barberis09} that every complex nilmanifold of dimension $n$ admits a (non-trivial) closed left-invariant $(n,0)$-form. In particular, the canonical bundle of a complex nilmanifold is trivial. Let us give a more geometric interpretation of Example \ref{exam: non-reduced point}.
	
	\begin{exam}\label{exam: non closed deformation}
		Consider again the Lie algebra with complex structure $(\frh, J)$ given in Example \ref{exam: non-reduced point}. By the criterion of Malcev \cite{malcev51}, the simply connected Lie group $H$ associated to $\frh$ admits a lattice $\Gamma$, so we can consider the complex nilmanifold $Y =(\Gamma \backslash H, J)$. Then the left-invariant form $\omega^{123} \in \Wedge^{3,0}\frh^*$ defined in Example \ref{exam: non-reduced point} yields a trivialising section of the canonical bundle of $Y$. We already saw that the class $[\eta] = [\overline \omega^3 \otimes z_2] \in H^1(\frh^{0,1}, \frh^{1,0}) = H^1(Y, \Theta_Y)$ defines an infinitesimal deformation of $Y$ such that the form $\eta \lrcorner \omega^{123}$ is not closed. Hence, there are infinitesimal deformations of $Y$ which no longer have trivial canonical bundle. This phenomenon will be discussed in more detail in Section \ref{sect: period maps}.
	\end{exam}

	\section{Teichmüller and moduli spaces}\label{chap: Teichmüller and moduli spaces}

	In this section, we formulate the moduli problems we aim to solve. We start by introducing the main objects of our study: the Teichmüller and moduli space of complex nilmanifolds of a given type. We also recall the notions of coarse and fine moduli spaces in our setting and discuss some existence criteria.

	\subsection{Definitions}\label{subsect: definitions}
	
	Let $M$ be a smooth compact connected manifold admitting complex structures. We denote by $\Diff(M)$ the group of diffeomorphisms of $M$ and by $\Diff^0(M)$ the connected component of the identity in $\Diff(M)$, that is, the group of diffeomorphisms of $M$ smoothly isotopic to the identity. There is an action of these groups on the space 
	\[
	\kc(M) = \{ J \in \End(TM) \, | \, J^2 = -\id, \; J \text{ integrable}\}
	\]
	of complex structures on $M$ given by
	\begin{equation}\label{eq: action of Diff}
		(f,J) \mapsto f_*J= df^{-1} \circ J \circ df. 
	\end{equation}
	The so-called \textit{Teichmüller space} of $M$ is defined as the quotient
	\[
	\kt(M) = \kc(M)/ \Diff^0(M)
	\]
	and the \textit{moduli space} of complex structures on $M$ is defined as
	\[
	\km(M) = \kc(M) / \Diff(M).
	\]
	Alternatively, one can define $\km(M)$ as the quotient of the Teichmüller space by the mapping class group $\Diff(M)/ \Diff^0(M)$. We consider both the Teichmüller and the moduli space as topological spaces equipped with the quotient topology of $\kc(M)$. 
	
	The connected components of the Teichmüller space are given by the deformation classes in the large. Concretely, two complex structures $J$ and $J'$ on $M$ correspond to points in the same connected component of $\kt(M)$ if and only if the complex manifold $(M,J)$ is a deformation of $(M,{J'})$ in the large \cite[Corollary~6]{catanese2011superficial}. In general, it is a difficult task to identify connected components of $\kt(M)$ and many examples where this is known to be possible belong to the class of nilmanifolds \cite{Cat04DefLargeI, rollenske09, andrada2025almost}. 
	
	From now on $M = \Gamma \backslash G$ will denote a nilmanifold with underlying Lie algebra $\frg$ and we will consider $\kc(\frg)  \subset \kc(M)$ as the space of left-invariant complex structures on $M$. We would like to restrict the notions of the Teichmüller and moduli space to the space $\kc(\frg)$. However, the action \eqref{eq: action of Diff} does not preserve $\kc(\frg)$, that is, for an arbitrary diffeomorphism $f\in \Diff(M)$ and a complex structure $J\in \kc(\frg)$ the complex structure $f_*J$ is not necessarily left-invariant. We define the Teichmüller space of complex nilmanifolds of type $(\frg, \Gamma)$ as 
	\[
	\kt(\frg) = \kc(\frg) / \sim,
	\]
	where the relation $\sim$ identifies two complex structures $J$ and $J'$ in $\kc(\frg)$ if there exists a diffeomorphism $f \in \Diff^0(M)$ such that $f_*J = J'$. Analogously, we define the moduli space of complex nilmanifolds of type $(\frg, \Gamma)$ as
	\[
	\km(\frg, \Gamma) = \kc(\frg)/ \cong,
	\]
	where $J\cong J'$ if there exists an $f\in \Diff(M)$ such that $f_*J = J'$. 
	
	As a consequence of Theorem \ref{thm: affine maps}, the spaces $\kt(\frg)$ and $\km(\frg, \Gamma)$ can also be described as quotients of finite-dimensional group actions: by associating to $g\in G$ the induced right-translation $R_g$ on $M= \Gamma \backslash G$ we can consider the group $G$ as a subgroup of $\Diff^0(M)$. Similarly, given an automorphism $\varphi \in \Aut(\Gamma)$ we can uniquely extend $\varphi$ to an automorphism of $G$ \cite[Corollary~2.8, p.~45]{Vinberg00} which then descends to a diffeomorphism of $M$. Hence, we obtain actions of both $G$ and $\Aut(\Gamma)$ on $\kc(M)$ defined as in \eqref{eq: action of Diff}.
	
	\begin{lem}
		The actions of $G$ and $\Aut(\Gamma)$ on $\kc(M)$ preserve the space $\kc(\frg)$.
	\end{lem}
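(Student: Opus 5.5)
The plan is to check left-invariance directly. Both kinds of diffeomorphisms lift to diffeomorphisms of $G$ that intertwine left-translations in a controlled way. A complex structure on $M$ lies in $\kc(\frg)$ exactly when its lift to $G$ is invariant under all left-translations $L_a$, $a\in G$. Integrability is automatic, since the pullback of an integrable structure by a diffeomorphism is integrable. So it only remains to show that $f_*J$ is again left-invariant whenever $J$ is.

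\emph{Right-translations.} Let $g\in G$ and let $R_g$ act on $M=\Gamma\backslash G$ by $[x]\mapsto[xg]$; it lifts to $R_g$ on $G$. On $G$ right- and left-translations commute, $R_g\circ L_a = L_a\circ R_g$. Hence
\[
L_a^*(R_{g*}J)=dL_a^{-1}\,dR_g^{-1}\,J\,dR_g\,dL_a = dR_g^{-1}\,(dL_a^{-1} J\, dL_a)\,dR_g = R_{g*}J,
\]
using $L_a^*J=J$. Concretely, $R_{g*}J$ is the left-invariant structure determined at the identity by $\mathrm{Ad}_g^{-1}\circ J\circ\mathrm{Ad}_g$ (up to the sign convention for $\mathrm{Ad}$). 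I would record this, since the later identification $\kt(\frg)=\kc(\frg)/G$ uses the action $J\mapsto \mathrm{Ad}_{g}^{\pm1} J\, \mathrm{Ad}_g^{\mp 1}$.

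\emph{Automorphisms of $\Gamma$.} Let $\varphi\in\Aut(\Gamma)$ and let $\Phi\in\Aut(G)$ be its unique extension \cite[Corollary~2.8, p.~45]{Vinberg00}. Since $\Phi(\Gamma)=\Gamma$, the map $\Phi$ descends to a diffeomorphism of $M$. For a homomorphism we have $\Phi\circ L_a = L_{\Phi(a)}\circ\Phi$, so
\[
L_a^*(\Phi_*J) = d\Phi^{-1}\, dL_{\Phi(a)}^{-1}\, J\, dL_{\Phi(a)}\, d\Phi=\Phi_*J.
\]
Thus $\Phi_*J$ is left-invariant, and at the identity it is $d\Phi_e^{-1}\circ J\circ d\Phi_e$ with $d\Phi_e\in\Aut(\frg)$.

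Both computations are routine. The only point needing care is the identification of left-invariant structures on $M$ with left-invariant structures on $G$. This holds because $G\to M$ is a local diffeomorphism and the action on $M$ lifts equivariantly to $G$. So I expect no real obstacle beyond keeping the conventions for the direction of the action consistent.
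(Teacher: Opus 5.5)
Your proof is correct and matches the paper's: right-translations commute with left-translations, and the extension $\Phi$ of $\varphi$ satisfies $\Phi\circ L_a = L_{\Phi(a)}\circ\Phi$, so both $R_{g*}J$ and $\Phi_*J$ remain left-invariant. As a small aside, with the paper's convention $f_*J = df^{-1}\circ J\circ df$, the structure $R_{g*}J$ at the identity is $\mathrm{Ad}_g\circ J\circ\mathrm{Ad}_g^{-1}$, which settles the sign you left open.
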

	\begin{proof}
		Let $J \in \kc(\frg)$, viewed as a left-invariant complex structure on $G$. Then for all $h,g\in G$ the maps $L_h$ and $R_g$ commute. Therefore, we have ${L_h}_*({R_g}_*J) ={R_g}_*({L_h}_*J) = {R_g}_*J$ and thus ${R_g}_*J \in \kc(\frg)$. Let $\varphi \in \Aut(\Gamma)$ and let $\Phi$ be its unique extension to $G$. Then $L_h \circ \Phi = \Phi \circ L_{\Phi^{-1}(h)}$ for all $h \in G$ and thus $\Phi_*J$ is again left-invariant.
	\end{proof}
	
	Note that the action of $\Aut(\Gamma)$ on $\kc(\frg)$ descends to the quotient $\kc(\frg)/G$. The following corollary of Theorem \ref{thm: affine maps} gives a greatly simplified description of the Teichmüller and moduli spaces.
	
	\begin{cor}\label{cor: simple description}
		We have $\kt(\frg) = \kc(\frg)/G$ and $\km(\frg, \Gamma) = \kt(\frg)/\Aut(\Gamma)$.
	\end{cor}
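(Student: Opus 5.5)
We prove both equalities by comparing the equivalence relations. In each case one inclusion is immediate from the preceding lemma, and Theorem \ref{thm: affine maps} gives the converse.

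\emph{Easy directions.} Let $J,J' \in \kc(\frg)$.
\begin{itemize}
\item If $J' = {R_g}_*J$ for some $g \in G$, then, since $G$ is connected, a path from $e$ to $g$ gives an isotopy $R_{g_t}$ from $\id_M$ to $R_g$. Hence $R_g \in \Diff^0(M)$ and $J \sim J'$.
\item If $J' = \Phi_*J$ for the extension $\Phi$ of some $\varphi \in \Aut(\Gamma)$, then $\Phi$ descends to a diffeomorphism of $M$, so $J \cong J'$.
\end{itemize}
Thus $\sim$ is coarser than the $G$-orbit relation, and $\cong$ is coarser than the relation generated by $G$ and $\Aut(\Gamma)$.

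\emph{Converse for $\km$.} Suppose $f \in \Diff(M)$ satisfies $f_*J = J'$. Then $f\colon (M,J') \to (M,J)$ is a biholomorphism between complex nilmanifolds; the direction does not matter, since we may use $f$ or $f^{-1}$. By Theorem \ref{thm: affine maps}, $f$ lifts to $F = R_h \circ \Phi$, where $\Phi\colon G \to G$ is a homomorphism with $\Phi(\Gamma) \subset \Gamma$.

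We need $\Phi|_\Gamma$ to be an automorphism of $\Gamma$. Apply the theorem also to $f^{-1}$, obtaining a lift $R_{h'} \circ \Psi$. The composite lift $F' \circ F$ of $f^{-1} \circ f = \id$ is then of the form $R_{h''} \circ (\Psi \circ \Phi)$ and covers the identity. So $F' \circ F$ equals $L_\gamma$ for some $\gamma \in \Gamma$, that is, $F'(F(x)) = \gamma x$. Evaluating at $e$ gives $h'' = \gamma$. Therefore
\[
\Psi\Phi(x)\,\gamma = \gamma x \quad \text{for all } x \in G,
\]
so $\Psi\circ\Phi$ is conjugation by $\gamma$, which is an automorphism of $G$ preserving $\Gamma$. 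Hence $\Phi$ is injective on $\Gamma$, and symmetrically $\Phi\circ\Psi$ is conjugation by an element of $\Gamma$, so $\Phi(\Gamma) = \Gamma$. Thus $\Phi|_\Gamma \in \Aut(\Gamma)$, and $\Phi$ is its unique extension.

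Since $F$ is holomorphic and the complex structures on $G$ are left-invariant, $F_*$ transports $J$ to $J'$ as in \eqref{eq: action of Diff}. Hence $J'$ is obtained from $J$ by $\Phi_*$ followed by ${R_h}_*$. So $J$ and $J'$ lie in the same orbit of the group generated by $G$ and $\Aut(\Gamma)$. This group acts on $\kc(\frg)/G$ through $\Aut(\Gamma)$, which gives $\km(\frg,\Gamma) = \kt(\frg)/\Aut(\Gamma)$ once the first equality is known.

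\emph{Converse for $\kt$: the main obstacle.} Now assume $f \in \Diff^0(M)$. By the previous step $f$ is covered by $R_h \circ \Phi$, and we must show that $\Phi_*J$ lies in the $G$-orbit of $J$. We will show that $\Phi$ is an inner automorphism, so that $\Phi = L_\gamma R_{\gamma^{-1}}$ with $\gamma \in \Gamma$. Since left translations preserve $J$, this gives $\Phi_*J = {R_{\gamma^{-1}}}_*J$, and we are done.

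\begin{itemize}
\item Since $f$ is isotopic to the identity, it acts trivially on $\pi_1(M) = \Gamma$ up to conjugation, because $M$ is aspherical.
\item The lift $R_h\circ\Phi$ conjugates deck transformations via $L_\gamma \mapsto L_{\Phi(\gamma)}$. The deck relation $F(\gamma x) = \Phi(\gamma)F(x)$ holds because $\Phi$ is a homomorphism and right translation commutes with left translation.
\item Hence the outer automorphism class of $\Phi|_\Gamma$ is trivial, so $\Phi|_\Gamma = c_\gamma$ for some $\gamma \in \Gamma$.
\item By uniqueness of extensions \cite[Corollary~2.8]{Vinberg00}, $\Phi = c_\gamma$ on all of $G$.
\end{itemize}

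The care needed in this last step is to identify the action on $\pi_1$ with $\Phi|_\Gamma$ correctly, accounting for the basepoint shift caused by $R_h$; that shift only changes the class by an inner automorphism.
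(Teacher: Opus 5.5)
Your proof is correct and follows the same route as the paper. You lift via Theorem \ref{thm: affine maps}, identify $\Phi|_\Gamma$ with the map induced on $\pi_1(M)\cong\Gamma$, and conclude by uniqueness of the extension from $\Gamma$ to $G$. Your version is in fact more careful than the paper's in two places: the paper asserts $\Phi=\id$ outright, which holds only for a suitably chosen lift since in general one gets an inner automorphism $c_\gamma$ as you note, and it never checks that $\Phi|_\Gamma$ is surjective. Your one slip is cosmetic and does not affect the orbit argument: $(R_h\circ\Phi)_*J=\Phi_*({R_h}_*J)$, so $R_h$ acts first, not second.
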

	\begin{proof}
		According to Theorem \ref{thm: affine maps}, every holomorphic map $f \colon (M, J) \to (M,I)$ has a lift $F \colon (G,J) \to (G,I)$ of the form $F= R_g \circ \Phi$, where $R_g$ is a right-translation on $G$ and $\Phi \colon G \to G$ is a group homomorphism preserving the lattice $\Gamma \subset G$. Therefore, it only remains to show that $F= R_g$ if $f$ is isotopic to the identity on $M$.  Note that the restriction $\Phi|_{\Gamma}\colon \Gamma \rightarrow \Gamma$ coincides with the induced map $f_*$ on $\pi_1(M) \cong \Gamma$. Hence, if $f$ is isotopic to the identity, then $f_*= \Phi|_{\Gamma}$ has to be the identity, but since $\Phi$ is already determined by its restriction to $\Gamma$ this implies that $\Phi$ is the identity.
	\end{proof}

	\begin{rem}
		In \cite[Theorem~1]{Hasegawa05Solv} Hasegawa showed that every complex structure on a solvmanifold of dimension four or less is isomorphic to a left-invariant one. We already saw that the only complex nilmanifolds up to dimension two are complex tori and Kodaira surfaces (see Example \ref{exam: Kodaira surface}). The real nilmanifold underlying a Kodaira surface is also called a \textit{Kodaira--Thurston manifold}. One consequence of Hasegawa's result is that the Teichmüller space of all complex structures on such a manifold has a finite number of connected components.
		
		In higher dimensions the situation becomes much more complicated. For instance, it was shown in \cite[Corollary~7.7]{Cat04DefLargeI} that the Teichmüller space $\kt(T^6)$ of the real six-dimensional torus $T^6$ already has countably many connected components of unbounded dimension. Therefore, we will restrict our considerations to the spaces $\kt(\frg)$ and $\km(\frg, \Gamma)$ which, as mentioned above, will often make up a union of connected components of $\kt(M)$ and $\km(M)$, respectively.
	\end{rem}

	\begin{rem}\label{rem: orientation}
		Instead of considering the space $\kc(M)$ of all complex structures on $M$ one could fix an orientation of $M$ and consider the the space $\kc^+(M)$ of complex structures compatible with this orientation. In this case, one defines the Teichmüller space $\kt^+(M)$ as above and for the moduli space $\km^+(M)$ one restricts the action to the group $\Diff^+(M)$ of orientation-preserving diffeomorphisms. The difference between these approaches depends on the existence of an orientation-reversing diffeomorphism, as explained in \cite[Remark~2.1]{Meersseman19}: if $M$ admits no orientation-reversing diffeomorphism, then $\kt(M)$ and $\km(M)$ are disjoint unions of the oriented Teichmüller and moduli spaces of both orientations. If $M$ does admit an orientation-reversing diffeomorphism, then $\km^+(M) = \km(M)$, but $\kt(M)$ has twice as many connected components as $\kt^+(M)$. 
		We will usually not impose an orientation on $M$.
	\end{rem}

	\begin{rem}
		Leaving aside the lattice $\Gamma$ for a moment and considering $\kc(\frg)$ as the space of complex structures on the Lie algebra $\frg$, we have an action of the automorphism group $\Aut(\frg)$ on $\kc(\frg)$ given by $(\Phi, J) \mapsto \Phi^{-1} \circ J \circ \Phi$. So the quotient space $\kc(\frg)/\Aut(\frg)$ parametrises equivalence classes of complex structures on $\frg$ up to Lie algebra automorphism. Note that there is a map from $\kt(\frg)$ to $\kc(\frg)/\Aut(\frg)$ since we can consider $\kt(\frg)$ as the quotient of $\kc(\frg)$ by the inner automorphisms of $\frg$. By taking the differential of an automorphism of $G$ induced by an automorphism of $\Gamma$, we can also consider $\Aut(\Gamma)$ as a subgroup of $\Aut(\frg)$. Hence, we have the quotient maps
		\[	
		\begin{tikzcd}
			\kt(\frg) \arrow[r] \arrow[rd] & \km(\frg, \Gamma) \arrow[d ]\\
			& \kc(\frg)/\Aut(\frg).
		\end{tikzcd}
		\]
		While there is not much known about the spaces $\kt(\frg)$ and $\km(\frg, \Gamma)$, the space $\kc(\frg)/\Aut(\frg)$ was already described for all Lie algebras up to dimension six in \cite{ceballos2016invariant} and for some Lie algebras of dimension eight in \cite{Latorre23}. However, the equivalence relation given by the $\Aut(\frg)$-action is usually too coarse to study $\kt(\frg)$ or $\km(\frg, \Gamma)$ via its map to $\kc(\frg)/\Aut(\frg)$. For instance, if $M$ is a Kodaira--Thurston manifold with its underlying Lie algebra $\frg$, then $\kc(\frg)/\Aut(\frg)$ is just a point while both $\kt(\frg)$ and $\km(\frg, \Gamma)$ admit the structure of a complex two-dimensional manifold (cf. \cite{Borcea84} or Section \ref{subsect: Kodaira manifolds}).
	\end{rem}

	\subsection{Fine and coarse moduli spaces}\label{sect: coarse and fine moduli spaces}
	Let $M =\Gamma \backslash G$ be a real nilmanifold of dimension $2n$ with underlying Lie algebra $\frg$.
	
	The topological spaces $ \kt(\frg)$ and $\km(\frg, \Gamma)$ parametrise left-invariant complex structures on $M$ up to different notions of equivalence. To give a precise meaning to the question whether there exists a natural complex structure on $\kt(\frg)$ or $\km(\frg, \Gamma)$ we first recall the notions of (topological) coarse and fine moduli spaces.
	
	To formally define a moduli problem for a class $\kc$ of objects in some category, in our case the category of (reduced) complex spaces, one needs the notion of a family of these objects and a notion of equivalence of such families. The corresponding \textit{moduli functor} is then defined as the contravariant functor $\kg$ from the category of complex spaces to the category of sets, which associates to a complex space $B$ the set of equivalence classes of families over $B$.
	
	A  complex space $S$ is said to be a \textit{fine moduli space} for a moduli functor $\kg$ if there exists a universal family $\kx \to S$ in $\kg(S)$, that is, for every family $\ky \to T$ in $\kg(T)$ there exist unique morphisms $f \colon T \to S$ and $F \colon \ky \to \kx$ such that
	\[
	\begin{tikzcd}
		\ky\arrow[r,"F"]\ar[d]&\kx\arrow[d]\\
		T\arrow[r, "f"]&S
	\end{tikzcd}
	\]
	is a cartesian diagram. In this case, the complex space $S$ represents the functor $\kg$. If a fine moduli space exists it is unique up to unique isomorphism. However, due to the existence of non-trivial automorphisms a fine moduli space will often not exist. This phenomenon already occurs for moduli of elliptic curves, as described in \cite[Example~8.4.1]{PeriodMappings17}. Thus, one is usually concerned with the more general notion of a coarse moduli space, we recall here the definition given in \cite[Chapter~7.1]{lange2012complex}.
	
	\begin{defin}\label{def: coarse moduli}
		A \textit{coarse moduli space} for a moduli functor $\kg$ is a complex space $S$ together with a natural transformation
		\[
		\Psi: \kg \to \Mor(\,\cdot\,, S)
		\]
		satisfying the following properties:
		\begin{enumerate}
			\item for a point $p$ the map $\Psi(p) \colon \kg(p) \to \Mor(p,S)$ is bijective, that is, the isomorphism classes of objects in $\kc$ are in bijection with points in $S$;
			\item for any complex space $T$ and every natural transformation $\Psi' \colon \kg \to \Mor(\, \cdot \, ,T)$, there exists a unique morphism of complex spaces $\psi\colon S\to T$ such that the associated diagram 
			\begin{equation}\label{eq: diagram coarse moduli}
				\begin{tikzcd}
					\kg \arrow[r, "\Psi"] \arrow[rd,swap,"\Psi'"] & \Mor(\, \cdot \,, S) \arrow[d,"\psi \circ" ]\\
					& \Mor(\,, \cdot \, T)
				\end{tikzcd}
			\end{equation}
			commutes. 
		\end{enumerate}
	\end{defin}
	
	Similarly, a \textit{topological coarse moduli space} for $\kg$ is a topological space $T$ together with a natural transformation $\Phi \colon  \kg \to C(\, \cdot \,, T)$ to the set of continuous maps to $T$ satisfying properties $(i)$ and $(ii)$ in the previous definition with all morphisms being just continuous maps. It immediately follows from property $(ii)$ that a (topological) coarse moduli space is unique up to unique isomorphism if it exists. 
	
	We will mostly be concerned with the following two moduli functors associated to a class $\kc \subset \kc(\frg)$ of left-invariant complex structures on the nilmanifold $M$.
	\begin{itemize}
		\item The functor $\kf$ associates to a complex space $B$ the set of equivalence classes of holomorphic families of complex nilmanifolds in $\kc$. Two families $\pi\colon \kx \to B$ and $\pi'\colon \kx' \to B$ are equivalent if there exists a biholomorphism $F \colon \kx \to \kx'$ such that the diagram
		\begin{equation}\label{eq: diagram equivalent families}
			\begin{tikzcd}
				\kx \arrow[rr,"F"] \arrow[dr, swap, "\pi"] & &\kx' \arrow[dl, "\pi'"]\\
				& B
			\end{tikzcd}
		\end{equation}
		commutes. To any morphism of complex spaces $f\colon B' \to B$ we define the map $\kf(f) \colon \kf(B) \to \kf(B')$ by sending a family $\kx \to B$ to the pull-back family $f^*\kx \to B'$.
		\item The functor $\kf^0$ associates to a complex space $B$ the set of equivalence classes of pairs
		\[
		(\pi \colon \kx \to B,\, [\phi\colon M \to \kx]),
		\]
		where $\pi$ is a holomorphic family of complex nilmanifolds in $\kc$ and $[\phi\colon M \to \kx]$ is a $\Diff^0(M)$-framing of the family $\pi$. For a morphism $f \colon B' \to B$ the map $\kf^0(f) \colon \kf^0(B) \to \kf^0(B')$ is given by sending the pair $(\kx \to B,[\phi\colon M \to \kx])$ to $(f^*\kx \to B,[M \xrightarrow{\phi} \kx \to f^*\kx])$. 
	\end{itemize}

	Following \cite{Meersseman19}, we briefly recall the notion of a $\Diff^0(M)$-framing, also known as a Teichmüller structure (see \cite[Section~3]{Arbarello09}), for a given family. A $\Diff^0(M)$-framing for a family $\pi\colon \kx \to B$ over a connected base $B$ is given by the isotopy class of a map $M \to \kx$, which is a diffeomorphism onto a fibre of $\pi$. Here, an isotopy is a smooth map $H \colon M \times [0,1] \to \kx$ such that for every $t\in [0,1]$ the map $H_t = H(\, \cdot\,,t)$ defines a diffeomorphism of $M$ to a fibre of $\pi$. If $H$ is such an isotopy and both $H_0$ and $H_1$ map to the same fibre of $\pi$, then the diffeomorphism $H_1^{-1} \circ H_0$ of $M$ lies in $\Diff^0(M)$.
	Now, suppose $H$ and $H'$ are isotopies with $H_0 = H_0' = \phi$ and such that $H_1$ and $H_1'$ are diffeomorphisms to the same fibre of $\pi$. Then, $H_1$ and $H_1'$ differ by the diffeomorphism $H_1^{-1} \circ H_1' = (H_1^{-1} \circ H_0) \circ (H_0^{-1} \circ H_1') \in \Diff^0(M)$.
	Hence,  a $\Diff^0(M)$-framing gives an identification of the fibres of $\pi$ with $M$, which is defined up to a diffeomorphism in the identity component $\Diff^0(M)$. We say that two families $(\pi\colon \kx \to B, [\phi\colon M \to \kx])$ and $(\pi'\colon \kx'\to B, [\phi'\colon M \to \kx'])$ with a $\Diff^0(M)$-framing are equivalent if there exists a biholomorphic map $F\colon \kx \to \kx'$ such that the above diagram \eqref{eq: diagram equivalent families} commutes and the framing $[F \circ \phi\colon M \to \kx']$ coincides with $[\phi'\colon M \to \kx']$.

	\begin{rem}\label{rem: framing Kuranishi family}
		After choosing a hermitian metric on a complex nilmanifold $X_0=(M,J_0)$ the Kuranishi family of $X_0$ comes with a $\Diff^0(M)$-framing: on the product $M \times \Kur(X_0)$ there exists the structure of a complex space $\kk$ such that the projection  $\pi\colon\kk \to \Kur(X_0)$ is holomorphic and $\pi^{-1}(t) = (M,J_t)$, where $J_t$ is the left-invariant complex structure corresponding to the power series solution \eqref{eq: Green} with respect to the chosen metric. 
	\end{rem}

	\subsection{Existence}\label{subsect: existence criteria}
	
	Our first goal will be to show that the spaces $\km(\frg, \Gamma)$ and $\kt(\frg)$ always form a topological coarse moduli space for the functors $\kf$ and $\kf^0$, respectively, when they are defined for the class $\kc = \kc(\frg)$ of all complex nilmanifolds of type $(\frg, \Gamma)$. This is well-known for the moduli space of complex tori, i.e., when $\frg$ is abelian (see \cite[Theorem~3.2, p.~216]{lange2012complex}). By the universal property of coarse moduli spaces, this allows us to give some non-existence criteria for a complex moduli space depending on the topology of $\km(\frg, \Gamma)$ and $\kt(\frg)$.
	First, we have to construct suitable natural transformations $ \Psi \colon \kf \to C(\, \cdot \,, \km(\frg, \Gamma))$ and  $\Psi^0 \colon \kf^0 \to C(\, \cdot \,, \kt(\frg))$. We will begin with the construction of $\Psi$.

	Let $\pi \colon \kx \to B$ be a family of complex nilmanifolds. Then we can define a map  $\tau_\pi\colon B \to \km(\frg, \Gamma)$ by mapping $b\in B$ to the isomorphism class representing the complex nilmanifold $\pi^{-1}(b)$. Note that this does not give a well-defined map to $\kc(\frg)$ or $\kt(\frg)$ since the identification of the fibre $\pi^{-1}(b)$ with a pair $(M,J_b)$, where $J_b$ is a left-invariant complex structure on $M$, is only defined up to diffeomorphism.
	\begin{lem}\label{lem: tau is continuous}
		The map $\tau_\pi$ is continuous.
	\end{lem}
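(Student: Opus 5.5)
Continuity is a local statement on $B$, so I will fix $b_0 \in B$ and show that $\tau_\pi$ is continuous on some neighbourhood of $b_0$. The idea is to factor $\tau_\pi$ locally through $\kc(\frg)$. Since $\km(\frg,\Gamma)$ carries the quotient topology of $\kc(\frg)$, the projection $q\colon \kc(\frg)\to\km(\frg,\Gamma)$ is continuous. It therefore suffices to find a neighbourhood $U$ of $b_0$ and a continuous map $\sigma\colon U\to\kc(\frg)$ with $q\circ\sigma = \tau_\pi|_U$.

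To construct $\sigma$ I use Kuranishi theory. Let $X_0 = \pi^{-1}(b_0)$ and identify it with $(M,J_0)$ for some $J_0\in\kc(\frg)$ via a diffeomorphism. Choose a left-invariant hermitian metric on $X_0$ and take the Kuranishi family $\kk\to\Kur(X_0)$ described in Remark \ref{rem: framing Kuranishi family}. By \cite{Has26} and \cite[Theorem~2.6]{rollenske09b}, as recalled in Section \ref{subsubsect: Kuranishi theory}, this family consists of the left-invariant structures $J_t$ given by \eqref{eq: Green}, and $\Kur(X_0)\hookrightarrow\kc(\frg)$ is an embedding of complex spaces, in particular continuous. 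By semi-universality (completeness) of the Kuranishi family, there are a neighbourhood $U$ of $b_0$ and a holomorphic map $g\colon U\to \Kur(X_0)$ such that $\pi^{-1}(U)\cong g^*\kk$ over $U$. For each $b\in U$ this gives $\pi^{-1}(b)\cong (M,J_{g(b)})$, so $\tau_\pi(b) = q(J_{g(b)})$. Hence $\sigma = \iota\circ g$ works, where $\iota\colon\Kur(X_0)\to\kc(\frg)$ is the inclusion, and $\sigma$ is continuous as a composition of holomorphic maps.

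The main point to check is that Kuranishi's completeness applies to an arbitrary holomorphic family over a possibly singular, reduced base $B$. Kuranishi's theorem gives exactly this: every deformation of $X_0$ over a germ of complex space is induced from the Kuranishi family by some holomorphic map of germs. A germ statement suffices here because continuity is local. A second, smaller point is that the left-invariant structures $J_t$ really lie in $\kc(\frg)$ and depend continuously on $t$. This holds because $\phi(t)$ is a convergent power series in $t$ with values in $\frg^{*0,1}\otimes\frg^{1,0}$, and the $(-i)$-eigenspace $\{z+\phi(t)(z)\}$ therefore varies holomorphically in $\Gr(n,\frg_\IC)$. If one only wants continuity, one can avoid the embedding statement altogether and use this explicit description.
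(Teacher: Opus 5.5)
Your proposal is correct and follows essentially the same argument as the paper. Both locally pull the family back from the Kuranishi family of the fibre via completeness, embed $\Kur(X_b)$ into $\kc(\frg)$ using a left-invariant hermitian metric, and compose with the quotient map $\kc(\frg)\to\km(\frg,\Gamma)$ to recover $\tau_\pi$ on a neighbourhood of $b$.
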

	\begin{proof}
		Let $b\in B$ and denote $X_b = \pi^{-1}(b)$. Then there exists a small neigbourhood $U$ of $b$ in $B$ and a holomorphic map $f\colon U \to \Kur(X_b)$ such that the restriction of the family $\pi$ to $U$ is equivalent to the pull-back of the Kuranishi family of $X_b$ along $f$. After choosing a hermitian metric on $X_b$ the Kuranishi space $\Kur(X_b)$ can be embedded into $\kc(\frg)$. By composing this embedding with $f$ we obtain a holomorphic map $\sigma \colon U \to \kc(\frg)$, which depends both on the map $f$ and the choice of hermitian metric on $X_b$. However, after composing $\sigma$ with the projection $\kc(\frg) \to \km(\frg, \Gamma)$ we obtain a  continuous map $U \to \km(\frg, \Gamma)$ independent of these choices which coincides with the restriction of $\tau_\pi$ to $U$.
	\end{proof}
	We can now define the natural transformation
	\[
	\Psi \colon \kf \to C(\, \cdot \,, \km(\frg,\Gamma))
	\]
	by sending a family $\pi$ to the continuous map $\tau_\pi$.
	
	For the construction of $ \Psi^0 \colon \kf^0 \to C(\, \cdot \,, \kt(\frg))$ we consider a family of complex nilmanifolds  $(\pi\colon\mathcal{X} \rightarrow B, [\phi\colon M \to \kx])$ with a $\Diff^0(M)$-framing. For each $b\in B$ the framing yields an identification of the fibre $\pi^{-1}(b)$ with a complex nilmanifold $(M,J_b)$, defined up to an element of $\Diff^0(M)$. Hence, sending $b\in B$ to the class of $J_b$ in $\kt(\frg)$ defines a map
	\begin{equation}\label{eq: tau_phi}
		\tau_{\pi,[\phi]} \colon B \to \kt(\frg),
	\end{equation}
	which depends on the framing $[\phi\colon M \to \kx]$ of the family. As in Lemma \ref{lem: tau is continuous} one shows that the map $\tau_{\pi,[\phi]} \colon B \to \kt(\frg)$ is continuous and we can define the natural transformation
	\[
	\Psi^0\colon \kf^0 \to C(\, \cdot\,, \kt(\frg)) 
	\]
	by associating to a pair $(\pi\colon\mathcal{X} \rightarrow B, [\phi\colon M \to \kx])$ the continuous map $\tau_{\pi,[\phi]}$.
	
	\begin{rem}\label{rem: family over Cg}
		In \cite[Section~2.2]{rollenske09b} Rollenske constructs a family over the complex space $\kc(\frg)$ such that the fibre over $J \in \kc(\frg)$ is the complex nilmanifold $(M,J)$: consider the pull-back $\ku(\frg) = (G \times \ku_n) \times_{\ku_n} \kc(\frg)$ with respect to the projection $(G \times \ku_n) \to \ku_n$ and the inclusion $\kc(\frg) \hookrightarrow \ku_n$. The lattice $\Gamma$ acts on $\ku(\frg)$ from the left and by taking the quotient we obtain a family $\pi \colon \ku(\frg, \Gamma) \to \kc(\frg)$ with $\pi^{-1}(J) = (M,J)$. In particular, the family $\pi$ comes with a framing $[\phi]$ and the induced maps $\tau_{\pi} \colon \kc(\frg) \to \km(\frg, \Gamma)$ and $\tau_{\pi, [\phi]} \colon \kc(\frg) \to \kt(\frg)$ coincide with the projections $\kc(\frg) \to \km(\frg, \Gamma)$ and $\kc(\frg) \to \kt(\frg)$, respectively.
	\end{rem}
	
	The following generalises the existence of a topological coarse moduli space for complex tori \cite[Theorem~3.2, p.~216]{lange2012complex} to complex nilmanifolds.
	
	\begin{prop}\label{prop: topological coarse moduli}
		The topological spaces $\km(\frg, \Gamma)$ and $\kt(\frg)$ together with the natural transformations $\Psi$ and $\Psi^0$ are topological coarse moduli spaces for the functors $\kf$ and $\kf^0$, respectively.
	\end{prop}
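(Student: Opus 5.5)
We have to verify the two properties of Definition~\ref{def: coarse moduli} in the topological setting, once for $(\km(\frg,\Gamma),\Psi)$ and once for $(\kt(\frg),\Psi^0)$. The two cases run in parallel, so we treat $\km(\frg,\Gamma)$ in detail and indicate the changes for $\kt(\frg)$.

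\emph{Property (i).} A family over a point is a single complex nilmanifold $(M,J)$ up to biholomorphism. $\Psi(p)$ sends it to its class in $\km(\frg,\Gamma)$. Injectivity and surjectivity hold by the definition of $\km(\frg,\Gamma)=\kc(\frg)/\cong$. Here we use that every complex manifold in the class $\kc$ is by definition of the form $(M,J)$ with $J\in\kc(\frg)$, and that biholomorphisms between such manifolds are exactly diffeomorphisms $f$ with $f_*J=J'$. For $\kf^0$ the argument is the same: a $\Diff^0(M)$-framed nilmanifold over a point is a complex structure $J\in\kc(\frg)$ up to $\Diff^0(M)$. This is precisely a point of $\kt(\frg)=\kc(\frg)/\sim$, which by Corollary~\ref{cor: simple description} equals $\kc(\frg)/G$.

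\emph{Property (ii).} Let $\Psi'\colon\kf\to C(\,\cdot\,,T)$ be a natural transformation. We use the family $\pi\colon\ku(\frg,\Gamma)\to\kc(\frg)$ of Remark~\ref{rem: family over Cg}, which gives a continuous map $\sigma=\Psi'(\pi)\colon\kc(\frg)\to T$.

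First, $\sigma$ is constant on the fibres of the projection $q\colon\kc(\frg)\to\km(\frg,\Gamma)$. To see this, take $J\cong J'$ and restrict $\pi$ to the points $J$ and $J'$. Naturality for the inclusion of points $p\to\kc(\frg)$ shows that $\sigma(J)=\Psi'(p)((M,J))$, and similarly for $J'$. Since $(M,J)$ and $(M,J')$ are the same element of $\kf(p)$, we get $\sigma(J)=\sigma(J')$. Hence $\sigma$ descends to a map $\psi\colon\km(\frg,\Gamma)\to T$, which is continuous because $\km(\frg,\Gamma)$ carries the quotient topology of $\kc(\frg)$.

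Next, the diagram \eqref{eq: diagram coarse moduli} commutes, i.e. $\psi\circ\tau_{\pi'}=\Psi'(\pi')$ for every family $\pi'\colon\kx\to B$. Both sides are maps $B\to T$, so it suffices to compare them pointwise. Naturality with respect to the inclusion $b\colon p\to B$ reduces this to the fibre $\pi'^{-1}(b)$, where the identity holds by the point case. This pointwise reduction uses only naturality, not any local structure of the family, so the appeal to Kuranishi families from Lemma~\ref{lem: tau is continuous} is not needed here.

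Uniqueness of $\psi$ holds because property (i) forces $\psi([J])=\Psi'(p)((M,J))$.

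For $\kf^0$ we use the same family $\pi$ with its framing $[\phi]$. The map $\tau_{\pi,[\phi]}$ is the projection $\kc(\frg)\to\kt(\frg)$, so the argument goes through verbatim. The one thing to check is that restricting a framed family to a point gives the framed point class $[J]\in\kt(\frg)$. This is the definition of $\tau_{\pi,[\phi]}$ in \eqref{eq: tau_phi}.

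The main obstacle, though mild, is making sure that continuity of $\psi$ really comes from the quotient topology. This requires that the projection from $\kc(\frg)$ is exactly the map used to topologise $\km(\frg,\Gamma)$ and $\kt(\frg)$. The paper first defines these spaces as quotients of $\kc(\frg)$ with the induced topology, and then identifies them with $\kc(\frg)/G$ and $\kt(\frg)/\Aut(\Gamma)$ via Corollary~\ref{cor: simple description}. So I would take the quotient topology from $\kc(\frg)$ as the definition, and note that the corollary identifies these spaces with the group quotients as sets, compatibly with the projections.
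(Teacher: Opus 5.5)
Your proposal is correct and follows essentially the same route as the paper: bijectivity at a point comes from the definitions, and continuity of $\psi$ comes from evaluating $\Psi'$ on the tautological family $\ku(\frg,\Gamma)\to\kc(\frg)$ of Remark~\ref{rem: family over Cg}, using that $\tau_\pi$ (resp.\ $\tau_{\pi,[\phi]}$) is the quotient map. The differences are cosmetic: the paper defines $\psi=\Psi'(p)\circ\Psi(p)^{-1}$ directly instead of descending $\sigma$, and for $\kf^0$ it spells out injectivity of $\Psi^0(p)$ via the biholomorphism $f=\phi'\circ g\circ\phi^{-1}$ with $[f\circ\phi]=[\phi']$, a check you treat as immediate.
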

	\begin{proof}
		We begin with the functor $\kf$. For a point $p$ the map
		\[ \Psi(p)\colon \mathcal{F}(p) \to C(p, \km(\frg,\Gamma)) = \km(\frg, \Gamma) \]
		is bijective since the equivalence of families over a point identifies biholomorphic complex nilmanifolds. Let $T$ be a topological space and let $\Psi'\colon \kf \to C(\, \cdot \,, T)$ be a natural transformation. To prove the second item of Definition \ref{def: coarse moduli} we have to show that there exists a unique continuous map $\psi \colon \km(\frg, \Gamma) \to T$ such that the diagram \eqref{eq: diagram coarse moduli} commutes.
		Consider the maps
		\[
		\begin{tikzcd}
			\kf(p) \arrow[r, "\Psi(p)"] \arrow[rd,swap,"\Psi'(p)"] & \km(\frg, \Gamma)\\
			& T.
		\end{tikzcd}
		\]
		It suffices to show that the map $\psi = \Psi'(p) \circ \Psi(p)^{-1}$  is continuous. Consider the family $\pi\colon \ku(\frg, \Gamma) \to \kc(\frg)$ from Remark \ref{rem: family over Cg} with the associated map $\tau_\pi\colon \kc(\frg) \to \km(\frg, \Gamma)$. Evaluating the map $\Psi'(\kc(\frg))\colon \kf(\kc(\frg)) \to C(\kc(\frg), T)$ on the equivalence class of the family $\pi$ we obtain a continuous map $\tau_\pi'\colon \kc(\frg) \to T$ such that $\psi \circ \tau_\pi = \tau_\pi'$. Since $\tau_\pi$ is the quotient map this implies that $\psi$ has to be continuous. This proves the claim for the functor $\kf$ with the natural transformation $\Psi$.
		
		For the functor $\kf^0$ and a point $p$ the set $\kf^0(p)$ is the set of equivalence classes of pairs $(X, [\phi])$, where $X$ is a complex manifold biholomorphic to a complex nilmanifold of type $(\frg, \Gamma)$ and $[\phi]$ is the isotopy class of a diffeomorphism $\phi \colon M \to X$. Two pairs $(X, [\phi])$ and $(X', [\phi'])$ are equivalent if there exists a biholomorphism $f \colon X \to X'$ such that $[f \circ \phi] = [\phi']$. Clearly, the associated map $\Psi^0(p) \colon \kf^0(p) \to \kt(\frg)$ is surjective. Suppose $\Psi^0(p)(X, [\phi]) = \Psi^0(p)(X', [\phi'])$ and let $X =(N,J)$ and $X'=(N',J')$. We can assume that $\phi^*J$ and $(\phi')^*J'$ are left-invariant complex structures on $M$, so the assumption $\Psi^0(p)(X, [\phi]) = \Psi^0(p)(X', [\phi'])$ implies that $\phi^*J$ and $(\phi')^*J'$ correspond to the same point in $\kt(\frg)$ and thus there is a diffeomorphism $g\in \Diff^0(M)$ defining a biholomorphism from $(M,{\phi^*J})$ to $(M,{(\phi')^*J'})$. Hence, the map $f= \phi' \circ g \circ \phi^{-1}$ is a biholomorphism from $X$ to $X'$ and we have
		$ [f \circ \phi] = [\phi' \circ g] = [\phi']$. Therefore, the pairs $(X, [\phi])$ and $(X', [\phi'])$ are equivalent and the map $\Psi^0(p) \colon \kf^0(p) \to \kt(\frg)$ is a bijection. To prove the second condition in Definition \ref{def: coarse moduli} one uses exactly the same argument as for $\kf$ and the fact that the map $\tau_{\pi, [\phi]}\colon \kc(\frg) \to \kt(\frg)$ is just the quotient map.
	\end{proof}

	Suppose there are complex structures on $\km(\frg, \Gamma)$ and $\kt(\frg)$ such that the maps $\tau_\pi$ and $\tau_{\pi,[\phi]}$ defined above are holomorphic for every family $\pi$ and every framing $[\phi]$. Then $\Psi$ and $\Psi^0$ are natural transformations from $\kf$ to $\Mor(\, \cdot\,, \km(\frg, \Gamma))$ and from $\kf^0$ to $\Mor(\, \cdot\,, \kt(\frg))$, respectively. If the complex structure on $\km(\frg, \Gamma)$ makes it into a coarse moduli space with respect to the functor $\kf$ and the natural transformation $\Psi$ we will simply say that $\km(\frg, \Gamma)$ exists as a complex space. Similary, we say that $\kt(\frg)$ exists as a complex space if it exists as a coarse moduli space for $\kf^0$ with the natural transformation $\Psi^0$. 
	
	The following is an immediate consequence of Proposition \ref{prop: topological coarse moduli} together with the universal property of coarse moduli spaces (cf. \cite[Corollary~1.2, p.211]{lange2012complex}).
	
	\begin{cor}\label{cor: non hausdorff moduli}
		If $\km(\frg,\Gamma)$, respectively $\kt(\frg)$, is not Hausdorff, then $\km(\frg,\Gamma)$, respectively $\kt(\frg)$, does not exist as a complex space.
	\end{cor}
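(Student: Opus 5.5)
The plan is to argue by contradiction, using the universal property from Definition \ref{def: coarse moduli} together with Proposition \ref{prop: topological coarse moduli}. I treat $\km(\frg,\Gamma)$ with $\kf$ and $\Psi$; the case of $\kt(\frg)$ with $\kf^0$ and $\Psi^0$ is word-for-word the same. Suppose $\km(\frg,\Gamma)$ exists as a complex space. Then there is a complex space $S$ and a natural transformation $\Psi_S\colon \kf \to \Mor(\,\cdot\,,S)$ satisfying (i) and (ii). Every complex space is Hausdorff, so the goal is to show that $S$ is homeomorphic to the topological space $\km(\frg,\Gamma)$. This contradicts the assumption that $\km(\frg,\Gamma)$ is not Hausdorff.

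First, composing morphisms with the forgetful map $\Mor(B,S)\to C(B,S^{\mathrm{top}})$ turns $\Psi_S$ into a natural transformation $\kf\to C(\,\cdot\,,S^{\mathrm{top}})$. By Proposition \ref{prop: topological coarse moduli}, $(\km(\frg,\Gamma),\Psi)$ is a topological coarse moduli space. The universal property (ii) in its topological version therefore gives a unique continuous map $\psi\colon \km(\frg,\Gamma)\to S$ with $\psi\circ\Psi=\Psi_S$. Evaluating at a point $p$ and using property (i) for both spaces, $\psi=\Psi_S(p)\circ\Psi(p)^{-1}$ is a bijection.

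Second, I show that $\psi^{-1}$ is continuous, or more modestly that $\psi$ is injective and continuous into a Hausdorff space. Injectivity plus continuity already forces the source to be Hausdorff. Indeed, two distinct points $x\neq y$ map to distinct points of $S$, which have disjoint open neighbourhoods $U,V$. Their preimages $\psi^{-1}(U)$ and $\psi^{-1}(V)$ are then disjoint open neighbourhoods of $x$ and $y$.

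This second step removes what I expected to be the main obstacle. Proving that $\psi$ is a homeomorphism would require a universal family on $S$, or at least local families, to construct the inverse. The continuous injection argument avoids that entirely, since the bijectivity from (i) together with continuity from (ii) already suffices. The only care needed is checking that $\Psi_S$, viewed through underlying continuous maps, is genuinely natural, which is immediate from functoriality of pullback. The conclusion is that $\km(\frg,\Gamma)$ would be Hausdorff, a contradiction.
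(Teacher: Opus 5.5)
Your proof is correct and follows the route the paper indicates. The paper states the corollary as an immediate consequence of Proposition \ref{prop: topological coarse moduli} and the universal property of coarse moduli spaces, and your continuous bijection $\km(\frg,\Gamma)\to S$ (resp.\ $\kt(\frg)\to S$) into a Hausdorff complex space is exactly that argument written out in detail.
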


	We will see in Section \ref{subsect: complex parallelisable structures} that $\kt(\frg)$ is not a Hausdorff space if $\frg$ is a non-abelian Lie algebra admitting a complex parallelisable structure (see Proposition \ref{prop: Tg non hausdorff complex par}). This is roughly due to the fact that the dimension of the automorphism group varies in the Kuranishi family of a complex parallelisable nilmanifold when it is not a complex torus \cite[Theorem~5.1]{rollenske11}. Generally, this jumping phenomenon of the dimension of the automorphism group is the major obstruction for the existence of a complex coarse moduli space. As in \cite{Meersseman19}, we define the function
	\[
	h^0 \colon \kc(\frg) \to \IN_0, \qquad  J \mapsto \dim H^0(\frg^{0,1}, \frg^{1,0}).
	\]
	Recall that for a complex nilmanifold $X =(M,J)$ we have 
	\[
	h^0(J) = \dim H^0(\frg^{0,1}, \frg^{1,0}) = \dim H^0(X, \Theta_X) = \dim \Aut(X).
	\]
	Hence, if we consider $\kc(\frg)$ as the space of left-invariant complex structures on $M$, then the function $h^0$ assigns to $J\in \kc(\frg)$ the dimension of the automorphism group of $X$.  It was proved by Grauert in \cite{Grauert60} that the function $h^0$ is upper-semi continuous in holomorphic families, so in our setting we have $h^0(J') \leq h^0(J)$ for all complex structures $J'$ in a sufficiently small neighbourhood around $J$ in $\kc(\frg)$.
	
	The main strategy to obtain a complex structure on $\kt(\frg)$ is to glue Kuranishi spaces of the complex manifolds parametrised by $\kt(\frg)$ to a global complex structure on $\kt(\frg)$. This requires the assumption that the Kuranishi families are universal, which is equivalent to the function $h^0$ being constant, provided that the Kuranishi space is reduced  \cite[Theorem~4.2]{wavrik1969obstructions}.

	More precisely, for a complex nilmanifold $X= (M,J)$ with a fixed left-invariant hermitian metric we have an embedding $\Kur(X) \hookrightarrow \kc(\frg)$. So by composing this embedding with the projection $\kc(\frg) \to \kt(\frg)$ we obtain a continuous map
	\[
	\pi_X \colon \Kur(X) \to \kt(\frg).
	\]
	This map coincides with the map $\tau_{\pi, [\phi]}\colon \Kur(X) \to \kt(\frg)$ defined in \eqref{eq: tau_phi}, where $\pi \colon \kk \to \Kur(X)$ is the Kuranishi family and $[\phi]$ is the $\Diff^0(M)$-framing described in Remark \ref{rem: framing Kuranishi family}.
	The map $\pi_X$ is surjective onto the germ $\kt(\frg)_J$ of $\kt(\frg)$ at the point corresponding to the complex structure $J$ since the Kuranishi family is complete. 
	As explained in \cite[Remark~14]{catanese2011superficial}, the map $\pi_X \colon \Kur(X) \to\kt(\frg)_J$ is a homeomorphism if and only if it is injective, in which case $\kt(\frg)$ is at least locally analytic around the class of $J$. The following is a precise formulation of \cite[Remark~14]{catanese2011superficial} in our setting.       
	
	\begin{prop}\label{prop: Tg coarse moduli}
		Suppose the following holds for every complex nilmanifold $X$ of type $(\frg, \Gamma)$.
		\begin{enumerate}
			\item The map $\pi_X \colon \Kur(X) \to \kt(\frg)$ is injective.
			\item The Kuranishi family of $X$ is universal.
		\end{enumerate}
		If $\kt(\frg)$ is a Hausdorff space, then $\kt(\frg)$ exists as a complex space.
	\end{prop}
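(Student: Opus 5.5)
The plan is to glue the Kuranishi spaces into a complex structure on $\kt(\frg)$ and then verify the coarse moduli property for $\kf^0$ with the natural transformation $\Psi^0$.

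\emph{Charts.} For every $J\in\kc(\frg)$, set $X=(M,J)$. Assumption (i), together with completeness of the Kuranishi family and \cite[Remark~14]{catanese2011superficial}, shows that $\pi_X\colon\Kur(X)\to\kt(\frg)$ is a homeomorphism onto an open neighbourhood $W_X$ of $[J]$. The openness of $W_X$ is justified as follows. Any structure close to a point of $G\cdot\Kur(X)$ in $\kc(\frg)$ lies in a small deformation of $X$, so it is equivalent to a point of $\Kur(X)$. Hence the saturation of the image of $\Kur(X)$ is open, and $\kt(\frg)=\kc(\frg)/G$ carries the quotient topology. We then transport the complex space structure of $\Kur(X)$ (taken reduced, matching the category in the definition of the moduli functors) to $W_X$.

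\emph{Compatibility.} Suppose $W_X\cap W_Y\neq\emptyset$. The transition map $\pi_Y^{-1}\circ\pi_X$ is defined on $\pi_X^{-1}(W_X\cap W_Y)$, and we must show it is holomorphic. Fix a point $t$ there with fibre $X_t$. By assumption (ii) the Kuranishi family of $X_t$ is universal, and the Kuranishi family of $X$ restricted near $t$ is a complete, indeed universal, family for $X_t$. Applying the universal property of $\Kur(Y)$ to the restriction of the Kuranishi family of $X$ near $t$ produces a holomorphic classifying map into $\Kur(Y)$. We check that it respects the $\Diff^0(M)$-framings of Remark \ref{rem: framing Kuranishi family}. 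Because $\pi_Y$ is injective, this classifying map must coincide with $\pi_Y^{-1}\circ\pi_X$, which is therefore holomorphic. The inverse transition is handled symmetrically, so the charts form a complex atlas. Since $\kt(\frg)$ is Hausdorff by assumption, this gives $\kt(\frg)$ the structure of a complex space.

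\emph{Coarse moduli property.} We verify the two conditions of Definition \ref{def: coarse moduli}. First, for any framed family $(\pi\colon\kx\to B,[\phi])$ the map $\tau_{\pi,[\phi]}$ must be holomorphic. Locally on $B$ the family is pulled back from a Kuranishi family via a holomorphic map $f$, as in the proof of Lemma \ref{lem: tau is continuous}. After possibly changing the framing by an element of $\Diff^0(M)$, we have $\tau_{\pi,[\phi]}=\pi_X\circ f$ on that neighbourhood, which is holomorphic in the chart $W_X$. Condition (i) of Definition \ref{def: coarse moduli} is Proposition \ref{prop: topological coarse moduli}.

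For condition (ii), let $\Psi'\colon\kf^0\to\Mor(\cdot,T)$ be a natural transformation. The topological argument of Proposition \ref{prop: topological coarse moduli} yields a unique continuous $\psi$ with $\psi\circ\Psi^0=\Psi'$. On each chart, $\psi\circ\pi_X$ equals $\Psi'$ evaluated on the framed Kuranishi family, which is a morphism. Hence $\psi$ is holomorphic on every $W_X$, and uniqueness follows from the topological statement.

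The main obstacle is the compatibility step. The subtle point is matching framings: the universal property of $\Kur(Y)$ is a statement about families up to fibrewise biholomorphism, and we need the induced isomorphism to be isotopic to the given identification. Injectivity of $\pi_Y$ on $\kt(\frg)$ is exactly what resolves this. I would first argue pointwise that the two maps agree as maps of sets into $\Kur(Y)$, and then invoke holomorphicity of the classifying map.
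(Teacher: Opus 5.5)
Your proposal is correct and follows the paper's proof. The Kuranishi spaces, viewed inside $\kt(\frg)$ via the injective maps $\pi_X$, are glued into an atlas, and Hausdorffness makes this a complex space. The coarse moduli property is then checked by showing that $\tau_{\pi,[\phi]}$ is holomorphic and that $\psi\circ\pi_X$ is the holomorphic map $\Psi'$ assigns to the framed Kuranishi family. You spell out the openness of the charts and the holomorphy of the transition maps, which the paper only asserts.

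One small correction to your last paragraph. In the transition step, framing compatibility does not come from injectivity of $\pi_Y$. It comes from taking the marking $X_t\cong Y_s$ to be a right translation in $G\subset\Diff^0(M)$, which exists because $\pi_X(t)=\pi_Y(s)$, and propagating it over a connected neighbourhood. This uses completeness of the Kuranishi family of $Y$ at $s$, not only at $0$. Injectivity is then what identifies the classifying map with $\pi_Y^{-1}\circ\pi_X$.
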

	\begin{proof}
		Let $X$ and $X'$ be complex nilmanifolds of type $(\frg, \Gamma)$. Since $\pi_X$ and $\pi_{X'}$ are injective we can consider $\Kur(X)$  and $\Kur(X')$ as subspaces of $\kt(\frg)$. Taking the intersection $\Kur(X) \cap \Kur(X')$ in $\kt(\frg)$ we obtain an open subset in both $\Kur(X)$ and $\Kur(X')$ where the complex structures coincide because the Kuranishi families of $X$ and $X'$ are universal. Therefore, the charts given by the Kuranishi spaces define a complex structure on $\kt(\frg)$ and since we assumed $\kt(\frg)$ to be Hausdorff this makes $\kt(\frg)$ into a complex space.

		With respect to this complex structure on $\kt(\frg)$ the maps $\tau_{\pi,[\phi]} \colon B \to \kt(\frg)$  defined for a family $(\pi \colon \kx \to B, [\phi \colon M \to \kx])$ with a $\Diff^0(M)$-framing are holomorphic and thus $\Psi^0$ defines a natural transformation from $\kf^0$ to $\Mor(\, \cdot \,, \kt(\frg))$.
		
		Since we already proved that $\kt(\frg)$ is a topological coarse moduli space it suffices to prove the universal property for holomorphic maps. Let $S$ be a complex space and let $\Psi' \colon \kf^0 \to \Mor(\cdot,\, S)$ be a natural transformation. We have to show that for a point $p$ the induced map $\psi^0 = \Psi'(p) \circ \Psi^0(p)^{-1}\colon \kt(\frg) \to S$ is holomorphic. Consider the Kuranishi family $\pi\colon \kk \to \Kur(X)$ for a given complex nilmanifold $X$. Then, by evaluating the maps $\Psi^0(\Kur(X))\colon \kf^0(\Kur(X)) \to \Mor(\Kur(X),\,\kt(\frg))$ and $\Psi'(\Kur(X))\colon \kf^0(\Kur(X)) \to \Mor(\Kur(X),\,S)$ on the class of the Kuranishi family $\pi$, we obtain a commutative diagram
		\[
		\begin{tikzcd}
			\Kur(X) \arrow[r, "\pi_X"] \arrow[rd,swap,"\varphi"] & \kt(\frg) \arrow[d,"\psi^0"] \\
			&  S,
		\end{tikzcd}
		\]
		where $\varphi$ is holomorphic and $\pi_X$ is a local biholomorphism by construction. Since $\kt(\frg)$ is constructed by glueing Kuranishi spaces this implies that $\psi^0$ is holomorphic.
	\end{proof}

	Of course, the assumptions of Proposition \ref{prop: Tg coarse moduli} are very strong and in general difficult to verify. Nevertheless, we will see in Section \ref{sect: period maps} that there are several criteria on the Lie algebra underlying $M$ implying these assumptions.

	\begin{rem}
		The map $\pi_Y \colon \Kur(Y) \to \kt(Y)$ can be defined for every compact complex manifold $Y$ and has been studied for several classes of manifolds. See for instance \cite{catanese2011superficial} for Calabi Yau manifolds and surfaces of general type, or \cite[Section~28.3]{Ornea24} for Hopf manifolds. The case in which $\pi_Y$ is a local homeomorphism is often referred to by the slogan Kuranishi = Teichmüller.
	\end{rem}
	
	The following proposition shows that there is a partial converse to the existence criterion given in Proposition \ref{prop: Tg coarse moduli}.
	
	\begin{prop}\label{prop: fibre dimension criterion}
		If $\kt(\frg)$ exists as a complex space, then the function $h^0$ is constant on the connected components of $\kc(\frg)$.
	\end{prop}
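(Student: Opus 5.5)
We argue by contradiction, exploiting that the projection $p\colon \kc(\frg)\to\kt(\frg)$ is the quotient by the $G$-action of right-translations (Corollary \ref{cor: simple description}), and that the fibre of $p$ through $J$ is the orbit $G\cdot J$. Its dimension is controlled by $h^0$. The stabiliser of $J$ in $G$ is the set of right-translations that are holomorphic for $J$. These are induced by $J$-holomorphic (right-invariant) vector fields, and the holomorphic vector fields on $(M,J)$ are the left-invariant ones in $H^0(\frg^{0,1},\frg^{1,0})$. Since $G$ is nilpotent, we expect to compute the stabiliser's Lie algebra as the real space $\{x\in\frg \mid x^{1,0}\in H^0(\frg^{0,1},\frg^{1,0})\}$ intersected with the centre-type condition coming from $\ad$. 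We then show that the orbit dimension is $2n-2h^0(J)$; this matches the map $\psi_h$ of Proposition \ref{lem: Kur inside Cg}, whose $G$-directions are parametrised exactly by $H^0(\frg^{0,1},\frg^{1,0})^\perp$ and whose differential $z\mapsto \dbar z$ has kernel $H^0$. We make this precise directly from the differential of the orbit map $g\mapsto R_{g*}J$, which at the identity is $x\mapsto \dbar x^{1,0}\in\ker\dbar$, so its real rank is $2n-2h^0(J)$ and hence $\dim_\IR G\cdot J = 2n-2h^0(J)$.

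Now suppose $\kt(\frg)$ exists as a complex space. Then $p$ is holomorphic, being $\tau_{\pi,[\phi]}$ for Rollenske's family over $\kc(\frg)$ (Remark \ref{rem: family over Cg}). Its fibres are the orbits $G\cdot J$, each of which is a locally closed real-analytic submanifold of $\kc(\frg)$ and also a complex-analytic subspace $p^{-1}(p(J))$. Since $\kt(\frg)$ is Hausdorff (Corollary \ref{cor: non hausdorff moduli}), orbits are closed.

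By Grauert's semicontinuity, the set where $h^0$ attains its minimum on a connected component $\kc_0$ is open. Assume $h^0$ is not constant on $\kc_0$; then there is a point $J_0$ in the closure of the minimal locus with $h^0(J_0)>h^0_{\min}$, and $J_k\to J_0$ with $h^0(J_k)$ minimal. The orbit of $J_0$ then has strictly smaller dimension than nearby orbits. Consider the Kuranishi family of $X_0=(M,J_0)$ and the map $\pi_{X_0}\colon\Kur(X_0)\to\kt(\frg)$, which is holomorphic and surjective onto the germ. We use the universal property: $\pi_{X_0}$ is the composite of the inclusion $\Kur(X_0)\hookrightarrow\kc(\frg)$ with $p$.

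The main obstacle is to derive a contradiction from the jump in orbit dimension. We would try upper semicontinuity of fibre dimension for the holomorphic map $p$ restricted to $\kc_0$: near $J_0$, every fibre of $p$ has complex dimension at most $\dim_{J_0}p^{-1}(p(J_0))$. However, the orbit $G\cdot J$ is real, so we need its complex dimension. Here we argue that each orbit $p^{-1}(p(J))$ is a complex subspace whose underlying set is a real submanifold of dimension $2n-2h^0(J)$, forcing it to have complex dimension $n-h^0(J)$. Semicontinuity then gives $n-h^0(J_k)\le n-h^0(J_0)$, that is, $h^0(J_k)\ge h^0(J_0)$, contradicting $h^0(J_k)<h^0(J_0)$. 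Hence $h^0$ is constant on $\kc_0$. If the reducedness of $\kc(\frg)$ causes trouble, we work with its reduction, since fibre dimensions are set-theoretic.
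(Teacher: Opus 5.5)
Your argument is correct and is essentially the paper's proof. Both combine Grauert's upper semicontinuity of $h^0$ with upper semicontinuity of fibre dimension for the holomorphic quotient map, whose fibres are the orbits $G\cdot J$ of complex dimension $n-h^0(J)$, so $h^0$ is forced to be locally constant. The differences are minor: you apply Cartan--Remmert semicontinuity directly to $\kc(\frg)\to\kt(\frg)$ and argue by contradiction. The paper instead pulls the graph back to the analytic relation $R=\{(J,g\cdot J)\}\subset\kc(\frg)\times\kc(\frg)$ and uses the projection $R\to\kc(\frg)$. Also, your orbit-map computation already shows that the stabiliser's Lie algebra is just $\{x\in\frg \mid \dbar x^{1,0}=0\}$. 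Right-translations are generated by left-invariant vector fields, so the ``right-invariant'' and ``centre-type'' remarks in your first paragraph should be dropped.
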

	\begin{proof}
		
		If $\kt(\frg)$ exists as a complex space, then there is a complex structure on $\kt(\frg)$ such that the quotient map $\pi \colon \kc(\frg) \to \kt(\frg)$ is holomorphic. In particular, the graph
		\[
		\textup{Graph}(\pi) = \{(J,\pi(J))\, | \, J \in \kc(\frg)\}
		\]
		of $\pi$ is an analytic subspace of $\kc(\frg) \times \kt(\frg)$ \cite[Theorem~12]{Remmert57}. Consider the holomorphic map $\id \times \pi \colon \kc(\frg) \times \kc(\frg) \to \kc(\frg) \times \kt(\frg)$. Then the preimage of $\textup{Graph}(\pi)$ under the map $\id \times \pi$ is the subset
		\[
		R = \{(J, g \cdot J) \, | \, J \in \kc(\frg), \, g \in G\}
		\]
		of $\kc(\frg)\times \kc(\frg)$ generated by the $G$-action on $\kc(\frg)$ and since $\textup{Graph}(\pi)$ is analytic so is $R$. Restricting the projection $\kc(\frg) \times \kc(\frg) \to \kc(\frg)$ onto the first factor to $R$, we obtain a holomorphic map $p \colon R \to \kc(\frg)$ such that the fibre $p^{-1}(J) =(J, G\cdot J)$ has the dimension of the $G$-orbit $G\cdot J$. Since $p$ is a holomorphic map the fibre dimension of $p$, which is just the orbit dimension, is an upper-semi continuous function on $\kc(\frg)$ \cite[Theorem~15]{Remmert57}. Because the dimension of the stabiliser of the $G$-action with respect to a point $J$ is equal to $h^0(J)$ the dimension of the stabiliser is also upper-semi continuous \cite{Grauert60}. But $ h^0(J) + \dim G\cdot J$ always equals half the real dimension of $G$ and since both $h^0$ and the orbit dimension are upper-semi continuous functions this implies that they are both constant on the connected components of $\kc(\frg)$.
	\end{proof}
	
	If $\kt(\frg)$ exists as a complex space, then the existence of a complex structure on $\km(\frg, \Gamma)$ depends on the properness of the $\Aut(\Gamma)$-action on $\kt(\frg)$. Indeed, suppose $Y$ is a complex space and $\Lambda$ is a group acting properly discontinuously on $Y$, that is, for any two compact subsets $K_1, K_2 \subset Y$ the set $\{\lambda \in \Lambda\, | \, \lambda K_1 \cap K_2 \neq \emptyset\}$ is finite. Then the quotient $Y/\Lambda$ also admits the structure of a complex space such that the map $Y \to Y / \Lambda$ is holomorphic \cite{Cartan57}.
	Moreover, if $Y$ is reduced or normal so is $Y/\Lambda$ and if $Y$ is a complex manifold and the action is also free, then $Y/\Lambda$ is a complex manifold.

	\subsection{First examples}\label{sect: first examples}
	
	There are two classes of complex nilmanifolds for which the existence of a moduli space has already been studied, namely, complex tori and so-called Kodaira manifolds, which are the higher-dimensional analogues of Kodaira surfaces. The results about complex tori go back to Kodaira and Spencer \cite{Kodaira58}, while the study of moduli for Kodaira surfaces was carried out by Borcea in \cite{Borcea84}, and later extended to Kodaira manifolds in \cite{grantcharov2004deformations}. We will briefly recall their results.
	
	\subsubsection{Complex tori}\label{subsect: complex tori}
	
	Let $\frg \cong \IR^{2n}$ be the abelian Lie algebra of dimension $2n$. Since every almost complex structure on $\frg$ is integrable, the space $\kc(\frg)$ of complex structures on $\frg$ can be viewed as the open subset $\ku_n =\{ V \in \Gr(n, \frg_\IC) \, | \, V \cap \overline V = 0 \}$ of $\Gr(n, \frg_\IC)$. For a fixed lattice $\Gamma \cong \IZ^{2n}$ the space $\ku_n$ parametrises all complex tori of dimension $n$.
	
	Alternatively, we can describe this space by varying the lattice in the complex Lie group $\IC^n$: after fixing a basis of $\frg_\IC \cong \IC^{2n}$ an element $V \in \ku_n$ can be identified with the class of a complex $n \times 2n$ matrix $A$. The complex torus corresponding to $A$ is $X =\IC^n/A\IZ^{2n}$ and the matrix $A$ is called a \textit{period matrix} for $X$. Two matrices define the same complex torus if they correspond to the same element of $\Gr(n, \IC^{2n})$, that is, they differ by an element of $\GL(n, \IC)$. Hence, the space of complex tori of dimension $n$ can also be described as
	\[
	\ku_n = \Bigg\{ [A] \in \Gr(n,\IC^{2n}) \, \Bigg| \, \det \begin{pmatrix}
		A \\
		\overline A
	\end{pmatrix}
	\neq 0 \Bigg\}.
	\]
	The space $\ku_n$ has two connected components $\ku_n^+$ and $\ku_n^-$ given by the sign of $\det \begin{pmatrix}
		A \\
		\overline A
	\end{pmatrix}$. As already mentioned in Section \ref{sect: affine maps}, a holomorphic map between complex tori is affine. Since translations by the abelian Lie group $G = \IR^{2n}$ act trivially on $\ku_n$, the Teichmüller space of complex tori of dimension $n$ is just $\ku_n$. For the lattice $\Gamma \cong \IZ^{2n}$ we have $\Aut(\Gamma) \cong \GL(2n, \IZ)$, so the moduli space of complex tori is the quotient $\ku_n/\GL(2n, \IZ)$, where the action of an element $M \in \GL(2n, \IZ)$  on a class $[A] \in \ku_n$ is given by $[AM]$.
	
	It was first observed in \cite{Kodaira58} that unless $n = 1$ the action of $\GL(2n, \IZ)$ on $\ku_n$ is not properly discontinuous, and in fact $\ku_n / \GL(2n, \IZ)$ is not a Hausdorff space. In particular, there exists no coarse moduli space of complex tori of dimension $n$ for $n \geq 2$ (cf. \cite[Corollary~3.3, p.~216]{lange2012complex}).

	\subsubsection{Kodaira manifolds}\label{subsect: Kodaira manifolds}
	
	A Kodaira manifold is a complex nilmanifold whose underlying Lie algebra is isomorphic to $\frh_{2k+1} \oplus \IR$, where $\frh_{2k+1}$ is the $(2k+1)$-dimensional Heisenberg algebra. The case $k=1$ corresponds to the class of Kodaira surfaces studied in \cite{Borcea84}. 
	
	Let $\frg = \frh_{2k+1} \oplus \IR$. Then there exists a basis $e_1,\dots,e_k,f_1,\dots,f_k,c,z$ of $\frg$ such that the only non-zero brackets are $[e_i,f_i] =c$ for $i=1, \dots, k$. For every $\sigma = (\sigma_2, \dots, \sigma_k)$ with $\sigma_i \in \{1,-1\}$ we can define an almost complex structure $J_\sigma$ on $\frg$ by
	\begin{align*}
		J_\sigma e_1 &= f_1,\\
		J_\sigma e_i &= \sigma_i f_i, \quad i = 2,\dots,k,\\
		J_\sigma c &= z.
	\end{align*}
	It was proved in \cite[Proposition~3.6]{rollenske09} that $J_\sigma$ is integrable, and up to Lie algebra automorphism, every complex structure on $\frg$ is of this form.
	
	Therefore, every complex structure on $\frg$ preserves the centre $\kz\frg = \langle c,z\rangle$ and the filtration $0 \subset \kz\frg \subset \frg$ defines a SPTBS on $\frg$. Hence, every Kodaira manifold with underlying Lie algebra $\frg$ has the structure of a principal elliptic curve bundle over a complex torus of dimension $k$.
	
	We begin by describing the Teichmüller space $\kt(\frg)$.
	Let us define
	\[
	\kh_k =\Bigg\{ [A] \in \ku_k \, \Bigg| \, A \begin{pmatrix}
		0 & -I_k\\
		I_k & 0
	\end{pmatrix} A^T = 0 \Bigg\},
	\]
	considered as a subspace of the space of complex structures on the abelian Lie algebra $\langle e_1,\dots,e_k,f_1,\dots,f_k \rangle$. The connected components of the complex space $\kh_k$ are the subsets
	\[
	\kh_{k,\ell} = \Bigg\{ [A] \in \ku_k \, \Bigg| \, A \begin{pmatrix}
		0 & -I_k\\
		I_k & 0
	\end{pmatrix} A^T = 0, \; \text{index}\Bigg( i\overline A \begin{pmatrix}
		0 & -I_k\\
		I_k & 0
	\end{pmatrix} A^T \Bigg) =\ell \Bigg\},
	\]
	defined for $\ell=0,\dots, k$. Here, the index is the number of negative eigenvalues. We will see in Section \ref{subsect: 2-step nilpotent Lie algebras} that the Lie algebra $\frg = \frh_{2k+1}\oplus \IR$ always satisfies the assumptions of Proposition \ref{prop: Tg coarse moduli}.  Together with the results of \cite[Sections~4.1-4.4]{grantcharov2004deformations} we obtain the following description of $\kt(\frg)$.
	
	\begin{prop}
		The Teichmüller space $\kt(\frg)$ exists as a complex manifold biholomorphic to $\kh_k \times \ku_1$. In particular, we have $\dim \kt(\frg) = \frac{1}{2}k(k+1) +1$ and the connected components of $\kt(\frg)$ are of the form $\kh_{k,\ell} \times \ku_1^\pm$.
	\end{prop}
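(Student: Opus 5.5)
Since the paper defers the hypotheses of Proposition \ref{prop: Tg coarse moduli} for $\frg = \frh_{2k+1}\oplus\IR$ to Section \ref{subsect: 2-step nilpotent Lie algebras}, the plan is to take the existence of $\kt(\frg)$ as a complex space from there. What must be shown here is that this complex space is a manifold biholomorphic to $\kh_k\times\ku_1$. By Corollary \ref{cor: simple description}, $\kt(\frg)=\kc(\frg)/G$, so the work is an explicit description of $\kc(\frg)$ and of the $G$-action.

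\emph{Step 1: parametrise $\kc(\frg)$.} Every $J$ preserves $\kz\frg=\langle c,z\rangle$. Hence $J$ induces a complex structure $J_{\kz}$ on $\kz\frg$, which is a point of $\ku_1$, and a complex structure $\bar J$ on $\frg/\kz\frg\cong\langle e_i,f_i\rangle$, which is a point of $\ku_k$. Integrability says $[\frg^{1,0},\frg^{1,0}]\subset\frg^{1,0}\cap\kz\frg_\IC$. I would phrase this via forms: writing $\frg^{*1,0}$ of the quotient as spanned by the rows of a period matrix $A$, the structure equation $dc^* = \sum e^i\wedge f^i$ must have vanishing $(0,2)$-part in the pulled-back structure. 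This is exactly $A\begin{pmatrix}0&-I\\I&0\end{pmatrix}A^T=0$, so $\bar J\in\kh_k$. The remaining datum is a lift: a complement $\frg^{1,0}$ over $\bar J$'s $(1,0)$-space and $J_\kz$'s $(1,0)$-line. Such lifts form an affine space modelled on $\Hom(\bar\frg^{1,0}\text{-part},\kz^{1,0}$-direction$)\cong\IC^k$, and integrability imposes no further condition on it, since brackets land in the centre.

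This gives $\kc(\frg)\cong\kh_k\times\ku_1\times\IC^k$ as a holomorphic bundle. In particular $\kc(\frg)$ is smooth of dimension $\dim\kh_k+1+k$. The conditions defining $\kh_k$ are a symmetric-type equation, and $\kh_k$ is a Siegel-type domain of dimension $\tfrac12k(k+1)$, which I would check directly via the index-$\ell$ normal form.

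\emph{Step 2: the $G$-action.} Right translation by $\exp(x)$ acts on $\frg$ by $\mathrm{Ad}$, i.e. $y\mapsto y+[x,y]$. It fixes the quotient and the centre, and it shifts the lift component by $y^{1,0}\mapsto [x,y]$. Here the main obstacle is to show that this action on the $\IC^k$-fibre is free and transitive, i.e. that the map $\frg/\kz\frg\to\IC^k$, $x\mapsto$ (the induced shift of the lift), is a real-linear isomorphism $\IR^{2k}\to\IC^k$. This uses that $\bar J$ is nondegenerate, i.e. $A\bar A$-type Hermitian form invertible, which is where $[A]\in\ku_k$ together with the isotropy condition enters. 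Concretely, the shift is $x\mapsto \omega(x,\cdot)$ restricted to the $(1,0)$-space, and injectivity follows because the symplectic form $\omega$ restricted to the $(1,0)$-space is nondegenerate against its conjugate. Granting this, the quotient $\kc(\frg)/G$ is identified with $\kh_k\times\ku_1$ via the holomorphic projection. A section of the affine bundle, e.g. the lift orthogonal to a fixed splitting, provides a holomorphic inverse, so the complex structure from Proposition \ref{prop: Tg coarse moduli} coincides with that of $\kh_k\times\ku_1$.

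\emph{Step 3: components.} The components of $\ku_1$ are $\ku_1^\pm$ and those of $\kh_k$ are the $\kh_{k,\ell}$, with the index locally constant and every value $0,\dots,k$ attained; alternatively, cite \cite[Sections~4.1-4.4]{grantcharov2004deformations}. The dimension count $\tfrac12k(k+1)+1$ then follows.
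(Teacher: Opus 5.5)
Your argument is correct, but it reaches the identification by a different route from the paper. The paper does no computation at this point. Existence of $\kt(\frg)$ as a complex manifold is Theorem~\ref{thm: fibre map not injective} applied to $\frh_{2k+1}\oplus\IR$, since all Kodaira manifolds have $\dim\Aut(X)=1$. There, injectivity of $\kp^\ks$ comes cohomologically from Lemma~\ref{lem: fibre is orbit}, i.e.\ from the vanishing of H\"ofer's twist deformations $A_T$. The concrete image $\kh_k\times\ku_1$ and its components are then imported from \cite[Sections~4.1--4.4]{grantcharov2004deformations}.

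You instead show by linear algebra two things about $\widetilde\kp^\ks\colon\kc(\frg)\to\ku_1\times\ku_k$:
\begin{enumerate}
\item it maps onto $\ku_1\times\kh_k$, because integrability is exactly the Lagrangian condition on the base, with no constraint on the lift;
\item its fibres are precisely the $G$-orbits, because $\mathrm{Ad}_{\exp x}=\id+\ad_x$ translates the affine $\IC^k$-fibre by $\Omega(x,\cdot)|_{\bar V}$.
\end{enumerate}
Your route is self-contained, gives explicit global coordinates, and yields Hausdorffness and $h^0\equiv 1$ along the way (the stabiliser is $\exp\kz\frg$). The paper's route is shorter and rests on a general theorem valid for every $2$-step algebra with a length-$2$ SPTBS and $h^0\equiv 1$, but it needs an outside reference for the explicit image. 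Both routes still need Section~\ref{subsect: 2-step nilpotent Lie algebras} for smoothness and universality of the Kuranishi families. Your conclusion is cleanest phrased as follows: $q\circ s\colon\kh_k\times\ku_1\to\kt(\frg)$ is a holomorphic bijection between complex manifolds of equal dimension, hence a biholomorphism.

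Some minor points:
\begin{enumerate}
\item Injectivity of $x\mapsto\Omega(x,\cdot)|_{\bar V}$ needs neither isotropy nor the Hermitian form. If it vanishes on $\bar V$, reality of $x$ and $\Omega$ forces vanishing on $\overline{\bar V}$ as well, hence on all of $\bar V\oplus\overline{\bar V}$.
\item Your bundle description only shows that the reduced space underlying $\kc(\frg)$ is smooth. This is harmless here.
\item Connectedness of each $\kh_{k,\ell}$ needs more than local constancy of the index, for instance transitivity of the connected group $\mathrm{Sp}(2k,\IR)$. The paper, however, asserts this before the proposition.
\end{enumerate}
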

	
	The factor $\kh_k$ parametrises complex tori of dimension $k$ appearing as the base of a Kodaira manifold and $\ku_1$ parametrises the elliptic curves in the fibre.

	Turning to the moduli space, by \cite[Theorem~1]{Borcea84} the lattices $\Gamma$ in the Lie group $G$ associated to $\frg =\frh_3 \oplus \IR$ are classified up to isomorphism by the single torsion coefficient $m$ of $H_1(\Gamma \backslash G, \IZ)$. A representative for each isomorphism class of these lattices is given by the lattice $\Gamma_m$ generated by  $\exp(e_1), \exp(f_1)$, $\exp(z)$, and $\exp(\frac{1}{m}c)$. So it suffices to treat the case $\Gamma = \Gamma_m$, and it was shown in \cite{Borcea84} that $\km(\frg,\Gamma_m)$ is biholomorphic to the product of $\IC$ with a punctured disc. 	
	Unfortunately, as for complex tori, a moduli space does not exist for Kodaira manifolds of higher dimension. Concretely, it was shown in \cite{grantcharov2004deformations} that the quotient $\km (\frh_{2k+1}, \Gamma)$ is non-Hausdorff for $k\geq 2$ and for every choice of lattice $\Gamma$.

	\begin{rem}
		Suppose we fix an orientation for a given Kodaira manifold $M =\Gamma \backslash G$ with underlying Lie algebra $ \frg =\frh_{2k+1} \oplus \IR$. Then $G = H_{2k+1}  \times \IR$, where $H_{2k+1}$ is a simply connected Lie group associated to $\frh_{2k+1}$. According to \cite[Theorem~5]{Ghorbel09} there exists a lattice $\Gamma' \subset H_{2k+1}$ such that $\Gamma \cong \Gamma' \times \IZ$. The automorphism $\id_{\Gamma'} \times -\id_\IZ$ of $\Gamma' \times \IZ$ induces an orientation-reversing diffeomorphism  of the Kodaira manifold $M$. Hence, the oriented Teichmüller space $\kt^+(\frg)$ has only half the connected components of $\kt(\frg)$ while the oriented moduli space coincides with the unoriented one (cf. Remark \ref{rem: orientation}).
	\end{rem}

	\section{Period maps}\label{sect: period maps}

	In this section we introduce two period maps. These will be our main tools for the study of the Teichmüller space.
	
	We first explain how the classical period map defined for families of Calabi--Yau manifolds can also be defined as a holomorphic map on a suitable reduction of the space of left-invariant complex structures on a given nilmanifold or holomorphic families thereof.
	We then investigate how the behaviour of this map corresponds to the structure of the Teichmüller space.
	
	Next, we introduce our second period map which depends on the choice of a stable principal torus bundle series on a given Lie algebra. This map is roughly defined by mapping a given complex nilmanifold with the structure of an iterated principal torus bundle to the product of complex tori appearing as the fibres of this bundle. We then study some basic properties of this map and explain how the two period maps are related.

	\subsection{Period maps associated to a nilmanifold}\label{sect: period map nilmanifold}

	As already mentioned in the introduction, the Tian--Todorov theorem (see \cite{Tian86}) states that a Calabi--Yau manifold $Y$ is unobstructed, that is, the Kuranishi space $\Kur(Y)$ of $Y$ is smooth. Since small deformations of $Y$ are still Calabi--Yau manifolds one can define a map $\kp_Y \colon \Kur(Y) \to \IP H_{dR}^n(Y, \IC)$ by sending $t \in \Kur(Y)$ to the class of a trivialising section of the canonical bundle of the fibre $Y_t$ in the Kuranishi family. This map is known as the local period map of $Y$ and it was shown by Griffiths in \cite{Griffiths68} that $\kp_Y$ is in fact a holomorphic map. Moreover, he computed the differential of $\kp_Y$ at $0 \in \Kur(Y)$ and it follows from the canonical bundle of $Y$ being trivial together with Serre duality that $\kp_Y$ is an embedding. As a consequence, it was proved by Catanese that the map $\pi_Y \colon \Kur(Y) \to \kt(Y)$ is always injective (cf. Section \ref{subsect: existence criteria}). More generally, he proved the following criterion \cite[Proposition~15]{catanese2011superficial}. 
	
	\begin{prop}\label{prop: catanese criterion} If there exists an injective continuous map $f\colon \Kur(X) \to S$ to a Hausdorff space $S$ factoring through $\pi_X$, then the map $\pi_X\colon \Kur(X) \rightarrow \kt(X)$ is a local homeomorphism.
	\end{prop}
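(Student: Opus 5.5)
The plan is to use two inputs. The first is the fact that $\pi_X\colon \Kur(X) \to \kt(X)_J$ is surjective onto the germ at $[J]$, since the Kuranishi family is complete; this is recalled in Section \ref{subsect: existence criteria}. The second is the injectivity of $\pi_X$, which I will deduce from the hypothesis. Suppose $f = g\circ \pi_X$ with $g$ defined on $\pi_X(\Kur(X))$. If $\pi_X(t)=\pi_X(t')$, then $f(t)=f(t')$, so $t=t'$ because $f$ is injective. Hence $\pi_X$ is a continuous bijection from $\Kur(X)$ onto its image $\pi_X(\Kur(X))$, which is a neighbourhood of $[J]$ in $\kt(X)$.

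It remains to show that this bijection is a homeomorphism, locally around $0$. For this I would shrink $\Kur(X)$ to a compact neighbourhood $K$ of $0$, for instance the closed ball of radius $\epsilon/2$ intersected with $\{\obs=0\}$. Then $f|_K\colon K \to S$ is a continuous injection from a compact space to a Hausdorff space, so it is a homeomorphism onto $f(K)$. Writing $\pi_X|_K = (f|_K)^{-1}\circ g|_{\pi_X(K)}$, I would conclude that the inverse of $\pi_X|_K$ is $(f|_K)^{-1}\circ g$. This inverse is continuous provided $g$ is continuous on $\pi_X(K)$.

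This is the main obstacle: the phrase ``factoring through $\pi_X$'' must be read as $f = g\circ\pi_X$ with $g$ continuous on the image with its subspace (quotient) topology. I expect this reading to be justified in the intended applications, where $g$ is induced by a period map defined on all of $\kt$ and continuous by the universal property of the quotient topology. With this reading, $\pi_X|_K$ is a homeomorphism onto $\pi_X(K)$.

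Next I would check that $\pi_X(K^\circ)$ is open in $\kt(X)$, so that $\pi_X$ is a genuine local homeomorphism and not just an embedding. To do this, I would take a point $[J']=\pi_X(t)$ with $t\in K^\circ$ and apply completeness of the Kuranishi family at $t$. Completeness gives a neighbourhood of $[J']$ in $\kt(X)$ contained in $\pi_X(\Kur(X))$; intersecting with the open set $g^{-1}(f(K^\circ))$ and using injectivity, this neighbourhood lies in $\pi_X(K^\circ)$. I would also use openness of versality, which holds for the Kuranishi family.

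Finally, the same argument applies at every point of $\Kur(X)$ after recentring, and this gives the local homeomorphism property.
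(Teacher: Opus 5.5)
Your injectivity step is correct, and so is the compactness step once a slip is fixed: it is the inverse $(\pi_X|_K)^{-1}$, not $\pi_X|_K$, that equals $(f|_K)^{-1}\circ g|_{\pi_X(K)}$. Your reading of ``factoring through'' is also the one the paper needs, since there $g$ is the global period map $\kp$, which is continuous on $\kt(\frg)$ by the quotient topology.

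The gap is in the openness step. You call $g^{-1}(f(K^\circ))$ an open set, but $f(K^\circ)$ is only open in $f(K)$, not in $S$. In the application $S=\IP H^n(\frg,\IC)$, and $f(\Kur(X))$ is in general a lower-dimensional analytic subset, so nothing makes its preimage open. In fact, injectivity of $f$ gives $g^{-1}(f(K^\circ))\cap\pi_X(\Kur(X))=\pi_X(K^\circ)$. So requiring this set to be a neighbourhood of $[J']$ inside $\pi_X(\Kur(X))$ is exactly the claim that $\pi_X(K^\circ)$ is a neighbourhood of $[J']$, and the argument is circular.

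The repair uses the tool you name at the end, but applied to arbitrarily small neighbourhoods of $t$. Completeness of the Kuranishi family at $t$ (openness of versality) is a statement about germs. Hence, for \emph{every} neighbourhood $U$ of $t$, the classifying map of a deformation of $X_t$ can be chosen with values in $U$.

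Apply this to the tautological family over a small connected neighbourhood $W$ of $J_t$ in $\kc(\frg)$. The framings agree, because the isomorphism of deformations is the identity on the central fibre. This gives $q(W)\subset\pi_X(U)$, where $q\colon\kc(\frg)\to\kt(\frg)$ is the quotient map. The map $q$ is open, being the quotient by a group action, so $\pi_X$ is an open map.

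A continuous, open, injective map is already a homeomorphism onto its open image. Once openness is in place, your compactness argument, the Hausdorff property of $S$ and the continuity of $g$ are not needed at all.

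This is the mechanism behind the paper's treatment. The paper does not prove the statement itself but quotes Catanese. In Section~\ref{subsect: existence criteria}, following \cite[Remark~14]{catanese2011superficial}, it recalls that $\pi_X$ maps onto the germ of $\kt$ and is a homeomorphism onto it as soon as it is injective. The map $f$ is used only to force injectivity.
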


	In light of Propostion \ref{prop: Tg coarse moduli}, we would like to apply this criterion to complex nilmanifolds. Let $X = (M, J)$ be a complex nilmanifold of dimension $n$ and of type $(\frg, \Gamma)$. We have already remarked that $X$ will never admit a Kähler structure unless it is a complex torus \cite{hasegawa1989minimal}. However, as mentioned in Section \ref{sect: Cg}, the canonical bundle of a complex nilmanifold can always be trivialised by a closed left-invariant form \cite[Theorem~2.7]{Barberis09}. Since this left-invariant form is unique up to scaling by a constant this allows us, as in the Kähler case, to associate a complex structure in $\Kur(X)$ or $\kc(\frg)$ to the class of a left-invariant closed $(n,0)$-form in $\IP H^n(\frg , \IC) = \IP H^n_{dR}(M, \IC)$. Unfortunately, unlike in the Kähler case, the Kuranishi space of $X$ and thus $\kc(\frg)$ may not be smooth or even reduced (see Example \ref{exam: non-reduced point}). As explained in Example \ref{exam: non closed deformation}, this corresponds to the fact that an infinitesimal deformation of $X$ may no longer have trivial canonical bundle. So for now we only have continuous maps
	\[
	\begin{tikzcd}
		{|\kc(\frg)|} \arrow[r, "\widetilde \kp"]  & \IP H^n(\frg, \IC) \\
		{|\Kur(X)|} \arrow[ru,swap, "\kp_X"] &  
	\end{tikzcd}
	\]
	on the topological spaces $|\kc(\frg)|$ and $|\Kur(X)| $ underlying $\kc(\frg)$ and $\Kur(X)$, respectively. Both $\widetilde \kp$ and $\kp_X$ are given by associating to a complex structure $I$ the one-dimensional vector space $H^{n,0}(\frg, I) \subset H^n(\frg, \IC)$ considered as a point in $\IP H^n(\frg, \IC)$. 
	
	\begin{prop}
		The maps $\widetilde \kp$ and $\kp_X$ are holomorphic on the reduction of $\kc(\frg)$ and $\Kur(X)$, respectively. Moreover, the differentials
		\[
		\begin{tikzcd}
			\ker \dbar\supset T_J \kc(\frg)_{\textup{red}}  \arrow[r,"d\widetilde\kp_J"] & \Hom\left(H^{n,0}(\frg,J) , H^n(\frg,\IC) /H^{n,0}(\frg,J)\right)  \\
			H^1(X, \Theta_X) \supset T_0 \Kur(X)_{\textup{red}} \arrow[ru,swap, "(d\kp_X)_0"]          &  
		\end{tikzcd}
		\]
		are given by $d\widetilde\kp_J(\eta) = [\eta \lrcorner \omega] = (d\kp_X)_0([\eta])$, where $\eta \in T_J \kc(\frg)_{\textup{red}}$ and $\omega \in \Wedge^n\frg^*_\IC $ generates the space $H^{n,0}(\frg, J)\subset \IP H^n(\frg, \IC)$. 
	\end{prop}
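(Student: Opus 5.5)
The plan is to factor $\widetilde\kp$ through the Plücker embedding. By Lemma \ref{lem: plücker image}, the restriction of $\iota$ to $\kc(\frg)$ takes values in $\IP\ker d \subset \IP(\Wedge^n\frg_\IC^*)$, and at the level of points it sends $J$ to the class of a closed $(n,0)$-form $\omega$ generating $\Wedge^{n,0}\frg^*$. Since the closed left-invariant $(n,0)$-form is unique up to scaling, it spans $H^{n,0}(\frg,J)$. Hence $\widetilde\kp = p \circ \iota$, where
\[
p \colon \IP\ker d \dashrightarrow \IP H^n(\frg,\IC)
\]
is the projectivisation of the linear quotient map $\ker d \to H^n(\frg,\IC)$. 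This map is a morphism away from $\IP(\operatorname{im} d)$.

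The first step is to show that $\iota(\kc(\frg))$ avoids $\IP(\operatorname{im} d)$. Suppose $\omega = d\beta$. Then $\omega\wedge\overline\omega = d(\beta\wedge\overline\omega)$ would be an exact top-degree form. On a nilpotent Lie algebra $d(\Wedge^{2n-1}\frg^*)=0$, since $\frg$ is unimodular, so every exact top form vanishes. This contradicts $\omega\wedge\overline\omega\neq 0$.

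The second step concerns the reduced structure. The subtlety is that $\iota|_{\kc(\frg)}$ lands in $\IP\ker d$ only set-theoretically: by Example \ref{exam: non-reduced point}, the scheme-theoretic image need not lie in $\IP\ker d$ with its non-reduced structure. On the reduction this is harmless. The set $\IP\ker d$ is a closed reduced linear subspace, and a morphism from a reduced space whose image lies set-theoretically in a closed subspace factors holomorphically through it. So $\iota|_{\kc(\frg)_{\textup{red}}}$ is a holomorphic map into $\IP\ker d\setminus\IP(\operatorname{im} d)$, and composing with the linear projection $p$ gives holomorphy of $\widetilde\kp$. For $\kp_X$, I restrict along the embedding $\Kur(X)\hookrightarrow\kc(\frg)$, which induces a morphism of reductions $\Kur(X)_{\textup{red}}\to\kc(\frg)_{\textup{red}}$. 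Then $\kp_X=\widetilde\kp\circ(\text{this map})$ is holomorphic.

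The third step is the differential, computed by the chain rule. As in the proof of Proposition \ref{prop: reduced points in Cg}, $d\iota_J(\eta)$ is the homomorphism $\omega\mapsto[\eta\lrcorner\omega]$ modulo $\langle\omega\rangle$. For $\eta\in T_J\kc(\frg)_{\textup{red}}$ the factorisation above forces $\eta\lrcorner\omega\in\ker d$. The differential of the linear projection $p$ just takes cohomology classes. Therefore $d\widetilde\kp_J(\eta)=[\eta\lrcorner\omega]$ modulo $H^{n,0}(\frg,J)$.

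For $\kp_X$, the embedding $\Kur(X)\hookrightarrow\kc(\frg)$ chosen via a hermitian metric has differential at $0$ sending a class to its harmonic representative $\eta$. By the first-order expansion $\phi(t)(\omega)=\omega+t(\phi_1\lrcorner\omega)+\dots$ from the proof of Proposition \ref{lem: Kur inside Cg}, this gives $(d\kp_X)_0([\eta])=[\eta\lrcorner\omega]$. I will check that this is independent of the representative: changing $\eta$ by $\dbar z$ changes $\eta\lrcorner\omega$ by $\dbar z\lrcorner\omega=d(z\lrcorner\omega)$, as computed in that proof, which is exact.

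The main obstacle is the second step: justifying the factorisation through $\IP\ker d$ only after passing to the reduction. Everything else is formal linear algebra.
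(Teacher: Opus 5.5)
Your proposal is correct and follows the paper's proof. You factor $\widetilde\kp$ through the Plücker embedding into $\IP\ker d$ on the reduction, and check that the image avoids exact forms. You then compose with the linear projection to $\IP H^n(\frg,\IC)$, read off the differential from $d\iota_J(\eta)=(\omega\mapsto[\eta\lrcorner\omega])$, and restrict to $\Kur(X)_{\textup{red}}\subset\kc(\frg)_{\textup{red}}$. The deviations are cosmetic. You rule out $\omega=d\beta$ via unimodularity, since then $\omega\wedge\overline\omega=d(\beta\wedge\overline\omega)=0$, where the paper uses $\partial(\Wedge^{n-1,0}\frg^*)=0$. You also spell out the factorisation of a map from a reduced space through a closed subspace, which the paper leaves implicit.
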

	\begin{proof}
		Recall that the Plücker embedding $\iota\colon \Gr(n, \frg_\IC^*) \to  \IP\left(\Wedge^n\frg_\IC^*\right)$ is a holomorphic embedding and let $J \in \kc(\frg) \subset \Gr(n, \frg_\IC^*)$. Then every form $\omega$ representing the class $\iota(J) \in \IP\left(\Wedge^n\frg^*_\IC\right)$ is closed by Lemma \ref{lem: plücker image} and we have $\langle \omega \rangle = H^{n,0}(\frg, J)$. Hence, $\iota(J)$ is contained in the subspace $\IP\ker d \subset  \IP\left(\Wedge^n\frg_\IC^*\right)$ and by restricting $\iota$ to the reduction of $\kc(\frg)$ we obtain an embedding $\kc(\frg)_{\textup{red}} \hookrightarrow \IP\ker d$. Moreover, the form $\omega$ cannot be exact since $\partial(\Wedge^{n-1,0}\frg^*) =0$. In particular, the image of $\kc(\frg)_{\textup{red}} \hookrightarrow \IP\ker d$ is contained in the domain of the projection $\IP \ker d \dashrightarrow \IP H^n(\frg, \IC)$. Therefore, the diagram
		\[
		\begin{tikzcd}
			\kc(\frg)_{\textup{red}} \arrow[r, "\iota"] \arrow[rd, swap, "\widetilde \kp"] & \IP \ker d \arrow[d, dashed] \\
			& \IP H^n(\frg, \IC)          
		\end{tikzcd}
		\]
		commutes and the map $\widetilde \kp$ is holomorphic. To compute the differential of $\widetilde \kp$, we recall that the differential of the Plücker embedding 
		\[
		d\iota\colon \frg^{*0,1} \otimes \frg^{1,0} \to \Hom\left(\langle \omega \rangle , \Wedge^n\frg_\IC^* /\langle\omega \rangle\right) = T_{\langle \omega \rangle}  \IP\left(\Wedge^n\frg_\IC^*\right)
		\]
		at the point $J$ takes an element $\eta \in \frg^{*0,1} \otimes \frg^{1,0}$ to the homomorphism mapping $\omega$ to the class of $\eta \lrcorner \omega$ in $\Wedge^n\frg_\IC^* /\langle\omega \rangle$. Thus, the differential of $\widetilde\kp$ takes $\eta$ to the homomorphism sending $[\omega] \in H^n(\frg, \IC)$ to $[\eta \lrcorner \omega] \in H^n(\frg,\IC)/ \langle [\omega] \rangle$. After choosing an hermitian metric on $X$ we can consider $\Kur(X)_{\textup{red}}$ as a subspace of $\kc(\frg)_\textup{red}$ such that $\kp_X$ is the restriction of $\widetilde\kp$ to $\Kur(X)_\textup{red}$. Hence, $\kp_X$ is holomorphic on $\Kur(X)_{\textup{red}}$ and its differential at $0$ is as above.
	\end{proof}
	
	\begin{rem}\label{rem: maximal domain period map}
		Note that the period map $\widetilde \kp$ can also be defined as a holomorphic map on the complex space $\kc(\frg) \cap \IP \ker d$, which is possibly larger than the reduction of $\kc(\frg)$. Similarly, we could define $\kp_X$ on the complex space $\Kur(X) \cap \IP \ker d$ after choosing an embedding $\Kur(X) \hookrightarrow \kc(\frg)$. However, for our purposes there is no advantage in making this distinction, so we will always work with the reduction of $\kc(\frg)$ and $\Kur(X)$.
	\end{rem}
	
	We will refer to $\kp_X \colon \Kur(X)_{\textup{red}} \to \IP H^n(\frg, \IC)$ as the \textit{local period map} associated to $X$ or $J$.
	
	\begin{rem}
		One could also define the local period map via the universal property of the Grassmannian: if the Kuranishi space $\Kur(X)$ is smooth, then the subspaces $H^{n,0}(\frg, J_t) \subset H^n(\frg, \IC)$ determine a holomorphic subbundle of the trivial bundle of rank $b_n(\frg)$ on $\Kur(X)$. By the universal property of the Grassmannian every such bundle is obtained as the pullback of the universal bundle on $\Gr(1, H^n(\frg, \IC)) = \IP H^n(\frg, \IC)$ along a uniquely determined map $\Kur(X) \to \IP H^n(\frg, \IC)$. This map coincides with the local period map $\kp_X$.
	\end{rem}
	
	\begin{rem}
		It was shown in \cite{xia2022variation} that under certain cohomological conditions, the map $\kp_Y$ can be defined on the base of a smooth family of deformations of a (not necessarily Kähler) manifold $Y$ with trivial canonical bundle. In our setting, these conditions translate to the equality
		\[
		\Wedge^{n,1}\frg^* \cap d \Big( \Wedge^n \frg_\IC^* \Big) = d\left( \Wedge^{n,0}\frg^*\right).
		\]
		Since $d\left( \Wedge^{n,0}\frg^*\right)=0$ this is equivalent to every $\dbar$-closed form in $\Wedge^{n-1,1}\frg^*$ being $d$-closed, which is a necessary condition for the Kuranishi space of a complex nilmanifold with underlying Lie algebra $\frg$ to be reduced (see Proposition \ref{prop: reduced points in Cg}).
	\end{rem}
	
	To define a period map on the Teichmüller space we recall that the identity component $\Diff^0(M)$ of the diffeomorphism group of $M = \Gamma \backslash G$ acts trivially on $H^n_{dR}(M,\IC) \cong H^n(\frg, \IC)$. Again, we view the Lie group $G$ as a subgroup of $\Diff^0(M)$ by associating to $g \in G$ the map $R_g \colon M \to M$ induced by the right-translation on $G$. Therefore, the map $\widetilde{P} \colon \kc(\frg) \to \IP H^n(\frg, \IC)$ descends to a map
	\[
	\begin{tikzcd}
		\kp \colon \kt(\frg) \to \IP H^n(\frg, \IC).
	\end{tikzcd}
	\]
	We will refer to $\kp$ as the \textit{global period map} associated to $\frg$. In general, this map is just continuous since $\kt(\frg)$ may not admit a complex structure. The image of $\kp$ is contained in the so-called \textit{period domain} 
	\[
	\kd = \{[\alpha] \in \IP H^n(\frg, \IC) \, | \, \alpha \wedge \overline \alpha \neq 0,\, [\alpha \wedge \alpha] =0 \},
	\]
	which is an open subset of a projective variety in $ \IP H^n(\frg, \IC)$. It follows directly from \cite[Theorem~B]{Borcea86} that $\kp$ is surjective onto $\kd$ if $\frg= \IR^4$ or $\frg = \frh_3 \oplus \IR$. However, the period map will generally not be surjective for nilmanifolds of higher dimension. In fact, by Lemma \ref{lem: plücker image} a class in $\kd$ is in the image of $\kp$ only if it has a decomposable representative. 
	
	For our purposes we will be mostly interested in the question when the local or global period maps are injective. For instance, the map $\kp$ makes the diagram
	\[
	\begin{tikzcd}
		\Kur(X) \arrow[r, "\pi_X"] \arrow[rd,swap,"\kp_X"] & \kt(\frg) \arrow[d,"\kp" ]\\
		& \kd
	\end{tikzcd}
	\]
	commute for every complex nilmanifold $X$ of type $(\frg, \Gamma)$. So by the criterion given in Proposition \ref{prop: catanese criterion} the map $\pi_X$ is a local homeomorphism if $\kp_X$ is injective. The following lemma will be used frequently throughout the text.
	
	\begin{lem}\label{lem: local period map}
		If every $d$-exact form in $\Wedge^{n-1,1}\frg^*$ is $\dbar$-exact, then the local period map $\kp_X$ is an embedding. 	
		
		In particular, the map $\kp_X$ is an embedding if the Frölicher spectral sequence of $X$ degenerates at the first page.
	\end{lem}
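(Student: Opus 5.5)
We need to show that, under the hypothesis, $\kp_X$ is an injective immersion on $\Kur(X)_{\textup{red}}$. We follow the Griffiths-type argument: first show that the differential is injective, then deduce that the map is an embedding. By the previous proposition, $(d\kp_X)_0([\eta]) = [\eta\lrcorner\omega]$ modulo $\langle[\omega]\rangle$. We work with the harmonic representative $\eta$ of a class in $T_0\Kur(X)_{\textup{red}}\subset H^1(X,\Theta_X)\cong H^1(\frg^{0,1},\frg^{1,0})$.

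\emph{Injectivity of the differential.} Suppose $[\eta\lrcorner\omega] = c[\omega]$ in $H^n(\frg,\IC)$. Then $\eta\lrcorner\omega - c\omega = d\beta$ for some $\beta\in\Wedge^{n-1}\frg^*_\IC$. We take $(n-1,1)$- and $(n,0)$-components and use the bigrading. Since $d = \partial+\dbar$, we have $\omega \neq d(\text{anything})$, because $(\partial\beta)^{n,0}$ comes from $\beta^{n-1,0}$ and $\partial(\Wedge^{n-1,0}\frg^*)=0$. So the $(n,0)$-part forces $c=0$.

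With $c = 0$, the form $\eta\lrcorner\omega$ is a $d$-exact form of pure type $(n-1,1)$. By hypothesis it is $\dbar$-exact, so $\eta\lrcorner\omega = \dbar\gamma$ with $\gamma\in\Wedge^{n-1,0}\frg^*$. Contraction with $\omega$ gives an isomorphism $\frg^{1,0}\to\Wedge^{n-1,0}\frg^*$, hence $\gamma = z\lrcorner\omega$ for a unique $z\in\frg^{1,0}$. As in the proof of Proposition \ref{lem: Kur inside Cg}, using \cite[Lemma~2.4]{Xia22} and $d\omega=0$, we get $\dbar(z\lrcorner\omega) = \dbar z\lrcorner\omega$. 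The contraction $\frg^{*0,1}\otimes\frg^{1,0}\to\Wedge^{n-1,1}\frg^*$ is also an isomorphism, so $\eta=\dbar z$ and $[\eta]=0$ in $H^1(\frg^{0,1},\frg^{1,0})$. Hence $(d\kp_X)_0$ is injective on $T_0\Kur(X)_{\textup{red}}$, and in fact on all of $H^1$ wherever the formula makes sense.

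\emph{From an injective differential to an embedding.} An injective differential at $0$ means that $\kp_X$ is an immersion at $0$. A holomorphic map of a reduced complex space germ with injective Zariski differential is a closed embedding of germs. After shrinking $\epsilon$ in the definition of $\Kur(X)$, $\kp_X$ is therefore a holomorphic embedding of $\Kur(X)_{\textup{red}}$ onto a locally closed subspace of $\IP H^n(\frg,\IC)$. The main point requiring care is to make sure that the tangent space to the reduction sits inside $H^1$ compatibly with the formula for the differential. This is already handled by the previous proposition, so this step is routine.

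\emph{The Frölicher statement.} It remains to show that degeneration of the Frölicher spectral sequence at $E_1$ implies the hypothesis. By \cite[Theorem~1.1]{Has26}, Dolbeault cohomology is computed by left-invariant forms, so the spectral sequence of the double complex $(\Wedge^{\bullet,\bullet}\frg^*,\partial,\dbar)$ degenerates at $E_1$. For a finite-dimensional double complex, $E_1$-degeneration is equivalent to the statement that $d$-exact, $\dbar$-closed forms of pure type are $\dbar$-exact; one sees this via the zigzag decomposition of the double complex. Every $d$-exact $(n-1,1)$-form is $\dbar$-closed, since $\dbar$ of it is the $(n-1,2)$-component of $d$ applied to it, which vanishes. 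Hence every such form is $\dbar$-exact, which is the hypothesis. If this equivalence is not considered standard, a direct alternative is dimension counting: $E_1$-degeneration gives $\sum_{p+q=n}h^{p,q}=b_n$, which forces the natural map $H^{n-1,1}_{\dbar}\to E_\infty$ to be injective on the relevant piece.
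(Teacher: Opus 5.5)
Your proof is correct and follows the paper's argument. Both use the hypothesis to write $\eta\lrcorner\omega=\dbar(z\lrcorner\omega)=\dbar z\lrcorner\omega$ and then invoke the contraction isomorphism $\frg^{*0,1}\otimes\frg^{1,0}\cong\Wedge^{n-1,1}\frg^*$ to get $[\eta]=0$. You are more careful than the paper in three places: you dispose of the $c[\omega]$ term via the $(n,0)$-component, you justify passing from an injective Zariski differential to a local embedding, and you prove the Fr\"olicher consequence, which the paper states but does not prove.
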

	\begin{proof}
		Suppose $\omega$ generates the space $\Wedge^{n,0}\frg^*$. Then the contraction map
		\[
		\lrcorner\omega \colon \Wedge^{0,k}\frg^* \otimes \frg^{1,0} \to \Wedge^{n-1,k}\frg^*
		\]
		is an isomorphism, which descends to an isomorphism between $H^k(X, \Theta_X)$ and $H^{n-1,k}_{\dbar}(X)$. Let $[\eta] \in T_J\Kur(X)_{\textup{red}} \subset H^1(X, \Theta_X)$ such that $(d\kp_X)_0([\eta]) =0$. Then there exists a form $\beta = \beta_1 +\beta_2 \in \Wedge^{n-1,0}\frg^* \oplus \Wedge^{n-2,1}\frg^*$ such that $d\beta = \dbar \beta_1 + \partial \beta_2 = \eta \lrcorner \omega$, and by our assumption $\partial \beta_2 =0$. Since the contraction map above is an isomorphism there exists an element $z \in \frg^{1,0}$ such that $z\lrcorner \omega = \beta_1$ and thus $0 = \eta\lrcorner\omega -\dbar(z\lrcorner \omega) = (\eta+\dbar z)\lrcorner \omega$. Therefore $\eta = -\dbar z$, and thus the class $[\eta] \in H^1(X, \Theta_X)$ is zero.
	\end{proof}

	In the following sections we will see several criteria for the global period map to be injective. We will often use the following criterion to show that the Teichmüller space exists as a complex space.
	
	\begin{prop}\label{prop: Tg exists}
		Suppose $\kc(\frg)$ is reduced and the fibres of $\widetilde{\kp}\colon \kc(\frg) \to \kd$ are connected. If the local period map $\kp_X$ is an embedding for every complex nilmanifold of type $(\frg, \Gamma)$, then $\kt(\frg)$ exists as a complex space and the global period map $\kp$ is an embedding.
	\end{prop}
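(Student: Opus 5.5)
The plan is to prove injectivity of $\kp$ first, and then feed the resulting properties into Proposition \ref{prop: Tg coarse moduli}.

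\emph{Step 1: each fibre of $\widetilde\kp$ is a single $G$-orbit.} Since $\widetilde\kp$ is $G$-invariant, every fibre $F=\widetilde\kp^{-1}(p)$ is a union of $G$-orbits. I claim each orbit is open in $F$. Fix $J\in F$, let $X=(M,J)$, choose a left-invariant hermitian metric $h$, and consider $\psi_h\colon \Kur(X)\times H^0(\frg^{0,1},\frg^{1,0})^\perp\to\kc(\frg)$.
\begin{itemize}
\item I want $\psi_h$ to cover a neighbourhood $U$ of $J$. By completeness of the Kuranishi family, every $I\in U$ is $\Diff^0(M)$-equivalent to a point $I'\in\Kur(X)$. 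By Corollary \ref{cor: simple description}, this equivalence is realised by a right translation, so $I=R_{g*}I'$ for some $g\in G$. It remains to control $g$ so that it can be taken of the form $\exp(z+\bar z)$ modulo the stabiliser. Here I would use that the differential of $\psi_h$ at $(0,0)$ is $(\eta,z)\mapsto\eta+\dbar z$, computed in the proof of Proposition \ref{lem: Kur inside Cg}, together with the reducedness of $\kc(\frg)$.
\item Given this, suppose $I\in U\cap F$. Then $\kp_X(I')=\widetilde\kp(I)=\widetilde\kp(J)=\kp_X(0)$, and injectivity of $\kp_X$ forces $I'=J$. Hence $I\in G\cdot J$.
\end{itemize}
Thus $U\cap F\subset G\cdot J$, so orbits are open in $F$. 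An orbit's complement in $F$ is a union of other orbits, hence also open, so each orbit is closed in $F$ as well. Since $F$ is connected, $F$ is a single orbit. Together with $\kt(\frg)=\kc(\frg)/G$, this gives injectivity of $\kp\colon\kt(\frg)\to\kd$.

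\emph{Step 2: the hypotheses of Proposition \ref{prop: Tg coarse moduli}.}
\begin{itemize}
\item $\kt(\frg)$ is Hausdorff, since it injects continuously into the Hausdorff space $\kd$.
\item $\pi_X$ is injective, because $\kp\circ\pi_X=\kp_X$ is injective.
\item For universality of the Kuranishi family, I would argue directly. First, $\Kur(X)$ is reduced: via $\psi_h$ it is locally a factor of the reduced space $\kc(\frg)$, since $\psi_h$ is a local isomorphism onto its image with tangent map onto $\ker\dbar=T_J\kc(\frg)$. Now let $\kx\to B$ be a family pulled back from the Kuranishi family along two maps $f_1,f_2\colon B\to\Kur(X)$, compatible with the framing. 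Then $\kp_X\circ f_1=\kp_X\circ f_2$, because both equal the period of the fibres of $\kx$ and that period depends only on the framed family. Since $\kp_X$ is an embedding of reduced spaces, $f_1=f_2$.
\end{itemize}
Proposition \ref{prop: Tg coarse moduli} then yields the complex structure on $\kt(\frg)$, with the spaces $\Kur(X)$ as charts.

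\emph{Step 3: $\kp$ is an embedding.} In the chart $\pi_X(\Kur(X))$, the map $\kp$ is exactly $\kp_X$, which is an embedding. So $\kp$ is a local embedding, and by Step 1 it is injective. I would conclude that $\kp$ is an embedding, checking that it is a homeomorphism onto its image by using the openness of the charts.

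The main obstacle is the first bullet of Step 1: showing that every complex structure near $J$ in $\kc(\frg)$ is a small right translate of a point of $\Kur(X)$, with the translation staying near the identity modulo the stabiliser. I would try to obtain this from the inverse function theorem applied to $\psi_h$. This is where reducedness of $\kc(\frg)$ enters, since the Zariski tangent space $\ker\dbar$ is then the honest tangent space at the relevant points.
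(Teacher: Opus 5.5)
Your overall strategy is sound, but the ``main obstacle'' is a red herring, and the fix you propose for it would not work.

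\textbf{Step 1.} Your second bullet only uses $I=R_{g*}I'$ for \emph{some} $g\in G$ and some $I'\in\Kur(X)$. Completeness of the Kuranishi family plus Corollary~\ref{cor: simple description} already give exactly this; it is the surjectivity of $\pi_X$ onto the germ of $\kt(\frg)$ at $[J]$. No control on $g$ and no property of $\psi_h$ is needed. So Step~1 is complete, and it is the paper's own argument. You phrase it on $\kc(\frg)$ (orbits are open in a connected fibre of $\widetilde\kp$); the paper phrases it on $\kt(\frg)$ (fibres of $\kp$ are discrete and connected).

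The inverse function theorem, on the other hand, is not available. Reduced does not mean smooth, so $\ker\dbar$ is only the Zariski tangent space of a possibly singular germ. A morphism of germs inducing an isomorphism on Zariski tangent spaces is merely a closed embedding; think of $\{y^2=x^3\}\hookrightarrow\IC^2$. Moreover, local surjectivity of $\psi_h$ is not a first-order statement at $J$. Since $\pi_X$ is injective, the part of an orbit $G\cdot I'$ near $J$, for $I'\in\Kur(X)$, would have to lie in $\psi_h(\{I'\}\times H^0(\frg^{0,1},\frg^{1,0})^\perp)$. This forces $h^0(I')\ge h^0(J)$, i.e.\ constancy of $h^0$ near $J$.

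\textbf{Step 2.} This is where the obstacle does matter: your claim that $\Kur(X)$ is reduced is unsupported as written. It has a short proof that avoids $\psi_h$. Completeness gives $f\colon(\kc(\frg),J)\to(\Kur(X),0)$ classifying the family of Remark~\ref{rem: family over Cg}. Let $\iota\colon\Kur(X)\hookrightarrow\kc(\frg)$ be the embedding given by the choice of metric. Then $f\circ\iota$ classifies the Kuranishi family itself, hence has identity differential and is an automorphism of the germ. So $f^*$ embeds $\mathcal{O}_{\Kur(X),0}$ into the reduced ring $\mathcal{O}_{\kc(\frg),J}$, and $\Kur(X)$ is reduced.

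With this in place, your universality argument is a genuinely different route from the paper's.
\begin{itemize}
\item The paper uses reducedness to turn the embedding hypothesis into the condition that $d$-exact $(n-1,1)$-forms are $\dbar$-exact. It then gets constancy of $h^0$ on $\Kur(X)$ from \cite[Theorem~3.6]{Rao18} and applies Wavrik's criterion \cite{wavrik1969obstructions}, which itself needs $\Kur(X)$ reduced.
\item You bypass both external results. For reduced bases, $f_1=f_2$ follows from pointwise equality, which is nothing but injectivity of $\pi_X$. Together with completeness at nearby points, this is all the gluing in Proposition~\ref{prop: Tg coarse moduli} uses once the charts are reduced.
\item For non-reduced bases you would additionally need a base-change statement for the relative $(n,0)$-forms, so that ``the period of the fibres'' is a morphism and not just a pointwise assignment; this deserves a sentence.
\end{itemize}
The paper's route, in exchange, produces the constancy of $h^0$ explicitly.

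\textbf{Step 3.} Openness of the charts alone does not make an injective local embedding a homeomorphism onto its image. The paper is equally terse on this point.
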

	\begin{proof}
		According to Proposition \ref{prop: Tg coarse moduli} we first have to show that for every $X$ the map $\pi_X\colon \Kur(X) \to \kt(\frg)$ is injective and that its Kuranishi family is universal. The former is implied by Proposition \ref{prop: catanese criterion} since we assumed $\kp_X$ to be an embedding.
		
		Let $J$ be a complex structure on $\frg$. If $J$ is a reduced point in $\kc(\frg)$, then the local period map $\kp_X$ with respect to $X = (M,J)$ is an embedding if and only if every $d$-exact form in $\Wedge^{n-1,1}\frg^*$ is already $\dbar$-exact. Under this assumption on forms in $\Wedge^{n-1,1}\frg^*$ it was shown in \cite[Theorem~3.6]{Rao18} that the function $h^0$ given by $h^0(I) = \dim \Aut(M,I)$ is constant on the Kuranishi space of $X$. Hence, the Kuranishi family of $X$ is universal by \cite[Theorem~4.2]{wavrik1969obstructions}.
		
		Since the map $\pi_X$ is a local homeomorphism and the local period map $\kp_X$ is injective at every point, it follows that the fibres of the global period map $\kp$ are discrete, and because we assumed the fibres of $\widetilde \kp$ to be connected, this implies that $\kp$ has connected and discrete fibres. Thus, the global period map $\kp$ is injective. In particular, the Teichmüller space $\kt(\frg)$ is Hausdorff and it follows from Proposition \ref{prop: Tg coarse moduli} that $\kt(\frg)$ exists as a complex space such that $\kp$ is an embedding.
	\end{proof}

	\begin{rem}
		Note that for every $J \in \kc(\frg)$ the fibre dimension of $\widetilde\kp$ at $J$ is equal to $\dim G\cdot J = n-h^0(J)$ if $\kp$ is injective, and since the fibre dimension of $\widetilde \kp$ is upper-semi continuous on $\kc(\frg)$, both $n-h^0$ and $h^0$ are upper-semi continuous. Thus, the function $h^0$ is constant on the connected components of $\kc(\frg)$ if $\kp$ is injective.
	\end{rem}

	\subsection{Period maps associated to a stable torus bundle series}\label{sect: period map SPTBS}

	Throughout this section we fix a nilpotent Lie algebra $\frg$ of dimension $2n$ with nilpotency index $\nu$, together with a SPTBS
	\[
	0 = \ks^0\frg \subset \ks^1\frg \subset \ks^2\frg \subset \dots \subset\ks^{\nu-1}\frg \subset \ks^\nu\frg = \frg.
	\]
	Let $\gotht^i =  \ks^i\frg / \ks^{i-1}\frg$ and set $2k_i = \dim \gotht^i$. Then, as explained in Section \ref{sect: torus bundles}, for every lattice $\Gamma \subset G$ in the associated simply connected Lie group, the nilmanifold $M= \Gamma \backslash G$ has the structure of an iterated real principal torus bundle
	\begin{equation*}
		\begin{tikzcd}
			M= M_1 \rar{\pi_1} & M_2 \rar{\pi _2} & {\cdots} \rar{\pi_{\nu-1}} & M_\nu= T^{2k_\nu}
		\end{tikzcd} 
	\end{equation*}
	such that the fibre of $\pi_i$ is a $2k_i$-dimensional torus $T^{2k_i}$ with Lie algebra $\gotht^i$. Moreover, for every $i=1,\dots, \nu$, a complex structure $J$ on $\frg$ induces complex structures $J_i$ on $\frg/\ks^{i-1}\frg$ and $J^i$ on $\gotht^i$. So we get a holomorphic iterated principal torus bundle 
	\begin{equation}\label{eq: tower}
		\begin{tikzcd}
			X= X_1 \rar{\pi_1} & X_2 \rar{\pi _2} & {\cdots} \rar{\pi_{\nu-1}} & X_\nu = T_\nu,
		\end{tikzcd} 
	\end{equation}
	where $X_i= (M_i,J_i)$ and with fibres $T_i =(T^{2k_i}, J^i)$. Considering $\ku_{k_i} \subset \Gr(k_i,\gotht^i_\IC)$ as the space of left-invariant complex structures on $T^{2k_i}$, we can associate to every $J \in \kc(\frg)$ a tuple of complex structures $(J^1,\dots, J^\nu)\in \ku_{k_1} \times\dots \times \ku_{k_\nu}$.  Similar to $\widetilde \kp$, sending a complex structure $J$ to the tuple $(J^1, \dots, J^\nu)$ defined with respect to $\ks = (\ks^i\frg)_{i=1,\dots, \nu}$ is initially only well-defined as a continuous map
	\[
	\widetilde\kp^\ks\colon |\kc(\frg)| \to \ku =\ku_{k_1} \times\dots \times \ku_{k_\nu}
	\]
	because $X$ may admit infinitesimal deformations that no longer have a torus bundle structure.
	As for the period maps defined in the previous section we have
	\begin{prop}
		The map $\widetilde\kp^\ks$ is holomorphic on the reduction of $\kc(\frg)$.
	\end{prop}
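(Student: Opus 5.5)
Since $\kc(\frg)_{\textup{red}}$ is a reduced complex space, I would reduce to showing that each component $J \mapsto J^i$ is holomorphic. The plan is to write this component as a composition of holomorphic maps between Grassmannians, restricted to the reduced space. Throughout, I work with the description $\kc(\frg) \subset \Gr(n, \frg_\IC)$, $J \mapsto \frg^{1,0}$, and with $\ku_{k_i} \subset \Gr(k_i, \gotht^i_\IC)$, $J^i \mapsto (\gotht^i)^{1,0}$.

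Fix $i$ and set $W = \ks^i\frg_\IC$ and $W' = \ks^{i-1}\frg_\IC$. These subspaces are independent of $J$ because the series is stable. Since $\ks^i\frg$ and $\ks^{i-1}\frg$ are $J$-invariant for every $J\in\kc(\frg)$, we have $\dim(\frg^{1,0}\cap W) = \tfrac12\dim W$ and $\dim(\frg^{1,0}\cap W') = \tfrac12 \dim W'$. Moreover, $(\gotht^i)^{1,0}$ is the image of $\frg^{1,0}\cap W$ in $W/W'$. Hence $\widetilde\kp^\ks_i$ factors as
\[
V \longmapsto V\cap W \longmapsto (V\cap W + W')/W' .
\]
The second map, $\Gr(a, W) \supset \{U : U\cap W' \text{ has dimension } b\} \to \Gr(a-b, W/W')$, is holomorphic on the locally closed stratum, and even algebraic. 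So the key step is to show that $V\mapsto V\cap W$ is holomorphic on $\kc(\frg)_{\textup{red}}$.

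For this step I would use the following standard fact. On the locally closed subvariety $Z=\{V\in\Gr(n,\frg_\IC) : \dim(V\cap W)\ge m\}$, where $m = \tfrac12\dim W$, the map $V \mapsto V\cap W$ is holomorphic on the open locus of $Z_{\textup{red}}$ where the dimension equals $m$ exactly. To prove it, work in a local chart: $V$ is the graph of a linear map $A\colon V_0\to C$. Then $V\cap W$ is the kernel of the composite $V\to \frg_\IC/W$. This is a map of vector bundles whose rank is constant equal to $n-m$ on the reduced space, so its kernel is a holomorphic subbundle. By the universal property of the Grassmannian, this gives a holomorphic classifying map to $\Gr(m, W)$. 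The dimension condition holds at every point of $\kc(\frg)$, so $\kc(\frg)_{\textup{red}}\subset Z_{\textup{red}}$ and this locus contains $\kc(\frg)_{\textup{red}}$. Reducedness is essential here: a constant-rank morphism of bundles has a locally free kernel over a reduced base, but over a non-reduced base the rank can only be computed pointwise.

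The main obstacle is exactly this kernel argument. The rank of a bundle map is constant only at the level of points, and holomorphic dependence of the kernel genuinely requires the reduced structure. This matches the earlier remark that infinitesimal, non-reduced directions may fail to preserve the torus bundle structure, as in Example \ref{exam: non-reduced point}. I would prove the constant-rank kernel lemma directly. Locally, choose $n-m$ columns of the matrix $M(V)$ of the composite that give a nonvanishing minor. Cramer's rule then expresses a basis of the kernel holomorphically in the entries, which are holomorphic functions on the reduced space.

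It then remains to compose with the quotient map above and to observe that the image lies in the open set $\ku_{k_i}$, because $J^i$ is a complex structure. Taking the product over $i = 1,\dots,\nu$ shows that $\widetilde\kp^\ks$ is holomorphic on $\kc(\frg)_{\textup{red}}$.
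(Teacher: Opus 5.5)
Your proposal is correct and follows essentially the same route as the paper. The paper defines the same map $V\mapsto (V\cap\ks^1\frg_\IC,\ p_1(V\cap\ks^2\frg_\IC),\dots,p_{\nu-1}(V))$ on the locus $\kv\subset\Gr(n,\frg_\IC)$ of subspaces with prescribed intersection dimensions with the fixed filtration, notes that $|\kc(\frg)|\subset\kv$ by stability, and restricts to the reduction. The only difference is in how holomorphy on that locus is justified: the paper appeals to $\kv$ being a Schubert cell (open in a normal variety), whereas you prove it directly via a constant-rank kernel argument on the reduced space, which makes explicit exactly where reducedness enters.
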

	\begin{proof}
		Consider the set
		\[
		\kv =\left\{V \in \Gr(n,\frg_\IC) \, \middle| \, \dim_\IC(V \cap \ks^i\frg_\IC) = \sum_{j=1}^i k_j \; \textup{ for all } i =1,\dots, \nu\right\}.
		\]
		The set $\kv$ describes a Schubert cell in $\Gr(n, \frg_\IC)$, so $\kv$ is an open subset of a normal variety in $\Gr(n, \frg_\IC)$ \cite[Theorem~3]{Ramanan85}. Let us denote by $p_i \colon \frg_\IC \to \frg_\IC/ \ks^i\frg_\IC$ the projection. Then we can define a holomorphic map
		\[
		\kv \to \Gr(k_1, \gotht^1_\IC) \times \Gr(k_2, \gotht^2_\IC) \times \dots \times \Gr(k_\nu, \gotht^\nu_\IC)
		\]
		by
		\[
		V \mapsto \left(V \cap \ks^1\frg_\IC,\; p_1(V \cap \ks^2\frg_\IC), \dots,p_{\nu-2}(V \cap \ks^{\nu-1}\frg_\IC),\; p_{\nu-1}(V)\right).
		\]
		Since $\ks$ defines a SPTBS on $\frg$, the set $\kc(\frg) \subset \Gr(n, \frg_\IC)$ is contained in $\kv$ and for a complex structure $J$ the space $p_i(\frg^{1,0}\cap \ks^{i-1}\frg_\IC) \subset \gotht^i_\IC$ defines the complex structure $J^i$ on $\gotht^i$.
		Hence, this map coincides with $\widetilde \kp^\ks$ on $|\kc(\frg)|$ and thus it is holomorphic on the reduction of $\kc(\frg)$.
	\end{proof}
	
	\begin{rem}
		As for the period map $\widetilde \kp$ (see Remark \ref{rem: maximal domain period map}) we could define $\widetilde \kp^\ks$ on a possibly larger complex space, but for our purposes it suffices to consider the reduction of $\kc(\frg)$.
		
		Also, note that one may define maps similar to $\widetilde \kp$ and $\widetilde\kp^\ks$ for every suitable filtration of $\frg$ compatible with every complex structure. The maps $\widetilde\kp$ and $\widetilde\kp^\ks$ correspond to the two extremal cases of the trivial filtration and a stable principle torus bundle series.
	\end{rem}
	
	Since $\gotht^i$ is contained in the centre of $\frg/\ks^{i-1}\frg$, the right-action of $G$ on $\kc(\frg)$ induces the trivial action on $\ku_{k_i}$ for every $i$. In other words, if there exists a holomorphic right-translation between two complex nilmanifolds $X=(M,J)$ and $X'=(M,J')$, then the fibre tori $T_i$ and $T_i'$ with respect to the SPTBS $\ks$ are the same. Therefore, the map $\widetilde\kp^\ks$ descends to a map 
	\[
	\kp^\ks \colon \kt(\frg) \to \ku.
	\]
	In the same way as above, we can define a local version $\kp^\ks_X\colon \Kur(X)_{\textup{red}} \to \ku$ of $\kp^\ks$ which satisfies $\kp_X^\ks = \kp^\ks \circ \pi_X$.

	\subsection{Relation between the period maps}\label{subsect: relation between period maps}
	
	Before we apply the period maps $\kp$ and $\kp^\ks$ to the study of our moduli spaces we briefly explain the relation between these maps. For $i=1,\dots, \nu,$ we denote by
	\[
	Z_i = \im\left(\widetilde\kp_i \colon \kc(\frg/\ks^{i-1}\frg) \to \IP H^{n_i}(\frg/\ks^{i-1}\frg, \IC) \right)
	\]
	the image of the period maps on $\kc(\frg/\ks^{i-1}\frg)$, where $2n_i = \dim \frg/\ks^{i-1}\frg$.
	In particular, $Z_\nu$ can be identified with $\ku_{k_\nu}$ since the Lie algebra $\frg/\ks^{\nu-1}\frg$ is abelian. Our goal is to construct maps $\tau_i\colon Z_i \to Z_{i+1}$ making the diagram
	\[
	\begin{tikzcd}
		\kc(\frg) \arrow[r] \arrow[d, "\widetilde\kp"] & \kc(\frg/\ks^1\frg) \arrow[r] \arrow[d, "\widetilde\kp_2"] &     \dots \arrow[r] &\kc(\frg/\ks^{\nu-2}\frg) \arrow[r] \arrow[d, "\widetilde\kp_{\nu-1}"] &\kc(\frg/\ks^{\nu-1}\frg) \arrow[d, "\widetilde\kp_\nu"] \\
		Z_1\arrow[r, "\tau_1"]          & Z_2\arrow[r,"\tau_2"]                  & \dots \arrow[r,"\tau_{\nu-2}"]   &Z_{\nu-1} \arrow[r, "\tau_{\nu-1}"]  & Z_\nu      
	\end{tikzcd}
	\]
	commute. We will construct these maps as follows: let $\omega_i \in \Wedge^{n_i} (\frg/\ks^{i-1}\frg)_\IC^*$ be a representative of a class in $Z_i$ corresponding to a complex structure $J_i$ on $\frg/\ks^{i-1}\frg$ and consider the associated map 
	\[
	\varphi_{\omega_i} \colon (\frg/\ks^{i-1}\frg)_\IC \to \Wedge^{n_i-1} (\frg/\ks^{i-1}\frg)_\IC^*,
	\]
	which sends $x\in (\frg/\ks^{i-1}\frg)_\IC$ to the form $x \lrcorner \omega_i$. The kernel of $\varphi_{\omega_i}$ is exactly the space $\frg_i^{0,1}$ of $(0,1)$-vectors with respect to $J_i$.  Let $x_1, \dots, x_{k_i}$ be a basis of the vector space  $ \overline{\frg_i^{0,1}} \cap \gotht^i_\IC =\frg^{1,0}_i \cap \gotht^i_\IC$ which determines the induced complex structure on $\gotht^i$. Then the form $\omega_{i+1} = (x_1\wedge \dots \wedge x_{k_i}) \lrcorner \omega_i$ vanishes on $\gotht_\IC^i \subset (\frg/\ks^{i-1}\frg)_\IC$, so we can consider it as an element of $\Wedge^{n_{i+1}} (\frg/\ks^{i}\frg)_\IC^*$ and $\omega_{i+1}$ corresponds to the complex structure $J_{i+1}$ on $\frg/\ks^i\frg$ induced by $J_i$. Hence, the class $[\omega_{i+1}]$ defines an element of $Z_{i+1}$ and we have 
	\begin{lem}
		The class $[\omega_{i+1}] \in Z_{i+1}$ does not depend on the representative of the class $[\omega_i] \in Z_i$ nor on the chosen basis. 
	\end{lem}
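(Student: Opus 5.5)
The plan is to separate the two kinds of choices. Independence of the basis is elementary. Independence of the representative reduces to two facts: contraction with vectors of $\gotht^i_\IC$ is compatible with $d$, and the subspace $\frg^{1,0}_i\cap\gotht^i_\IC$ can be read off from the cohomology class $[\omega_i]$ alone.

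\emph{Basis.} If $x_1',\dots,x_{k_i}'$ is another basis of $\frg^{1,0}_i\cap\gotht^i_\IC$, then $x_1'\wedge\dots\wedge x_{k_i}'=\det(A)\,x_1\wedge\dots\wedge x_{k_i}$, where $A$ is the change-of-basis matrix. So $\omega_{i+1}$ changes only by a nonzero scalar. Likewise, rescaling $\omega_i$ rescales $\omega_{i+1}$. Hence the point $[\omega_{i+1}]\in\IP H^{n_{i+1}}$ is unchanged.

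\emph{Contraction commutes with $d$.} For $x\in\gotht^i_\IC$, which is central in $(\frg/\ks^{i-1}\frg)_\IC$, the Lie derivative $L_x$ acts on left-invariant forms through the coadjoint action of $x$. This action is trivial because $x$ is central. Cartan's formula $L_x=x\lrcorner d+d(x\lrcorner\,\cdot\,)$ then gives $d(x\lrcorner\alpha)=-x\lrcorner d\alpha$. Iterating, contraction with $x_1\wedge\dots\wedge x_{k_i}$ maps closed forms to closed forms and exact forms to exact forms. It therefore induces a map
\[
H^{n_i}(\frg/\ks^{i-1}\frg,\IC)\to H^{n_i-k_i}(\frg/\ks^{i-1}\frg,\IC).
\]
Since $\omega_{i+1}$ vanishes on $\gotht^i_\IC$ and is closed, it also defines a class on the quotient $\frg/\ks^i\frg$. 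This class agrees with the one given by the construction, as the remark before the lemma indicates.

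\emph{Representative.} Now let $\omega_i'$ be the $(n_i,0)$-form of another complex structure $J_i'$ with $[\omega_i']=c[\omega_i]$, that is, $\omega_i'=c\,\omega_i+d\beta$. If the two structures induce the same subspace $\frg_i^{1,0}\cap\gotht^i_\IC=\frg_i'^{1,0}\cap\gotht^i_\IC$, then by the previous step $\omega_{i+1}'-c\,\omega_{i+1}$ is exact, and we are done. It therefore remains to prove that this subspace is determined by $[\omega_i]$. Since $\gotht^i_\IC$ is the direct sum of $\gotht^i_\IC\cap\frg^{1,0}_i$ and $\gotht^i_\IC\cap\frg^{0,1}_i$ (the filtration is stable), it suffices to show that $\gotht^i_\IC\cap\frg^{0,1}_i$ is the kernel of
\[
\gotht^i_\IC\to H^{n_i-1},\qquad x\mapsto[x\lrcorner\omega_i].
\]
Note that this map depends only on $[\omega_i]$ by the chain-map property. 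Vectors in $\frg^{0,1}_i$ contract $\omega_i$ to zero, so they lie in the kernel. The remaining claim is that for $0\neq x'\in\gotht^i_\IC\cap\frg^{1,0}_i$ the closed nonzero $(n_i-1,0)$-form $x'\lrcorner\omega_i$ is not exact.

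This non-exactness is the main obstacle. My first attempt would use unimodularity: top-degree left-invariant forms are never exact. If $x'\lrcorner\omega_i=d\gamma$, then since $\overline\omega_i$ is closed,
\[
(x'\lrcorner\omega_i)\wedge\overline\omega_i=d(\gamma\wedge\overline\omega_i)
\]
up to sign. One would then like to wedge with a suitable closed $1$-form to reach a nonzero top form and derive a contradiction. This works when $x'$ is not contained in the commutator. When $\gotht^i$ meets the commutator I would instead pair with a closed $(0,1)$- or $(1,1)$-type form built from the torus bundle structure. As a fallback, I would use the holomorphic fibration $X_i\to X_{i+1}$ with fibre $T_i$: integrating over the fibre, together with Nomizu's theorem, detects the class $[x'\lrcorner\omega_i]$.
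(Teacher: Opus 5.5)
Your basis step and your chain-map step are fine. The chain-map step is essentially the paper's whole proof: it contracts $\omega_i+d\beta$ with the \emph{same} $x_1\wedge\dots\wedge x_{k_i}$ and uses that contraction by central vectors anticommutes with $d$. You are right that the $x_j$ depend on the representative, which the paper does not discuss. But your way of handling this fails. Your key claim is that $x'\lrcorner\omega_i$ is never exact for $0\neq x'\in\gotht^i_\IC\cap\frg^{1,0}_i$, so that $\gotht^i_\IC\cap\frg^{0,1}_i$ is the kernel of $x\mapsto[x\lrcorner\omega_i]$. This is false. Take the Iwasawa manifold with its SPTBS $0\subset\kc^1\frg\subset\frg$ (Example~\ref{exam: Iwasawa fibres}) and the parallelisable structure $d\omega^1=d\omega^2=0$, $d\omega^3=\omega^{12}$. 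Then $\gotht^{1,0}=\langle z_3\rangle$ and $z_3\lrcorner\omega^{123}=\omega^{12}=d\omega^3$, so $x\mapsto[x\lrcorner\omega_1]$ vanishes on all of $\gotht_\IC$. This is exactly the case $\gotht^i\cap\kc^1\frg\neq0$ that you flagged, and no pairing with closed forms or fibre integration can detect a class that is zero. Example~\ref{exam: iwasawa x curve}, for the torus bundle given by the centre, even shows the fibre subspace moving from $\langle z_3,z_4\rangle$ to $\langle z_3+\bar t\bar z_4,z_4\rangle$ while $\widetilde\kp$ stays constant. So recovering this subspace from $[\omega_i]$ is the wrong target.

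What you need is transversality, not equality. Let $\omega_i'=c\,\omega_i+d\beta$ be the $(n_i,0)$-form of $J_i'$. Vectors in $\frg^{0,1}_{J_i'}\cap\gotht^i_\IC$ annihilate $\omega_i'$, so $(x_1\wedge\dots\wedge x_{k_i})\lrcorner\omega_i'=\det(P)\,\omega'_{i+1}$. Here $P$ is the projection of $\frg^{1,0}_{J_i}\cap\gotht^i_\IC$ onto $\frg^{1,0}_{J_i'}\cap\gotht^i_\IC$ along $\frg^{0,1}_{J_i'}$; both spaces have dimension $k_i$. Your unimodularity idea proves $\det(P)\neq0$ if you apply it to $\omega_i'\wedge\overline{\omega_i'}$ instead. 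The form $d\beta\wedge\overline{\omega_i'}=d(\beta\wedge\overline{\omega_i'})$ is an exact top-degree form, hence zero. So $0\neq\omega_i'\wedge\overline{\omega_i'}=c\,\omega_i\wedge\overline{\omega_i'}$, which says $\frg^{1,0}_{J_i}\cap\frg^{0,1}_{J_i'}=0$. With the $x_j$ thus legitimately fixed, the paper's identity applies.

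One further point is left implicit, both in your remark that the class ``agrees'' on the quotient and in the paper's one-line proof. The form $(x_1\wedge\dots\wedge x_{k_i})\lrcorner\beta$ lives on $\frg/\ks^{i-1}\frg$ and need not vanish on $\gotht^i$. So the identity gives exactness only after pull-back, and $H^\bullet(\frg/\ks^i\frg,\IC)\to H^\bullet(\frg/\ks^{i-1}\frg,\IC)$ need not be injective. For the Iwasawa manifold, $[\omega^{12}]\neq0$ on the base although $\omega^{12}=d\omega^3$ upstairs. Concluding in $Z_{i+1}$ therefore needs a further argument, and that argument must use that $\omega_i'$ comes from a complex structure.
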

	\begin{proof}
		By the definition of a SPTBS the elements $x_1,\dots,x_{k_i}$ are contained in the centre of $\frg_\IC/\ks^{i-1}\frg_\IC$ and thus we have
		\[ 
		(x_1\wedge \dots \wedge  x_{k_i}) \lrcorner (\omega_i +d \beta) = \omega_{i+1} + (-1)^{k_i} d ((x_1 \wedge \dots \wedge x_{k_i})\lrcorner \beta)
		\]
		for every $\beta \in \Wedge^{n_i-1}\frg_\IC^*$.
	\end{proof}
	
	Hence, we can define $\tau_i \colon Z_i \to Z_{i+1}$ by $\tau_i([\omega_i]) =[\omega_{i+1}]$ and the above diagram commutes. We immediately obtain
	
	\begin{prop}\label{prop: forms on base}
		Let $J$ and $J'$ be complex structures on $\frg$ with their induced complex structures $J_i$ and $J'_i$ on $\frg/\ks^{i-1}\frg$. If $\widetilde \kp(J) = \widetilde\kp (J')$, then $\widetilde\kp(J_i) = \widetilde\kp (J'_i)$ for every $i=1, \dots, \nu$. In particular, the induced complex structures on the base torus $M_\nu$ coincide.
	\end{prop}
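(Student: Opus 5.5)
The plan is to exploit the commutative diagram just constructed, with the maps $\tau_i\colon Z_i \to Z_{i+1}$ defined by $\tau_i([\omega_i]) = [(x_1\wedge\dots\wedge x_{k_i})\lrcorner\omega_i]$, and to argue by induction on $i$. For $i=1$ the statement is the hypothesis $\widetilde\kp(J) = \widetilde\kp(J')$, since $J_1 = J$ and $J_1' = J'$. For the inductive step we assume $\widetilde\kp_i(J_i) = \widetilde\kp_i(J_i')$. Commutativity of the square gives $\widetilde\kp_{i+1}(J_{i+1}) = \tau_i(\widetilde\kp_i(J_i))$, and likewise for $J'$, so the equality passes to level $i+1$.

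The substantive point is that $\tau_i$ is a well-defined \emph{map} on $Z_i$, depending only on the class $[\omega_i]$ and not on the complex structure $J_i$ used to produce the vectors $x_1,\dots,x_{k_i}$. The preceding lemma establishes independence of the representative and of the basis, but the basis $x_1,\dots,x_{k_i}$ of $\frg_i^{1,0}\cap\gotht^i_\IC$ a priori depends on $J_i$. Two different complex structures $J_i, J_i'$ could share a period class $[\omega_i]$ while having different $\frg_i^{1,0}$. I would handle this by working in $\frg_i^{0,1}\cap\gotht^i_\IC$ instead, which is determined by $\omega_i$ as the kernel of $\varphi_{\omega_i}$ restricted to $\gotht^i_\IC$.

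To get independence of the representative, note that $x\lrcorner d\beta = -d(x\lrcorner\beta)$ for central $x$ in the nilpotent quotient, since $L_x$ acts trivially on invariant forms when $x$ is central. Consequently the class $[x\lrcorner\omega_i]$ depends only on $[\omega_i]$, and the same holds for iterated contractions. One then shows that the cohomology class of $(x_1\wedge\dots\wedge x_{k_i})\lrcorner\omega_i$, up to scaling, is determined by $[\omega_i]$. Since $\gotht^i_\IC = (\frg_i^{1,0}\cap\gotht^i_\IC)\oplus(\frg_i^{0,1}\cap\gotht^i_\IC)$, contracting $\omega_i$ with a basis $y_1,\dots,y_{2k_i}$ of all of $\gotht^i_\IC$ in a suitable combination isolates the top contraction. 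Alternatively, the $(0,1)$ directions contract to zero, so contracting with any $k_i$-vector $\xi\in\Wedge^{k_i}\gotht^i_\IC$ gives a multiple of $\omega_{i+1}$. Choosing $\xi$ generic, independent of $J_i$, makes that multiple nonzero on the locus of interest.

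This independence is the main obstacle. The fallback, which I would commit to first, is to reformulate: $\omega_{i+1}$ spans the image of the map $\Wedge^{k_i}\gotht^i_\IC\to\Wedge^{n_{i+1}}$ given by contraction with $\omega_i$. At the level of cohomology, the contraction $\Wedge^{k_i}\gotht^i_\IC\otimes H^{n_i}\to H^{n_{i+1}}$ is well defined by the centrality identity above. Hence the image line of $[\omega_i]$ under this map is intrinsic, and it equals $\langle[\omega_{i+1}]\rangle$. Once this is established, the induction closes. Taking $i=\nu$ gives equality in $Z_\nu\cong\ku_{k_\nu}$, which is exactly the statement that the induced complex structures on the base torus $M_\nu$ coincide.
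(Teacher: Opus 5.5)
Your route is the paper's: induct along the commutative square, where the only real content is that $\tau_i\colon Z_i\to Z_{i+1}$ is well defined. The worry you raise is legitimate. The basis $x_1,\dots,x_{k_i}$ depends on $J_i$, and the paper's lemma contracts $\omega_i+d\beta$ with the $x_j$ of $J_i$ without addressing this. Your generic $\xi\in\Wedge^{k_i}\gotht^i_\IC$ (chosen outside two hyperplanes) correctly reduces the comparison to contracting both $\omega_i$ and $\omega_i'$ with one fixed central $k_i$-vector.

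The genuine gap is the next step, the claim that contraction is well defined in cohomology. The centrality identity gives
\[
\xi\lrcorner\omega_i'=\lambda\,\xi\lrcorner\omega_i\pm d(\xi\lrcorner\beta),\qquad \omega_i'=\lambda\omega_i+d\beta,
\]
but $\xi\lrcorner\beta$ is a form on $\frg/\ks^{i-1}\frg$ that in general does not vanish on $\gotht^i$. So you only obtain $[\omega'_{i+1}]\propto[\omega_{i+1}]$ in $H^{n_{i+1}}(\frg/\ks^{i-1}\frg,\IC)$. However, $Z_{i+1}$ and the hypothesis of your next inductive step live in $\IP H^{n_{i+1}}(\frg/\ks^{i}\frg,\IC)$, and the pull-back between these groups need not be injective. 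Nor can you run the next step upstairs, because $\gotht^{i+1}$ is central only in $\frg/\ks^i\frg$.

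A concrete failure already occurs for the Iwasawa structure $d\omega^3=\omega^{12}$ on $\frh_5$ with $\ks=(0\subset\kc^1\frg\subset\frg)$. Both $\omega^{12}=d\omega^3$ and $\overline{\omega^{12}}=d\overline\omega^3$ are exact on $\frh_5$. Your ``image line'' is therefore zero and cannot even distinguish the base complex structure from its conjugate, although these are different points of $Z_2\subset\IP H^2(\IR^4,\IC)$.

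To close the gap you must control the basic part of $d(\xi\lrcorner\beta)$ modulo $d$ of basic forms. Write $\xi\lrcorner\beta=\eta_0+\sum_j\eta_j\wedge\tau^j+\dots$ with $\eta_\bullet\in\Wedge\Ann(\gotht^i_\IC)$ and the $\tau^j$ completing $\Ann(\gotht^i_\IC)$ to a basis. Then this basic part is $d\eta_0\pm\sum_j\eta_j\wedge d\tau^j$ with $d\tau^j\in\Wedge^2\Ann(\gotht^i_\IC)$. Every class in the kernel of the pull-back is represented by curvature terms of this kind.

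Some further argument is needed to show these terms cannot change the projective class. One possibility is to use the remaining fibre-degree components of $\omega_i'-\lambda\omega_i=d\beta$, not just a single contraction, together with decomposability and $\omega_{i+1}\wedge\overline{\omega_{i+1}}\neq0$. The identity in the paper's lemma is the same upstairs identity, so citing it does not supply this step either.
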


	If one restricts $\widetilde \kp$ to the (possibly empty) subset of abelian complex structures on $\frg$ and $\ks$ is the ascending central series, then $\widetilde\kp(J) = \widetilde\kp(J')$  implies $\widetilde\kp^\ks(J) = \widetilde\kp^\ks(J')$, that is, all the complex tori given by the bundle structure of $(M,J)$ and $(M,J')$ coincide. However, as the following example will show, this is generally not the case. 
	
	\begin{exam}\label{exam: iwasawa x curve}
		Let $X = \ki \times E$ be the product of the Iwasawa manifold $\ki$ with an elliptic curve $E$. Then there is basis $\omega^1, \dots, \omega^4$ of left-invariant $(1,0)$-forms on $X$ with structure equations
		\[
		d\omega^1 = d\omega^2 = d\omega^3 =0, \; d\omega^4 = \omega^{12},
		\]
		and the ascending central series on the underlying real Lie algebra provides $X$ with the structure of a principal bundle over a complex 2-torus $T_2$ whose fibre is a complex 2-torus $T_1$.
		If $z_1,\dots, z_4$ denotes the dual basis, then the class of $\overline \omega^3 \otimes z_4$ defines an element of $H^1(X, \Theta_X)$. For small $t \in \IC$ the forms $t\overline \omega^3 \otimes z_4$ define a one-parameter family $\{J_t\}$ of deformations of the complex structure $J= J_0$ on $X$, deforming only the fibre torus $T_1$. So the complex nilmanifolds $X_t$ with complex structure $J_t$ are still 2-torus bundles over $T_2$ but the fibre tori are different from $T_1$. Nevertheless, we still have
		\[
		\widetilde{\kp}(J_t) = [\omega^{123} \wedge (\omega^4+t\overline \omega^3)] = [\omega^{1234} + td\omega^{34\bar 3}] = [\omega^{1234}] = \widetilde\kp(J).
		\]
	\end{exam}

	\chapter*{Part II: Applications}
	
	In the following sections we will study the behaviour of the previously introduced period maps and the Teichmüller spaces for several classes of nilmanifolds.
	
	\section{Torus bundles over tori}\label{subsect: 2-step nilpotent Lie algebras}
	
	In this section, we are concerned with the Teichmüller spaces for principal torus bundles over a torus. We first recall some results on deformations of holomorphic principal torus bundles.
	
	\subsection{Deformations of principal torus bundles}\label{subsect: twist deformations}
	Let $\pi \colon X \to Y$ be a principal torus bundle with fibre $T$, where both $X$ and $Y$ are complex nilmanifolds. Let us denote by $\frg$, $\gotht\subset \frg$, and $\gothb = \frg/ \gotht$ the underlying real Lie algebras of $X$, $T$, and $Y$, respectively. The aim of this section is to study the space $H^1(X, \Theta_X)$ of infinitesimal deformations of $X$ and to discuss how small deformations of $X$ relate to deformations of $T$ and $Y$.
	
	In \cite{Hoefer} Höfer studied the more general situation of small deformations of holomorphic principal torus bundles over compact complex manifolds. We start by recalling his results for the bundle $\pi\colon X \to Y$. Following the notation of \cite{Hoefer}, the $d_2$-differential of the Borel spectral sequence for $\pi$ induces homomorphisms
	\[
	\gamma \colon H^{1,0}_{\dbar}(T) \to H_{\dbar}^{1,1}(Y), \qquad  \varepsilon \colon H^{0,1}_{\dbar}(T) \to H_{\dbar}^{0,2}(Y).
	\]
	In our setting these maps can be described as follows: consider the decomposition of the complexifications $\gothb_\IC = \gothb^{1,0} \oplus \gothb^{0,1}$ and  $\gotht_\IC = \gotht^{1,0} \oplus \gotht^{0,1}$ given by the left-invariant complex structures on $Y$ and $T$. Then, as explained in Section \ref{subsect: Lie algebra cohomology}, we have $H^{1,0}_{\dbar}(T) = \gotht^{*1,0}$, $H^{0,1}_{\dbar}(T) = \gotht^{*0,1}$ and the cohomology groups $H^{p,q}_{\dbar}(Y)$ are computed by the complex $(\Wedge^{p,\bullet}\gothb^*, \dbar)$. After choosing a basis of $\frg^{1,0}$ compatible with the filtration $0 \subset \gotht^{1,0} \subset \frg^{1,0}$ and taking its dual we can identify $\gotht^{*1,0}$ with a subspace of $\frg^{*1,0}$. Restricting $\dbar$ on $\frg^{*1,0}$ to $\gotht^{*1,0}$ we get a map $\dbar \colon \gotht^{*1,0} \to  \ker\left( \dbar \colon \Wedge^{1,1} \gothb^* \to \Wedge^{1,2} \gothb^* \right)$ by identifying $\gothb^*$ with $\Ann\gotht \subset \frg^*$. This map does not depend on the choice of basis and we have the commutative diagram 
	\[
	\begin{tikzcd}
		{H^{1,0}_{\dbar}(T)=\gotht^{*1,0}} \arrow[r, "\dbar"] \arrow[rd, "\gamma", swap] & {\ker\left( \dbar \colon \Wedge^{1,1} \gothb^* \to \Wedge^{1,2} \gothb^* \right)} \arrow[d] \\
		& {H^{1,1}_{\dbar}(Y).}          
	\end{tikzcd}
	\]
	Similarly, the map $\varepsilon$ is given by the restriction of $\dbar$ to $\gotht^{*0,1}$. The maps $\gamma$, $\varepsilon$ and their duals induce the following maps (for more details, see \cite[Section~14]{Hoefer}):
	\begin{align*}
		&\varepsilon \otimes \id \colon H^{0,1}_{\dbar}(T) \otimes \gotht^{1,0} \to H^{0,2}_{\dbar}(Y) \otimes \gotht^{1,0},\\[0,5em]
		& \gamma_1 \colon H^0(Y, \Theta_Y) \to H^{0,1}_{\dbar}(Y) \otimes \gotht^{1,0},\\[0,5em]
		& \gamma_2 \colon H^1(Y, \Theta_Y) \to H^{0,2}_{\dbar}(Y) \otimes \gotht^{1,0},\\[0,5em]
		& \gamma_3 \colon H^{0,1}_{\dbar}(T) \otimes H^0(Y, \Theta_Y) \to H^2(Y, \Theta_Y),
	\end{align*}
	and 
	\[
	\gamma_4 = \gamma_1 \otimes \id \colon H^0(Y, \Theta_Y) \otimes H^{0,1}_{\dbar}(T) \to  H^{0,1}_{\dbar}(Y) \otimes \gotht^{1,0} \otimes  H^{0,1}_{\dbar}(T) .
	\]
	As in \cite{Hoefer} we denote $A_F= \ker (\varepsilon \otimes \id)$, $A_T= \coker \gamma_1$,  $A_P = \ker \gamma_2$ and $A_D = \ker\gamma_3 \cap \ker \gamma_4$. We collect the results of \cite[Section~14]{Hoefer} in the following proposition.
	
	\begin{prop}\label{prop: H1 decomposition}
		The space $H^1(X, \Theta_{X})$ admits a decomposition
		\[
		H^1(X, \Theta_{X}) \cong A_F \oplus A_T \oplus A_P \oplus A_D,
		\]
		where
		\begin{enumerate}
			\item the space $A_F$ parametrises deformations of the fibre torus $T$,
			\item the space $A_T$ parametrises deformations of $X$ preserving both $Y$ and $T$,
			\item the space $A_P$ parametrises deformations of the base $Y$ while preserving the bundle structure,
			\item the space $A_D$ parametrises deformations destroying the bundle structure.
		\end{enumerate}
	\end{prop}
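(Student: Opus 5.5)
The decomposition will come from a filtration of the Dolbeault complex computing $H^1(X,\Theta_X)$. By \cite[Corollary~2.5]{rollenske09b} we have $H^1(X,\Theta_X)\cong H^1(\frg^{0,1},\frg^{1,0})$, so the plan is to compute this Lie algebra cohomology directly. I will exploit the two extensions $0\to\gotht^{1,0}\to\frg^{1,0}\to\gothb^{1,0}\to0$ of $\frg^{0,1}$-modules and $0\to\gotht^{0,1}\to\frg^{0,1}\to\gothb^{0,1}\to0$ of Lie algebras. Here $\gotht$ is central in $\frg$, so $\gotht^{1,0}$ is a trivial $\frg^{0,1}$-module and $\gotht^{0,1}$ is a central ideal.

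\textbf{Step 1: the coefficient sequence.} I first take the long exact sequence in $H^\bullet(\frg^{0,1},\cdot)$ attached to the coefficient sequence:
\[
H^0(\frg^{0,1},\gothb^{1,0})\to H^1(\frg^{0,1},\gotht^{1,0})\to H^1(\frg^{0,1},\frg^{1,0})\to H^1(\frg^{0,1},\gothb^{1,0})\to H^2(\frg^{0,1},\gotht^{1,0}).
\]

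\textbf{Step 2: Hochschild--Serre.} Each term is then computed with the Hochschild--Serre spectral sequence for the central ideal $\gotht^{0,1}\subset\frg^{0,1}$. This is the Lie algebra version of the Borel spectral sequence. Its $E_2$-page is $H^p(\gothb^{0,1},\Wedge^q\gotht^{*0,1}\otimes V)$, and its $d_2$-differentials are exactly the maps $\varepsilon$ and $\gamma$ described above.
\begin{enumerate}
\item For the trivial module $\gotht^{1,0}$, the low-degree exact sequence gives $0\to H^{0,1}(Y)\otimes\gotht^{1,0}\to H^1(\frg^{0,1},\gotht^{1,0})\to \ker(\varepsilon\otimes\id)=A_F$.
\item The first arrow of Step 1 then has cokernel $\coker\gamma_1=A_T$ plus $A_F$. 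This uses that $H^0(\frg^{0,1},\gothb^{1,0})$ surjects onto $H^0(Y,\Theta_Y)$, and that the connecting map to $H^{0,1}(Y)\otimes\gotht^{1,0}$ is induced by $\gamma$.
\item For $H^1(\frg^{0,1},\gothb^{1,0})$, the same spectral sequence gives a piece $H^1(Y,\Theta_Y)$ and a piece $H^{0,1}(T)\otimes H^0(Y,\Theta_Y)$, the latter constrained by $\gamma_3$.
\item Intersecting with the kernel of the connecting map to $H^2(\frg^{0,1},\gotht^{1,0})$ cuts these down to $\ker\gamma_2=A_P$ and $\ker\gamma_3\cap\ker\gamma_4=A_D$.
\end{enumerate}

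\textbf{Step 3: splitting and interpretation.} Because everything is a finite-dimensional vector space, the resulting filtration splits and gives the stated direct sum. The geometric meaning of each summand is read off from representatives.
\begin{itemize}
\item $A_F$: classes in $\gotht^{*0,1}\otimes\gotht^{1,0}$ move only $\frg^{1,0}\cap\gotht_\IC$, that is, the fibre.
\item $A_T$: classes in $\gothb^{*0,1}\otimes\gotht^{1,0}$ fix both $\gothb^{1,0}$ and $\gotht^{1,0}$ and change only the twisting.
\item $A_P$: classes lifting $H^1(Y,\Theta_Y)$ deform the base while $\gotht$ stays $J$-invariant.
\item $A_D$: classes in $\gotht^{*0,1}\otimes\gothb^{1,0}$ mix fibre and base directions, so $\gotht$ is no longer complex and the bundle structure is destroyed.
\end{itemize}
Alternatively, I can cite \cite[Section~14]{Hoefer} directly, noting that his hypotheses hold because the Dolbeault cohomology of $X$, $Y$ and $T$ is computed by invariant forms (\cite{Has26}).

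\textbf{Main obstacle.} The hard part is matching the connecting homomorphisms and $d_2$-differentials precisely with $\gamma_1,\dots,\gamma_4$ and $\varepsilon\otimes\id$. In particular, I must check that $A_D$ is cut out by both $\gamma_3$ and $\gamma_4$ together. I would settle this with an explicit cocycle computation in a basis adapted to $\gotht^{1,0}\subset\frg^{1,0}$, writing $\dbar$ on $\frg^{*0,1}\otimes\frg^{1,0}$ in block form.
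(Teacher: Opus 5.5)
Citing \cite[Section~14]{Hoefer} is all the paper itself does. The direct derivation you propose, however, fails at Step~2(iv), and the failure is not a matter of missing detail.

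\textbf{Where Step~2(iv) breaks.} Consider the connecting map $\delta\colon H^1(\frg^{0,1},\gothb^{1,0})\to H^2(\frg^{0,1},\gotht^{1,0})=H^{0,2}_{\dbar}(X)\otimes\gotht^{1,0}$. On the subspace $H^1(Y,\Theta_Y)$, $\delta$ is $\gamma_2$ followed by the inflation $H^{0,2}_{\dbar}(Y)\otimes\gotht^{1,0}\to H^{0,2}_{\dbar}(X)\otimes\gotht^{1,0}$. The five-term sequence you already used in Step~2(i) shows that the kernel of this inflation is $\im(\varepsilon\otimes\id)$. Hence
\[
\ker\delta\cap H^1(Y,\Theta_Y)=\gamma_2^{-1}(\im(\varepsilon\otimes\id)),
\]
which contains $A_P$ but is in general larger.

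Done correctly, your filtration has graded pieces $A_T$, $A_F$, $\gamma_2^{-1}(\im(\varepsilon\otimes\id))$, and a subspace of $\ker\gamma_3\cap\ker\gamma_4$. For the last piece: on $\ker\gamma_3$, vanishing of $\delta$ requires its $E^{1,1}$-component to vanish, which is the condition $\gamma_4=0$. It \emph{also} requires a secondary $E^{2,0}$-component to vanish, with values in $H^{0,2}_{\dbar}(Y)\otimes\gotht^{1,0}/(\im\gamma_2+\im(\varepsilon\otimes\id))$.

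Equivalently, the classes with no $\gotht^{*0,1}\otimes\gothb^{1,0}$-component form an extension by $A_T$ of the joint kernel
\[
W=\{(\phi_P,\phi_F)\in H^1(Y,\Theta_Y)\oplus(H^{0,1}_{\dbar}(T)\otimes\gotht^{1,0})\mid \gamma_2(\phi_P)+(\varepsilon\otimes\id)(\phi_F)=0\},
\]
and $\dim W=\dim A_P+\dim A_F+\dim(\im\gamma_2\cap\im(\varepsilon\otimes\id))$.

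\textbf{The discrepancy really occurs.} Take $d\omega^1=d\omega^2=0$ and $d\omega^3=\omega^{12}+\omega^{1\bar 1}$. This is a complex structure on the Lie algebra of the Iwasawa manifold, giving a principal elliptic bundle over a $2$-torus with $\gotht^{1,0}=\langle z_3\rangle$. Here:
\begin{itemize}
\item $\varepsilon(\overline\omega^3)=\overline\omega^{12}\neq0$, so $\dim A_F=0$;
\item $\gamma_1(z_1)=\overline\omega^1\otimes z_3$ and $\gamma_1(z_2)=0$, so $\dim A_T=1$;
\item $\gamma_2$ is nonzero only on $\overline\omega^2\otimes z_1$, with image $\langle\overline\omega^{12}\otimes z_3\rangle=\im(\varepsilon\otimes\id)$, so $\dim A_P=3$;
\item $\gamma_3$ is injective, so $\dim A_D=0$.
\end{itemize}
A direct computation in $\frg^{*0,1}\otimes\frg^{1,0}$ nevertheless gives $\dim H^1(X,\Theta_X)=5$. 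The extra class is $[\overline\omega^2\otimes z_1-\overline\omega^3\otimes z_3]$: it is $\dbar$-closed, not exact, and lies in $W$ but not in $A_P\oplus A_F$.

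\textbf{Consequence.} No careful matching of connecting maps with $\gamma_1,\dots,\gamma_4$ can make Step~2(iv) true as written. Your argument does prove the four-term splitting when $\varepsilon=0$, for instance for abelian complex structures and hence for Kodaira manifolds. In general it proves the statement only after replacing $A_F\oplus A_P$ by $W$, and $A_D$ by the kernel of the secondary map on $\ker\gamma_3\cap\ker\gamma_4$. This is in fact how the decomposition is used later in the paper: in the proof of Proposition~\ref{prop: Kuranishi smooth h^0=1}, $A_F\oplus A_P$ is computed as the kernel of $\dbar$ on $(\gothb^{*0,1}\otimes\gothb^{1,0})\oplus(\gotht^{*0,1}\otimes\gotht^{1,0})$, i.e.\ as $W$.
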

	
	The decomposition given in Proposition \ref{prop: H1 decomposition} was also considered in \cite{Tatsuo79} in a more general setup, where it was shown that the infinitesimal deformations in $A_T$ are always unobstructed. In our setting this is easily verified since the space $\gotht^{1,0}$ is contained in the centre of $\frg_\IC$ and thus the Schouten bracket of two elements in $A_T$ vanishes. By the same argument the infinitesimal deformations in $A_F$ are also unobstructed. Since the deformations parametrised by $A_T$ do not change the fibre or the base of $\pi \colon X \to Y$ but only the bundle structure, they are called the \textit{twist deformations} of $\pi$.

	\subsection{Period maps and the Teichmüller space}
	Due to the existence of twist deformations, the period map $\kp^\ks$ associated to a SPTBS $\ks$ will almost never be injective. Our first goal is to give a precise characterisation of $2$-step nilpotent Lie algebras for which $\kp^\ks$ is injective and to deduce some criteria for the injectivity of $\kp$. This allows us construct the Teichmüller space as a complex manifold for certain 2-step nilpotent Lie algebras.
	
	Let us fix a $2$-step nilpotent Lie algebra $\frg$ of dimension $2n$ and let $M$ be a nilmanifold with underlying Lie algebra $\frg$. Suppose we have a complex structure $J$ on $\frg$ admitting a (not necessarily stable) principal torus bundle series $0 \subset \gotht \subset \frg$ of length 2. Hence, the complex nilmanifold $X=(M,J)$ has the structure of a holomorphic principal torus bundle $\pi \colon X \to Y$ over a complex torus $Y$ with fibre $T$. Consider the decomposition
	\[
	H^1(X, \Theta_X) = A_F \oplus A_T \oplus A_P \oplus A_D
	\]
	given in Proposition \ref{prop: H1 decomposition}. We are particularly interested in the case where the fibre $T$ is an elliptic curve.
	
	\begin{prop}\label{prop: Kuranishi smooth h^0=1}
		The space of twist deformations $A_T$ of the bundle $\pi\colon X \to Y$ is trivial if and only if $\dim \Aut(X) =1$. In this case, the fibre $T$ is an elliptic curve and $X$ has the following properties.
		\begin{enumerate}
			\item We have $\dim H^1(X, \Theta_X) = \dim (A_F\oplus A_P) = \frac{1}{2}n(n-1) +1$.
			\item The Kuranishi family of $X$ is smooth and universal.
			\item The local period map $\kp_X$ is an embedding.
		\end{enumerate}
	\end{prop}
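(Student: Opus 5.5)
Throughout, $\frg$ is $2$-step nilpotent with a complex structure $J$ for which $0\subset\gotht\subset\frg$ is a principal torus bundle series; we set $2m=\dim\gotht$ and $2b=\dim\gothb$, so $n=m+b$. The first step is to compute $H^0(X,\Theta_X)=H^0(\frg^{0,1},\frg^{1,0})$ and relate it to $A_T$. Since $Y$ is a torus, $H^0(Y,\Theta_Y)=\gothb^{1,0}$ and $H^{0,1}_{\dbar}(Y)=\gothb^{*0,1}$, with all differentials on $\gothb$ vanishing. The map $\gamma_1\colon\gothb^{1,0}\to\gothb^{*0,1}\otimes\gotht^{1,0}$ is $y\mapsto(\bar x\mapsto[\bar x,y]^{1,0})$. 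A holomorphic vector field on $X$ is a vector $z=t+y$ with $t\in\gotht^{1,0}$ and $y$ a lift of an element of $\gothb^{1,0}$; the condition $\dbar z=0$ says exactly that $\gamma_1(y)=0$. Hence
\[
h^0(X)=m+\dim\ker\gamma_1 ,
\]
while $\dim A_T=\dim\coker\gamma_1=bm-b+\dim\ker\gamma_1$. Both quantities are minimised exactly when $\ker\gamma_1=0$. Moreover $h^0(X)=1$ forces $m=1$ and $\ker\gamma_1=0$, and then $\dim A_T=\dim\ker\gamma_1=0$. For the converse, $A_T=0$ gives $b(m-1)+\dim\ker\gamma_1=0$, so $\ker\gamma_1=0$ and either $m=1$ or $b=0$. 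The case $b=0$ means $\frg=\gotht$ is abelian, which I exclude, since then $X$ would be a torus rather than a genuinely $2$-step object. Thus $m=1$ and $h^0(X)=1$, and in particular $T$ is an elliptic curve.

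For (i), with $m=1$ and $\ker\gamma_1=0$ we have $\gotht^{1,0}$ one-dimensional and $\gamma_1$ injective. Computing dimensions: $A_F=\ker(\varepsilon\otimes\id)$ has dimension $1-\rk\varepsilon$. Next, $A_P=\ker\gamma_2$, where $\gamma_2\colon\gothb^{*0,1}\otimes\gothb^{1,0}\to\Wedge^2\gothb^{*0,1}$ is contraction with the $(1,1)$-part of the structure form $d\omega^n$ (with $\omega^n$ spanning $\gotht^{*1,0}$). For $A_D$, note that $\gamma_3$ maps into $H^2(Y,\Theta_Y)$ and $\gamma_4$ into $\gothb^{*0,1}\otimes\gothb^{*0,1}\otimes\gotht^{1,0}$. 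Injectivity of $\gamma_1$ means that the $(1,1)$-part $\beta$ of $d\omega^n$ is nondegenerate in the sense that $y\mapsto y\lrcorner\beta$ is injective, and this gives $A_D=0$ via $\gamma_4$, which is injective. I expect the main obstacle to be showing $\dim A_P=\tfrac12 b(b+1)$ together with the appropriate correction from $\varepsilon$. Concretely, $\gamma_2(\eta)$ is the antisymmetrisation of $\eta$ composed with $\beta$, so its kernel is identified with the endomorphisms $\eta$ for which $\beta\circ\eta$ is symmetric. Since $\beta$ is invertible, this space has dimension $\tfrac12 b(b+1)$. However, if $d\omega^n$ has a nonzero $(0,2)$-part, then $\varepsilon\neq0$ and the count changes. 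I would handle this by a case analysis on the $(2,0)$- and $(0,2)$-parts of $d\omega^n$, checking that in every case $\dim A_F+\dim A_P=\tfrac12 b(b+1)+1$. Substituting $b=n-1$ then gives the claimed value $\tfrac12 n(n-1)+1$.

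For (ii), the plan is to exhibit honest deformations realising every class and then conclude smoothness by a dimension count. Deforming the fibre curve gives a one-parameter family that is unobstructed, since $\gotht^{1,0}$ is central (as already noted after Proposition \ref{prop: H1 decomposition}). Deforming the base complex structure on $\gothb$ within the locus where $d\omega^n$ stays of type $(1,1)+(2,0)$ produces an explicit family of complex structures on $\frg$; I would show this family has dimension equal to $\dim A_P$ and maps onto $A_P$ at first order. Together these give a smooth family whose Kodaira--Spencer map is surjective, so $\Kur(X)$ is smooth of dimension $\dim H^1(X,\Theta_X)$. The condition $h^0=1$ is open by upper semicontinuity and holds for all fibres since $h^0\geq1$ (the central vector field persists). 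Hence $h^0$ is constant on $\Kur(X)$, and universality follows from \cite[Theorem~4.2]{wavrik1969obstructions}.

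For (iii), I will apply Lemma \ref{lem: local period map}. It suffices to check that every $d$-exact form in $\Wedge^{n-1,1}\frg^*$ is $\dbar$-exact, since $\Kur(X)$ is now smooth and reduced. The $d$-exact forms of this bidegree come from $d$ applied to forms in $\Wedge^{n-1,0}\oplus\Wedge^{n-2,1}$, and the only nonzero differentials originate from $\omega^n$ and $\overline\omega^n$. Using the explicit $\beta$, the $\partial$-image of a $(n-2,1)$-form landing in $(n-1,1)$ can be written as a $\dbar$-image, by exploiting the injectivity of $y\mapsto y\lrcorner\beta$. This reduces (iii) to a short linear algebra verification.
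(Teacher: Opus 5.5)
Your argument for $A_T=0\iff\dim\Aut(X)=1$ via $h^0=m+\dim\ker\gamma_1$, the vanishing of $A_D$, and the count $\dim\ker\gamma_2=\tfrac12 b(b+1)$ are correct and agree with the paper. There are, however, two genuine gaps.

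\emph{First gap: (i) and (ii).} You treat fibre and base directions separately, but they are coupled through $\varepsilon$. Note that $d\omega^n$ never has a $(0,2)$-part. Rather, $\varepsilon\neq0$ exactly when $\partial\omega^n\neq0$, since $\dbar\overline\omega^n=\overline{\partial\omega^n}$. This does occur with $h^0=1$, for example $d\omega^1=d\omega^2=0$, $d\omega^3=\omega^{12}+\omega^{1\bar 1}+i\,\omega^{2\bar 2}$ on $\frh_2$.

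If $A_F=\ker(\varepsilon\otimes\id)$ and $A_P=\ker\gamma_2$ are separate kernels, then $\varepsilon\neq0$ gives $\dim A_F+\dim A_P=\tfrac12 b(b+1)$. So the case analysis you announce cannot produce $\tfrac12 b(b+1)+1$. The relevant subspace of $H^1(X,\Theta_X)$ is the kernel of the combined map
\[
(\gothb^{*0,1}\otimes\gothb^{1,0})\oplus(\gotht^{*0,1}\otimes\gotht^{1,0})\to\Wedge^2\gothb^{*0,1}\otimes\gotht^{1,0},\qquad(\eta_P,\eta_F)\mapsto\dbar\eta_P+\dbar\eta_F .
\]
The missing observation is that its restriction $\gamma_2$ to the first summand is already surjective. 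Your own count $b^2-\tfrac12 b(b+1)=\binom{b}{2}$ says exactly this. Hence the kernel has dimension $b^2+1-\binom{b}{2}$ regardless of $\varepsilon$, which is how the paper argues.

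The same issue breaks your construction in (ii). When $\partial\omega^n\neq0$, the pure fibre direction $\overline\omega^n\otimes z_n$ is not $\dbar$-closed, so no fibre-only family exists. A base-only family only realises $\ker\gamma_2$, so your Kodaira--Spencer map misses a direction. You would have to deform fibre and base simultaneously, using surjectivity of $\gamma_2$ as a transversality statement. The paper instead solves the Maurer--Cartan equations recursively, using that every bracket that occurs lies in $\Wedge^2\gothb^{*0,1}\otimes\gotht^{1,0}=\dbar(\gothb^{*0,1}\otimes\gothb^{1,0})$.

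\emph{Second gap: (iii).} You take for granted that every $d$-exact form in $\Wedge^{n-1,1}\frg^*$ is the differential of a form in $\Wedge^{n-1,0}\frg^*\oplus\Wedge^{n-2,1}\frg^*$. In a double complex this is not automatic: a primitive can have components of every bidegree whose differentials cancel along a zigzag. Ruling this out is precisely the nontrivial content of (iii).

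The paper takes an arbitrary primitive and splits it along $\Wedge^\bullet\gothb^*_\IC\otimes\Wedge^\bullet\langle\omega^n,\overline\omega^n\rangle$. The components not containing $\omega^n\wedge\overline\omega^n$ have differentials in $\Wedge^{n-1,1}\gothb^*$. That space equals $\dbar(\Wedge^{n-1,0}\frg^*)$ by a rank count using $h^0=1$. It remains to show that the component $\gamma\wedge\omega^n\wedge\overline\omega^n$ vanishes, where $\gamma\in\Wedge^{n-3}\gothb^*_\IC$ has arbitrary type.

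For this, look at the part of its differential of highest antiholomorphic degree. It comes only from the component $\gamma^{p,q}$ with $q$ maximal. Purity of type then forces the product of $\gamma^{p,q}$ with the $(1,1)$-part of $d\omega^n$ to vanish. Wedging with a nondegenerate $(1,1)$-form on $\gothb$ is injective in degree $n-3$, so $\gamma=0$.

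So nondegeneracy of your $\beta$ is the right ingredient. It must, however, be used in this Lefschetz-type form on primitives of all bidegrees, not only through injectivity of $y\mapsto y\lrcorner\beta$ on $(n-2,1)$-forms.
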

	\begin{proof}
		Since $J$ preserves the filtration $0\subset \gotht \subset \frg$ where $\dim \gotht =2k$, there exists a basis $\omega^1, \dots, \omega^{n-k}, \omega^{n-k+1}, \dots, \omega^n$ of $\frg^{*1,0}$ compatible with this filtration. So we have
		\[
		\gotht_\IC^* \cong \gotht^{*1,0} \oplus \gotht^{*0,1} = \langle \omega^{n-k+1}, \dots, \omega^{n} \rangle \oplus \langle \overline \omega^{n-k+1}, \dots, \overline \omega^{n} \rangle
		\]
		and we denote
		\[
		\mathfrak{b}^*_\IC = \mathfrak b^{*1,0} \oplus \mathfrak b^{*0,1} = \langle \omega^1, \dots, \omega^{n-k} \rangle \oplus \langle \overline \omega^1, \dots, \overline \omega^{n-k} \rangle.
		\]
		In particular, we get $d\omega^i =0$ for $i \leq n-k$ and $d\omega^i \in \Wedge^2\mathfrak{b}_\IC^*$ for $i> n-k$. Hence, for the basis $z_1,\dots, z_n$ of $\frg^{1,0}$ dual to $\omega^1, \dots, \omega^n$ we have $\dbar z_i =0$ for $i > n-k$ and  $\dbar z_i \in \mathfrak b^{*0,1} \otimes \gotht^{1,0}$ for $i \leq n-k$. Recall
		from Section \ref{subsect: twist deformations} that 
		\[
		A_T = \coker\left(\dbar\colon \mathfrak{b}^{1,0} \to  \mathfrak b^{*0,1} \otimes \gotht^{1,0}\right).
		\]
		Therefore, $A_T=0$ is equivalent to the map $\dbar\colon \mathfrak{b}^{1,0} \to  \mathfrak b^{*0,1} \otimes \gotht^{1,0}$ being surjective. Hence,  $\gotht^{1,0}$ has to be one-dimensional and the map is an isomorphism. Since $\dim \Aut(X) = \dim \ker \left( \dbar\colon \frg^{1,0} \to \frg^{*0,1} \otimes \frg^{1,0}\right)$ this is equivalent to $\dim \Aut(X) =1$. 
		
		From now on we will assume that $A_T=0$, or equivalently $\dim \Aut(X)=1$. Hence, we have $k=1$ and thus $\gotht^{*1,0} = \langle \omega^n \rangle$ and $\gotht^{1,0} = \langle z_n \rangle$ because $\dim \gotht^{1,0} \leq \dim \Aut(X)$.
		
		For the first item, it follows from the assumption $A_T=0$ that the dimension of $H^1(X, \Theta_X)$ equals the dimension of $A_F \oplus A_P \oplus A_D$, where
		\[
		A_F \oplus A_P = \ker\left( \dbar \colon (\mathfrak{b}^{*0,1} \otimes \mathfrak{b}^{1,0}) \oplus( \gotht^{*0,1} \otimes \gotht^{1,0}) \to \Wedge^2 \mathfrak{b}^{*0,1} \otimes \gotht^{1,0}\right) 
		\]
		and 
		\[
		A_D = \ker \left( \dbar \colon \gotht^{*0,1} \otimes \gothb^{1,0} \to (\Wedge^2 \gothb^{*0,1} \otimes \gothb^{1,0}) \oplus (\gotht^{*0,1} \otimes  \mathfrak b^{*0,1} \otimes \gotht^{1,0}) \right).
		\]
		We first observe that $A_D=0$ since the map $\id \otimes \dbar \colon \gotht^{*0,1} \otimes \gothb^{1,0} \to \gotht^{*0,1} \otimes  \mathfrak b^{*0,1} \otimes \gotht^{1,0}$ is an isomorphism if $A_T =0$.
		Moreover, the map $\dbar \colon \mathfrak{b}^{*0,1} \otimes \mathfrak{b}^{1,0} \to \Wedge^2 \mathfrak{b}^{*0,1} \otimes \gotht^{1,0}$ is surjective if $A_T =0$ since $\dbar$ vanishes on $\gothb^{*0,1}$ and $\dbar\colon \mathfrak{b}^{1,0} \to  \mathfrak b^{*0,1} \otimes \gotht^{1,0} $ is an isomorphism. Therefore, we have
		\[
		\dim H^1(X, \Theta_X) = \dim (A_F \oplus A_P) = (n-1)^2 +1 - \binom{n-1}{2} = \frac{1}{2} n(n-1) +1.
		\]	
		For the second item it suffices to show that $\Kur(X)$ is smooth since the function $h^0$ is constant on $\Kur(X)$ by the assumption $h^0(J) = \dim \Aut(X) =1$. It then follows from Wavrik's criterion that the Kuranishi family is universal \cite{wavrik1969obstructions}. We will again use the fact that the map $\dbar \colon \mathfrak{b}^{*0,1} \otimes \mathfrak{b}^{1,0} \to \Wedge^2 \mathfrak{b}^{*0,1} \otimes \gotht^{1,0}$ is surjective. Let $\phi$ be a representative of a class in $H^1(X, \Theta_X) = A_F \oplus A_P$. Then we can assume that $\phi = \phi_F + \phi_P$, where $\phi_F \in \gotht^{*0,1} \otimes \gotht^{1,0}$ and  $\phi_P \in  \mathfrak{b}^{*0,1} \otimes \mathfrak{b}^{1,0}$. Computing the Schouten bracket $[\phi, \phi]$ yields
		\begin{align*}
			[\phi, \phi] &= [\phi_F, \phi_F] + [\phi_P, \phi_P]  +2[\phi_F, \phi_P]= [\phi_P, \phi_P] + 2[\phi_F, \phi_P] \in \Wedge^2\mathfrak{b}^{*0,1} \otimes \gotht^{1,0}.
		\end{align*}
		Therefore, we have $[\phi, \phi] \in \im \left(\dbar \colon \mathfrak{b}^{*0,1} \otimes \mathfrak{b}^{1,0} \to \Wedge^2 \mathfrak{b}^{*0,1} \otimes \gotht^{1,0} \right)$. Since the Schouten bracket of two elements in $\mathfrak{b}^{*0,1} \otimes \mathfrak{b}^{1,0}$ is also always contained in $\Wedge^2\mathfrak{b}^{*0,1} \otimes \gotht^{1,0} \subset \im \dbar$ this allows us to construct for every $\phi$ a solution to the system of equations \eqref{eq:Phi_k} given by the Maurer--Cartan equation.
		
		Finally, it remains to show that the local period map $\kp_X \colon \Kur(X) \to \kd$ is an embedding. By Lemma \ref{lem: local period map} we can equivalently show that every $d$-exact form in $\Wedge^{n-1,1}\frg^*$ is $\dbar$-exact. Concretely, we will prove that every $d$-exact form in $\Wedge^{n-1,1}\frg^*$ is contained in the subspace $\Wedge^{n-1,1}\gothb^* \subset \Wedge^{n-1,1}\frg^*$. Since $\dim \Aut(X) =1$, the map $\dbar \colon \Wedge^{n-1,0}\frg^* \to \Wedge^{n-1,1}\mathfrak \gothb^*$ has rank $n-1$, and thus it is surjective. Hence, every form in $\Wedge^{n-1,1}\gothb^*$ is $\dbar$-exact.
		
		Let $\alpha \in \Wedge^{n-1,1} \frg^*$ and let $\beta \in \Wedge^{n-1}\frg_\IC^*$ such that $d\beta = \alpha$. The vector space $\Wedge^{n-1}\frg_\IC^*$ can be decomposed into
		\[
		 \Wedge^{n-1} \gothb_\IC^* \oplus  \left(\Wedge^{n-2} \gothb_\IC^* \otimes \gotht^{*1,0} \right) \oplus  \left(\Wedge^{n-2} \gothb_\IC^* \otimes \gotht^{*0,1} \right) \oplus  \left(\Wedge^{n-3} \gothb_\IC^* \otimes \gotht^{*1,0} \otimes \gotht^{*0,1} \right),
		\]
		so $\beta$ can be written as $\beta = \beta_0 + \beta_{1,0}+ \beta_{0,1} + \beta_{1,1}$. Since every form in $\gothb_\IC^*$ is closed, the form $\beta_0$ is closed, and both $d\beta_{1,0}$ and $d\beta_{0,1}$ are contained in $\Wedge^{n-1,1}\gothb^*$. So it remains to treat $d\beta_{1,1}$. We can write $\beta_{1,1} = \gamma \wedge \omega^n \wedge \overline \omega^n$ for some $\gamma \in \Wedge^{n-3} \gothb_\IC^*$, and by assumption $d\beta_{1,1} \in \Wedge^{n-1,1} \frg^*$, so we have
		\[
		d \beta_{1,1} = (-1)^{n-3} \gamma \wedge ( \partial \omega^n \wedge \overline \omega^n - \omega^n \wedge \partial \overline\omega^n).
		\]
		Let $q$ be the largest index such that the $(p,q)$-part $\gamma^{p,q}$ of $\gamma$ is non-zero with $p+q = n-3$. Then, $\gamma^{p,q} \wedge \omega^n \wedge \partial\overline \omega^n$ has to be zero, which is equivalent to $\gamma^{p,q} \wedge \partial\overline\omega^n =0$. Writing $\partial \overline\omega^n = \sum_{i=1}^{n-1}   \beta^i \wedge \overline \omega^i$ with $\beta^i \in \gothb^{*1,0}$, we get $\dbar z_i = \overline \beta^i \otimes z_n$ for the dual $z_i \in \gothb^{1,0}$ of $\omega^i$. Again, since $\dim \Aut(X) =1$, the map $\dbar \colon \gothb^{1,0} \to \gothb^{*0,1} \otimes \langle z_n \rangle$ is an isomorphism and thus the forms $ \beta^1, \dots, \beta^{n-1}$ are a basis of $\gothb^{*1,0}$. Therefore, the form $\gamma^{p,q} \wedge \partial\overline \omega^n$ is zero if and only if $\gamma^{p,q}=0$. Since $q$ was chosen as the largest index with $\gamma^{p,q} \neq 0$, this implies that $\gamma =0$. Hence, $d\beta_{1,1}=0$, and thus $\alpha = d(\beta_{1,0}+ d\beta_{0,1}) \in \Wedge^{n-1,1}\gothb^*$ is $\dbar$-exact. This concludes the proof. 
	\end{proof}

	Suppose the principal torus bundle series $ \ks =(0 \subset \gotht \subset \frg)$ is stable, and consider the associated period map $\kp^\ks \colon \kt(\frg) \to \ku$. We saw in Example \ref{exam: iwasawa x curve} that there is usually no relation between the period maps $\kp$ and $\kp^\ks$. However, we have
	
	\begin{lem}\label{lem: period maps for dim t =2}
		Let $J, J' \in \kc(\frg)$ such that $\widetilde \kp(J)  = \widetilde{\kp}(J')$. If $\dim_\IR \gotht =2$, then $\widetilde \kp^\ks(J) = \widetilde \kp^\ks(J')$.
	\end{lem}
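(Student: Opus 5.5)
The plan is to compare the two pieces of $\widetilde\kp^\ks$ separately. Here $\ks=(0\subset\gotht\subset\frg)$ is the SPTBS of length $2$, $\gothb=\frg/\gotht$ is abelian, and $\widetilde\kp^\ks(J)=(J^1,J^2)$, where $J^1$ is the induced structure on $\gotht$ and $J^2$ the induced structure on the base torus.

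\emph{Base component.} This is immediate from Proposition~\ref{prop: forms on base} with $i=\nu=2$: $\widetilde\kp(J)=\widetilde\kp(J')$ forces $\widetilde\kp_2(J_2)=\widetilde\kp_2(J_2')$. Since $\gothb$ is abelian, $H^{n-1}(\gothb,\IC)=\Wedge^{n-1}\gothb^*_\IC$, so the decomposable $(n-1,0)$-forms of $J_2$ and $J_2'$ are proportional. They therefore span the same $\gothb^{*1,0}$, i.e.\ $J^2=J'^2$.

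\emph{Fibre component.} Since $\dim_\IR\gotht=2$, $J^1$ is determined by the line $\gotht^{0,1}\subset\gotht_\IC$, so it suffices to show $\gotht^{0,1}=\gotht'^{0,1}$. Write $\omega=\omega_b\wedge\theta$ and $\omega'=\omega_b'\wedge\theta'$, where $\omega_b,\omega_b'$ span $\Wedge^{n-1,0}\gothb^*$ for $J$ and $J'$, and $\theta,\theta'$ are $(1,0)$-forms that are nonzero on $\gotht^{1,0}$, respectively $\gotht'^{1,0}$. For central $x\in\gotht_\IC$ the Lie derivative $L_x$ vanishes on left-invariant forms, because $(L_x\alpha)(y_1,\dots)=-\sum\alpha(\dots,[x,y_j],\dots)=0$. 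Hence $d\iota_x=-\iota_xd$, as in the lemma defining $\tau_i$, and $\iota_x$ induces a map $H^n(\frg,\IC)\to H^{n-1}(\frg,\IC)$. By assumption $\omega'=\lambda\omega+d\beta$ for some $\lambda\neq0$. Choose $0\neq x\in\gotht^{0,1}$. Then $\iota_x\omega=0$, so
\[
\pm\theta'(x)\,\omega_b'=\iota_x\omega'=\mp d(\iota_x\beta)
\]
is $d$-exact. If we know that $\omega_b'$ is \emph{not} exact in $\Wedge^\bullet\frg^*_\IC$, this gives $\theta'(x)=0$. Since also $\omega_b'(x,\cdot)=0$, we get $x\in\ker\omega'=\frg'^{0,1}$, hence $x\in\gotht'^{0,1}$, which proves $J^1=J'^1$.

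\emph{Main obstacle.} The remaining point is to show that the nonzero form $\omega_b'\in\Wedge^{n-1}\gothb^*_\IC$ is not $d$-exact in $\frg$.
\begin{itemize}
\item Grade $\Wedge^\bullet\frg^*_\IC$ by $\gotht$-degree, using $\Wedge^\bullet\frg^*_\IC=\Wedge^\bullet\gothb^*_\IC\otimes\Wedge^\bullet\gotht^*_\IC$. The differential $d$ kills $\gothb^*$ and sends $\gotht^*$ into $\Wedge^2\gothb^*$, so it lowers $\gotht$-degree by exactly one.
\item Consequently, if $\omega_b'=d\beta$, we may replace $\beta$ by its $\gotht$-degree-one part $\beta_1=a\wedge\theta'+b\wedge\overline{\theta'}$ with $a,b\in\Wedge^{n-2}\gothb^*_\IC$. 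This gives
\[
\omega_b'=\pm\bigl(a\wedge d\theta'+b\wedge d\overline{\theta'}\bigr).
\]
\item To rule this out, I would first wedge both sides with a suitable closed form from $\Wedge^{n-1}\gothb^*_\IC$, for instance $\overline{\omega_b'}$, to reach the top degree of $\gothb$. I would then use the $J'$-type decomposition of $d\theta'$ together with the integrability constraint that $\omega'$ is closed, i.e.\ $\omega_b'\wedge d\theta'=0$, to force the right-hand side to vanish.
\item If that fails, the fallback is the structure of the bundle $X'\to Y'$ with elliptic fibre. I would show that pullback $H^{n-1}(\gothb)\to H^{n-1}(\frg)$ is injective on $H^{n-1,0}(\gothb)$, by comparing with the Borel/Leray spectral sequence of the bundle as in Section~\ref{subsect: twist deformations}, where the $d_2$-differential only hits classes of mixed type.
\end{itemize}
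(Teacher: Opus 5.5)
Your reduction is correct as far as it goes. The base step via Proposition~\ref{prop: forms on base} is fine. So is the identity $\pm\theta'(x)\,\omega_b'=\mp d(\iota_x\beta)$ for $0\neq x\in\gotht^{0,1}$, and so is the conclusion that $x$ is of type $(0,1)$ for $J'$ once $\theta'(x)=0$. (In your second bullet, $a,b$ should lie in $\Wedge^{n-3}\gothb^*_\IC$.)

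The gap is your ``main obstacle'': $\omega_b'$ \emph{can} be $d$-exact, so neither of your two strategies can succeed. Take the Lie algebra of the Iwasawa manifold (Example~\ref{exam: Iwasawa fibres}) with the SPTBS $0\subset\kc^1\frg\subset\frg$ and the complex parallelisable structure $d\omega^1=d\omega^2=0$, $d\omega^3=\omega^{12}$. Then $\omega_b=\omega^{12}=d\omega^3$. By your base step, $\omega_b'$ is a multiple of $\omega_b$ for every $J'$ in the same fibre of $\widetilde\kp$, so your identity says nothing about $\theta'(x)$.

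In general, write $d\theta=\sigma+\tau$ with $\sigma\in\Wedge^{2,0}\gothb^*$ and $\tau\in\Wedge^{1,1}\gothb^*$. The $(n-1,0)$-part of $a\wedge d\theta$ is $a^{n-3,0}\wedge\sigma$. Hence:
\begin{itemize}
\item the transgression $a\otimes\theta\mapsto\pm a\wedge d\theta$ does hit pure $(n-1,0)$-classes as soon as $\sigma\neq0$, so your spectral-sequence fallback fails;
\item after wedging with $\overline{\omega_b'}$, the term $a^{n-3,0}\wedge\sigma\wedge\overline{\omega_b'}$ survives, so the wedge computation fails;
\item the relation $\omega_b'\wedge d\theta'=0$ only says that $d\theta'$ has no $(0,2)$-part, and this holds in the example.
\end{itemize}

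The missing idea is to apply integrability of $J'$ to $\theta'$ itself, via a case distinction.
\begin{itemize}
\item If $\sigma=0$, then $d\theta$ and $d\bar\theta=\overline{d\theta}$ are both of type $(1,1)$. So no form $a\wedge d\theta+b\wedge d\bar\theta$ has an $(n-1,0)$-component. Thus $\omega_b$, hence $\omega_b'$, is not exact, and your argument closes.
\item If $\sigma\neq0$, write $\theta'\equiv c\,\theta+\mu\,\bar\theta$ modulo $\gothb^*_\IC$. Since $J$ and $J'$ induce the same structure on $\gothb$, the $(0,2)$-part of $d\theta'=c\,d\theta+\mu\,d\bar\theta$ is $\mu\bar\sigma$. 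Integrability of $J'$ therefore forces $\mu=0$, which is exactly $\theta'(x)=0$.
\end{itemize}

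For comparison, the paper does not contract with a single vector. It keeps the whole $\gotht$-degree-two summand $\gamma\wedge\omega^n\wedge\bar\omega^n$ of $\beta$. This yields both equations
\[
\gamma\wedge d\omega^n=\pm\lambda_n\,\omega^1\wedge\dots\wedge\omega^{n-1},\qquad \gamma\wedge d\bar\omega^n=0.
\]
It then argues as in Proposition~\ref{prop: Kuranishi smooth h^0=1}, an argument that uses the nondegeneracy coming from $\dim\Aut(X)=1$. Your contraction with $x$ keeps only the first equation, whereas it is the second that excludes the parallelisable example. The case split above proves the lemma without any assumption on $\dim\Aut(X)$.
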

	\begin{proof}
		As in the proof of Proposition \ref{prop: Kuranishi smooth h^0=1} we have a basis $\omega^1, \dots, \omega^n$ of $(1,0)$-forms with respect to $J$ such that the only non-closed form is $\omega^n$ since $\dim \gotht =2$. Let $\omega = \omega^1 \wedge \dots \wedge\omega^n$ and let $\omega' \in \Wedge^n \frg_\IC^*$ be a representative of the class $\widetilde{\kp}(J') \in \kd$. If $\widetilde{\kp}(J) = \widetilde{\kp}(J')$, then we can assume that $\omega' = \omega + d \beta$ for some $\beta \in \Wedge^{n-1}\frg_\IC^*$. Moreover, it follows from Proposition \ref{prop: forms on base}, that $J$ and $J'$ induce the same complex structure on $\frg/\gotht$. So we may assume that $\omega' = \omega^1 \wedge \dots \wedge \omega^{n-1} \wedge (\omega^n + \overline \alpha)$ with $\alpha = \sum_{i=1}^n \lambda_i \overline \omega^i$. To show that $\widetilde \kp^\ks(J) = \widetilde \kp^\ks(J')$ it suffices to show that $\lambda_n =0$. Seeking a contradiction, suppose that $\lambda_n \neq 0$. Then the form $\lambda_n \omega^1 \wedge \dots \wedge \omega^{n-1} \wedge \overline \omega^n$ appears as a summand of the form $d\beta$. Since $d\omega^i = 0$ for $i<n$ this implies that $\beta$ has a summand of the form $\gamma \wedge \omega^n \wedge \overline \omega^n$, where $\gamma \in \Wedge^{n-3,0}\frg^*$ with respect to $J$. From here, one derives a contradiction using the same argument as for the injectivity of the local period map in the proof of Proposition \ref{prop: Kuranishi smooth h^0=1}.
	\end{proof}
	
	\begin{lem}\label{lem: fibre is orbit}
		For every complex structure $J$ on $\frg$ we have $(\widetilde\kp^\ks)^{-1}(\widetilde\kp^\ks(J)) = G\cdot J$ if and only if $h^0(J) =1$. In particular, we have $(\widetilde\kp)^{-1}(\widetilde\kp(J)) = G\cdot J$ if $h^0(J)=1$.
	\end{lem}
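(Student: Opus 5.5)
The plan is to compare dimensions and then upgrade the dimension statement to an equality of sets using the explicit form of complex structures with fixed fibre and base data.

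First I describe the fibre $F_J=(\widetilde\kp^\ks)^{-1}(\widetilde\kp^\ks(J))$ explicitly. A complex structure $J'$ with the same induced structures on $\gotht$ and on $\gothb=\frg/\gotht$ has a basis of $(1,0)$-forms $\omega^1,\dots,\omega^{n-k},\omega^{n-k+1}+\alpha^{n-k+1},\dots,\omega^n+\alpha^n$ with $\alpha^j\in\gothb^{*0,1}$. Here the $\omega^i$, $i\le n-k$, are the $(1,0)$-forms of the base, and $\omega^j$ is a fixed extension of the fibre $(1,0)$-forms. I then check integrability. Since $\frg$ is $2$-step, $d\alpha^j=0$ and $d(\omega^j+\alpha^j)=d\omega^j\in\Wedge^2\gothb_\IC^*$. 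The $(0,2)$-part of this form with respect to $J'$ is the $(0,2)$-part with respect to the base structure, which vanishes because $J$ is integrable. So every choice of $\alpha\in\gothb^{*0,1}\otimes\gotht^{1,0}$ is integrable. Hence $F_J$ is an affine space isomorphic to $\gothb^{*0,1}\otimes\gotht^{1,0}$, of complex dimension $(n-k)k$. This is exactly the space of twist data; compare Proposition \ref{prop: H1 decomposition}.

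Second, I identify the orbit $G\cdot J$ inside $F_J$. By the discussion preceding the definition of $\kp^\ks$, right translations act trivially on $\ku$, so $G\cdot J\subset F_J$. Differentiating $R_{\exp(z+\bar z)}$ as in the proof of Proposition \ref{lem: Kur inside Cg}, the tangent space of the orbit at $J$ is $\dbar(\frg^{1,0})=\dbar(\gothb^{1,0})\subset\gothb^{*0,1}\otimes\gotht^{1,0}$. Concretely, because $\frg$ is $2$-step, $R_{\exp(x)}^*\omega^j=\omega^j-x\lrcorner d\omega^j$. The action therefore translates $\alpha$ by the $(0,1)$-part of $-x\lrcorner d\omega^j$, which is linear in $x$. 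So $G\cdot J$ is the affine subspace $J+\im(\dbar\colon\gothb^{1,0}\to\gothb^{*0,1}\otimes\gotht^{1,0})$, and this description holds at every point of $F_J$, not only infinitesimally. Consequently $F_J=G\cdot J$ if and only if this $\dbar$ is surjective, i.e. $A_T=0$. By Proposition \ref{prop: Kuranishi smooth h^0=1} this is equivalent to $\dim\Aut(X)=h^0(J)=1$.

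For the final claim, suppose $h^0(J)=1$ and $\widetilde\kp(J')=\widetilde\kp(J)$. By Proposition \ref{prop: Kuranishi smooth h^0=1}, $\dim\gotht=2$, so Lemma \ref{lem: period maps for dim t =2} gives $\widetilde\kp^\ks(J')=\widetilde\kp^\ks(J)$. The first part then yields $J'\in G\cdot J$. The reverse inclusion holds because $\widetilde\kp$ is $G$-invariant: $G\subset\Diff^0(M)$ acts trivially on cohomology.

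The main obstacle is the second step. The infinitesimal statement only gives openness of the orbit in $F_J$, so I need the global statement that the orbit is the whole affine space. I handle this with the explicit $2$-step formula for $R_{\exp(x)}^*$. That formula makes the action of $G$ on $F_J$ a translation action by the linear image of $\frg$. If this computation proves messy, I would instead argue that the orbit is an open subset of the connected space $F_J$ and that all $G$-orbits in $F_J$ are open, hence closed.
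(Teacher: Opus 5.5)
Your proposal is correct and follows the same route as the paper. Both arguments show that the fibre equals the orbit if and only if $A_T=0$, use Proposition \ref{prop: Kuranishi smooth h^0=1} for $A_T=0\Leftrightarrow h^0(J)=1$, and then use Lemma \ref{lem: period maps for dim t =2} plus the $G$-invariance of $\widetilde\kp$ for the final claim. The difference is that the paper only asserts the first equivalence ``since $\frg$ is $2$-step nilpotent'', whereas your description of the fibre as the affine space $J+\gothb^{*0,1}\otimes\gotht^{1,0}$, on which $G$ acts by translations through $\im\dbar$, actually proves it (the sign in $R^*_{\exp(x)}\omega^j=\omega^j\pm x\lrcorner d\omega^j$ is $+$ with the paper's conventions, which is immaterial).
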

	\begin{proof}
		Since $\frg$ is 2-step nilpotent we have $(\widetilde\kp^\ks)^{-1}(\widetilde\kp^\ks(J)) = G\cdot J$ if and only if the space of twist deformations $A_T$ associated the torus bundle given by $J$ and $\ks$ is trivial. By Proposition \ref{prop: Kuranishi smooth h^0=1} this is equivalent to $h^0(J)=1$. If $h^0(J) =1$, then $\dim \gotht =2$ and by Lemma \ref{lem: period maps for dim t =2} we have $G \cdot J \subset (\widetilde\kp)^{-1}(\widetilde\kp(J)) \subset (\widetilde\kp^\ks)^{-1}(\widetilde\kp^\ks(J)) = G\cdot J$ and thus $(\widetilde\kp)^{-1}(\widetilde\kp(J)) = G\cdot J$ 
	\end{proof}
	
	We can now prove the main result of this section.
	
	\begin{thm}\label{thm: fibre map not injective}
		Let $\frg$ be a $2$-step nilpotent Lie algebra of dimension $2n$ with a SPTBS $\ks$ of length $2$. If $\dim \Aut(X) =1$ for every complex nilmanifold $X$ of type $(\frg, \Gamma)$, then the following holds.
		\begin{enumerate}
			\item The period maps $\kp^\ks \colon \kt(\frg) \to \ku$ and $\kp \colon \kt(\frg) \to \kd$ are both injective. Moreover, the map $\kp^\ks$ is injective only if $\dim \Aut(X)=1$ for every complex nilmanifold $X$ of type $(\frg, \Gamma)$.
			\item The space $\kc(\frg)$ is a complex manifold of dimension $\frac{1}{2}n(n+1)$ and the Teichmüller space $\kt(\frg)$ exists as a complex manifold of dimension $\frac{1}{2}n(n-1) +1$.
		\end{enumerate}
	\end{thm}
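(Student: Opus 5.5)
Everything reduces to the hypothesis $h^0\equiv 1$ on $\kc(\frg)$. By Proposition \ref{prop: Kuranishi smooth h^0=1}, every complex structure $J$ then has fibre $\gotht$ with $\dim_\IR\gotht=2$, $A_T=0$, a smooth and universal Kuranishi family of dimension $\frac12 n(n-1)+1$, and an embedding $\kp_X$ as local period map.

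\emph{Injectivity of the period maps.} By Lemma \ref{lem: fibre is orbit}, the fibres of $\widetilde\kp^\ks$ and of $\widetilde\kp$ through each $J$ are exactly the orbits $G\cdot J$. By Corollary \ref{cor: simple description}, $\kt(\frg)=\kc(\frg)/G$, so both $\kp^\ks$ and $\kp$ are injective. For the converse in (i), I argue by contraposition. Suppose some $J$ has $h^0(J)>1$. The ``only if'' part of Lemma \ref{lem: fibre is orbit} gives $(\widetilde\kp^\ks)^{-1}(\widetilde\kp^\ks(J))\supsetneq G\cdot J$. Equivalently, there are non-trivial twist deformations, i.e.\ complex structures with the same fibre and base that are not right translates of $J$. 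These give two distinct points of $\kt(\frg)$ with the same image under $\kp^\ks$.

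\emph{Manifold structures.} For (ii), every $X$ is unobstructed, so Proposition \ref{lem: Kur inside Cg} shows that each component of $\kc(\frg)$ is a complex manifold. Its local model at $J$ is $\Kur(X)\times H^0(\frg^{0,1},\frg^{1,0})^\perp$, of dimension
\[
\tfrac12 n(n-1)+1+(n-1)=\tfrac12 n(n+1),
\]
because $h^0=1$ means $\dim H^0(\frg^{0,1},\frg^{1,0})^\perp=n-1$. In particular $\kc(\frg)$ is reduced. Its $\widetilde\kp$-fibres are $G$-orbits, hence connected since $G$ is connected. The local period maps are embeddings. Proposition \ref{prop: Tg exists} therefore applies: $\kt(\frg)$ exists as a complex space and $\kp$ is an embedding. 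Its local charts are the smooth spaces $\Kur(X)$, so $\kt(\frg)$ is a complex manifold of dimension $\frac12 n(n-1)+1$.

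\emph{Main obstacle.} The delicate point is the converse in (i): whether a larger twist space really produces a non-injective $\kp^\ks$ rather than just larger fibres inside one orbit. I expect the step $A_T\neq0\Rightarrow$ fibre $\neq$ orbit, which sits inside Lemma \ref{lem: fibre is orbit}, to need care. I would check it with the explicit family $J_t$ given by $t\phi$ for $\phi\in\mathfrak b^{*0,1}\otimes\gotht^{1,0}$ with $[\phi]\neq0$ in $A_T$. This family keeps $\widetilde\kp^\ks$ constant, because $\gotht^{1,0}$ is central and $\mathfrak b^{*0,1}$ is unchanged. On the other hand, $J_t\notin G\cdot J$ for small $t\neq0$, because the orbit's tangent directions are exactly $\dbar(\frg^{1,0})$ and $[\phi]\neq0$ is not among them.
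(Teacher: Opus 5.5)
Your proposal is correct and follows essentially the same route as the paper. Item (i) comes from Lemma~\ref{lem: fibre is orbit} together with $\kt(\frg)=\kc(\frg)/G$. Item (ii) comes from Proposition~\ref{prop: Kuranishi smooth h^0=1}, Proposition~\ref{lem: Kur inside Cg} with the dimension count $\frac12 n(n-1)+1+(n-1)$, and Proposition~\ref{prop: Tg exists}. You also check that proposition's hypotheses explicitly: $\kc(\frg)$ is reduced because it is smooth, and the $\widetilde\kp$-fibres are connected because they are orbits of the connected group $G$. This spells out what the paper leaves implicit when it cites Propositions~\ref{prop: Tg coarse moduli} and~\ref{prop: Tg exists}.
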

	\begin{proof}
		The first item follows directly from Lemma \ref{lem: fibre is orbit}. By Proposition \ref{prop: Kuranishi smooth h^0=1} the Kuranishi space of every complex nilmanifold $X$ of type $(\frg,\Gamma)$ is smooth and of dimension $\frac{1}{2}n(n-1)+1$. Hence, it follows from Proposition \ref{lem: Kur inside Cg} that $\kc(\frg)$ is a complex manifold of dimension
		\[
		\dim \Kur(X) + (n- \dim \Aut(X)) = \frac{1}{2}n(n-1) +1 + (n-1) = \frac{1}{2}n(n+1).
		\] 
		Since $\kp$ is injective the Teichmüller space $\kt(\frg)$ is Hausdorff, and because $\kp_X$ is also an embedding for every $X$ by Proposition \ref{prop: Kuranishi smooth h^0=1}, it follows from Propositions \ref{prop: Tg coarse moduli} and \ref{prop: Tg exists} that $\kt(\frg)$ exists as a complex space compatible with the Kuranishi charts. Again, since $\Kur(X)$ is smooth and of dimension $\frac{1}{2}n(n-1)+1$ for every $X$, this implies that $\kt(\frg)$ is a complex manifold of the same dimension.
	\end{proof}
	
	\begin{rem}
		It follows directly from the description given in Section \ref{subsect: Kodaira manifolds} that every Kodaira manifold $X$ satisfies $\dim \Aut(X)=1$. Hence, Theorem \ref{thm: fibre map not injective} describes the Teichmüller space for the Lie algebras $\frh_{2k+1} \oplus \IR$. In Section \ref{sect: nilmanifolds of dimension six} we will see that starting in dimension six there are several Lie algebras not of this form satisfying the assumptions of Theorem \ref{thm: fibre map not injective}. We will also see in Section \ref{sect: almost abelian  nilmanifolds} that the condition $\dim \Aut(X) =1$ is not necessary for the period map $\kp$ to be injective.
	\end{rem}

	In light of the previous theorem, it is natural to consider the space 
	\[
	\kc_1(\frg) = \{J \in \kc(\frg) \, | \, h^0(J) =1\}.
	\]
	Since $h^0$ is always at least 1 this defines an open subset of $\kc(\frg)$, which is dense on the connected components of $\kc(\frg)$ intersecting $\kc_1(\frg)$. Note that $\kc_1(\frg)$ can only be non-empty if $\dim \gotht =2$. If $X=(M,J)$ is a complex nilmanifold of type $(\frg, \Gamma)$ with complex structure $J \in \kc_1(\frg)$, then every complex structure parametrised by $\Kur(X)$ is also contained in $\kc_1(\frg)$. Hence, the image of the map $\pi_X \colon \Kur(X) \to \kt(\frg)$ is contained in the open subset 
	\[
	\kt_1(\frg) = \{[J] \in \kt(\frg) \, | \, h^0(J) =1\} \subset \kt(\frg).
	\]
	Similar to $\kt(\frg)$, the space $\kt_1(\frg)$ can be constructed as a topological coarse moduli space for framed complex nilmanifolds of type $(\frg, \Gamma)$ with one-dimensional automorphism group, and with the same argument as in Theorem \ref{thm: fibre map not injective} one shows 
	\begin{prop}\label{prop: T1 as complex manifold}
		If $\frg$ admits a stable principal torus bundle series of length 2, then $\kt_1(\frg)$ is a complex manifold and $\kp$ embeds $\kt_1(\frg)$ into $\IP H^n(\frg, \IC)$.
	\end{prop}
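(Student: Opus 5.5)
Throughout, $\frg$ is $2$-step nilpotent with a SPTBS $\ks=(0\subset\gotht\subset\frg)$ of length $2$; this is the context of Proposition \ref{prop: T1 as complex manifold}. The plan is to rerun the proof of Theorem \ref{thm: fibre map not injective} with $\kc(\frg)$ replaced by the open subset $\kc_1(\frg)$ and $\kt(\frg)$ by $\kt_1(\frg)$. The point is that every hypothesis used there was pointwise: each was needed only at structures with $h^0=1$. If $\kc_1(\frg)$ is empty there is nothing to prove, so assume it is not. Then $\dim\gotht=2$, and for every $J\in\kc_1(\frg)$ the complex nilmanifold $X=(M,J)$ is an elliptic principal bundle over a torus with $A_T=0$.

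\textbf{Local structure.} For $X=(M,J)$ with $J\in\kc_1(\frg)$, Proposition \ref{prop: Kuranishi smooth h^0=1} gives three facts: $\Kur(X)$ is smooth of dimension $\frac12 n(n-1)+1$, the Kuranishi family is universal, and $\kp_X$ is an embedding. By Grauert's semicontinuity and $h^0\ge 1$, every structure in $\Kur(X)$ again has $h^0=1$. Hence $\pi_X$ lands in $\kt_1(\frg)$, and by Proposition \ref{prop: catanese criterion} together with the factorisation $\kp_X=\kp\circ\pi_X$ it is a local homeomorphism onto an open subset of $\kt_1(\frg)$. The argument of Proposition \ref{lem: Kur inside Cg}, applied only at these points, shows that $\kc_1(\frg)$ is a complex manifold of dimension $\frac12 n(n+1)$ and in particular reduced. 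This makes $\widetilde\kp$ holomorphic on $\kc_1(\frg)$.

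\textbf{Injectivity of $\kp$ on $\kt_1(\frg)$.} Let $J,J'\in\kc_1(\frg)$ with $\widetilde\kp(J)=\widetilde\kp(J')$. Lemma \ref{lem: period maps for dim t =2} gives $\widetilde\kp^\ks(J)=\widetilde\kp^\ks(J')$. Lemma \ref{lem: fibre is orbit} then gives $(\widetilde\kp^\ks)^{-1}(\widetilde\kp^\ks(J))=G\cdot J$, so $J'\in G\cdot J$. Thus $\kp|_{\kt_1(\frg)}$ is injective, and consequently $\kt_1(\frg)$ is Hausdorff.

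\textbf{Gluing.} Following the proof of Proposition \ref{prop: Tg coarse moduli}, the Kuranishi charts $\pi_X$ for $J\in\kc_1(\frg)$ are injective and the Kuranishi families are universal. Therefore the transition maps are biholomorphic, and the charts glue to a complex structure on the Hausdorff space $\kt_1(\frg)$. Since every chart is smooth of constant dimension $\frac12 n(n-1)+1$, $\kt_1(\frg)$ is a complex manifold. In each chart $\kp$ coincides with $\kp_X$, which is an embedding. So $\kp$ is an injective holomorphic immersion.

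\textbf{Remaining step and main obstacle.} Calling $\kp$ an embedding into $\IP H^n(\frg,\IC)$ also requires it to be a homeomorphism onto its image. The charts give this only locally, and this global step is where I expect the difficulty.

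The first approach I would try is to show that $\kp$ is open onto its image, which is a locally closed analytic set. Injectivity combined with the local embedding property would then give a homeomorphism. If that proves too hard, the fallback is to read ``embeds'' as ``injective immersion which is a local embedding'', which is exactly what the preceding steps establish. This matches how Proposition \ref{prop: Tg exists} itself argues.

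The other delicate point is the check that the earlier results apply pointwise on the open set $\kc_1(\frg)$ rather than on all of $\kc(\frg)$. This holds because every construction above is local around points with $h^0=1$.
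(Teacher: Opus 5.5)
Your proposal is correct and is essentially the paper's proof: the paper only says the statement follows by the same argument as Theorem \ref{thm: fibre map not injective}, namely Proposition \ref{prop: Kuranishi smooth h^0=1} for the local charts, Lemmas \ref{lem: period maps for dim t =2} and \ref{lem: fibre is orbit} for injectivity, and the gluing of Proposition \ref{prop: Tg coarse moduli}, all restricted to the open set $\kc_1(\frg)$, which is what you carry out. As for your remaining step, the paper also does not prove that $\kp$ is a homeomorphism onto its image: its ``embedding'' (as in the last step of Proposition \ref{prop: Tg exists}) means exactly your fallback, an injective map that is an embedding in each Kuranishi chart.
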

	The space $\kt_1(\frg)$ will play an important role in the subsequent sections.
	
		\begin{exam}\label{exam: Iwasawa}
		As in the Examples \ref{exam: non-reduced point} and \ref{exam: non closed deformation} we consider the real Lie algebra $\frh$ underlying the Iwasawa manifold. It follows from the description of $\kc(\frh)$ given in \cite{ketsetzis2004complex} that every connected component of $\kc(\frh)$ admits a complex structure with one-dimensional automorphism group, so by the previous proposition the global period map $\kp$ is at least generically injective on the Teichmüller space of the Iwasawa manifold. As we will see in the next section, this is the best one can hope for if the Lie algebra admits complex parallelisable structures.
	\end{exam}

	\section{Nilmanifolds with complex parallelisable structures}\label{subsect: complex parallelisable structures}

	Let $\frg$ be a non-abelian nilpotent Lie algebra. We denote the space of complex parallelisable structures on $\frg$ by $\kc^{\para}(\frg)$. Recall that a complex structure $J$ on $\frg$ is complex parallelisable if and only if $[\frg^{1,0}, \frg^{0,1}] =0$. Therefore, the space $\kc^{\para}(\frg)$ is a closed subset of $\kc(\frg)$ and we can identify 
	\[
	\kc^{\para}(\frg)=\{V \in \kc(\frg)\, | \, [V, \overline V] =0 \}.
	\]
	It follows from the results of \cite[Chapter~5]{Winkelmann98} or Theorem \ref{thm: affine maps}, that if two complex parallelisable nilmanifolds of type $(\frg, \Gamma)$ are biholomorphic via a biholomorphism smoothly isotopic to the identity, then these manifolds are the same. So the complex space $\kc^{\para}(\frg)$ is already the Teichmüller space for complex parallelisable structures on $\frg$. In particular, $\kc^{\para}(\frg)$ can be embedded as a subspace of $\kt(\frg)$, but this embedding is usually not well-behaved.
	
	\begin{prop}\label{prop: Tg non hausdorff complex par}
		The space $\kt(\frg)$ is not locally Hausdorff at any point representing a complex parallelisable structure.
	\end{prop}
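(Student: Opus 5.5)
Let $J\in\kc^{\para}(\frg)$ and set $X=(M,J)$. To show that $\kt(\frg)$ is not Hausdorff at $[J]$, I will produce, in every neighbourhood of $[J]$, a point $[J']\neq[J]$ that cannot be separated from $[J]$. The mechanism is the jump of $h^0$: since $\frg$ is non-abelian and $(\frg,J)$ is a complex Lie algebra, $h^0(J)=\dim_\IC\frg^{1,0}=n$ is maximal. On the other hand, $J$ has nearby deformations with $h^0<n$, as shown in \cite[Theorem~5.1]{rollenske11}. Concretely, I will take a non-closed $(0,1)$-direction in the centre. Choose $z\in\kz(\frg)^{1,0}\cap\kc^{\nu-1}$ and $\overline\alpha\in\frg^{*0,1}$ with $\alpha\in\frg^{*1,0}$ such that $d\alpha\neq 0$ (this exists because $\frg$ is non-abelian). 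Then $\eta=\overline\alpha\otimes z$ is $\dbar$-closed, since $z$ is central and $\dbar\overline\alpha=\overline{\partial\alpha}$, which vanishes in the parallelisable case as $d\alpha\in\Wedge^{2,0}$. Moreover $[\eta,\eta]=0$, so $J_t=t\eta$ solves the Maurer--Cartan equation \eqref{eq: MC} exactly.

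The key step is to show that every $J_t$ with $t\neq 0$ lies in the same $G$-orbit as a single structure $J_1$, while $J_t\to J$ as $t\to 0$. I will use the scaling automorphisms of the graded/central structure, but since these need not descend, I will instead use right translations. For $g=\exp(w+\overline w)$ with $w\in\frg^{1,0}$, the computation in the proof of Proposition~\ref{lem: Kur inside Cg} gives the infinitesimal change $\dbar w$. Since $J$ is parallelisable, $\dbar w=0$ and the orbit $G\cdot J$ is trivial, which is consistent with $h^0(J)=n$. At $J_t$, however, the $(0,1)$-bracket acquires the term $t\overline\alpha\otimes z$, so translations act non-trivially. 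The plan is to show that the orbit $G\cdot J_t$ has dimension $n-h^0(J_t)>0$ and that its closure contains $J$. Equivalently, I will exhibit $g_s\in G$ with $R_{g_s*}J_{t}=J_{t'}$ for varying $t'$ and with $t'\to 0$ along a sequence. I expect an explicit formula here: translation by $\exp(s(x+\overline x))$, where $[x,\cdot]$ hits the direction whose dual is $\alpha$, should rescale the coefficient $t$ by a factor tending to $0$. If this works, $[J_t]$ is a constant point $p\neq[J]$ of $\kt(\frg)$ lying in every neighbourhood of $[J]$. It is distinct from $[J]$ because $h^0$ is a $G$-invariant and $h^0(J_t)<h^0(J)$.

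Given such a $p$, non-Hausdorffness at $[J]$ follows immediately: every neighbourhood of $[J]$ contains $p$. To get failure of local Hausdorffness, I will apply the same argument at the nearby points $[J_s]$ for small $s$ in a one-parameter family of parallelisable structures, or simply note that every neighbourhood $U$ of $[J]$ contains both $[J]$ and $p$, which cannot be separated inside $U$. Hence $U$ is not Hausdorff.

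The main obstacle is the second step: proving that $J$ lies in the closure of $G\cdot J_t$. This is where the existence of a suitable $x$ must be established. I would first try the case where $d\alpha=\beta\wedge\gamma$ with $\beta,\gamma$ closed, working in a basis adapted to the descending central series. If that is too delicate, the fallback is the abstract argument: the orbit $G\cdot J_t$ lies in the fibre $\widetilde\kp^{-1}$ through $J$, it has positive dimension, and upper-semicontinuity of orbit dimension (as in the proof of Proposition~\ref{prop: fibre dimension criterion}) forces orbits near $J$ to accumulate on $J$.
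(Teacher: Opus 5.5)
There is a genuine gap, and it begins before the step you flagged as the main obstacle.

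\textbf{Your $\eta$ is not a Maurer--Cartan element.} For complex parallelisable $J$ one has $\dbar z=0$ for all $z\in\frg^{1,0}$, and $d\overline\alpha=\overline{d\alpha}\in\Wedge^{0,2}\frg^*$. Hence $\dbar(\overline\alpha\otimes z)=\overline{d\alpha}\otimes z\neq 0$ whenever $d\alpha\neq 0$. You used $\dbar\alpha=0$ where you needed $\dbar\overline\alpha=0$; compare the identity $\ker\dbar=\ker d\otimes\frg^{1,0}$ in Remark~\ref{rem: complex parallelisable reduced}. So your $J_t$ are not integrable. If you repair this by taking $d\alpha=0$ with $z$ central, the $J_t$ remain complex parallelisable, because the central term brackets trivially with everything. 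Then $h^0(J_t)=n$, each orbit $G\cdot J_t$ is a single point, and the $J_t$ are pairwise distinct points of $\kt(\frg)$ rather than one point.

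\textbf{The strategy itself cannot work in general.} You want one point $[J']\neq[J]$ lying in every neighbourhood of $[J]$, i.e.\ $J\in\overline{G\cdot J'}$. This already fails for the Iwasawa structure $d\omega^1=d\omega^2=0$, $d\omega^3=\omega^{12}$. Write $V_{J'}\subset\frg^*_\IC$ for the $(1,0)$-forms of $J'$.
\begin{enumerate}
\item The algebra is $2$-step, so right translations act on $\frg^*_\IC$ by $\beta\mapsto\beta+\iota_xd\beta$ with $x\in\frg$. In particular they fix every closed $1$-form.
\item By Example~\ref{exam: Iwasawa fibres}, $\kc^1\frg$ is invariant under every complex structure. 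Hence $W(J')=V_{J'}\cap\Ann(\kc^1\frg)_\IC$ is always $2$-dimensional, so it is continuous in $J'$, and by (1) it is constant on $G$-orbits.
\item If $J\in\overline{G\cdot J'}$, then $W(J')=\langle\omega^1,\omega^2\rangle$. So $V_{J'}$ is spanned by $\omega^1,\omega^2$ and a form $a\omega^3+c_1\overline\omega^1+c_2\overline\omega^2+c_3\overline\omega^3$.
\item The $(0,2)$-part of the differential of that third form is $c_3\overline\omega^{12}$, so integrability forces $c_3=0$. Then $J'$ is complex parallelisable, hence fixed by $G$, hence $J'=J$.
\end{enumerate}
Your fallback does not rescue this. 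Orbit dimension equals $n-h^0$, which is lower, not upper, semicontinuous. The upper semicontinuity in Proposition~\ref{prop: fibre dimension criterion} is deduced from the hypothesis that $\kt(\frg)$ is a complex space.

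\textbf{The missing idea.} Non-Hausdorffness comes from two distinct $G$-fixed points that cannot be separated, not from a single point accumulating on $[J]$.
\begin{enumerate}
\item The paper first takes complex parallelisable deformations $J_t$ along $\overline\omega^i\otimes(-\sum_kA^k_{ij}z_k)$, with $d\omega^i=0$ and the $z_k$ central. The points $[J_t]$ are pairwise distinct.
\item It then deforms each $J_t$ in the direction $s\,\overline\omega^i\otimes z_i$, corrected to be of type $(1,0)$ for $J_t$, with $z_i$ non-central. This gives structures $J_t^s$.
\item An explicit computation shows that, for real $t$, $J_t^s$ and $J_0^s$ differ by right translation by $g_{s,t}=\exp(\tfrac{t}{s}z_j+\tfrac{t}{\overline s}\overline z_j)$.
\end{enumerate}
Since the translation parameter blows up as $s\to 0$, the single orbit $G\cdot J_0^s$ comes arbitrarily close to both $J_0=J$ and $J_t$. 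Hence $[J]$ and $[J_t]$ are inseparable for all small real $t$.
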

	\begin{proof}
		Let $J$ be a complex parallelisable structure on $\frg$. To show that $\kt(\frg)$ is not locally Hausdorff at the point $J\in \kt(\frg)$ we start by constructing a one-parameter family $\{ J_t\}$ of complex parallelisable deformations of $J$. In particular, every complex structure $J_t$ defines a distinct point in $\kt(\frg)$. Then we show that for every $J_t$ there exist a one-parameter family $\{J^s_t\}$ of deformations of $J_t$ such that for $s\neq 0$ the classes of $J^s_t$ and $J_{t'}^s$ in $\kt(\frg)$ coincide for all $t$ and $t'$. Hence, the complex structures in the family $\{J_t\}$ are inseparable in $\kt(\frg)$ and thus $\kt(\frg)$ is not locally Hausdorff at $J_0 = J \in \kt(\frg)$. Since $J$ is complex parallelisable, it preserves the descending central series
		\begin{equation*}
			0 = \kc^\nu\frg \subset \kc^{\nu-1}\frg \subset \dots \subset \kc^1\frg \subset \frg.
		\end{equation*}
		So we can choose a basis $\omega^1,\dots,\omega^n$ of $\frg^{*1,0}$ with dual basis $z_1,\dots, z_n$ which is compatible with this filtration. Suppose the vectors  $z_m, \dots, z_n$ are a basis of $\kc^{\nu-1}\frg^{1,0}$. For all $k \geq m$ the differentials $d\omega^k$ have the form
		\[
		d\omega^k = \sum_{i<j<k} A_{ij}^k \omega^{ij}.
		\]
		Then, because $\frg^{1,0}$ is $\nu$-step nilpotent, there exist indices $i$ and $j$ such that $z_j\in\kc^{\nu-2}\mathfrak g$, $\omega^i \in \Ann (\kc^1\frg^{1,0})$, and $A_{ij}^k$ is non-zero for some indices $k$. Notice that for any such $k$ the element $z_k$ is in the centre of $\frg_\IC$ and we have $d\overline\omega^i =0$. In particular, the element $\overline \omega^i \otimes \left(  -\sum_{k}A_{ij}^k z_k \right)$ is $\dbar$-closed and its Schouten bracket vanishes. Hence, for small $t\in \IC$, we can consider the complex structures $J_t$ on $\frg$ corresponding to the direction
		\begin{equation*}
			t\overline\omega ^i \otimes \left(  -\sum_{k} A_{ij}^k z_k \right) \in H^1(\frg^{0,1},\frg^{1,0}).
		\end{equation*}
		The space of $(n,0)$-forms with respect to the complex structure $J_t$ is generated by the form
		\[
		\omega_t = \omega^1 \wedge \dots \wedge \omega^{m-1} \wedge \left(\omega^m - tA_{ij}^m \overline \omega^i\right) \wedge \dots \wedge \left(\omega^n - tA_{ij}^n \overline \omega^i\right)
		\]
		Since the vector $\sum_{k} A_{ij}^k z_k$ is in the centre of $\frg_\IC$ the complex structures $J_t$ remain complex parallelisable \cite[Theorem~5.1]{Rol11Kuranishi}. To contruct the deformations $J_t^s$ we consider the element
		\begin{equation*}
			\phi_t(s) = s\overline\omega^i\otimes \left(z_i + \overline t\sum_k \overline A_{ij}^k\overline z_k\right) \in \frg_t^{*0,1} \otimes \frg_t^{1,0}.
		\end{equation*}
		Since $\overline \omega^i$ is closed we get $\dbar_t\phi_t(s) =0$, where $\dbar_t$ is taken with respect to $J_t$, and because $\sum_k A_{ij}^k\overline z_k$ is in the centre we have $[\phi_t(s),\phi_t(s)]=0$. Therefore, we can set $J^s_t$ to be the small deformations of $J_t$ corresponding to $\phi_t(s)$. Notice that $z_i$ is not contained in the centre and thus the complex structures $J_t^s$ are no longer complex parallelisable for $s\neq 0$ \cite[Theorem~5.1]{Rol11Kuranishi}.
		We claim that the complex structures $J_0^s$ and $J_t^s$ are isomorphic for all $s$ and $t$. The form $\omega_t + \phi_t(s) \lrcorner \omega_t$ generates the space of $(n,0)$-forms with respect to the complex structure $J_t^s$. It suffices to show that for all $s\neq 0$, the form $\omega_0 + \phi_0(s) \lrcorner \omega_0$ differs from $\omega_t + \phi_t(s) \lrcorner \omega_t$ by a suitable right-translation. For $t\in \IR$, let $g_{s,t} = \text{exp}\left(\frac{ t}{s} z_j+ \frac{ t}{\overline s} \overline z_j\right)$, where $j$ is the same index as in the construction of $J_t$ and $\phi_t(s)$. Then we compute
		\begin{align*}
			R_{g_{s,t}}^*( \omega_0  + \phi_0(s) \lrcorner \omega_0) &= \omega_0 + R_{g_{s,t}}^*(\omega^1 \wedge \dots \wedge\omega^{i-1}\wedge s\overline\omega^i \wedge \omega^{i+1} \wedge \dots \wedge \omega^n)\\
			&= \omega_0 + \omega^1 \wedge \dots \wedge s\overline\omega^i \wedge \dots \wedge \omega^{m-1} \wedge R_{g_{s,t}}^*(\omega^m \wedge \dots \wedge \omega^n)\\
			&= \omega_0 + \omega^1 \wedge \dots \wedge s\overline\omega^i \wedge \dots \wedge \omega^{m-1} \\
			& \qquad \wedge \left(\omega^{m}+ \frac{ t}{s} \sum_{k<j} A_{kj}^m \omega^k-\frac{ t}{s} \sum_{k>j}  A_{jk}^{m}\omega^k\right)\\
			& \qquad \wedge \dots \wedge \left(\omega^{n} + \frac{t}{s} \sum_{k<j} A_{kj}^n\omega^k-\frac{ t}{s} \sum_{k>j}  A_{jk}^{n}\omega^k\right)\\
			&= \omega^1 \wedge \dots \wedge (\omega^i + s\overline\omega^i) \wedge \dots \wedge \omega^{m-1}\\
			&\qquad \wedge \left(\omega^{m} + \frac{ t}{s} A_{ij}^{m}\omega^i\right) \wedge \dots \wedge \left(\omega^{n} +  \frac{ t}{s} A_{ij}^{n}\omega^i\right)\\
			&=  \omega^1 \wedge \dots \wedge (\omega^i + s\overline\omega^i) \wedge \dots \wedge \omega^{m-1}\\
			&\qquad \wedge \left(\omega^{m} - t A_{ij}^{m}\overline \omega^i\right) \wedge \dots \wedge \left(\omega^{n} - t A_{ij}^{n}\overline\omega^i\right)\\
			&= \omega_t+ \phi_t(s) \lrcorner \omega_t
		\end{align*}
		Therefore, the classes $[J_t^s]$ and $[J_{t'}^s]$ coincide for all $t,t' \in \IR$ and we can conclude that $\kt(\frg)$ is not locally Hausdorff at $J$.
	\end{proof}
	
	\begin{rem}\label{rem: analytically inseparable}
		Let $X_t$ and $X_{t'}$ be complex nilmanifolds endowed with the complex structures $J_t$ and $J_{t'}$, respectively, constructed in the proof above. The deformation families $\{J_t^s\}$ and $\{J_{t'}^s\}$ define holomorphic families $\kx \to \Delta$ and $\kx' \to \Delta$ over a small disc $\Delta \subset \IC$ centred at $0$ such that $\kx$ and $\kx'$ are isomorphic over $\Delta\backslash\{0\}$. In particular, the moduli space $\km(\frg, \Gamma)$ is also non-Hausdorff for any lattice $\Gamma$.
		
		In \cite[Definition~12.1]{meersseman2024geography}, two manifolds admitting such families are called analytically non-separated, which is generally stronger than $J_t$ and $J_{t'}$ just being inseparable points in $\kt(\frg)$.
	\end{rem}

	Proposition \ref{prop: Tg non hausdorff complex par} shows in particular that the the period maps on $\kt(\frg)$ cannot be injective if $\frg$ is non-abelian and admits complex parallelisable structures. In fact, since complex tori with complex multiplication are discrete in their parameter space, it follows from the results of \cite[Chapter~9]{Winkelmann98} that the period map associated to the descending central series is often constant on the connected components of $\kc^{\para}(\frg)$.
	
	If $\frg$ admits complex parallelisable structures, then both the centre and the commutator of $\frg$ have to be even-dimensional. In the simplest case $\dim \kz\frg = \dim \kc^1\frg= 2$ it was shown in \cite[Proposition~3.8]{rollenske09} that the filtration $0 \subset \kc^1\frg \subset \frg$ defines a SPTBS on $\frg$. In this case, we have some control over the period maps outside of $\kc^{\para}(\frg)$.

	\begin{prop}\label{prop: generically injective complex par}
		Suppose $\dim \kz\frg = \dim\kc^1\frg =2$ and let $\ks = (0 \subset \kc^1\frg \subset \frg)$. Then the set $\kt_1(\frg)$ is dense in the union of connected components of $\kt(\frg)$ containing complex parallelisable structures.
		
		In particular, the period maps $\kp$ and $\kp^\ks$ are generically injective on the union of connected components of $\kt(\frg)$ containing complex parallelisable structures.
	\end{prop}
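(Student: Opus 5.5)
The plan is to first show that $\kc_1(\frg)$ is the complement of a closed analytic subset of $\kc(\frg)$ which is nowhere dense in every connected component containing a point of $\kc^{\para}(\frg)$. Since the projection $\kc(\frg)\to\kt(\frg)$ is open, this gives density of $\kt_1(\frg)$.

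Set $\gotht=\kc^1\frg$, which is central and two-dimensional. Fix $J$ and a basis $\omega^1,\dots,\omega^n$ of $\frg^{*1,0}$ adapted to $0\subset\gotht\subset\frg$, as in the proof of Proposition \ref{prop: Kuranishi smooth h^0=1}. Then $\omega^n$ is the only non-closed form, and $\dbar z_i=\overline\beta^i\otimes z_n$ for $i<n$, where $\partial\overline\omega^n=\sum_i\beta^i\wedge\overline\omega^i$. In other words, the $(1,1)$-part $(d\omega^n)^{1,1}$ is a sesquilinear pairing on $\gothb^{1,0}$, and
\[
h^0(J)=1+\dim\ker\big((d\omega^n)^{1,1}\big).
\]
Hence $h^0(J)=1$ if and only if $\det\big((d\omega^n)^{1,1}\big)\neq0$. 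Locally on $\kc(\frg)_{\textup{red}}$ the frame $\omega^i$ can be chosen to depend holomorphically on $J$, because $\ks$ is stable and $J\mapsto \frg^{1,0}\cap\gotht_\IC$ is holomorphic (this is the map used for $\widetilde\kp^\ks$). So the locus $\{h^0\geq 2\}$ is cut out by a single holomorphic determinant and is closed analytic. Its complement is therefore dense in every irreducible component of $\kc(\frg)$ on which the determinant is not identically zero. This is consistent with the upper semicontinuity from \cite{Grauert60}.

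The main obstacle is to show that the determinant does not vanish identically on any irreducible component of a connected component $K$ of $\kc(\frg)$ that meets $\kc^{\para}(\frg)$. At a complex parallelisable $J$ the determinant vanishes, since then $(d\omega^n)^{1,1}=0$. The first step is to exhibit, near such $J$, a deformation of $J$ with $h^0=1$. I would imitate the construction in the proof of Proposition \ref{prop: Tg non hausdorff complex par}. Deforming in directions $\overline\omega^i\otimes z_i$ for closed $\omega^i$ produces $\omega^i+s\overline\omega^i$. The resulting $(1,1)$-part of $d\omega^n$ becomes $s$ times (a conjugate-twisted copy of) the non-degenerate complex bracket form $A^n_{ij}$. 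Since $\dim\kz\frg=2$, the form $\sum_{i,j} A^n_{ij}\,\omega^{ij}$ is non-degenerate on $\gothb^{1,0}$. Choosing the parameters generically then makes the determinant non-zero.

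To pass from one point to every irreducible component through it, I would show that $\kc(\frg)$ is locally irreducible at points of $K$. The approach is to describe $\kc(\frg)$ near any $J$ via the SPTBS data: a complex structure on $\gothb$, a line $\gotht^{1,0}$, and the condition that the structure form $\sigma\in\Wedge^2\gothb^*\otimes\gotht_\IC$ has vanishing $(0,2)$-component. This description exhibits $\kc(\frg)$ as a fibration over an irreducible base whose fibres are linear. If this fails, the fallback is to apply the above determinant argument on each irreducible component separately, using the explicit parallelisable deformations, and to argue that every component of $K$ meets a parallelisable point.

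Finally, on $\kt_1(\frg)$ the map $\kp$ is injective by Proposition \ref{prop: T1 as complex manifold}. The map $\kp^\ks$ is injective there by Lemma \ref{lem: fibre is orbit}, because its fibres through points of $\kc_1(\frg)$ are single $G$-orbits. Density of $\kt_1(\frg)$ then yields generic injectivity of both maps on the union of the relevant components.
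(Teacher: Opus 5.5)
Your set-up is sound up to the step you flag yourself. The formula $h^0(J)=1+\dim\ker\big((d\omega^n)^{1,1}\big)$ is correct. So is the fact that $\{h^0\geq 2\}$ is closed analytic, and so is the final deduction via Proposition~\ref{prop: T1 as complex manifold} and Lemma~\ref{lem: fibre is orbit}. The genuine gap is exactly that step, and it cannot be closed.

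\textbf{Where the irreducibility step fails.} The linear fibres of your fibration are the twists, a copy of $\IC^{2k}$. The base, however, is not $\ku_{2k}\times\ku_1$. It is the integrability locus $B=\{(\gotht^{1,0},W=\gothb^{1,0})\}$, where $n=2k+1$ and $\frg^{1,0}_J$ is the complex Heisenberg algebra. This $B$ is reducible at a parallelisable $J$ once $k\geq 2$. To see this, write $\gothb_\IC=K\oplus\overline K$ with $K$ the image of $\frg^{1,0}_J$. Let $\Omega$ be the complex bracket form on $K$, extended by zero on $\overline K$.
\begin{itemize}
\item For $\gotht^{1,0}$ near, but different from, $\gotht^{1,0}_J$, integrability says $W$ is Lagrangian for a non-degenerate form. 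This generic piece $B'$ is smooth of dimension $1+k(2k+1)$.
\item For $\gotht^{1,0}=\gotht^{1,0}_J$, integrability only says $\overline\Omega|_W=0$, i.e.\ the projection of $W$ to $\overline K$ is isotropic. Hence $\dim(W\cap K)\geq k$.
\item The $W$ with $\dim(W\cap K)=k$ form an irreducible family of dimension $\tfrac{k(k+1)}{2}+2k^2=\tfrac{5k^2+k}{2}$. Its closure contains $K$, i.e.\ $J$.
\end{itemize}
The closure $\overline{B'}$ meets this special fibre in dimension at most $2k^2+k$. For $k\geq 2$ this is less than $\tfrac{5k^2+k}{2}$. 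So the special fibre contains non-empty open subsets of $B$ arbitrarily close to $J$.

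On the special fibre the $(1,1)$-pairing $W\times\overline W\to\IC$ is $\Omega$. It kills $W\times\overline{W\cap K}$, so $h^0\geq k+1$ there. A concrete case is $k=2$ with $d\omega^5=\omega^{12}+\omega^{34}$. The forms $\theta^1=\omega^1+\epsilon\overline\omega^2$, $\theta^2=\omega^2$, $\theta^3=\omega^3+\epsilon\overline\omega^4$, $\theta^4=\omega^4$, $\theta^5=\omega^5$ define an integrable structure with $d\theta^5=\theta^{12}+\theta^{34}+\epsilon(\theta^2\wedge\overline\theta^2+\theta^4\wedge\overline\theta^4)$, so $h^0=3$ for all $\epsilon\neq 0$.

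So your determinant vanishes identically on an open piece of the connected component of $J$. Neither local irreducibility nor your component-wise fallback can hold. In fact $\kc_1(\frg)$ is not dense in that connected component when $k\geq 2$. By Lemma~\ref{lem: fibre is orbit}, $\kp^\ks$ is then non-injective on a non-empty open set there.

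\textbf{Comparison with the paper.} The paper's proof does not get around this either. Via the explicit Maurer--Cartan element $\phi(t)$, it only shows that each relevant connected component contains some structure with $h^0=1$. It then relies on the unproved remark preceding Proposition~\ref{prop: T1 as complex manifold}, that $\kc_1(\frg)$ is dense in every connected component it meets. The obstruction above disappears only for $k=1$, the Iwasawa case. There the special fibre of $B$ has dimension $3$, while every component of $B$, being a hypersurface in a $5$-dimensional manifold, has dimension at least $4$.

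\textbf{A smaller issue with your deformation.} The element $\sum_i s_i\overline\omega^i\otimes z_i$ is obstructed unless $s_{2j-1}s_{2j}$ is independent of $j$. Even then it needs a second-order term along $\overline\omega^{2k+1}\otimes z_{2k+1}$. That term moves $\gotht^{1,0}$ off $\gotht^{1,0}_J$, so the deformation lands in the generic piece; this is what the paper's $\phi(t)$ does.
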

	\begin{proof}
		By Lemma \ref{lem: fibre is orbit} it suffices to show that every connected component of $\kc(\frg)$ containing a complex parallelisable structure also contains a complex structure $I$ with $h^0(I)=1$. Let $J$ be a complex parallelisable structure on $\frg$.  Then the complex Lie algebra $\frg^{1,0}$ is nilpotent with $\dim_\IC \kc^1\frg^{1,0} = 1 = \dim_\IC \kz\frg^{1,0}$. Hence, $\frg^{1,0}$ is isomorphic to a Heisenberg Lie algebra, so $\frg^{*1,0}$ has a basis $\omega^1, \dots, \omega^{2k+1}$ with structure equations
		\[
		d\omega^1 = \dots = d\omega^{2k} =0, \qquad d\omega^{2k+1} = \sum_{j=1}^k \omega^{2j-1} \wedge \omega^{2j}.
		\]
		We will construct small deformations $J_t$ of $J$ with $h^0(J_t) =1$. Consider the element 
		\[
		\phi(t) = t\left(\sum_{j=1}^n \overline\omega^{2j-1} \otimes z_{2j} + \overline \omega^{2j} \otimes z_{2j-1} \right) +t^2 \overline\omega^{2k+1} \otimes z_{2k+1} \in \frg^{*0,1} \otimes \frg^{1,0},
		\]
		where $z_1,\dots, z_{2k+1}$ denotes the dual basis to $\omega^1, \dots, \omega^{2k+1}$. Then one computes that $\phi(t)$ satisfies the Maurer--Cartan equation, and we set $J_t$ as the complex structure corresponding to $\phi(t)$. According to \cite[Theorem~1.2]{Xia_2021}, the number $h^0(J_t)$ is equal to the dimension of the kernel of the map $\frg^{1,0} \to \frg^{*0,1} \otimes \frg^{1,0}$ given by sending $z \in \frg^{1,0}$ to $\dbar z + [z,\phi(t)]$.  Recall that the bracket is defined as
		\[
		[x, \overline \alpha \otimes y] =  x\lrcorner d\overline\alpha \otimes y- \overline\alpha \otimes [x,y]
		\]
		for $x,y \in \frg^{1,0}$ and $\overline \alpha \in \frg^{*0,1}$. Since $J$ is complex parallelisable $\dbar z=0$ and for $z = \sum_{i=1}^{2k+1} \lambda_i z_i$ we compute
		\[
		[z,\phi(t)] = \left(\sum_{j=1}^k \lambda_{2j-1} \overline \omega^{2j-1} - \lambda_{2j} \overline \omega^{2j}\right) \otimes z_{2k+1}.
		\]
		Therefore, the kernel of this map is generated by $z_{2k+1}$ and thus $h^0(J_t) =1$.
	\end{proof}

	\section{Lie algebras with small commutator}\label{subsect: Lie algebas with small commutator}

	In this section we will study the period maps and the Teichmüller space for nilmanifolds whose underlying Lie algebra has at most two-dimensional commutator. Let $\frg$ be a nilpotent Lie algebra with $\dim \kc^1\frg \leq 2$ admitting complex structures. In \cite[Propositions~3.6 and 3.8]{rollenske09}, Rollenske provided the following classification.
	      
	\begin{itemize}
		\item If $\dim \kc^1\frg =1$, then every complex structure on $\frg$ is abelian. In particular, the ascending central series is a SPTBS on $\frg$.
		\item If $ \dim \kc^1 \frg =2$ and $\frg$ is 2-step nilpotent, then we have the following cases.
		\begin{enumerate}
			\item If $\dim \kz\frg$ is even, then $\frg$ does not admit a SPTBS in general, unless $\dim \kz \frg =2$ in which case the ascending and descending central series coincide and define a SPTBS on $\frg$.
			\item If $\dim \kz\frg$ is odd, then the descending central series is a SPTBS on $\frg$.
		\end{enumerate}
		\item If $\frg$ is $3$-step nilpotent, then every complex structure on $\frg$ is abelian, so the ascending central series is a SPTBS on $\frg$.
	\end{itemize}

	The goal of this section is to prove the following theorem.
	
	\begin{thm}\label{thm: Teichmüller 2-dim commutator}
		let $\frg$ be a nilpotent Lie algebra admitting complex structures. If $\dim \kc^1\frg \leq 2$ and either $\dim \kz\frg =2$ or $\dim \kz\frg$ is odd, then we have the following cases.
		\begin{enumerate}
			\item If $\dim \kc^1\frg=1$, then $\kt(\frg)$ exists a complex manifold and the period map $\kp$ is an embedding.
			\item If $\frg$ is $2$-step nilpotent and $\dim \kz\frg =2$, then $\kt(\frg)$ does not exist as a complex space in general, but if $\kt_1(\frg)$ is non-empty, then $\kt_1(\frg)$ exists as a complex manifold and $\kp$ is an embedding on $\kt_1(\frg)$. 
			\item If $\frg$ is $2$-step nilpotent and $\dim \kz\frg$ is odd, then $\kt(\frg)$ does not exist as a complex space in general.
			\item If $\frg$ is $3$-step nilpotent, then $\kt(\frg)$ exists a complex manifold and the period map $\kp$ is an embedding.
		\end{enumerate}
	\end{thm}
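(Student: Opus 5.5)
I will treat the four cases separately, using Rollenske's classification recalled above to fix an explicit normal form for each case. The general tools are Proposition~\ref{prop: Tg exists} for existence, Lemma~\ref{lem: local period map} for embeddedness of the local period maps, and Corollary~\ref{cor: non hausdorff moduli} with Proposition~\ref{prop: fibre dimension criterion} for non-existence.

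\emph{Case (i).} If $\dim\kc^1\frg=1$, then $\frg\cong\frh_{2k+1}\oplus\IR^{2n-2k-1}$ and every complex structure is abelian. By the classification, $J$ preserves $\kz\frg$, and the centre contains $\kc^1\frg=\langle c\rangle$ together with $Jc$. I therefore plan to use the SPTBS $0\subset\kz\frg\subset\frg$. This is a torus bundle over a torus whose fibre has dimension $\dim\kz\frg$.

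When $\frg=\frh_{2k+1}\oplus\IR$, the fibre is an elliptic curve and $h^0\equiv1$, so Theorem~\ref{thm: fibre map not injective} applies directly. In general the fibre is larger, so there are twist deformations and $h^0>1$. This means that, instead of Theorem~\ref{thm: fibre map not injective}, I would argue as follows.
\begin{enumerate}
\item Choose a refined filtration $0\subset\langle c,Jc\rangle\subset\frg$. Then $\frg/\langle c,Jc\rangle$ is abelian, so this is a length-two series with two-dimensional fibre $\gotht$.
\item Redo the computation of Proposition~\ref{prop: Kuranishi smooth h^0=1}: show that every $d$-exact $(n-1,1)$-form is $\dbar$-exact, and that the Kuranishi space is smooth because all brackets land in $\Wedge^2\gothb^{*0,1}\otimes\gotht^{1,0}$. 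The complement of $\langle c,Jc\rangle$ in the centre contributes closed $(1,0)$-forms, which should be harmless.
\item Show that $h^0$ is constant on components, equal to $1+\dim_\IC(\kz\frg/\langle c,Jc\rangle)$, using the explicit normal form $de^{2k+1}=\sum \pm e^{2i-1}\wedge e^{2i}$.
\item Show that the fibres of $\widetilde\kp$ are $G$-orbits. The argument is Lemma~\ref{lem: period maps for dim t =2}: since $\dim\gotht=2$, $\widetilde\kp$ determines $\widetilde\kp^\ks$, and I then check that the twist parameters are read off from $\widetilde\kp$.
\end{enumerate}
Proposition~\ref{prop: Tg exists} then gives existence of $\kt(\frg)$ and that $\kp$ is an embedding.

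\emph{Case (iv).} In the 3-step case with $\dim\kc^1\frg=2$, every complex structure is abelian, and I will use the ascending central series as the SPTBS. I would take Rollenske's normal form, in which $\kc^2\frg$ is one-dimensional and lies in the centre. I plan the same four checks:
\begin{itemize}
\item[(a)] reducedness and smoothness of $\kc(\frg)$ via unobstructedness, using Proposition~\ref{lem: Kur inside Cg};
\item[(b)] the $\dbar$-exactness condition of Lemma~\ref{lem: local period map};
\item[(c)] constancy of $h^0$;
\item[(d)] connectedness of the fibres of $\widetilde\kp$, arguing inductively up the tower with Proposition~\ref{prop: forms on base}.
\end{itemize}
Proposition~\ref{prop: forms on base} reduces the fibre question to the 2-step quotient $\frg/\kc^2\frg$, where an argument like that of Case~(i) applies. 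What remains is to identify the extra fibre coordinate in $\kc^2\frg$ by a top-degree computation in the style of Lemma~\ref{lem: period maps for dim t =2}.

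\emph{Cases (ii) and (iii).} In Case~(ii), $\dim\kz\frg=\dim\kc^1\frg=2$ and the algebra admits complex parallelisable structures in general; for instance, the Iwasawa algebra $\frh$ does. Proposition~\ref{prop: Tg non hausdorff complex par} with Corollary~\ref{cor: non hausdorff moduli} then gives non-existence of $\kt(\frg)$ as a complex space. The positive statement on $\kt_1(\frg)$ is exactly Proposition~\ref{prop: T1 as complex manifold}, which applies because the ascending central series is a SPTBS of length~2.

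For Case~(iii) I will exhibit one example, for instance $\frh\oplus\IR$-type or $\frh_5\oplus\ldots$ with $\dim\kz\frg$ odd. In this example I will find a connected family in $\kc(\frg)$ on which $h^0$ jumps, by writing down an explicit Maurer--Cartan deformation as in Proposition~\ref{prop: generically injective complex par}. Proposition~\ref{prop: fibre dimension criterion} then shows that $\kt(\frg)$ does not exist.

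\emph{Expected obstacle.} I expect the main difficulty to be the injectivity of $\kp$ in Cases~(i) and~(iv) once $h^0>1$. There the tool of Lemma~\ref{lem: fibre is orbit} no longer applies, and one must show directly that period-equivalent structures differ by a right-translation. I would attempt this through an explicit computation with normal forms of $\omega'=\omega+d\beta$.
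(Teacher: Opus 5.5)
Your treatment of (ii) is the paper's argument. For (i) as literally stated, the hypothesis already forces $\frg\cong\frh_{2k+1}\oplus\IR$: an even-dimensional $\frh_{2k+1}\oplus\IR^{\ell}$ has $\ell$ odd, so its centre has even dimension $\ell+1$, and only $\dim\kz\frg=2$ is allowed. Your first sentence, invoking Theorem~\ref{thm: fibre map not injective}, therefore suffices there.

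\emph{The general abelian case and (iv).} The real gap is in your general argument. The paper proves it for all $\frh_{2k+1}\oplus\IR^{\ell}$ in Proposition~\ref{prop: 3-step 2-dim commutator}, and it is exactly what (iv) needs.
\begin{itemize}
\item Every step of Proposition~\ref{prop: Kuranishi smooth h^0=1} uses $A_T=0$, so it cannot simply be redone. Take $d\omega^n=\sum_{i\le k}\sigma_i\omega^{i\bar i}$ with $k<n-1$.
\item The map $\dbar\colon\Wedge^{n-1,0}\frg^*\to\Wedge^{n-1,1}\gothb^*$ then has rank $k<n-1$.
\item The form $\partial\bar\omega^n=-\sum_{i\le k}\sigma_i\omega^{i\bar i}$ no longer yields a basis of $\gothb^{*1,0}$.
\item The surjectivity of $\dbar\colon\gothb^{*0,1}\otimes\gothb^{1,0}\to\Wedge^2\gothb^{*0,1}\otimes\gotht^{1,0}$ fails once $k\le n-3$. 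That surjectivity is what made ``brackets land in $\Wedge^2\gothb^{*0,1}\otimes\gotht^{1,0}$'' sufficient.
\item There are also non-trivial classes outside $A_F\oplus A_P$, such as $\bar\omega^n\otimes z_j$ with $k<j<n$, which your argument never treats.
\item Lemma~\ref{lem: period maps for dim t =2} is unavailable for the same reason, since its proof uses that basis. It also concerns a \emph{stable} series, whereas $\langle c,Jc\rangle$ depends on $J$.
\item For (iv) you give no mechanism for (a)--(c) at all.
\end{itemize}

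\emph{The missing idea} is Lemma~\ref{lem: Kuranishi smooth abelian}. Start from the normal forms of Lemma~\ref{lem: structure equations abelian}. Choose closed $1$-forms $\xi^k$ such that $\omega^1,\dots,\omega^n,\xi^1,\dots,\xi^n$ is a basis and $d\omega^k=\sum B^k_{ij}\omega^i\wedge\xi^j$. In (i) you can take $\xi^k=\bar\omega^k$ for $k<n$ and $\xi^n=\omega^n+\bar\omega^n$. Then $\omega^k\mapsto\omega^k$, $\bar\omega^k\mapsto\xi^k$ is an isomorphism of complexes $(\Wedge^{\bullet,\bullet}\frg^*,\dbar)\cong(\Wedge^\bullet\frg^*_\IC,d)$. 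This gives $E_1$-degeneration of the Frölicher spectral sequence. Hence $\Kur(X)$ is smooth and universal, and by Lemma~\ref{lem: local period map} the local period map is an embedding, independently of $h^0$.

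\emph{Global injectivity.} Your ``expected obstacle'' is bypassed rather than solved. Proposition~\ref{prop: Tg exists} only needs the fibres of $\widetilde\kp$ to be \emph{connected}, and local injectivity then forces $\kp$ to be injective. In (i) the fibre through $J$ is visibly the linear family $\omega^1\wedge\dots\wedge\omega^{n-1}\wedge(\omega^n+\bar\alpha)$ with $\bar\alpha\in\langle\bar\omega^1,\dots,\bar\omega^k\rangle$.

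\emph{The reduction in (iv).} It must go through $\frg/\kz\frg$, which has one-dimensional commutator, not through $\frg/\kc^2\frg$. The ideal $\kc^2\frg$ is a real line, so it is not $J$-invariant and $\frg/\kc^2\frg$ is odd-dimensional. Proposition~\ref{prop: forms on base} therefore says nothing about it.

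\emph{Case (iii).} Your candidate examples violate the hypothesis. $\frh_5\oplus\IR$ is $7$-dimensional, and $\frh_5\oplus\IR^{2j}$ has centre of even dimension $2+2j$. An odd-dimensional centre by itself does not produce a jump either. For instance, $\frh_6=(0,0,0,0,12,13)$ is $2$-step with $\dim\kc^1\frh_6=2$ and $\dim\kz\frh_6=3$. Yet it is almost abelian, so $\kt(\frh_6)$ exists by Theorem~\ref{thm: global period map almost abelian}. An explicit example with a jump of $h^0$ is therefore the entire content of (iii).

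The paper uses Example~\ref{exam: counterexample dim 10}: $d\omega^5=(\omega^1+\bar\omega^1)\wedge\omega^2+\omega^{34}$ in real dimension $10$, with $\dim\kz=3$. The deformation $t\,\bar\omega^3\otimes z_3$ lowers $h^0$ from $4$ to $3$, and Proposition~\ref{prop: fibre dimension criterion} then gives the conclusion.
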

	
	The second item of Theorem \ref{thm: Teichmüller 2-dim commutator} follows directly from Propositions \ref{prop: T1 as complex manifold} and \ref{prop: generically injective complex par} proved in the previous sections. Note that the results in Section \ref{subsect: complex parallelisable structures} do not cover the case where $\frg$ has odd-dimensional centre, since such Lie algebras do not admit complex parallelisable structures. In Section \ref{sect: almost abelian nilmanifolds} we will see many examples of such Lie algebras where $\kt(\frg)$ exists as a complex manifold and the period map $\kp$ is an embedding, but this does not hold in general as the next example shows.
	
	\begin{exam}\label{exam: counterexample dim 10}
		Consider the complex structure $J$ given by the structure equations
		\[
		d \omega^1 = d\omega^2 = d\omega^3 = d\omega^4 =0, \; d\omega^5 = (\omega^1 + \overline \omega^1) \wedge \omega^2 + \omega^{34}.
		\]
		Let $\frh$ be the underlying real Lie algebra and let $z_1,\dots,z_5$ denote the basis dual to $\omega^1,\dots,\omega^5$. Then we have $\kc^1\frh_\IC= \langle z_5, \overline z_5 \rangle$ and $\kz\frh_\IC = \langle z_1 - \overline z_1, z_5, \overline z_5 \rangle$.
		According to Proposition \ref{prop: fibre dimension criterion}, $\kt(\frh)$ can only exist as a complex space if the function $h^0$ is constant on the connected components of $\kc(\frh)$. So it suffices to construct a deformation $J_t$ of $J$ such that $h^0(J_t)< h^0(J)$. First, we compute
		\[
		\dbar z_1 = 0, \; \dbar z_2 = -\overline \omega^1 \otimes z_5, \; \dbar z_3 = 0, \; \dbar z_4 =0, \; \dbar z_5 =0.
		\]
		Hence, we have $h^0(J) =4$. Consider for small $t \in \IC$ the form 
		\[
		\phi(t) = t\overline \omega^3 \otimes z_3.
		\]
		Since $\dbar \phi(t) =0$ and $[\phi(t), \phi(t)] =0$, the form $\phi(t)$ defines a deformation $J_t$ of $J$. We will again use the fact that $h^0(J_t)$ equals the dimension of the kernel of the map $\frh^{1,0} \to \frh^{*0,1} \otimes \frh^{1,0}$ given by sending $z \in \frh^{1,0}$ to $\dbar z + [z,\phi(t)]$ \cite[Theorem~1.2]{Xia_2021}. So we compute
		\begin{align*}
			\dbar z_1 + [z_1, \phi(t)] &=0 ,\\
			\dbar z_2 + [z_2, \phi(t)] &= -\overline \omega^1 \otimes z_5 ,\\
			\dbar z_3 + [z_3, \phi(t)] &= 0,\\
			\dbar z_4 + [z_4, \phi(t)] &= -\overline \omega^3 \otimes [z_4, z_3] = \overline \omega^3 \otimes z_5,\\
			\dbar z_5 + [z_5, \phi(t)] &= 0.
		\end{align*}
		Hence, the kernel of this map is three-dimensional and thus $h^0(J_t)=3< h^0(J)$.
	\end{exam}

	Example \ref{exam: counterexample dim 10} proves the third item of Theorem \ref{thm: Teichmüller 2-dim commutator}. To complete the proof of Theorem \ref{thm: Teichmüller 2-dim commutator} it remains to cover the cases, where $\frg$ is either $3$-step nilpotent or has one-dimensional commutator. The latter are Lie algebras of the form $\frh_{2k+1}\oplus \IR^m$. The main ingredient to show that in either of these cases the Teichmüller space exists as a complex manifold is the following lemma.
	
	\begin{lem}\label{lem: Kuranishi smooth abelian}
	Let $X$ be a complex nilmanifold with underlying Lie algebra $\frg$, and let $\omega^1, \dots, \omega^n$ be a basis of $\frg^{*1,0}$ with $d\omega^k = \sum_{i,j \leq n} B_{ij}^k \omega^i \wedge \overline \omega^j$. Suppose there exist 1-forms $\xi^1, \dots, \xi^n$ such that the following holds.
	\begin{enumerate}
		\item The forms $ \omega^1, \dots, \omega^n, \xi^1, \dots, \xi^n$ are a basis of $\frg_\IC^*$.
		\item We have $d\omega^k = \sum_{i,j \leq n} B_{ij}^k \omega^i \wedge \xi^j$ for all $k =1, \dots,n$.
	\end{enumerate}
	Then the Frölicher spectral sequence of $X$ degenerates at the first page. In particular, the Kuranishi family of $X$ is smooth and universal, and the local period map $\kp_X$ is an embedding.
	\end{lem}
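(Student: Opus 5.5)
The approach is a deformation argument on the Lie algebra level, turning the abelian-type structure equations into honest complex structure equations. For $s\in\IC$, set $\xi^j_s = s\,\overline\omega^j + (1-s)\,\xi^j$. Let $V_s$ be the span of $\omega^1,\dots,\omega^n$ together with $\xi^1_s,\dots,\xi^n_s$, and consider the bigraded algebra $\Wedge^{p,q}_s$ generated by the $\omega^i$ in degree $(1,0)$ and the $\xi^j_s$ in degree $(0,1)$. On this algebra we take the derivation $d_s$ determined by
\[
d_s\omega^k=\textstyle\sum_{i,j} B^k_{ij}\omega^i\wedge\xi^j_s, \qquad d_s\xi^j_s = 0 .
\]
For $s=1$ this recovers $(\Wedge^\bullet\frg_\IC^*,d)$ with its Dolbeault bigrading, provided $d\overline\omega^j$ is accounted for. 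The catch is that $d\overline\omega^j$ is not zero in general, so this naive family is not right.

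I would therefore instead use hypothesis (ii) more directly. Because of (ii), the differential $d$ on $\Wedge^\bullet\frg_\IC^*$ is entirely determined by the $B^k_{ij}$ and by $d\xi^j$. I would then compare the complex $(\Wedge^{\bullet,\bullet}\frg^*,\dbar)$ with $(\Wedge^\bullet\frg_\IC^*, d)$ through the linear isomorphism
\[
\Psi\colon\Wedge^\bullet\frg_\IC^*\to\Wedge^\bullet\frg_\IC^*,\qquad \Psi(\omega^i)=\omega^i,\quad \Psi(\overline\omega^j)=\xi^j ,
\]
which is an isomorphism by (i). The key claim is that $\Psi$ intertwines the $\dbar$-part of $d$ on $(p,\bullet)$-forms in such a way that
\[
\dim H^{k}_{dR}=\textstyle\sum_{p+q=k}\dim H^{p,q}_{\dbar}.
\]
The mechanism is as follows. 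Applying $\dbar$ to $\omega^k$ only produces the $\omega^i\wedge\overline\omega^j$ terms. Meanwhile $\partial$ vanishes on $\frg^{*1,0}$, since $d\omega^k$ has no $(2,0)$ part. Hence $\Psi$ identifies the total complex $(\Wedge^{\bullet}\frg_\IC^*,d)$ with a complex whose differential preserves the $\omega$-degree. Concretely, $d$ maps the span of forms with exactly $p$ factors $\omega$ to itself plus forms with fewer $\omega$'s, namely the terms coming from $d\xi^j$.

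The main obstacle is controlling $d\xi^j$. One has $d^2\omega^k=0$, and by (ii) this gives $\sum_{i,j} B^k_{ij}\omega^i\wedge d\xi^j=\sum_{i,j} B^k_{ij}\,d\omega^i\wedge\xi^j$. I would try to use this identity to show that the extra terms form a filtration-preserving perturbation whose associated spectral sequence degenerates. This would give the dimension equality above, and equality of dimensions is equivalent to $E_1$-degeneration of the Frölicher spectral sequence. Here one uses that both de Rham and Dolbeault cohomology are computed by left-invariant forms, by Nomizu and \cite[Theorem~1.1]{Has26}. If this route fails, I would fall back on proving directly what the rest of the argument actually needs: every $d$-exact $(n-1,1)$-form is $\dbar$-exact, and the Schouten brackets of harmonic representatives are $\dbar$-exact. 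For the latter, I would follow the Kodaira-manifold style computation in Proposition \ref{prop: Kuranishi smooth h^0=1}.

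Once $E_1$-degeneration is established, the remaining steps are standard. $E_1$-degeneration yields the $\partial\dbar$-type statement that $d$-exact $(n-1,1)$-forms are $\dbar$-exact, so Lemma \ref{lem: local period map} makes $\kp_X$ an embedding. Unobstructedness follows by the Tian--Todorov argument: contraction with the closed $(n,0)$-form $\omega$ identifies $\Wedge^{0,q}\frg^*\otimes\frg^{1,0}$ with $\Wedge^{n-1,q}\frg^*$, and via the Tian--Todorov lemma $[\phi,\psi]\lrcorner\omega$ is $\partial$-exact, hence $\dbar$-exact by degeneration. Finally, \cite[Theorem~3.6]{Rao18} gives that $h^0$ is constant on $\Kur(X)$, and Wavrik's criterion \cite{wavrik1969obstructions} then gives universality.
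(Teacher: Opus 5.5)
You have found the right map: your $\Psi$ is exactly the isomorphism $\varphi$ in the paper's proof, and you correctly identify $d\xi^j$ as the only issue. But the proposal never resolves it. The ``filtration-preserving perturbation whose associated spectral sequence degenerates'' is not an argument, and it is circular. For $\xi^j=\overline\omega^j$ the filtration by $\omega$-degree is the Hodge filtration, so the spectral sequence you would need to degenerate is the Frölicher spectral sequence itself. (Also, contrary to what you write, the terms coming from $d\xi^j$ never lower the $\omega$-degree.)

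The paper finishes with a single input, $d\xi^k=0$. Combine it with $\dbar\overline\omega^k=\overline{\partial\omega^k}=0$, which holds because each $d\omega^k$ is of type $(1,1)$. Then the antiderivations $\Psi\circ\dbar\circ\Psi^{-1}$ and $d$ agree on the generators $\omega^k,\xi^k$, hence everywhere. So $\Psi$ is an isomorphism of complexes from $(\Wedge^\bullet\frg^*_\IC,\dbar)$, graded by total degree, to $(\Wedge^\bullet\frg^*_\IC,d)$. This gives $\sum_{p+q=k}h^{p,q}=b_k$ directly, and $E_1$-degeneration follows from \cite{nomizu54} and \cite{Has26}. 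No perturbation argument is needed.

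However, $d\xi^j=0$ is not among hypotheses (i) and (ii) as stated; the paper's proof uses it tacitly. Nothing extracted from $d^2\omega^k=0$ can replace it, because without it the statement is false. Every abelian complex structure satisfies (i) and (ii) with $\xi^j=\overline\omega^j$. Take the abelian structure $d\omega^1=d\omega^2=0$, $d\omega^3=\omega^{1\bar1}+\omega^{1\bar2}$ on the Iwasawa algebra from Example~\ref{exam: non-reduced point}. There $d(\omega^{13\bar3})=\omega^{123\bar1}$ is of type $(3,1)$. So $\omega^{13\bar3}$ is $\dbar$-closed, and $\partial\omega^{13\bar3}=\omega^{123\bar1}$ is not $\dbar$-exact, since $d\omega^{123}=0$ forces $\dbar=0$ on $\Wedge^{3,0}\frh^*$. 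Hence $d_1\neq0$ on $E_1^{2,1}$.

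Your fallback fails on the same example. $\dbar(\Wedge^{2,0}\frh^*)$ is spanned by $\omega^{12\bar1}+\omega^{12\bar2}$, so $\omega^{12\bar1}=d(\omega^{1\bar3})$ is a $d$-exact $(2,1)$-form that is not $\dbar$-exact. Moreover, this $J$ is a non-reduced point of $\kc(\frh)$.

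So the missing idea is exactly the closedness of the $\xi^j$. Once $d\xi^j=0$ is added as a hypothesis, your $\Psi$ proves the lemma in one line; without it, no argument can. When applying the lemma, the $\xi^j$ must then be chosen closed, e.g.\ $\xi^n=\omega^n+\overline\omega^n$ rather than $\overline\omega^n$ when $\dim\kc^1\frg=1$.
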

	\begin{proof}
		Consider the isomorphism $\varphi \colon \bigoplus_{1\leq p,q\leq n} \Wedge^{p,q}\frg^* \to \bigoplus_{1 \leq k \leq n} \Wedge^k\frg_\IC^*$ given by $\varphi(\omega^k) = \omega^k$ and $\varphi(\overline\omega^k) = \xi^k$ for $k=1,\dots,n$. Then we have
		\[
		\varphi (\dbar \omega^k) = \sum_{i,j\leq n} B_{ij}^k \varphi(\omega^i) \wedge \varphi(\overline \omega^j) = \sum_{i,j\leq n} B_{ij}^k \omega^i \wedge \xi^j = d\omega^k = d\varphi(\omega^k)
		\]
		and 
		\[
		\varphi(\dbar\overline\omega^k) = 0 = d\xi^k = d \varphi(\overline \omega^k).
		\]
		Hence, $\varphi$ is an isomorphism preserving the total degrees satisfying $\varphi \circ \dbar = d \circ \varphi$, and thus induces an isomorphism $\bigoplus_{p+q=k} H_{\dbar}^{p,q}(X) \cong H^k(\frg, \IC)$ for every $k=1, \dots,n$. Therefore, the Frölicher spectral sequence of $X$ degenerates at the first page.
		\end{proof}
	
	To a apply the previous lemma in our special cases we have to understand the possible structure equations of complex structures on the two classes of Lie algebras at hand.
	
	\begin{lem}\label{lem: structure equations abelian}
		Let $\frg$ be a nilpotent Lie algebra of dimension $2n$ with $\dim \kc^1\frg \leq 2$ and let $J$ be a complex structure on $\frg$.
		\begin{enumerate}
			\item If $\dim \kc^1\frg =1$, then there exists a basis $\omega^1, \dots, \omega^n$ of $\frg^{*1,0}$ with structure equations
			\[
			d\omega^1 = \dots = d\omega^{n-1}, \; d\omega^{n} = \sum_{i=1}^k \sigma_i \omega^i \wedge \overline \omega^i
			\]
			for some $k < n$ and $\sigma_i \in \{1,-1\}$. 
			\item If $\frg$ is $3$-step nilpotent, then there exists a basis $\omega^1, \dots,\omega^m, \dots, \omega^n$ of $\frg^{*1,0}$ with structure equations
			\[
			d\omega^m = \omega^{1\bar 1}, \; \quad d \omega^n = \sum_{i,j < m} B_{ij} \omega^{i\bar j} + \lambda \overline B_{ij} \omega^{j\bar i} + (B_{1m} \omega^{1\overline m} + \lambda \overline B_{1m}\omega^{m\bar 1}).
			\]
			for some $\lambda \in \IC\backslash\{0\}$ and $d\omega^k=0$ for $k \neq m,n$.
		\end{enumerate}
	\end{lem}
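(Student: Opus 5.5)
For both items I will start from Rollenske's classification recalled above. When $\dim\kc^1\frg=1$, and also when $\frg$ is $3$-step nilpotent with $\dim\kc^1\frg=2$, every complex structure $J$ is abelian. Hence $d(\frg^{*1,0})\subset\Wedge^{1,1}\frg^*$, and the ascending central series is a SPTBS that $J$ preserves. So I will choose a basis of $\frg^{*1,0}$ adapted to this filtration. The closed forms come first, and their span is the annihilator of $\kc^1\frg$ intersected with $\frg^{*1,0}$; this has complex codimension $1$ in the first case and codimension $2$ in the second. Each remaining differential then lies in $\Wedge^{1,1}$ of the span of the earlier forms.

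\emph{Item (i).} Here $\kc^1\frg_\IC$ is spanned by a single real vector $c$, and $\frg\cong\frh_{2k+1}\oplus\IR^m$ with one nontrivial bracket pattern. Since $J$ is abelian, $c$ is central and $\langle c, Jc\rangle$ is $J$-invariant. So exactly one form $\omega^n$ is not closed, and $d\omega^n=\sum B_{ij}\omega^{i\bar j}$ with $B$ a complex $(n-1)\times(n-1)$ matrix. The reality condition is the key point: $d$ maps $\frg^*$ into a real line of $2$-forms, namely $d$ of the real dual of $c$. Since $\overline{d\omega^n}=d\overline\omega^n$ must be proportional to $d\omega^n$, replacing $\omega^n$ by a multiple makes $d\omega^n$ real, i.e.\ $iB$ (or $B$) is hermitian. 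A unitary change of $\omega^1,\dots,\omega^{n-1}$ then diagonalises $B$, and rescaling each $\omega^i$ by positive reals normalises the nonzero eigenvalues to $\pm1$. This gives $d\omega^n=\sum_{i\le k}\sigma_i\omega^{i\bar i}$, where $k$ is the rank of $B$, and $k<n$.

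\emph{Item (ii).} Here $\kc^1\frg$ is $2$-dimensional, with $\kc^2\frg$ $1$-dimensional and central. Invariance gives $\frg^{*1,0}=\langle\omega^1,\dots\rangle$ with exactly two non-closed forms, $\omega^m$ and $\omega^n$. We have $d\omega^m\in\Wedge^{1,1}$ of the closed forms, and the image of $d$ on $\kc^1$-duals is spanned by the duals of $\kc^1/\kc^2$ and of $\kc^2$. I will take $\omega^m$ dual to a direction in $\kc^1\frg_\IC$ modulo $\kc^2$, so that $d\omega^m$ and $d\overline\omega^m$ span a space whose dimension I control.

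The main obstacle is showing that $d\omega^m$ has rank one. I would argue that $d\overline\omega^m$ cannot be independent of $d\omega^m$ in the relevant real quotient: otherwise $\kc^1\frg/\kc^2\frg$ would be $2$-dimensional, whereas it is $1$-dimensional because $\dim\kc^1=2$ and $\dim\kc^2\ge1$. Hence $\overline{d\omega^m}=\mu\, d\omega^m$. Combined with $d\omega^m\in\Wedge^{1,1}$, a rank argument (the $\kc^2$ direction must be reached by brackets with the $m$-th vector) should force $d\omega^m$ to be a decomposable real form $\alpha\wedge\bar\alpha$ after scaling. Setting $\omega^1=\alpha$ gives $d\omega^m=\omega^{1\bar1}$.

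For $\omega^n$, I will write $d\omega^n$ in $\Wedge^{1,1}$ of $\omega^1,\dots,\omega^m$. The conjugation relation $d\overline\omega^n=\overline{\lambda}^{-1}d\omega^n$, which holds modulo the real line spanned by $d(\omega^m)$ and is absorbed by modifying $\omega^n$ by a multiple of $\omega^m$, then yields exactly the stated symmetry $B_{ji}$ versus $\lambda\overline B_{ij}$. The $\omega^{m\bar j}$ terms are restricted to $j=1$ via the Jacobi identity $d^2\omega^n=0$. I expect $d^2\omega^n=0$, together with $d\omega^m=\omega^{1\bar1}$, to kill the coefficients of $\omega^{m\bar j}$ and $\omega^{j\bar m}$ for $j\neq1$ and the $\omega^{m\bar m}$ term. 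Checking these cancellations and the reality bookkeeping is the step I expect to be most delicate.
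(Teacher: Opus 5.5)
Item (i) is fine. Diagonalising the coefficient matrix of $d\omega^n$, which is hermitian up to a scalar, is a self-contained replacement for the paper's citation of Rollenske's Proposition~3.6.

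The gap is in item (ii), at exactly the step you flag as the main obstacle: nothing in your proposal forces $d\omega^m$ to have rank two. The fact that $\kc^2\frg$ is reached by brackets with the $m$-th vector only says that $d\omega^n$ genuinely contains $\omega^m$ or $\overline\omega^m$. Equivalently, it says the $1$-form $\theta=\sum_j B_{mj}\overline\omega^j+\sum_i B_{im}\omega^i$ is non-zero. It gives no information about the rank of $d\omega^m$. For instance, take $d\omega^m=\omega^{1\bar1}+\omega^{2\bar2}$ and $d\omega^n=\omega^{1\bar m}-\omega^{m\bar1}$. These data have all the reality properties you use and reach $\kc^2\frg$ through the $m$-th vector. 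They are excluded only because $d^2\omega^n\neq0$, i.e.\ by the Jacobi identity.

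The missing idea is to apply the Jacobi identity \emph{before} normalising $d\omega^m$, not after.

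\begin{itemize}
\item First put $d\omega^m$ in the form $\sum_{i\le k}\sigma_i\omega^{i\bar i}$. The paper does this by applying (i) to $\frg/\kz\frg$. That quotient has one-dimensional commutator and inherits a complex structure, because the abelian $J$ preserves $\kz\frg$. Your quotient $\frg/\kc^2\frg$ inherits none, since $\kc^2\frg$ is a real line. Still, your conjugation argument combined with your diagonalisation from (i) would give the same normal form.
\item Then $d\overline\omega^m=-d\omega^m$, and a direct computation gives
\[
0=d^2\omega^n=d\omega^m\wedge\theta .
\]
\item Since $\theta\neq0$ by $3$-step nilpotency, this is only possible if $d\omega^m$ has rank $2$. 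That means $k=1$ and $\theta\in\langle\omega^1,\overline\omega^1\rangle$.
\end{itemize}

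This single identity yields two things at once. It gives $d\omega^m=\pm\omega^{1\bar1}$; the sign is fixed by replacing $\omega^m$ with $-\omega^m$. It also gives the vanishing of all $\omega^{m\bar j}$ and $\omega^{j\bar m}$ coefficients with $j\neq1$, including $\omega^{m\bar m}$.

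In your plan, $d^2\omega^n=0$ is used only for the second consequence, and only after presupposing the first. As written, the rank-one claim for $d\omega^m$ is therefore unsupported.
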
	
	\begin{proof}
		The first item follows directly from the classification of complex structures on Lie algebras with one-dimensional commutator proved in \cite[Propostion~3.6]{rollenske09}.
		
		If $\frg$ is 3-step nilpotent, then every complex structure on $\frg$ preserves the ascending central series and since $\dim \kc^1\frg =2$, the Lie algebra $\frg/\kz\frg$ of dimension $2m$ has one-dimensional commutator. Therefore, there exists a basis $\omega^1, \dots, \omega^m , \dots, \omega^n $ of $\frg^{*1,0}$ such that the only non-zero differentials are $ d\omega^{m} = \sum_{i=1}^k \sigma_i \omega^i \wedge \overline \omega^i$ for some $k<m$ and $\sigma_i \in \{1,-1\}$.
		Moreover, since $\dim \kc^1\frg_\IC = \rk \left(d \colon \frg_\IC^* \to \Wedge^2 \frg_\IC^*\right)=2$ we can assume that the only other non-closed form is $\omega^n$ with 
		\[
		d\omega^n = \sum_{i,j<n} B_{ij} \omega^i \wedge \overline \omega^j.
		\]
		Furthermore, we have $d\omega^n = \lambda d\overline \omega^n$ for some $\lambda \in \IC$ because $\dim \langle d\omega^m, d\omega^n , d\overline \omega^n \rangle = \dim \kc^1\frg_\IC =2$ and the Lie algebra $\frg_\IC/\kz\frg_\IC$ has one-dimensional commutator. Hence, we get $B_{ji} = -\overline \lambda B_{ij}$, and since $\frg$ is 3-step nilpotent there exists an index $j$ such that the coefficient $B_{mj}$ is non-zero. Therefore, we have
		\[
		0 = d^2 \omega^n = d\omega^m \wedge \left(\sum_{j} B_{mj} \overline \omega^j + \bar \lambda \overline B_{mj} \omega^j \right).
		\]
		This implies that there is an index $i$ such that $d\omega^m = \omega^{i \bar i}$ and $B_{mj} =0$ for $i \neq j$, so we may assume $i=1$.
	\end{proof}
	
	The following proposition completes the proof of Theorem \ref{thm: Teichmüller 2-dim commutator}.

	\begin{prop}\label{prop: 3-step 2-dim commutator}
		Let $\frg$ be a nilpotent Lie algebra with $\dim \kc^1\frg \leq 2$ admitting complex structures. If $\dim \kc^1\frg =1$ or $\frg$ is $3$-step nilpotent, then $\kt(\frg)$ exists as a complex manifold and the period map $\kp$ is an embedding. 
	\end{prop}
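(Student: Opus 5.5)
The plan is to verify the hypotheses of Proposition \ref{prop: Tg exists}: $\kc(\frg)$ is reduced (indeed smooth), the local period maps $\kp_X$ are embeddings, and the fibres of $\widetilde\kp$ are connected. Under these hypotheses, Proposition \ref{prop: Tg exists} shows that $\kt(\frg)$ exists as a complex space and that $\kp$ is an embedding. Smoothness of the Kuranishi spaces then upgrades this to a complex manifold.

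\textbf{Step 1 (local statements).} For any complex structure $J$ on $\frg$, use Lemma \ref{lem: structure equations abelian} to pick structure equations. Then construct forms $\xi^1,\dots,\xi^n$ as in Lemma \ref{lem: Kuranishi smooth abelian}.
\begin{itemize}
\item In the case $\dim\kc^1\frg=1$, all $d\omega^k$ lie in $\Wedge^{1,1}\frg^*$. The natural choice is $\xi^j=\overline\omega^j$ for $j\neq n$, together with a closed replacement $\xi^n=\overline\omega^n-\overline\omega^n$-correction. More simply, take $\xi^j=\overline\omega^j$ for $j<n$ and $\xi^n$ any closed form completing a basis, e.g. a real closed form not in $\langle\omega^i,\overline\omega^j\rangle_{j<n}$. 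Since $\overline\omega^n$ never appears in $d\omega^k$, hypothesis (ii) holds trivially.
\item In the $3$-step case, $\overline\omega^n$ again does not occur in any differential. The only issue is $\overline\omega^m$, which appears in $d\omega^n$ through $B_{1m}\omega^{1\bar m}$ but is not closed.
\end{itemize}
In the $3$-step case I would try $\xi^j=\overline\omega^j$ for $j\neq m,n$ and $\xi^m=\overline\omega^m+c\,\omega^m$, with $c$ chosen so that $\omega^1\wedge\xi^m$ reproduces the required terms. Alternatively, I would rewrite $d\omega^n$ using $d\omega^m=\omega^{1\bar1}$ and the relation $B_{ji}=-\overline\lambda B_{ij}$ to absorb the non-closed part. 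The $\xi^j$ must be closed, since the proof of Lemma \ref{lem: Kuranishi smooth abelian} uses $d\xi^k=0$. Note that $d\overline\omega^m=-\omega^{1\bar1}$, so a combination like $\xi^m=\overline\omega^m+\omega^m$ is closed; whether it still satisfies (ii) has to be checked against the shape of $d\omega^n$ in Lemma \ref{lem: structure equations abelian}. Once (i) and (ii) hold, Lemma \ref{lem: Kuranishi smooth abelian} gives that $\Kur(X)$ is smooth and universal and that $\kp_X$ is an embedding, for every $X$. Proposition \ref{lem: Kur inside Cg} then makes every component of $\kc(\frg)$ a complex manifold, so in particular it is reduced.

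\textbf{Step 2 (connected fibres).} I expect this to be the main obstacle. I would show directly that the fibres of $\widetilde\kp$ are exactly $G$-orbits, which are connected.
\begin{itemize}
\item Suppose $\widetilde\kp(J)=\widetilde\kp(J')$. By Proposition \ref{prop: forms on base}, applied to the ascending central series (a SPTBS in both cases by Rollenske's classification), $J$ and $J'$ induce the same structures on all quotients $\frg/\ks^{i}\frg$, in particular on the base torus.
\item Write the generator for $J'$ as $\omega'=\omega+d\beta$, with $\omega'$ decomposable and obtained from $\omega$ by modifying only the forms with non-zero differential, i.e. $\omega^n$ (and $\omega^m$). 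Then decompose $\beta$ along $\gothb^*$ and the fibre directions, as in the proof of Proposition \ref{prop: Kuranishi smooth h^0=1}.
\item The goal is to show that the correction $\omega^n\mapsto\omega^n+\alpha$ is of the form $\alpha=\dbar(z\lrcorner\cdot)$ for a right translation. Concretely, $\alpha$ must lie in the image of $\dbar\colon\frg^{1,0}\to\frg^{*0,1}\otimes\gotht^{1,0}$, so that $J'=R_{g*}J$.
\item The point needing care is excluding a fibre component $\overline\omega^n$ (resp. $\overline\omega^m$) in $\alpha$. I would attack this by a top-degree argument as in Lemma \ref{lem: period maps for dim t =2}, using the non-degeneracy of $\sum\sigma_i\omega^{i\bar i}$ on its support. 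Any remaining freedom in $\alpha$ that is not of translation type must be shown to change $[\omega']$. This uses Frölicher degeneration from Step 1: classes in $H^{n-1,1}$ inject into $H^n$.
\end{itemize}
Given connected fibres, Proposition \ref{prop: Tg exists} applies and finishes the proof.
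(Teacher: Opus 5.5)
Your architecture matches the paper's. You use Lemma \ref{lem: Kuranishi smooth abelian} for smoothness and local Torelli, then prove that the fibres of $\widetilde\kp$ are connected and apply Proposition \ref{prop: Tg exists}. Your case-1 choice of closed $\xi$'s is fine.

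\textbf{Step 1 in the $3$-step case.} Your candidate fails. With $\xi^j=\overline\omega^j$ for $j\neq m,n$ and $\xi^m=\overline\omega^m+c\,\omega^m$, we have $d\xi^m=(c-1)\omega^{1\bar 1}$, so closedness forces $c=1$. Then $B_{1m}\,\omega^1\wedge\xi^m=B_{1m}(\omega^{1\bar m}+\omega^{1m})$ leaves a $(2,0)$-term $B_{1m}\omega^{1m}$. Nothing cancels it while $\xi^1=\overline\omega^1$, and $B_{1m}\neq 0$ precisely because $\frg$ is $3$-step, so hypothesis (ii) fails. You must also modify $\xi^1$. The paper takes $\xi^k=\omega^k+\overline\omega^k$ for $k=1,m,n$ and $\xi^k=\overline\omega^k$ otherwise, so that the $(2,0)$-parts of $\omega^1\wedge\xi^m$ and $\omega^m\wedge\xi^1$ cancel. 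For this to hold literally, one first normalises the basis: rescale $\omega^1,\omega^m$, rotate $\omega^n$, and replace $\omega^m$ by $\omega^m+\sum_{1<i<m}c_i\omega^i$. Then $\omega^{1\bar m}$ and $\omega^{m\bar 1}$ enter $d\omega^n$ with equal coefficients, no terms $\omega^{i\bar1},\omega^{1\bar i}$ with $1<i<m$ occur, and $\omega^n+\overline\omega^n$ is closed.

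\textbf{Step 2 (the main gap).} This step remains a sketch.
\begin{enumerate}
\item The reduction to an $\omega'$ that differs from $\omega$ only in the non-closed factors is asserted, not proved. Proposition \ref{prop: forms on base} only fixes the base torus. You must still exclude $(0,1)$-corrections of the closed central forms $\omega^{k+1},\dots,\omega^{n-1}$. In case 1 this follows because an exact $n$-form has equal $\omega^n$- and $\overline\omega^n$-components and no $\omega^n\wedge\overline\omega^n$-component.
\item In the $3$-step case you misread that proposition. It gives equality of the \emph{periods} of the induced structures on $\frg/\kz\frg$, not of the structures themselves. Since $\frg/\kz\frg$ has one-dimensional commutator, its period fibres are positive-dimensional.
\item Once $\omega^m$ and $\omega^n$ are both corrected, $\omega'-\omega$ is no longer linear in the correction. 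The infinitesimal fact supplied by Frölicher degeneration (that $d$-exact $(n-1,1)$-forms are $\dbar$-exact, which is all Lemma \ref{lem: local period map} uses) therefore does not decide whether $[\omega']=[\omega]$.
\end{enumerate}

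In case 1 your plan does close once the reduction is proved, since then $\omega'=\omega+\eta\lrcorner\omega$ exactly with $\eta=\overline\alpha\otimes z_n$. This is essentially the paper's argument. The paper exhibits the fibre as the affine family $\omega^1\wedge\dots\wedge\omega^{n-1}\wedge(\omega^n+\overline\alpha)$ with $\overline\alpha\in\langle\overline\omega^1,\dots,\overline\omega^k\rangle$, each member being $\omega+d\beta$ for an explicit $\beta$.

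For the $3$-step case, the idea you are missing is the paper's reduction to the quotient. Use the induced map $\kt(\frg)\to\kt(\frg/\kz\frg)$ and rerun the one-dimensional-commutator computation for $\frg/\kz\frg$, instead of handling $\omega^m$ and $\omega^n$ simultaneously.
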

	\begin{proof}
		Let $J$ be a complex structure on $\frg$. Let $\omega = \omega^1 \wedge \dots \wedge \omega^n$, where $\omega^1, \dots, \omega^n$ is a basis of $\frg^{*1,0}$ with structure equations as in Lemma \ref{lem: structure equations abelian}. We will prove the claim by verifying the assumptions of the criterion given in Proposition \ref{prop: Tg exists} for the Lie algebra $\frg$. Using Lemma \ref{lem: structure equations abelian} we will first show that both cases satisfy the assumption of Lemma \ref{lem: Kuranishi smooth abelian}, so it remains to show that the period map $\widetilde \kp$ has connected fibres. We will treat the two cases separately.
		\begin{prooflist}
			\item[Case 1] $\dim \kc^1\frg =1$. Applying Lemma \ref{lem: Kuranishi smooth abelian} for $\xi^k = \overline \omega^k$, we get that $\kc(\frg)$ is smooth, and $\kp_X$ is an embedding for every complex nilmanifold $X$ with underlying Lie algebra $\frg$. So in order to apply Proposition \ref{prop: Tg exists} it remains to show that the fibres of the period map $\widetilde{\kp}$ are connected. Since every complex structure on $\frg$ is abelian, the ascending central series $\kz$ defines a SPTBS on $\frg$ and it follows from Proposition~\ref{prop: forms on base} that $\widetilde\kp(\widetilde\kp^{-1}(J)) \subset (\widetilde\kp^\kz)^{-1}(\widetilde\kp^\kz(J))$. Hence, every complex structure in the fibre $\widetilde\kp(\widetilde\kp^{-1}(J))$ admits a basis of $(1,0)$-forms of the form $\omega^1, \dots, \omega^{n-1}, \omega^n + \overline \alpha$ with $\overline \alpha \in \langle \overline \omega^1, \dots, \overline \omega^k \rangle$. Conversely, every choice of such a basis determines a complex structure in the fibre $\widetilde\kp(\widetilde\kp^{-1}(J))$. Indeed, if $\overline \alpha = \sum_{i=1}^k \lambda_i \overline \omega^i$ and $\beta = \sum_{i=1}^k (-1)^k \lambda_i \omega^1 \wedge \dots \widehat{\omega^k} \wedge \dots \wedge \omega^{n-1} \wedge\omega^n$, then \[
			\omega +d\beta = \omega^1 \wedge \dots \wedge \omega^{n-1} \wedge (\omega^n + \overline \alpha).
			\]
			Therefore, the fibre $\widetilde\kp(\widetilde\kp^{-1}(J))$ is connected and Proposition \ref{prop: Tg exists} concludes this case.
			
			\item[Case 2] $\frg$ is 3-step nilpotent. Again, we can apply Lemma \ref{lem: Kuranishi smooth abelian} with
			\[
			\xi^k =
			\begin{cases}
				\omega^k + \overline \omega^k & k =1,m,n\\
				\overline \omega^k & \textup{else}
			\end{cases}
			\]
			  to conclude that $\kc(\frg)$ is smooth and $\kp_X$ is always an embedding.  Since every complex structure on $\frg$ is abelian, the quotient map $\frg \to \frg/\kz\frg$ induces a map $\kt(\frg) \to \kt(\frg/\kz\frg)$. The Lie algebra $\frg/\kz\frg$ has one-dimensional commutator. So by the same computation as in the previous case the fibre $\widetilde\kp(\widetilde\kp^{-1}(J))$ is connected. Hence, it follows from Proposition \ref{prop: Tg exists} that $\kt(\frg)$ exists as a complex manifold and $\kp$ is an embedding.
			\qedhere
		\end{prooflist}
	\end{proof}
	
	\begin{proof}[Proof of Theorem \ref{thm: Teichmüller 2-dim commutator}]
			As explained above, Theorem \ref{thm: Teichmüller 2-dim commutator} now follows from Propositions \ref{prop: T1 as complex manifold} and \ref{prop: generically injective complex par} together with Example \ref{exam: counterexample dim 10} and Proposition \ref{prop: 3-step 2-dim commutator}.
	\end{proof}

	\section{Almost abelian nilmanifolds}\label{sect: almost abelian nilmanifolds}
	
	The goal of this section is to describe the Teichmüller spaces for almost abelian nilmanifolds.
	
	\subsection{Almost abelian complex nilmanifolds}\label{subsect: almost abelian nilmanifolds}
	An almost abelian Lie algebra is a non-abelian Lie algebra admitting an abelian ideal of codimension one. An almost abelian (complex) nilmanifold is a (complex) nilmanifold whose underlying Lie algebra is almost abelian. Following \cite{andrada2025almost}, we will briefly set up the notation used throughout this section. 
	
	Let $\frg$ be a nilpotent almost abelian Lie algebra of dimension $2n+2$ and let $\gotha \subset \frg$ be an abelian ideal of codimension one. Choose $e_0 \not\in \gotha$ and let $A$ be the matrix representing the map $\ad_{e_0}\colon \gotha \to \gotha$ with respect to a chosen basis of $\gotha$. Then the Lie algebra $\frg$ can be written as the semi-direct product 
	\[\frg_A = \langle e_0 \rangle \ltimes_A \IR^{2n+1},\] 
	and it was shown in \cite{Fr} that two such Lie algebras are isomorphic if and only if their corresponding matrices are conjugate. Thus, the partition of $2n+1$ given by the Jordan normal form of the nilpotent matrix $A$ uniquely determines the isomorphism type of $\frg_A$.  
	
	In terms of this partition, there is a complete description of nilpotent almost abelian Lie algebras admitting a complex structure. Concretely, it was shown in \cite[Theorem~4.10]{ABDGH24} that a $(2n+2)$-dimensional nilpotent almost abelian Lie algebra $\frg_A$ admits a complex structure if and only if there exists an index $j \in \IN$ and a partition
	\begin{equation*} 
		n = \sum_{i>0} p_i \cdot i,
	\end{equation*}
	such that the partition of $2n+1$ given by the Jordan normal form of $A$ is
	\begin{align*}
			2n+1  &= 
			\begin{cases}     
				\displaystyle{\sum_{\stackrel{i>0}{  i\neq j,\, j-1}}} 2p_i \cdot i + (2 p_j +1)\cdot j + (2 p_{j-1}-1)\cdot (j-1), \; &\text{if } j>1,\\ 
				&  \\
				\hspace*{.25cm} \displaystyle{ \sum_{i>1}} \; 2p_i \cdot i + (2 p_1 +1)\cdot 1 , \; &\text{if } j=1.
			\end{cases}
	\end{align*}
	 Throughout this section we also set
	\begin{equation}\label{eq: epsilon}     
		\epsilon = 
		\begin{cases} 1, & \text{if } j>1,\\ 0, & \text{if }j =1.
	\end{cases} \end{equation}

	\subsection{Period maps and Teichmüller spaces}
	
	Let $\frg = \frg_A$ be an almost abelian Lie algebra of dimension $2n+2$ admitting complex structures. Our first goal is to show that we can apply Proposition \ref{prop: Tg exists} to prove that $\kt(\frg)$ always exists as a complex manifold, so we need the following lemma.
	
	\begin{lem}\label{lem: fibres are connected almost abelian}
		The fibres of $\widetilde\kp \colon \kc(\frg) \to \kd$ are connected.
	\end{lem}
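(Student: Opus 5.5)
Fix $J\in\kc(\frg)$. The plan is to show that the fibre $\widetilde\kp^{-1}(\widetilde\kp(J))$ is an affine-linear family of complex structures, parametrised by a complex vector space, and hence connected. This mirrors Case~1 in the proof of Proposition~\ref{prop: 3-step 2-dim commutator}.

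First I fix coordinates adapted to $J$. Following \cite{andrada2025almost}, the codimension one abelian ideal $\gotha$ can be chosen so that $\gotha\cap J\gotha$ is a $J$-invariant abelian ideal of codimension two; this is the first step of the SPTBS. I then choose a basis $\omega^1,\dots,\omega^{n+1}$ of $\frg^{*1,0}$ with $\omega^1$ closed and $\omega^1,\overline\omega^1$ spanning $\Ann(\gotha\cap J\gotha)$. Every $d\omega^k$ is then of the form $\omega^1\wedge\theta^k+\overline\omega^1\wedge\rho^k$. Integrability and nilpotency of $A$ let me arrange $\theta^k,\rho^k$ in a triangular (Jordan-type) form, with $d\omega^k\in I(\omega^1,\overline\omega^1)\wedge\langle\omega^{<k},\overline\omega^{<k}\rangle$.

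Next, let $J'$ lie in the fibre, so $\omega'=\omega+d\beta$. By Proposition~\ref{prop: forms on base}, $J'$ induces the same structures on all quotients $\frg/\ks^i\frg$. Using the same triangular structure, I will show inductively that $J'$ admits a $(1,0)$-basis of the form $\omega^1,\omega^2+\overline\alpha^2,\dots,\omega^{n+1}+\overline\alpha^{n+1}$. Here each $\overline\alpha^k$ lies in an explicit subspace of $\frg^{*0,1}$, roughly in $\langle\overline\omega^1\rangle$ together with the span of the $\overline\omega^j$ that actually occur in $d\omega^{k}$.

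Conversely, I will show that every such choice of $\overline\alpha$ that satisfies integrability lies in the fibre. Concretely, $\omega^1\wedge\dots\wedge(\omega^k+\overline\alpha^k)\wedge\dots$ equals $\omega+d\beta$ for an explicit $\beta$ built from contractions. This is the same mechanism as the formula $\omega+d\beta=\omega^1\wedge\dots\wedge(\omega^n+\overline\alpha)$ in the proof of Proposition~\ref{prop: 3-step 2-dim commutator}, now using $d\omega^{k-1}$-type relations with $\omega^1$ as the common factor. I also need to check that the remaining open condition $\omega'\wedge\overline{\omega'}\neq0$ does not cut out a disconnected set, either by noting it holds automatically for this triangular form or by showing the complement has real codimension at least two. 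The fibre is then the set of $\overline\alpha$ ranging over a linear subspace cut out by linear integrability equations. The key point is that the Maurer--Cartan quadratic term vanishes because $\gotha$ is abelian, so the equations are linear. Such a set is connected.

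The main obstacle is the characterisation step: proving that the condition $\omega'-\omega\in d(\Wedge^{n}\frg^*_\IC)$ forces $\overline\alpha$ into exactly the space that is realised. In particular I must rule out a coefficient of $\overline\omega^{k}$ in $\overline\alpha^{k}$ itself, the analogue of $\lambda_n=0$ in Lemma~\ref{lem: period maps for dim t =2}. I would attack this with the same device used there: take the highest-degree $(p,q)$-component of a hypothetical primitive $\beta$ and use the Jordan chain structure of $A$, whose indices are governed by $j$ and $\epsilon$, to show that this component must vanish. This is likely to require separating the cases $\epsilon=0$ and $\epsilon=1$.
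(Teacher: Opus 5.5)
There is a genuine gap, and it sits at the step you call the main obstacle, which is in fact the whole content of the lemma. Nothing in the proposal shows that a structure $J'$ in the fibre has $(1,0)$-forms $\omega^1,\ \omega^k+\lambda_k\overline\omega^1$, and the device you suggest will not supply this.

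\textbf{Why the proposed device fails.} The highest-$(p,q)$-component argument from the proof of Proposition~\ref{prop: Kuranishi smooth h^0=1} (reused in Lemma~\ref{lem: period maps for dim t =2}) works because $\dim\Aut(X)=1$. That hypothesis makes the forms $\beta^i$ in $\partial\overline\omega^n=\sum_i\beta^i\wedge\overline\omega^i$ a basis of $\gothb^{*1,0}$. For almost abelian structures one has instead $\partial(\frg^{*0,1})\subset\omega^1\wedge\frg^{*0,1}$, so that step collapses.

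\textbf{Your structural input is too weak.} The observation $d\omega^k\in I(\omega^1,\overline\omega^1)$ only uses that $\gotha\cap J\gotha$ is abelian. Incidentally, $\gotha\cap J\gotha$ is not a step of the SPTBS \eqref{eq: sptbs almost abelian}; that filtration starts with $\kz\frg$.

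\textbf{The missing idea.} Use that $\gotha$ itself is an abelian ideal of codimension one. Every $d\xi$ then vanishes on $\gotha\times\gotha$, so $d(\Wedge^\bullet\frg^*_\IC)\subset e^0\wedge\Wedge^\bullet\frg^*_\IC$, where $e^0$ is the real form annihilating $\gotha$. After rescaling $\omega^1$ one may take $e^0=\omega^1+\overline\omega^1$. This is the paper's decomposition $d\gamma=(\alpha+\overline\alpha)\wedge\eta_1+\alpha\wedge\overline\alpha\wedge\eta_2$. It is where the codimension-one abelian ideal really enters, not in the vanishing of the Maurer--Cartan bracket. The argument then runs as follows:
\begin{enumerate}
\item Normalise $\omega'=\omega+d\gamma$.
\item Since $d\gamma$ has no $(n+1,0)$-part, the $(1,0)$-space of $J'$ is a graph over $\frg^{*1,0}$.
\item The identity $e^0\wedge\omega'=e^0\wedge\omega=\overline\omega^1\wedge\omega$ puts that space inside $\langle\omega^1,\overline\omega^1,\omega^2,\dots,\omega^{n+1}\rangle$.
\item Comparing coefficients gives $\omega'=\omega^1\wedge\bigwedge_{k\ge2}(\omega^k+\lambda_k\overline\omega^1)$, with no induction along the tower.
\end{enumerate}

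\textbf{The converse step is misframed.} Your converse step and final sentence treat the fibre as cut out by integrability, and it is not. In the basis $\alpha=\omega^1,\ \beta^\ell_i$ of \cite[Corollary~3.13]{andrada2025almost}, take $\lambda\overline\alpha\otimes y^\ell_1$ with $\ell>0$. This adds $\lambda\overline\omega^1$ to the closed form $\beta^\ell_1$ heading a Jordan chain. It satisfies the Maurer--Cartan equation, so the resulting $J'$ is integrable and of your shape, yet $\omega'-\omega$ is not exact. This is the paper's condition $\lambda^\ell_1=0$.

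\textbf{How to finish.} The correct condition is exactness, and once the shape above is established it is linear in $\lambda$. Since $\overline\omega^1\wedge\overline\omega^1=0$, one has exactly $\omega'=\omega+\sum_k\lambda_k\Omega_k$, where $\Omega_k$ is $\omega$ with $\omega^k$ replaced by $\overline\omega^1$. Moreover $\omega'\wedge\overline{\omega'}=\omega\wedge\overline\omega\neq0$ holds automatically. Hence the fibre is the continuous image of the linear space $\{\lambda\mid\sum_k\lambda_k\Omega_k\in\im d\}$ and is connected, without determining the realised subspace. The paper goes one step further and identifies that subspace as $\langle\overline\alpha\rangle\otimes V=\im\bigl(\dbar\colon\frg^{1,0}\to\frg^{*0,1}\otimes\frg^{1,0}\bigr)$. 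As a result, every point of the fibre is $[\omega+d(x\lrcorner\omega)]$ for some $x\in\frg^{1,0}$.
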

	\begin{proof}
		Let $J$ be a complex structure on $\frg$. We will describe the fibre $\widetilde\kp^{-1}(\widetilde \kp(J))$. According to \cite[Corollary~3.13]{andrada2025almost} there exists a basis
		\[\alpha,\beta_1^0, \dots,\beta_{k_0}^0, \dots, \beta_1^r, \dots, \beta_{k_r}^r\]
		of $\frg^{*1,0}$ such that its dual basis
		$z,y_1^0, \dots, y_{k_0}^1, \dots, y_1^r, \dots, y_{k_r}^r$ satisfies the following bracket relations 
		\begin{align}\label{eq: brackets almost abelian}
			\begin{split}
				[z,\overline z] &= -\varepsilon(y_1^0- \overline y_1^0),\\
				[y_i^\ell,z] &=  y_{i+1}^\ell, \qquad [\overline y_i^\ell,z] =  \overline y_{i+1}^\ell,\\
				[y_i^\ell,\overline z] &=  y_{i+1}^\ell, \qquad [\overline y_i^\ell,\overline z] =  \overline y_{i+1}^\ell.
			\end{split}
		\end{align}
		with $\varepsilon$ as in \eqref{eq: epsilon}. Suppose $J'$ is another complex structure on $\frg$ with $\widetilde\kp(J)= \widetilde\kp(J')$. If $\omega = \alpha \wedge\beta_{0}^1 \wedge \dots \wedge \beta_{k_r}^r$ and $\omega'$ is a representative of the class $\widetilde\kp(J')$, then we can assume $\omega' = \omega +d\gamma$ for some $\gamma\in \Wedge^{n}\frg^*_{\IC}$. Since $d\gamma$ does not have a $(n+1,0)$-part with respect to $J$, we have 
		\[
		\omega' = (\alpha +\overline \nu) \wedge (\beta_1^0+ \overline \nu_1^0) \wedge \dots \wedge (\beta_{k_r}^r + \overline \nu_{k_r}^r),
		\]
		where $\overline \nu, \overline\nu_i^\ell \in \frg^{*0,1}$. It follows from the structure equations that we can write
		\[
		d\gamma =  (\alpha+\overline{\alpha})\wedge \eta_1 +\alpha \wedge \overline\alpha \wedge \eta_2  
		\]
		for some $\eta_i \in \Wedge^{n+1-i}\langle \beta_1^0,\dots, \beta_{k_r}^r ,\overline \beta_1^0, \dots, \overline\beta_{k_r}^r \rangle$. Note that the coefficient of $\overline {\alpha}$ in $\overline{\nu}$ cannot be 1 since this would imply that $\omega' \wedge \overline \omega' =0$. Therefore, we have 
		\[
		\omega' = \omega + \alpha \wedge \overline\alpha \wedge \eta_2
		\]
		and thus we can assume $\overline \nu_i^\ell= \lambda_i^\ell\overline\alpha$ for some $\lambda_i^\ell \in \IC$ and $\overline \nu =0$. Moreover, the coefficients $\lambda_{1}^\ell$ have to be $0$ for all $\ell > 0$: if $\lambda_{1}^\ell$ were to be non-zero for some $\ell$, then the form $\omega'$ would have a summand of the form $\lambda_{1}^\ell \alpha \wedge \overline \alpha \wedge ((z \wedge \overline z \wedge y_{1}^\ell) \lrcorner \omega)$, but again the structure equations imply that no exact $(n+1)$-form can have such a summand. Let us denote
		\[
		V = \langle y_1^0, \dots, y_{k_0}^0, y_2^1, \dots, y_{k_1}^1, \dots, y_2^r , \dots, y_{k_r}^r \rangle \subset \frg^{1,0}.
		\]
		Then the class $[\omega'] \in \IP\left(\Wedge^{n+1}\frg^*_\IC\right)$ is contained in the connected subset
		\[
		R = \{ [\omega + \eta \lrcorner \omega]\, | \, \eta \in \langle \overline \alpha \rangle \otimes V \} \subset \IP\left(\Wedge^{n+1}\frg^*_\IC\right).
		\]
		So it remains to see that every element of $R$ determines a complex structure on $\frg$ contained in $\widetilde\kp^{-1}(\widetilde \kp(J))$. The bracket relations of the basis $z,y_1^0, \dots, y_{k_0}^1, \dots, y_1^r, \dots, y_{k_r}^r$ given in \eqref{eq: brackets almost abelian} imply that $\dbar z = \overline \alpha \otimes y_1^0$ and $\dbar y_i^\ell = \overline \alpha \otimes y_{i+1}^\ell$. Thus, we have
		\[
		\langle \overline \alpha \rangle \otimes V = \im \left(\dbar \colon \frg^{1,0} \to \frg^{*0,1} \otimes \frg^{1,0}\right).
		\]
		Hence, every element $[\omega + \eta \lrcorner \omega] \in R$ can be written as 
		\[
		[\omega + \eta \lrcorner \omega] = [\omega + (\dbar x) \lrcorner \omega] = [\omega + \dbar(x \lrcorner \omega)] =  [\omega + d(x  \lrcorner \omega)]
		\]
		for some $x \in \frg^{1,0}$. Here the second to last equality comes from the general formula $\dbar(x \lrcorner \omega) = x \lrcorner (\dbar \omega) + (\dbar x) \lrcorner \omega$ \cite[Lemma~2.4 (1)]{Xia22} and the fact that $\dbar \omega =0$. Therefore, the fibre $\widetilde\kp^{-1}(\widetilde \kp(J)) =R$ is connected.
	\end{proof}
	
	We can now prove the main result of this section.
	
	\begin{thm}\label{thm: global period map almost abelian}
		The Teichmüller space $\kt(\frg)$ exists as a complex manifold and the period map $\kp \colon \kt(\frg) \to \kd$ is an embedding.
	\end{thm}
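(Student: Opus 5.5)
The plan is to apply Proposition \ref{prop: Tg exists}. Lemma \ref{lem: fibres are connected almost abelian} already gives that the fibres of $\widetilde\kp$ are connected. Two hypotheses remain:
\begin{enumerate}
\item $\kc(\frg)$ is reduced, and in fact smooth;
\item for every complex nilmanifold $X$ of type $(\frg,\Gamma)$, the local period map $\kp_X$ is an embedding.
\end{enumerate}
Both should follow from the structure equations in the adapted basis of \cite[Corollary~3.13]{andrada2025almost} used in the proof of Lemma \ref{lem: fibres are connected almost abelian}. The bracket relations \eqref{eq: brackets almost abelian} give the following structure equations: $d\alpha=0$ when $\varepsilon=0$, while for $\varepsilon=1$ the form $d\beta_1^0$ contains the term $\alpha\wedge\overline\alpha$. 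The forms $d\beta_{i+1}^\ell$ are built from $\beta_i^\ell\wedge(\alpha+\overline\alpha)$ and $\overline\beta_i^\ell\wedge(\alpha+\overline\alpha)$-type terms, with the appropriate $(1,1)$/$(2,0)$ splitting.

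For the embedding of $\kp_X$, the cleanest route is Lemma \ref{lem: Kuranishi smooth abelian}. I would try to find $1$-forms $\xi$, for instance $\xi^\alpha=\alpha+\overline\alpha$ and $\xi^{\ell}_i$ suitable modifications of $\overline\beta_i^\ell$, such that replacing $\overline\omega^j$ by $\xi^j$ in the $(1,1)$-parts reproduces $d$ exactly, and such that the $\xi$ are closed. The $\xi$ must also span a complement to $\frg^{*1,0}$.

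A caveat is that the complex structures need not be abelian, since $d\omega$ may have $(2,0)$-parts. If so, Lemma \ref{lem: Kuranishi smooth abelian} does not apply verbatim. I would then instead verify the weaker criterion of Lemma \ref{lem: local period map} directly: every $d$-exact $(n,1)$-form (here $\dim_\IC=n+1$) is $\dbar$-exact. To do this, write a primitive $\gamma$ in the basis $\alpha,\overline\alpha,\beta,\overline\beta$, exactly as in the proof of Lemma \ref{lem: fibres are connected almost abelian}. Every exact form has the shape $(\alpha+\overline\alpha)\wedge\eta_1+\alpha\wedge\overline\alpha\wedge\eta_2$. Imposing bidegree $(n,1)$ should force the form into the image of $\dbar$ on $\Wedge^{n,0}$, using that $\dbar z=\overline\alpha\otimes y_1^0$ and $\dbar y_i^\ell=\overline\alpha\otimes y_{i+1}^\ell$. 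This is the same mechanism that identifies $\langle\overline\alpha\rangle\otimes V$ with $\im\dbar$.

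For smoothness of $\kc(\frg)$, I would show $X$ is unobstructed and invoke Proposition \ref{lem: Kur inside Cg}. Here I would compute $H^1(\frg^{0,1},\frg^{1,0})$ in the adapted basis and observe that representatives can be chosen in $\overline\alpha\otimes\frg^{1,0}\oplus\frg^{*0,1}\otimes\langle\text{central/abelian-ideal vectors}\rangle$. Because $\gotha$ is abelian and $\overline\alpha$ is essentially the only form pairing with $e_0$, Schouten brackets of such representatives should land in $\overline\alpha\wedge(\cdots)\otimes\gotha^{1,0}$, which is $\dbar$-exact. This would give a recursive solution of \eqref{eq:Phi_k}, as in Proposition \ref{prop: Kuranishi smooth h^0=1}. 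Alternatively, if the Frölicher degeneration via Lemma \ref{lem: Kuranishi smooth abelian} works, smoothness comes for free. Once smoothness and the embedding of $\kp_X$ are established, Proposition \ref{prop: Tg exists} gives that $\kt(\frg)$ exists and that $\kp$ is an embedding. Since all Kuranishi spaces are smooth, $\kt(\frg)$ is a manifold.

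The main obstacle is the cohomological step: showing that $d$-exact $(n,1)$-forms are $\dbar$-exact, or that obstructions vanish, uniformly across all Jordan types. This is most delicate in the case $\varepsilon=1$, where $[z,\overline z]\neq0$ introduces the $\alpha\wedge\overline\alpha$ term. I would handle it by separately tracking the coefficient of $\alpha\wedge\overline\alpha$, mirroring the argument used in the proof of Lemma \ref{lem: fibres are connected almost abelian}.
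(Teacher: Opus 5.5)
Your reduction is the same as the paper's: Lemma~\ref{lem: fibres are connected almost abelian} plus Proposition~\ref{prop: Tg exists}, with reducedness of $\kc(\frg)$ coming from unobstructedness via Proposition~\ref{lem: Kur inside Cg}. The gap is in the two remaining inputs: unobstructedness of every $X$, and the embedding property of $\kp_X$. These are the entire substance of the theorem, and you leave both at the level of ``should''. The paper does not derive them from the structure equations. It takes them from \cite[Theorem~1.2]{andrada2025almost}, which says the Kuranishi family is smooth and universal, and from \cite[Theorem~1.1]{andrada2025almost}, which says the Frölicher spectral sequence degenerates at $E_1$, so that Lemma~\ref{lem: local period map} applies. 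Without these results or a working substitute the argument is incomplete, and the substitutes you sketch do not work as stated.

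\emph{The closed-$\xi$ route.} Demanding closed $\xi^k$ is impossible here. The proof of Lemma~\ref{lem: Kuranishi smooth abelian} needs $\varphi\circ\dbar=d\circ\varphi$ on all generators. Since $\dbar\overline\beta^\ell_{i+1}=\overline\alpha\wedge\overline\beta^\ell_i\neq0$, this forces $d\xi^\ell_{i+1}\neq0$. If you replace closedness by the correct condition $d\xi^k=\varphi(\dbar\overline\omega^k)$, your idea does go through. The structure equations are $d\beta^\ell_{i+1}=(\alpha+\overline\alpha)\wedge\beta^\ell_i$ and $d\beta^0_1=\varepsilon\,\alpha\wedge\overline\alpha$. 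Take $\xi=\alpha+\overline\alpha$, take $\xi^\ell_i=\overline\beta^\ell_i$ for $\ell\geq1$, and take $\xi^0_i=\beta^0_i+\overline\beta^0_i$ (the chain $\ell=0$ only occurs when $\varepsilon=1$). One checks that $\varphi\circ\dbar=d\circ\varphi$, which gives $E_1$-degeneration. This would be a self-contained replacement for \cite[Theorem~1.1]{andrada2025almost}, and, through the ``in particular'' of Lemma~\ref{lem: Kuranishi smooth abelian}, for the smoothness statement as well.

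\emph{The unobstructedness heuristic.} This is false. Take $\frh_6$, i.e.\ $\varepsilon=0$ with a single chain $y_1,y_2$ (dropping $\ell$). There $\dbar z=0$ and $\dbar(\frg^{1,0})=\langle\overline\alpha\otimes y_2\rangle$. So the class of $\overline\beta_1\otimes z$ is non-zero in $H^1(\frg^{0,1},\frg^{1,0})$. It has no representative in $\overline\alpha\otimes\frg^{1,0}\oplus\frg^{*0,1}\otimes\langle y_1,y_2\rangle$. Moreover, $\overline\alpha\wedge\overline\beta_2\otimes y_1$ is $\dbar$-closed but not $\dbar$-exact. Hence ``brackets land in $\overline\alpha\wedge(\cdots)\otimes\gotha^{1,0}$'' proves nothing.

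\emph{The direct $(n,1)$-form check.} This also stops before its only non-trivial point. $d$-exact $(n,1)$-forms lie in $\alpha\wedge\overline\alpha\wedge\Wedge^{n-1,0}\frg^*$. The $\dbar$-exact ones are exactly $(\langle\overline\alpha\rangle\otimes V)\lrcorner\omega$. So you must rule out $d$-exact forms with a non-zero component along $(\overline\alpha\otimes y^\ell_1)\lrcorner\omega$ for $\ell\geq1$, and you do not address this.
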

	\begin{proof}
		Let $X = (\Gamma \backslash G,J)$ be an almost abelian complex nilmanifold. By Theorem \cite[Theorem~1.2]{andrada2025almost} the Kuranishi space of $X$ is a smooth universal deformation space. Moreover, the Frölicher spectral sequence of X degenerates at the first page \cite[Theorem~1.1]{andrada2025almost}, so Lemma \ref{lem: local period map} implies that the local period map $\kp_X$ is an embedding. Now, Lemma \ref{lem: fibres are connected almost abelian} allows us to apply Proposition \ref{prop: Tg exists} to conclude that $\kt(\frg)$ exists as a complex manifold and the period map $\kp$ is an embedding.
	\end{proof}

	\begin{rem}\label{rem: dimension Teichmüller almost abelian}
		The dimension of $\kt(\frg)$ equals $ \dim \Kur(X) = \dim H_{\dbar}^{n-1,1}(X)$ for any almost abelian complex nilmanifold $X$ of type $(\frg, \Gamma)$, which can be computed with the combinatorial formula given in \cite[Proposition~4.11]{andrada2025almost}. 
	\end{rem}
	
	Next, we study the period map associated to the filtration
	\begin{equation}\label{eq: sptbs almost abelian}
		0  \subset \kz\frg =\kz^1\frg \subset \dots \subset \kz^{j-1}\frg \subset \kz^{j-1}\frg +\kc^{\nu-j}\frg \subset \dots \subset \kz^{j-1}\frg +\kc^1\frg \subset \frg,
	\end{equation}
	where $\nu$ is the nilpotency index of $\frg = \frg_A$ and $j\in \IN$ is defined as in Section \ref{subsect: almost abelian nilmanifolds}. It was shown in \cite[Corollary~3.15]{andrada2025almost} that this filtration, which we will denote by $\ks = (\ks^i\frg)_{i=1,\dots, \nu}$, defines a SPTBS on $\frg$. So for every complex structure on $\frg$ we get a tower of principal torus bundles:
	\begin{equation}\label{eq: tower almost abelian}
		\begin{tikzcd}
			X= X_1 \rar{\pi_1} & X_2 \rar{\pi _2} & {\cdots}  \rar{\pi_{\nu-2}} &X_{\nu-1} \rar{\pi_{\nu-1}} & X_{\nu} = T_\nu.
		\end{tikzcd} 
	\end{equation}
	The simplest examples of almost abelian complex nilmanifolds are Kodaira surfaces, that is, nilmanifolds with underlying Lie algebra $\frg = \frh_3 \oplus \IR$. In this case, $\ks$ is just $0 \subset \kz\frg \subset \frg$ and $X \to X_2$ is a bundle of elliptic curves over an elliptic curve. It turns out that $\frh_3 \oplus \IR$ is the only almost abelian Lie algebra for which the period map $\kp^\ks$ is injective.
	
	\begin{prop}\label{prop: fibre map almost abelian}
		If $\frg$ is almost abelian, then the period map $\kp^\ks \colon \kt(\frg) \to \ku$ is injective if and only if $\frg = \frh_3 \oplus \IR$.
	\end{prop}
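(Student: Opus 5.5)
For the direction ``$\frg = \frh_3\oplus\IR$ implies injective'', the plan is to reduce to Theorem~\ref{thm: fibre map not injective}. Here $\ks$ is $0\subset\kz\frg\subset\frg$, which is a SPTBS of length $2$ on a $2$-step nilpotent Lie algebra. Every Kodaira surface $X$ satisfies $\dim\Aut(X)=1$, as noted in the remark after that theorem via Section~\ref{subsect: Kodaira manifolds}. Item (i) of Theorem~\ref{thm: fibre map not injective} then gives injectivity of $\kp^\ks$. Alternatively, Lemma~\ref{lem: fibre is orbit} gives $(\widetilde\kp^\ks)^{-1}(\widetilde\kp^\ks(J))=G\cdot J$ directly.

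For the converse, assume $\frg\neq\frh_3\oplus\IR$. I will exhibit, for some (in fact every) $J\in\kc(\frg)$, a non-trivial deformation that keeps all the tori $T_1,\dots,T_\nu$ of the tower \eqref{eq: tower almost abelian} fixed. Since $\kp$ is an embedding by Theorem~\ref{thm: global period map almost abelian}, it suffices to find $J'$ with $\widetilde\kp^\ks(J')=\widetilde\kp^\ks(J)$ but $\widetilde\kp(J')\neq\widetilde\kp(J)$. Equivalently, I need a class $[\eta]\neq 0$ in $H^1(\frg^{0,1},\frg^{1,0})$, integrable to an actual deformation, that is tangent to the fibre of $\widetilde\kp^\ks$. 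Such a path yields infinitely many distinct points of $\kt(\frg)$ with the same image under $\kp^\ks$.

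\textbf{Constructing $\eta$.} I will use the adapted basis $\alpha,\beta^\ell_i$ with dual basis $z,y^\ell_i$ satisfying the bracket relations \eqref{eq: brackets almost abelian}, from \cite[Corollary~3.13]{andrada2025almost}. A tangent vector to the fibre of $\widetilde\kp^\ks$ is an element $\eta\in\frg^{*0,1}\otimes\frg^{1,0}$ that maps each $\ks^i\frg^{0,1}$ into $\ks^{i-1}\frg^{1,0}$ modulo $\ks^{i-1}$. In other words, $\eta$ strictly lowers the filtration level, so the induced complex structures on the graded pieces $\gotht^i$ do not change. My candidates are twist-type elements $\overline\beta^\ell_i\otimes y$, where $y$ lies in a strictly lower filtration step, for instance $y=y^{\ell'}_{k_{\ell'}}$ in the centre. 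Such an element is $\dbar$-closed and satisfies $[\eta,\eta]=0$ because $y$ is central, so the Maurer--Cartan equation holds exactly for $t\eta$. It then remains to check that $\eta$ is not $\dbar$-exact. By the proof of Lemma~\ref{lem: fibres are connected almost abelian}, $\im\dbar=\langle\overline\alpha\rangle\otimes V$, so any $\eta$ whose $(0,1)$-part is not $\overline\alpha$ is automatically non-exact.

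\textbf{Case analysis (main obstacle).} The hard part is to show that some such $\eta$ exists exactly when $\frg\neq\frh_3\oplus\IR$. This requires the partition data of \cite[Theorem~4.10]{ABDGH24}: either $n\ge 2$ (so $\dim\frg\ge 6$), or the Jordan type forces $\nu\ge3$. I would split into two cases.
\begin{enumerate}
\item If there are at least two $\beta$-directions, or some $\beta^\ell_i$ lies at a filtration level above a central $y$, then $\eta=\overline\beta\otimes y_{\text{central}}$ works. I must check that $\overline\beta$ is closed, or at least that $\dbar\eta=0$, using $d\overline\beta^\ell_i\in\overline\alpha\wedge(\cdot)$ and the centrality of $y$.
\item In the remaining small case $n=1$ with $\frg\not\cong\frh_3\oplus\IR$ (the $4$-dimensional filiform algebra cannot occur since it admits no complex structure), the partition forces $\frg=\frh_3\oplus\IR$, so this case is vacuous. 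This must be verified against the list in \cite[Theorem~4.10]{ABDGH24}.
\end{enumerate}
Alternatively, I could argue by dimensions. The fibre of $\kp^\ks$ through $[J]$ contains the image of the twist subspace, so it suffices to show $\dim\kt(\frg)$ (Remark~\ref{rem: dimension Teichmüller almost abelian}) exceeds $\dim\widetilde\kp^\ks(\kc(\frg))$. I would compute both via \cite[Proposition~4.11]{andrada2025almost}, and expect equality only for $\frh_3\oplus\IR$, where both sides equal $2$.
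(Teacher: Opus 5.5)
Your ``if'' direction is fine. So is the reduction: $t\eta$ with $\dbar\eta=0$ and $[\eta,\eta]=0$ is an exact Maurer--Cartan path, and if $\eta$ strictly lowers the $\ks$-filtration the path stays in a fibre of $\widetilde\kp^\ks$. If moreover $\eta$ is not $\dbar$-exact, then $d\widetilde\kp_J(\eta)=[\eta\lrcorner\omega]\neq 0$ by the argument of Lemma~\ref{lem: local period map}, so the path gives infinitely many points of $\kt(\frg)$.

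The gap is in the case analysis. Having ``at least two $\beta$-directions'' does not produce a candidate $\overline\beta\otimes y$ with $y$ central. First, only $i=1$ is usable, since $\dbar\overline\beta^\ell_i=\pm\overline\alpha\wedge\overline\beta^\ell_{i-1}\neq0$ for $i\geq2$. Second, take $\frg=\frh_3\oplus\IR^{2r+1}$ with $r\geq1$, for instance $\frh_8$ in dimension six. It is almost abelian, carries complex structures (a Kodaira surface times a torus), and is not $\frh_3\oplus\IR$. In the adapted basis this is $\varepsilon=1$ and $k_0=k_1=\dots=k_r=1$. So every $y^\ell_1$ is central, $\ks=(0\subset\kz\frg\subset\frg)$, and $\ks^1\frg_\IC=\langle y^\ell_1,\overline y^\ell_1\rangle$. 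Every $\overline\beta^\ell_1\otimes y$ with $y$ central then maps $\ks^1\frg^{0,1}$ into $\ks^1\frg^{1,0}$ rather than to $0$. It is a fibre deformation (in $A_F$): it changes $T_1$ and moves $\widetilde\kp^\ks$. So your construction gives nothing for this whole family, although the proposition asserts non-injectivity there. The dimension comparison you offer as an alternative is only stated as an expectation, so it does not close the gap.

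The missing deformations have $(0,1)$-part $\overline\alpha$, namely $\eta=\overline\alpha\otimes y^\ell_1$ with $\ell\geq1$.
\begin{itemize}
\item It is $\dbar$-closed, since $d\overline\alpha=0$ and $y^\ell_1$ is central, and $[\eta,\eta]=0$.
\item It kills $\kz\frg$, so all tori stay fixed.
\item It is not $\dbar$-exact, because $\im\dbar=\langle\overline\alpha\rangle\otimes V$ with $V=\langle y^0_1\rangle$ here. Your shortcut ``$(0,1)$-part not $\overline\alpha$'' does not apply, so you must check $y^\ell_1\notin V$ directly.
\end{itemize}
This is exactly the paper's subcase $k_0=1$. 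There $\dim H^0(X_2,\Theta_{X_2})=\dim H^{0,1}_{\dbar}(X_2)$, so $\gamma_1$ can only surject onto $H^{0,1}_{\dbar}(X_2)\otimes\gotht^{1,0}$ if $\dim\gotht^{1,0}=1$, which forces the Kodaira surface.

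In the remaining cases some chain has $k_\ell\geq2$, and $\eta=\overline\beta^\ell_1\otimes y^\ell_{k_\ell}$ does work for both values of $\varepsilon$. With the family above added, your explicit-class argument becomes a valid alternative to the paper's dimension count of $\coker\gamma_1$.
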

	\begin{proof}
		Since every Kodaira surface has a one-dimensional automorphism group it follows from Theorem \ref{thm: fibre map not injective} that $\kp^\ks$ is injective if $\frg = \frh_3 \oplus \IR$.
		Let $X$ be an almost abelian complex nilmanifold with complex structure $J$ and associated bundle as in \eqref{eq: tower almost abelian}. Let us denote by $T$ the fibre of the map $\pi_1 \colon X \to X_2$ and let $\gotht^{1,0}$ be the space of left-invariant holomorphic vector fields on $X$ tangent to $T$. Recall from Section \ref{subsect: twist deformations} that every element of the space
		\[
		A_T = \coker\left(\gamma_1 \colon H^0(X_2, \Theta_{X_2}) \to H^{0,1}_{\dbar}(X_2) \otimes \gotht^{1,0} \right)
		\]
		defines a deformation of $J$ which is in the same fibre of $\widetilde\kp^\ks$ as $J$. In particular, $\kp^\ks$ can only be injective if $A_T=0$, i.e., if $\gamma_1$ is surjective. In what follows, we will compute the dimensions of $H^0(X_2, \Theta_{X_2})$ and $H^{0,1}_{\dbar}(X_2)$ and see that $\gamma_1$ can only be surjective if $X$ is a Kodaira surface. We separate the proof into two cases depending on the value of $\varepsilon\in \{0,1\}$ defined as in \eqref{eq: epsilon}. As in the proof of Lemma \ref{lem: fibres are connected almost abelian}, $\alpha,\beta_1^0, \dots,\beta_{k_0}^0, \dots, \beta_1^r, \dots, \beta_{k_r}^r$ denotes a basis of left-invariant $(1,0)$-forms on $X$ whose dual basis $z,y_1^0, \dots, y_{k_0}^1, \dots, y_1^r, \dots, y_{k_r}^r$ satisfies the bracket relations given in \eqref{eq: brackets almost abelian}.
		\begin{prooflist}
			\item[Case 1] $\varepsilon =0$. In this case we have $k_0=0$. Since $\frg$ is $\nu$-step nilpotent the indices $k_i$ are at most $\nu$ and for $i=1,\dots, r$ we define
			\[
			m_i = \begin{cases}
				k_i & \textup{ for $k_i < \nu$},\\
				k_i-1 & \textup{ for $k_i = \nu$}.
			\end{cases}
			\]
			The first vector space in the filtration $\ks$, i.e., the real Lie algebra underlying the fibre $T$, is the commutator $\kc^{\nu-1}\frg$. Therefore, the forms $\alpha,\beta_1^1, \dots,\beta_{m_1}^1, \dots, \beta_1^r, \dots, \beta_{m_r}^r$ are a basis of left-invariant $(1,0)$-forms on $X_2$. Since $\dbar\overline \alpha =0$ and $\dbar \overline\beta_i^\ell = \overline\alpha \wedge \overline \beta_{i-1}^\ell$ we get
			\[
			H^{0,1}_{\dbar}(X_2) = \langle \overline \alpha, \overline \beta_1^1, \dots, \overline \beta_1^r \rangle.
			\]
			Similarly, $z,y_1^0, \dots, y_{m_1}^1, \dots, y_1^r, \dots, y_{m_r}^r$ form a basis of left-invariant $(1,0)$-vector fields on $X_2$. The bracket relations \eqref{eq: brackets almost abelian} imply $\dbar z = 0$ and $\dbar y_i^\ell = \overline \alpha \otimes y_{i+1}^\ell$ and thus we get
			\[
			H^0(X_2, \Theta_{X_2}) = \langle z, y_{m_1}^1, \dots, y_{m_r}^r \rangle.
			\]  
			Therefore, we have $\dim H^{0,1}_{\dbar}(X_2) = \dim H^0(X_2, \Theta_{X_2})$ but since $\gamma_1(z)=0$, the map $\gamma_1$ can never be surjective if $\varepsilon=0$.
			
			\item[Case 2] $\varepsilon =1$. The real Lie algebra underlying $T$ is the centre $\kz\frg$. Hence, the forms $\alpha,\beta_1^0, \dots,\beta_{k_0-1}^0, \dots, \beta_1^r, \dots, \beta_{k_r-1}^r$ are a basis of $(1,0)$-forms and the vectors $z,y_1^0, \dots, y_{k_0-1}^1, \dots, y_1^r, \dots, y_{k_r-1}^r$ form a basis of $(1,0)$-vector fields on $X_2$. As in the case $\varepsilon=0$, we have $\dbar\overline \alpha =0$, $\dbar \overline\beta_i^\ell = \overline\alpha \wedge \overline \beta_{i-1}^\ell$ and $\dbar y_i^\ell = \overline \alpha \otimes y_{i+1}^\ell$, but unless $k_0=1$ we have $\dbar z = \overline \alpha \otimes y_1^0$. So for $k_0 \geq 2$ we get
			\[
			H^{0,1}_{\dbar}(X_2) = \langle \overline \alpha, \delta_0\overline \beta_1^0, \dots, \delta_r\overline \beta_1^r \rangle
			\]
			and 
			\[
			H^0(X_2, \Theta_{X_2}) = \langle \delta_0y_{k_0-1}^0, \dots,  \delta_ry_{k_r-1}^r \rangle,
			\]
			where we define $\delta_i =0$ if $k_i=1$ and $\delta_i =1$ if $k_i>1$.
			Therefore, we have $\dim H^0(X_2, \Theta_{X_2}) < H^{0,1}_{\dbar}(X_2)$ and thus $\gamma_1$ is never surjective. If $k_0=1$, then $y_1^0$ is in the centre of $\frg_\IC$ and thus $\dbar z=0$ if $z$ is considered as a vector field on $X_2$. In this case, we have $\dim H^0(X_2, \Theta_{X_2}) = \dim H^{0,1}_{\dbar}(X_2)$. Hence, if $\gamma_1$ is surjective, then $\dim\gotht^{1,0}=1$ and thus $\dim \kz\frg =2$. But this implies that the matrix $A$ corresponding to $\frg$ has only two Jordan blocks and thus $k_i=0$ for all $i>0$. Since we already showed that $k_0=1$ if $\gamma_1$ is surjective, the forms $\alpha, \beta_1^0$ are a basis of left-invariant $(1,0)$-forms on $X$ with structure equations $d\alpha=0$ and $d\beta_1^0= \alpha \wedge \overline \alpha$. Therefore, $X$ is a Kodaira surface.\qedhere
		\end{prooflist}
	\end{proof}

	\section{Nilmanifolds of dimension six}\label{sect: nilmanifolds of dimension six}

	In this section, we will apply our previous results to the the class of six-dimensional nilmanifolds. Here, we will only consider those nilmanifolds for which the description of the Teichmüller or moduli space follows directly from one of the preceding theorems. The remaining cases will be treated in a separate article.

	Following Salamon's notation \cite{salamon01}, we describe a nilpotent Lie algebra $\mathfrak{g}$, or rather its isomorphism class, by writing $\mathfrak{g} = (de^1, de^2, \ldots, de^{2n})$ for a given basis $e^1, \dots, e^{2n}$ of $\frg^*$, and abbreviating $e^{ij} = e^i \wedge e^j$ to $ij$. For example, the tuple $(0,0,0,12)$ represents the four-dimensional Lie algebra with the single non-trivial differential $de^4 = e^{12}$. 
	
	Real nilpotent Lie algebras are classified up to dimension seven, which is the smallest dimension where infinite families start to appear (see for instance \cite{Magnin86}). The analysis which Lie algebras up to dimension six admit a complex structure was carried out by Salamon in \cite{salamon01}: up to dimension four there are two non-abelian nilpotent Lie algebras, namely, $(0,0,0,12)$ and $(0,0,12,13)$. The former is the real Lie algebra underlying a Kodaira surface and the latter does not admit a complex structure \cite[Proposition~2.3]{salamon01}.
	
	In dimension six the situation becomes more involved. There are 34 isomorphism classes of real nilpotent Lie algebras and Salamon's classification states that a six-dimensional nilpotent Lie algebra $\mathfrak{g}$ admits a complex structure if and only if $\mathfrak{g}$ is isomorphic to one of the following Lie algebras (cf. \cite[Table~A.1]{salamon01} or \cite[Theorem~8]{Ugarte07}):
	
	\begin{enumerate}
		\begin{multicols}{2}
			\item[] $\mathfrak{h}_1 = (0,0,0,0,0,0)$
			\item[] $\mathfrak{h}_2 = (0,0,0,0,12,34)$
			\item[] $\mathfrak{h}_3 = (0,0,0,0,0,12+34)$
			\item[] $\mathfrak{h}_4 = (0,0,0,0,12,14+34)$
			\item[] $\mathfrak{h}_5 = (0,0,0,0,13+42,14+23)$
			\item[] $\mathfrak{h}_6 = (0,0,0,0,12,13)$
			\item[] $\mathfrak{h}_7 = (0,0,0,12,13,23)$
			\item[] $\mathfrak{h}_8 = (0,0,0,0,0,12)$
			\item[] $\mathfrak{h}_9 = (0,0,0,0,12,14+25)$
			\item[] $\mathfrak{h}_{10} = (0,0,0,12,13,14)$
			\item[] $\mathfrak{h}_{11} = (0,0,0,12,13,14+23)$
			\item[] $\mathfrak{h}_{12} = (0,0,0,12,13,24)$
			\item[] $\mathfrak{h}_{13} = (0,0,0,12,13+14,24)$
			\item[] $\mathfrak{h}_{14} = (0,0,0,12,14,13+42)$
			\item[] $\mathfrak{h}_{15} = (0,0,0,12,13+42,14+23)$
			\item[] $\mathfrak{h}_{16} = (0,0,0,12,14,24)$
			\item[] $\mathfrak{h}_{19}^- = (0,0,0,12,23,14-35)$
			\item[] $\mathfrak{h}_{26}^+ = (0,0,12,13,23,14+25)$
		\end{multicols}
	\end{enumerate}
	
	Note that all of these Lie algebras have rational structure constants, so an associated simply connected Lie group always admits a lattice. The following theorem can readily be deduced from our previous results.
	
	\begin{thm}\label{thm: dimension six}
		Let $\frg$ be a nilpotent Lie algebra of dimension six admitting complex structures, and let $\Gamma$ be a lattice in an associated simply connected Lie group.
		\begin{enumerate}
			\item If $\frg \in \{ \frh_1, \frh_2, \frh_3, \frh_4, \frh_6, \frh_8, \frh_9, \frh_{10}\}$, then the global period map 
			\[
			\kp \colon \kt(\frg) \to \IP H^3(\frg, \IC)
			\]
			is injective and the Teichmüller space $\kt(\frg)$ exists as a complex manifold.
			\item If $\frg = \frh_5$, then $\kt(\frg)$ and $\km(\frg, \Gamma)$ are non-Hausdorff.
			\item If $\frg \in \{\frh_{13}, \frh_{14}, \frh_{15}\}$, then $\kt(\frg)$ does not exist as a complex space.
		\end{enumerate}
	\end{thm}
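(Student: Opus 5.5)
The plan is to sort the eighteen algebras of Salamon's list according to the structural classes already treated, and to read off each item from the corresponding earlier theorem.

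\emph{Item (i).} The abelian algebra $\frh_1$ gives complex tori, and $\kt(\frh_1)=\ku_3$ is a complex manifold (Section \ref{subsect: complex tori}); the period map is injective because an $(n,0)$-class determines $\frg^{1,0}$ for the abelian algebra. The algebras $\frh_8=(0,0,0,0,0,12)$ and $\frh_3=(0,0,0,0,0,12+34)$ have one-dimensional commutator, so they fall under Theorem \ref{thm: Teichmüller 2-dim commutator}(i), equivalently Proposition \ref{prop: 3-step 2-dim commutator}. The algebras $\frh_9=(0,0,0,0,12,14+25)$ and $\frh_{10}=(0,0,0,12,13,14)$ are almost abelian: for $\frh_{10}$ the ideal $\langle e_2,\dots,e_6\rangle$ is abelian of codimension one, and for $\frh_9$ I will check that the ideal spanned by all $e_i$ with $i\neq 1$ is abelian. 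These are therefore covered by Theorem \ref{thm: global period map almost abelian}. Note also that $\frh_9$ is $3$-step with $\dim\kc^1=2$, so Proposition \ref{prop: 3-step 2-dim commutator} applies to it as well.

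Next, $\frh_2=(0,0,0,0,12,34)$ and $\frh_4=(0,0,0,0,12,14+34)$ are $2$-step with two-dimensional commutator equal to the centre, and $\frh_6=(0,0,0,0,12,13)$ is $2$-step with centre $\langle e_4,e_5,e_6\rangle$ of odd dimension. For these I intend to apply Theorem \ref{thm: fibre map not injective}, which requires a SPTBS of length $2$ and $h^0\equiv1$. The SPTBS exists by Rollenske's classification: in the even case $\kz\frg=\kc^1\frg$ is two-dimensional, and in the odd case the descending central series serves. The condition $h^0\equiv 1$ requires the complex structure equations, and I take these from the known classification of complex structures in dimension six \cite{ceballos2016invariant}. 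Up to automorphism these are $d\omega^1=d\omega^2=0$ and $d\omega^3=$ a combination of $\omega^{1\bar1},\omega^{1\bar2},\omega^{2\bar1},\omega^{2\bar2},\omega^{12}$ with no $\omega^{12}$-type parallelisable term. I must check that $\dbar\colon\frb^{1,0}\to\frb^{*0,1}\otimes\langle z_3\rangle$ is an isomorphism, that is, that the $(1,1)$-coefficient matrix of $d\omega^3$ is invertible. This is exactly the statement that $\frg$ is not $\frh_8\oplus\IR^2$-like and not complex parallelisable ($\frh_5$). I expect this check to be the main obstacle, particularly the degenerate families in \cite{ceballos2016invariant}. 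If some family fails it, I would fall back on Lemma \ref{lem: Kuranishi smooth abelian} together with the fibre-connectedness argument of Proposition \ref{prop: 3-step 2-dim commutator}.

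\emph{Item (ii).} $\frh_5$ is the Iwasawa algebra, which admits a complex parallelisable structure. Hence Proposition \ref{prop: Tg non hausdorff complex par} gives that $\kt$ is non-Hausdorff, and Remark \ref{rem: analytically inseparable} gives the same for $\km(\frg,\Gamma)$.

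\emph{Item (iii).} For $\frh_{13},\frh_{14},\frh_{15}$ I would use Proposition \ref{prop: fibre dimension criterion}, in the manner of Example \ref{exam: counterexample dim 10}. In each case I pick a complex structure from \cite{ceballos2016invariant}, choose a closed element $\phi(t)=t\,\bar\omega^a\otimes z_b$ with vanishing bracket, and compute $h^0(J_t)$ via \cite[Theorem~1.2]{Xia_2021}, exhibiting a drop $h^0(J_t)<h^0(J)$.
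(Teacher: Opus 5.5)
\textbf{The gap is $\frh_6=(0,0,0,0,12,13)$.} You route $\frh_6$ through Theorem \ref{thm: fibre map not injective}, but that theorem needs $h^0\equiv 1$, and this fails on $\frh_6$. Take the complex structure $d\omega^1=d\omega^2=0$, $d\omega^3=\omega^{12}+\omega^{1\bar 1}+\omega^{1\bar 2}$ on $\frh_6$ (as in \cite{ceballos2016invariant}). Its $(1,1)$-coefficient matrix has rank one, so $\dbar z_2=\dbar z_3=0$ and $h^0=2$.

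This is not an accident of the chosen structure. In the almost abelian normal form of Section \ref{sect: almost abelian nilmanifolds} one has $\varepsilon=0$ for $\frh_6$, and then $\dbar z=0$ and $\dbar y^\ell_{k_\ell}=0$ always hold. Moreover, $h^0\equiv 1$ would, by Theorem \ref{thm: fibre map not injective}(i), make $\kp^\ks$ injective, contradicting Proposition \ref{prop: fibre map almost abelian}. So the invertibility check you flag as the main obstacle genuinely fails here.

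Your fallback does not rescue this case either:
\begin{itemize}
\item Lemma \ref{lem: Kuranishi smooth abelian} requires structure equations with no $(2,0)$-part, i.e.\ an abelian complex structure. $\frh_6$ has none: in the almost abelian basis, $[y^1_1,z]=y^1_2\neq 0$ is a bracket of two $(1,0)$-vectors.
\item The fibre-connectedness computation in Proposition \ref{prop: 3-step 2-dim commutator} relies on abelian structures with one-dimensional commutator, which is also unavailable.
\end{itemize}
The missing observation is that $\frh_6$ is itself almost abelian. Every bracket involves $e_1$, so $\langle e_2,\dots,e_6\rangle$ is an abelian ideal of codimension one, with Jordan type $(2,2,1)$. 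Item (i) for $\frh_6$ then follows from Theorem \ref{thm: global period map almost abelian}, which is how the paper argues.

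\textbf{A smaller misstep: $\frh_9$ is not almost abelian.} The check you announce fails, since $[e_2,e_5]=-e_6$. More decisively, $e^{14}+e^{25}$ is nondegenerate on $\langle e_1,e_2,e_4,e_5\rangle$. Any $5$-dimensional subspace meets this space in dimension at least $3$, so no codimension-one abelian subalgebra exists. Alternatively, $\dim\kz\frh_9=2$, whereas a six-dimensional almost abelian algebra with two-dimensional commutator has centre $\ker A$ of dimension $5-2=3$. Your second argument for $\frh_9$, via Proposition \ref{prop: 3-step 2-dim commutator}, is correct and is exactly the paper's.

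\textbf{The rest agrees with the paper.} This covers $\frh_2,\frh_4$ via Theorem \ref{thm: fibre map not injective}, $\frh_3,\frh_8$ via one-dimensional commutator, and $\frh_{10}$ as almost abelian. It also covers $\frh_1$ via complex tori, which the paper's proof leaves implicit, and items (ii) and (iii). For (iii), the paper simply reads off from \cite{ceballos2016invariant} that $h^0$ is non-constant on connected components, rather than building explicit deformations. The criterion used, Proposition \ref{prop: fibre dimension criterion}, is the same.
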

	\begin{proof}
		We begin with the first item. The Lie algebras $\frh_2$ and $\frh_4$ admit a SPTBS of length 2 \cite[Theorem B]{rollenske09}, and by the classification of complex structures given in \cite{ceballos2016invariant} every complex structure $J$ on $\frh_2$ or $\frh_4$ satisfies $h^0(J) =1$, that is, every complex nilmanifold of type $(\frh_2, \Gamma)$ or $(\frh_4, \Gamma)$ has one-dimensional automorphism group. Hence, for the Lie algebras $\frh_2$ and $\frh_4$ the first item follows from Theorem~\ref{thm: fibre map not injective}. The Lie algebras $\frh_3$ and $\frh_8$ have one-dimensional commutator and are thus covered by the first item of Theorem \ref{thm: Teichmüller 2-dim commutator}. The Lie algebra $\frh_9$ is 3-step nilpotent and has two-dimensional commutator, so the result follows from the last item of Theorem \ref{thm: Teichmüller 2-dim commutator}. Finally, the almost abelian Lie algebras $\frh_6$ and $\frh_{10}$ are covered by Theorem \ref{thm: global period map almost abelian}.
		
		The Lie algebra $\frh_5$ is non-abelian and admits complex parallelisable structures. Thus, $\kt(\frh_5)$ and $\km(\frh_5, \Gamma)$ are non-Hausdorff by Proposition \ref{prop: Tg non hausdorff complex par}. 
		
		For the Lie algebras listed in the last item, it again follows from the classification of complex structures proved in \cite{ceballos2016invariant} that the function $h^0$ is not constant on the connected components of $\kc(\frg)$ and thus the Teichmüller space does not exist as a complex space by Proposition \ref{prop: fibre dimension criterion}.
	\end{proof}

	\begin{rem}
		The Lie algebra $\frh_5$ is the real Lie algebra underlying the Iwasawa manifold. We already mentioned in Example \ref{exam: Iwasawa} that the period map $\kp$ on the Teichmüller space of the Iwasawa manifold $\kt(\frh_5)$ is generically injective. This allows one to replace $\kt(\frh_5)$ by its largest Hausdorff quotient to
		obtain a complex manifold of dimension four, parametrising complex structures on
		$\frh_5$, which coincides with $\kt(\frh_5)$ on a dense open subset. This approach will be further explored in a separate article.
	\end{rem}
	



	\printbibliography

\end{document}